\documentclass[10pt,aps,prb, preprint,reqno]{amsart}
\usepackage{amsmath}
\usepackage{amssymb}
\usepackage{yfonts}
\usepackage{hyperref}
\usepackage{mathrsfs}
\usepackage {latexsym}
\usepackage{graphics}
\usepackage{txfonts}
\usepackage{breqn}
\usepackage{cite}
\usepackage{color}
\usepackage[shortlabels]{enumitem}
\usepackage{wasysym}
\usepackage{tikz}
\usetikzlibrary{snakes}
\usetikzlibrary{arrows.meta}
\usetikzlibrary{decorations.pathmorphing}
\usetikzlibrary{er, positioning}

\newtheorem{theorem}{Theorem}[section]
\newtheorem{remark}{Remark}[section]
\newtheorem{lemma}[theorem]{Lemma}
\newtheorem{assume}[theorem]{Assumption}
\newtheorem{proposition}[theorem]{Proposition}
\newtheorem{corollary}[theorem]{Corollary}
\newtheorem{define}{Definition}[section]

\newtheorem*{proposition*}{Proposition}

\newcommand{\joinR}{\hspace{-.1em}}
\newcommand{\RomanI}{I}
\newcommand{\RomanII}{\mbox{\RomanI\joinR\RomanI}}
\newcommand{\RomanIII}{\mbox{\RomanI\joinR\RomanII}}
\newcommand{\RomanIV}{\mbox{\RomanI\joinR\RomanV}}
\newcommand{\RomanV}{V}

\DeclareMathOperator*{\Id}{Id}

\DeclareMathOperator*{\supp}{supp}
\DeclareMathOperator*{\divergence}{div}
\DeclareMathSymbol{:}{\mathord}{operators}{"3A}

\allowdisplaybreaks

\begin{document}
\title[Singular Burgers' equation]{Global unique solution to the perturbation of the Burgers' equation \\ forced by derivatives of space-time white noise}

\subjclass[2010]{35A02; 35R60; 76F30}
 
\author[Kazuo Yamazaki]{Kazuo Yamazaki}  
\address{Department of Mathematics, University of Nebraska, Lincoln, 243 Avery Hall, PO Box, 880130, Lincoln, NE 68588-0130, U.S.A.; Phone: 402-473-3731; Fax: 402-472-8466}
\email{kyamazaki2@unl.edu}
\date{}
\keywords{Anderson Hamiltonian; Burgers' equation; Global well-posedness; Paracontrolled distributions; Space-time white noise.}
\thanks{This work was supported by the Simons Foundation MPS-TSM-00962572.}

\begin{abstract}
We consider the one-dimensional Burgers' equation forced by fractional derivative of order $\frac{1}{2}$ applied on space-time white noise. Relying on the approaches of Anderson Hamiltonian from Allez and Chouk (2015, arXiv:1511.02718 [math.PR]) and two-dimensional Navier-Stokes equations forced by space-time white noise from Hairer and Rosati (2024, Annals of PDE, \textbf{10}, pp. 1--46), we prove the global-in-time existence and uniqueness of its mild and weak solutions.
\end{abstract}

\maketitle

\section{Introduction}\label{Section 1}

\subsection{Motivation from physics and real-world applications}\label{Section 1.1} 
Since the pioneering work \cite{LL57} on hydrodynamic fluctuations in 1957, many partial differential equations (PDEs) in fluid mechanics and mathematical physics have been investigated under the force by random noise, especially space-time white noise (STWN) \eqref{STWN}: ferromagnet \cite{MM75}; Kardar-Parisi-Zhang (KPZ) equation \eqref{KPZ} \cite{KPZ86}; magnetohydrodynamics (MHD) system \cite{CT92}; Navier-Stokes equations \cite{FNS77, YO86}; Rayleigh-B$\acute{\mathrm{e}}$nard equation \cite{ACHS81, GP75, HS92, SH77, ZS71}. 

The model of main interests in this manuscript is the Burgers' equation \eqref{Burgers'}, introduced by Bateman \cite{B15} in 1915 and later studied by Burgers \cite{B48} in 1948. It has rich applications in fluid mechanics, gas dynamics, and traffic flow. Furthermore, it has been investigated in great depth as the prototype for one-dimensional (1D) toy model of the Navier-Stokes equations \eqref{Navier-Stokes}, as well as conservation law that exhibit finite-time shock in the inviscid case. The Burgers' equation forced by STWN has also caught much attention from the physics community (e.g. \cite[Equation (2.8)]{FNS77}). Moreover, taking spatial derivatives on the KPZ equation \eqref{KPZ} leads to the vorticity-free velocity field solving the Burgers' equation forced by a derivative of the STWN (e.g. \cite[Equation (3)]{KPZ86} and \cite[Equation (B.1)]{BG97}). 

Global-in-time existence and uniqueness of a solution is a fundamental property of any system of PDEs and the roughness of the STWN makes its verification notoriously difficult. In this manuscript, we prove the global well-posedness of the 1D Burgers' equation forced by a fractional derivative of order $\frac{1}{2}$ applied on the STWN. Our proof follows the approach of \cite{HR24} on the 2D Navier-Stokes equations forced by STWN but without any derivatives, and generalizes \cite{AC15} on the 2D Anderson Hamiltonian with noise that is white only in space.

\subsection{Past relevant results and mathematical motivation}\label{Section 1.2}
We set up a minimum amount of notations before introducing the equations of our main interests. We define $\mathbb{N} \triangleq \{ 1, 2, \hdots \}$ and $\mathbb{N}_{0} \triangleq \mathbb{N} \cup \{0\}$, and work with a spatial variable $x \in \mathbb{T}^{d} = (\mathbb{R} \setminus \mathbb{Z})^{d}$ for $d \in \mathbb{N}$ with primary focus on $d = 1$. We abbreviate by $\partial_{t} \triangleq  \frac{\partial}{\partial t}, \partial_{i} \triangleq \partial_{x_{i}} \triangleq \frac{\partial}{\partial x_{i}}$ for $i \in \{1, \hdots, d\}$, and define $\mathbb{P}_{\neq 0} f \triangleq f - \fint_{\mathbb{T}^{d}} f(x) dx$ and $\mathbb{P}_{L}$ to be the Leray projection operator onto the space of divergence-free vector fields. We write $A \lesssim_{\alpha, \beta} B$ whenever there exists a constant $C = C(\alpha,\beta) \geq 0$ such that $A \leq CB$ and $A \approx_{\alpha,\beta} B$ in case $A \lesssim_{\alpha,\beta}B$ and $A \gtrsim_{\alpha,\beta}B$. We often write $A \overset{( \cdot)}{\lesssim} B$ whenever $A\lesssim B$ due to $(\cdot)$. We denote the Lebeague, homogeneous and inhomogeneous Sobolev spaces by $L^{p}, \dot{H}^{s}$, and $H^{s}$ for $p\in [1,\infty], s\in \mathbb{R}$ with corresponding norms of $\lVert \cdot\rVert_{L^{p}}, \lVert \cdot \rVert_{\dot{H}^{s}}$, and $\lVert \cdot \rVert_{H^{s}}$, respectively. Finally, we denote the Schwartz space and its dual by $\mathcal{S}$ and $\mathcal{S}'$, and Fourier transform of $f$ by $\mathcal{F}(f) = \hat{f}$. 

Let us fix a probability space $(\Omega, \mathcal{F}, \mathbb{P})$ so that the STWN $\xi$ can be introduced as a distribution-valued Gaussian field with a correlation of 
\begin{equation}\label{STWN}
\mathbb{E} [\xi(t,x) \xi(s,y)] = \delta(t-s) \delta (x-y) 
\end{equation} 
where $\mathbb{E}$ represents the mathematical expectation with respect to (w.r.t.) $\mathbb{P}$; i.e., 
\begin{align*}
\mathbb{E} [ \xi(\phi) \xi(\psi)] = \int_{\mathbb{R} \times \mathbb{T}^{d}} \phi(t,x) \psi(t,x) dx dt \hspace{3mm} \forall \hspace{1mm} \phi, \psi \in \mathcal{S} (\mathbb{R} \times \mathbb{T}^{d}). 
\end{align*}
We define $\Lambda^{\gamma} \triangleq (-\Delta)^{\frac{\gamma}{2}}$ as a fractional derivative of order $\gamma \in \mathbb{R}$, specifically a Fourier operator with a symbol $\lvert m \rvert^{\gamma}$ so that $\widehat{ \Lambda^{\gamma} f} (m) = \lvert m \rvert^{\gamma} \hat{f} (m)$. We also recall the H$\ddot{\mathrm{o}}$lder-Besov spaces $\mathcal{C}^{\gamma} \triangleq B_{\infty,\infty}^{\gamma}$ for $\gamma \in \mathbb{R}$ which is equivalent to the classical H$\ddot{\mathrm{o}}$lder spaces $C^{\alpha}$ whenever $\alpha \in (0,\infty) \setminus \mathbb{N}$ although $C^{k} \subsetneq \mathcal{C}^{k}$ for all $k \in \mathbb{N}$ (see \cite[p. 99]{BCD11}); we defer detailed definitions of Besov spaces to Section \ref{Section 3}. 

For general discussions, let us consider $m_{1}, m_{2} \in \mathbb{R}$ and the velocity and pressure fields $u: \hspace{1mm} \mathbb{R}_{\geq 0} \times \mathbb{T}^{d} \mapsto \mathbb{R}^{d}$ and $\pi: \hspace{1mm} \mathbb{R}_{\geq 0} \times \mathbb{T}^{d} \mapsto \mathbb{R}$ that satisfy 
\begin{equation}\label{Navier-Stokes}
\partial_{t} u + \divergence (u\otimes u) + \nabla \pi + \nu \Lambda^{m_{1}} u = \Lambda^{m_{2}} \xi, \hspace{1mm} \nabla\cdot u = 0, \hspace{1mm} \text{ for } t > 0, 
\end{equation} 
starting from given initial data $u^{\text{in}}(x) = u(0,x)$; the case $\nu > 0, \nu = 0$ represent the Navier-Stokes and the Euler equations, respectively. If we define a scaling (see \cite[Lemma 10.2]{H14})
\begin{equation}
\mathfrak{s} \triangleq (m_{1}, \underbrace{1, \hdots, 1}_{d\text{-many}}) \text{ so that } \lvert \mathfrak{s} \rvert \triangleq m_{1} + d,
\end{equation} 
then we have $\mathbb{P}$-almost surely ($\mathbb{P}$-a.s.) $\xi \in \mathcal{C}_{\mathfrak{s}}^{\alpha}$ for every $\alpha < - \frac{\lvert \mathfrak{s} \rvert}{2} = - \frac{m_{1} + d}{2}$ where $\mathcal{C}_{\mathfrak{s}}^{\alpha}$ is the scaled H$\ddot{\mathrm{o}}$lder-Besov space of space-time distributions introduced in \cite[Theorem 3.7]{H14}. Consequently, $\Lambda^{m_{2}} \xi \in \mathcal{C}_{x}^{\alpha}$ for $\alpha < - \frac{m_{1} + d}{2} - m_{2}$ so that $u \in \mathcal{C}_{x}^{\gamma}$ for $\gamma < \frac{m_{1}}{2} - \frac{d}{2} - m_{2}$ (e.g. \cite[Lemma 4.1]{BK17}). Due to the Bony's estimates (see Lemma \ref{Lemma 3.1}), a product $fg$ is well-defined if and only if $f \in \mathcal{C}_{x}^{\beta_{1}}, g \in \mathcal{C}_{x}^{\beta_{2}}$ for $\sum_{j=1}^{2} \beta_{j} > 0$. Therefore, $u \otimes u$ in \eqref{Navier-Stokes} becomes ill-defined if 
\begin{equation}\label{Bony's threshold}
\frac{m_{1}}{2} - \frac{d}{2} - m_{2} \leq 0;
\end{equation} 
let us refer to $m_{2} = \frac{m_{1}}{2} - \frac{d}{2}$ as the Bony's threshold. Such PDEs with ill-defined products due to rough stochastic forces are called singular stochastic PDEs (SPDEs). 

We notice that \eqref{Bony's threshold} in case $d = 2, m_{1} = 2, m_{2} = 0$ implies that the 2D Navier-Stokes equations forced by STWN is barely singular. In this case, Da Prato and Debussche \cite{DD02} decomposed \eqref{Navier-Stokes} to two parts: $u = X + v$ where 
\begin{subequations}\label{est 14}
\begin{align}
&\partial_{t} X = \nu \Delta X + \mathbb{P}_{L} \mathbb{P}_{\neq 0} \xi \hspace{11mm} \text{ for } t > 0, \hspace{3mm} X(0,x) = 0, \label{est 14a}\\
&\partial_{t} v + \mathbb{P}_{L} \divergence (X+ v)^{\otimes 2} = \nu \Delta v  \hspace{1mm} \text{ for } t > 0, \hspace{3mm} v(0,x) = u^{\text{in}}(x), \label{est 14b} 
\end{align}
\end{subequations}  
where $W^{\otimes 2} \triangleq W \otimes W$. Although $X^{\otimes 2}$ is ill-defined, Da Prato and Debussche were able to prove local solution theory using Wick products (global solution theory will be discussed shortly); similar approach was successfully applied on the 2D $\Phi^{4}$ model in  \cite{DD03b}. However, as can be seen from \eqref{Bony's threshold}, the singularity worsens in higher dimension and the approach of Da Prato and Debussche did not seem applicable to other singular SPDEs. 

One of the first breakthroughs on singular SPDEs that are not just barely singular was achieved by Hairer \cite{H13} on the 1D KPZ equation forced by STWN 
\begin{equation}\label{KPZ}
\partial_{t} h = \nu \Delta h + \lvert \nabla h \rvert^{2} + \xi 
\end{equation}
where $\nabla h \in \mathcal{C}^{\gamma} (\mathbb{T})$ for $\gamma < - \frac{1}{2}$ (the first was actually \cite{H11} on the Burgers' equation, to be described shortly). Using the theory of rough path due to Lyons \cite{L98}, Hairer was able to construct a local-in-time solution to the 1D KPZ equation (its global-in-time aspect will be discussed shortly). Subsequently, two breakthrough techniques were developed: the theory of paracontrolled distributions \cite{GIP15} by Gubinelli, Imkeller, and Perkoswki, and the theory of regularity structures \cite{H14} by Hairer. They led to many new developments; e.g. solution theory \cite{CC18} (also \cite{Y23a, ZZ15}) and strong Feller property \cite{HM18a} (also \cite{Y21a, ZZ20}). 

Yet, broadly stated, the solution theory that is directly attainable from either the theory of paracontrolled distributions or regularity structures is only local in time and in \emph{locally subcritical} case, which informally requires that the nonlinear term is smoother than the noise (see \cite[Assumption 8.3]{H14} for complete definition). Extending such a solution to be global-in-time has significant motivation, highlighted by recent developments on the stochastic quantization approach through the stochastic Yang-Mills equation \cite{CCHS22a, CCHS22b} due to Chandra, Chevyrev, Hairer, and Shen. Of most relevance, we first recall that Da Prato and Debussche in \cite{DD02} were able to take advantage of the explicit knowledge of an invariant measure ``$\mu = \times_{k\in\mathbb{Z}_{0}^{2}} \mathcal{N} (0, 1/2\lvert k \rvert^{2})$'' (see \cite[p. 185]{DD02}) due to the special property 
\begin{equation}\label{explicit knowledge}
\int_{\mathbb{T}^{2}} (u\cdot\nabla) u \cdot \Delta u dx = 0 
\end{equation} 
of the solution $u$ to the 2D Navier-Stokes equations, particularly its Gaussianity, and deduce the existence of path-wise unique solution globally in time starting from $\mu$-almost every initial data $u^{\text{in}}$ (see \cite[Theorem 5.1]{DD02}). Analogous result for the 2D $\Phi^{4}$ model was obtained also by Da Prato and Debussche in \cite[Theorem 4.2]{DD03b}, and later extended to the whole plane by Mourrat and Weber \cite{MW17}. Additionally, Gubinelli and Perkowski \cite{GP17} observed that the solution to the 1D KPZ equation constructed by Hairer \cite{H13} is global-in-time (see \cite[p. 170 and Corollary 7.5]{GP17}). Loosely speaking, the examples thus far are limited to singular SPDEs with explicit knowledge of invariant measure (2D Navier-Stokes equations due to \eqref{explicit knowledge}), favorable nonlinear term (2D $\Phi^{4}$ model with damping nonlinearity), or useful transformation (1D KPZ equation with Cole-Hopf transform). 

\subsection{Approach of \cite{HR24}}\label{Section 1.3}
We describe the approach of \cite{HR24} by Hairer and Rosati, the main source of our inspiration, which particularly constructed a global-in-time unique solution to the 2D Navier-Stokes equations forced by STWN without relying on its invariant measure in contrast to \cite{DD02} (\hspace{1sp}\cite{Y23b} in the case of the 2D MHD system forced by STWN for which an analogue of \eqref{explicit knowledge} fails). To understand the mechanism behind \cite{HR24}, it is instructive to recall the work of Tao \cite{T07, T09} on the global regularity result for deterministic logarithmically supercritical PDEs. Lions \cite{L69} proved that the Leray-Hopf weak solution to the deterministic $d$-dimensional Navier-Stokes equations (\eqref{Navier-Stokes} with zero noise) is unique as long as 
\begin{equation}\label{Lions' criticality} 
m_{1} \geq 1 + \frac{d}{2}, 
\end{equation} 
to which we refer as Lions' criticality; thus, we say that the deterministic $d$-dimensional Navier-Stokes equations is $L^{2}(\mathbb{T}^{d})$-subcritical, critical, and supercritical if $m_{1} > 1+ \frac{d}{2}, m_{1} = 1 + \frac{d}{2}$, and $m_{1} < 1+ \frac{d}{2}$, respectively. Tao in \cite{T09} proved the global well-posedness of the $d$-dimensional Navier-Stokes equations with diffusion of the form $\nu \mathcal{L} u$ such that 
\begin{equation}\label{est 261}
\widehat{\nu \mathcal{L} u}( k) = \frac{\nu \lvert k \rvert^{1 + \frac{d}{2}}}{\ln^{\frac{1}{2}}(2+ \lvert k \rvert^{2})} \hat{u}(k)
\end{equation} 
and thus in the \emph{logarithmically supercritical} case (see also \cite{BMR14, W11, Y14a, Y18}). A typical path toward the $H^{s}$-bound via Gronwall's inequality requires 
\begin{align}\label{classical Hs estimate}
\partial_{t} \lVert u(t) \rVert_{H^{s}}^{2} \lesssim a(t) \lVert u(t) \rVert_{H^{s}}^{2} \text{ where } a \in L^{1} (0, T) 
\end{align}
and naive energy estimates on the logarithmically supercritical Navier-Stokes equations to verify \eqref{classical Hs estimate} certainly fail. The key observation by Tao is that the following logarithmically worse bound still implies the $H^{s}$-bound of the solution: 
\begin{equation}\label{logarithmic worsening}
\partial_{t} \lVert u(t) \rVert_{H^{s}}^{2} \lesssim a(t) \lVert u(t) \rVert_{H^{s}}^{2} \ln ( e + \lVert u(t) \rVert_{H^{s}}^{2}) \hspace{3mm} \text{ where } a \in L^{1}(0,T). 
\end{equation} 
(see \cite[p. 362]{T09}). Loosely stated, to take advantage of this logarithmic affordability $\ln ( e + \lVert u(t) \rVert_{H^{s}}^{2})$ to handle the logarithmic worsening $\frac{1}{ \ln^{\frac{1}{2}} (2+ \lvert k \rvert^{2})}$ in \eqref{est 261}, Tao split the Fourier frequency of Littlewood-Paley decomposition with time-dependent cutoff as 
\begin{align*}
\sum_{k\geq -1} = \sum_{k: 2^{k} \leq e+ \lVert u(t) \rVert_{H^{s}}^{2}} + \sum_{k: 2^{k} > e+ \lVert u(t) \rVert_{H^{s}}^{2}}.
\end{align*}
Then the lower frequency side leads to $ln(e+ \lVert u(t) \rVert_{H^{s}}^{2})$ as $\sum_{k: 2^{k} \leq e+ \lVert u(t) \rVert_{H^{s}}^{2}} 1 \lesssim ln(e+ \lVert u(t) \rVert_{H^{s}}^{2})$. 

To prove the global solution theory for \eqref{Navier-Stokes} with $m_{1} = 2, m_{2} = 0$, \cite{HR24} defined 
\begin{equation}\label{est 5}
(\nabla_{\text{symm}} \phi)_{i,j} \triangleq \frac{1}{2} ( \partial_{i} \phi_{j} + \partial_{j} \phi_{i}) \hspace{3mm} \text{ and } \hspace{3mm}  \mathcal{A}_{t}^{\lambda} \triangleq \frac{ \nu \Delta \Id}{2} - \nabla_{\text{symm}} \mathcal{L}_{\lambda_{t}} X - r_{\lambda} (t) \Id 
\end{equation}
where $\mathcal{L}_{\lambda}$ is the projection onto the lower frequencies (see \eqref{est 3}) and $r_{\lambda}(t)$ is the renormalization constant (see  \eqref{est 63b}), which led to an $L^{2}(\mathbb{T}^{2})$-estimate of $w^{\mathcal{L}}$, the lower frequency part of a certain function $w$ (see \eqref{Define QH, wH, wL}):
\begin{subequations} 
\begin{align}
\partial_{t} \lVert w^{\mathcal{L}}(t) \rVert_{L^{2}}^{2} =& - \nu \lVert \nabla w^{\mathcal{L}}  (t)\rVert_{\dot{H}^{1}}^{2} + 2 \int_{\mathbb{T}^{2}} w^{\mathcal{L}} \cdot \left[ \frac{\nu \Delta \Id}{2} - \nabla_{\text{symm}} \mathcal{L}_{\lambda_{t}} X\right] w^{\mathcal{L}}(t) dx + \hdots  \label{est 16a}\\
\overset{\eqref{est 5}}{=}& - \nu \lVert \nabla w^{\mathcal{L}} (t)\rVert_{\dot{H}^{1}}^{2} + 2 \int_{\mathbb{T}^{2}} w^{\mathcal{L}} \cdot \mathcal{A}_{t}^{\lambda} w^{\mathcal{L}}(t) dx + 2 r_{\lambda_{t}} \lVert w^{\mathcal{L}}(t) \rVert_{L^{2}}^{2} + \hdots,  \label{est 16b}
\end{align}
\end{subequations} 
where we omitted other terms for simplicity of this discussion. By \cite[Proposition 6.1]{HR24}, which is due to results from \cite{AC15} concerning the 2D Anderson Hamiltonian (to be discussed shortly), we have 
\begin{align}\label{est 15} 
2 \int_{\mathbb{T}^{2}} w^{\mathcal{L}} \cdot \mathcal{A}_{t}^{\lambda} w^{\mathcal{L}} (t) dx \lesssim \lVert w^{\mathcal{L}}(t) \rVert_{L^{2}}^{2} 
\end{align}
(see \eqref{est 68}) so that the last remaining crucial ingredient that we need, similarly to \eqref{logarithmic worsening}, is 
\begin{equation}
r_{\lambda}(t) \lesssim \ln(\lambda_{t})
\end{equation} 
since $\ln(\lambda_{t}) \approx \ln(1+ \lVert w(T_{i}) \rVert_{L^{2}})$ for $T_{i}$ defined on \cite[p. 16]{HR24} (see Definition \ref{Definition 4.2}). By making these ideas rigorous, \cite{HR24} constructed a global-in-time unique solution to \eqref{Navier-Stokes} with $m_{1} = 2, m_{2} = 0$; in fact, they added another rough force to clarify that the approach of \cite{DD02} via invariant measure does not apply. We will follow the same suit in \eqref{Burgers'}-\eqref{define zeta}. Consequently, a logarithmic bound on the renormalization constant seems indispensable for the success of this scheme. Going through our computations, specifically \eqref{est 148}-\eqref{est 149}, \eqref{est 150}-\eqref{est 152}, indicates that the logarithmic growth of $r_{\lambda}(t)$ for \eqref{Navier-Stokes} requires
\begin{equation}\label{logarithmic renormalization criticality}
m_{2} = m_{1} - \frac{d+2}{2}. 
\end{equation} 

We now summarize our discussions thus far as follows.
\begin{enumerate}
\item On one hand, by Bony's threshold \eqref{Bony's threshold} we are interested in the singular case when $\frac{m_{1}}{2} - \frac{d}{2} \leq m_{2}$.
\item On the other hand, in terms of the energy estimates part of proof, Lions' criticality \eqref{Lions' criticality} suggests we should need $m_{1} \geq 1+ \frac{d}{2}$. 
\item Finally, the logarithmic renormalization criticality \eqref{logarithmic renormalization criticality} requires $m_{2} = m_{1} - \frac{d+2}{2}$. 
\end{enumerate} 
For this reason, in this manuscript we choose to focus on the 1D Burgers' equation with $m_{1} = 2, m_{2} = \frac{1}{2}$ to reach both Bony's and logarithmic renormalization criticalities. Furthermore, Hairer and Rosati \cite{HR24} chose an additional force $\zeta \in \mathcal{C}_{\mathfrak{s}}^{-2 + 3 \kappa} ( \mathbb{R} \times \mathbb{T}^{2}; \mathbb{R}^{2})$, intentionally not in the Cameron-Martin space $L_{\text{loc}}^{2}L_{x}^{2}$ (e.g. \cite[p. 32]{N95})  so that the law of the solution $u$ to the 2D Navier-stokes equations forced by STWN and $\zeta$ have no obvious link to the law of $X$ in \eqref{est 14a} and the approach via the explicit knowledge of invariant measure from \cite{DD02} becomes inapplicable. We follow the same suit and consider $\theta: \hspace{1mm} \mathbb{R}_{\geq 0} \times \mathbb{T} \mapsto \mathbb{R}$ that solves  
\begin{equation}\label{Burgers'}
\partial_{t} \theta + \frac{1}{2} \partial_{x} \theta^{2} - \nu \partial_{x}^{2} \theta = \mathbb{P}_{\neq 0} \zeta + \Lambda^{\frac{1}{2}} \xi \hspace{1mm} \text{ for } t > 0, 
\end{equation} 
starting from given initial data $\theta^{\text{in}}(x) = \theta(0,x)$, where $\nu > 0$ and 
\begin{equation}\label{define zeta}
\zeta \in \mathcal{C}_{\mathfrak{s}}^{-2 + 3 \kappa} ( \mathbb{R} \times \mathbb{T}; \mathbb{R})
\end{equation} 
is an additional perturbation similarly to \cite{HR24, Y23b}, and $\kappa \in (0,1)$ will be taken small.

The unforced deterministic Burgers' equation is \eqref{Burgers'} with $\xi \equiv \zeta \equiv 0$, and by now, the evolution of its solution is well understood. In case the diffusion ``$-\nu \partial_{x}^{2} \theta$'' in \eqref{Burgers'} replaced by $\nu \Lambda^{m_{1}} \theta$, the solution experiences finite-time shock for $m_{1} \in [0,1)$ and remains unique globally in time for $m_{1} \geq 1$ (e.g. \cite{ADV07, DDL09, KNS08}). 

In case the STWN $\xi$ is present in \eqref{Burgers'}, as can be seen from \eqref{Bony's threshold}, the product $\theta^{2}$ in \eqref{Burgers'} is well-defined in the case $d= 1, m_{1} = 2, m_{2} = 0$ and thus such 1D Burgers' equation with full Laplacian forced by STWN was proven to be globally well-posed by Bertini, Cancrini, and Jona-Lasinio \cite[Theorem 2.2]{BCJ94} via stochastic Cole-Hopf transform and Da Prato, Debussche, and Temam \cite[Theorem 3.1]{DDT94} by an approach akin to \eqref{est 14}. Yet, even in this well-posed case of $d = 1, m_{1} = 2, m_{2} = 0$, the solution $\theta \in C^{\gamma} (\mathbb{T})$ for $\gamma < \frac{1}{2}$ is not differentiable and consequently, inaccuracies in numerical approximations have been pointed out by Hairer and Voss \cite{HV11}. Additionally, Hairer \cite{H11} considered the 1D generalized Burgers' equation by replacing $\frac{1}{2} \partial_{x} \theta^{2}$ by $g(\theta) \partial_{x} \theta$ where $g$ is not a gradient type, the case in which one cannot shift a derivative in its weak formulation and even Young's integral becomes barely ill-defined due to $\theta \in \mathcal{C}_{x}^{\gamma}$ for $\gamma < \frac{1}{2}$. Hairer overcame this difficulty via the theory of rough paths and proved its global solution theory in \cite[Theorem 3.6]{H11} for $g$ that is sufficiently smooth; subsequently, Hairer and Weber \cite[Theorem 3.5]{HW13} extended this result to the case of multiplicative STWN by replacing $\xi$ by $f(\theta) \xi$ for $f$ that is sufficiently smooth. Relying on such solution theory, Hairer and Mass \cite{HM12} rigorously verified the numerical inaccuracies observed in \cite{HV11} with an explicit It$\hat{\mathrm{o}}$-Stratonovich type correction term in the case of gradient type nonlinearity with additive STWN; this was followed by Hairer, Mass, and Weber \cite{HMW14} in the case of non-gradient type with multiplicative STWN (also \cite{ZZ17, Y25a}). 

Concerning our proof of global solution theory of \eqref{Burgers'}, some results from \cite{AC15} will need to be extended to our case (see also \cite{GUZ20, L19a}). On one hand, the set up of \cite{AC15} was 2D and the force was $\tilde{\xi}$ that is white only in space so that $\tilde{\xi} \in \mathcal{C}_{x}^{\alpha}$ for $\alpha < - 1$. On the other hand, as can be seen from \eqref{est 5} and \eqref{est 16b}, $\nabla_{\text{symm}} X$ plays the role of $\tilde{\xi}$ here and they have the same regularity. Our Proposition \ref{Proposition 2.4} of independent interests states that the crucial results that we need from \cite{AC15} can indeed be extended to our setting.  

\section{Statement of main results}\label{Section 2}
We define $P_{t} \triangleq e^{\nu \partial_{x}^{2} t}$. Analogously to \eqref{est 14a} we consider 
\begin{equation}\label{Equation of X}
\partial_{t} X = \nu \partial_{x}^{2} X + \Lambda^{\frac{1}{2}} \xi \hspace{1mm} \text{ for } t > 0, \hspace{3mm} X(0,x) = 0. 
\end{equation} 
It follows that 
\begin{equation}\label{est 22}
X \in C( [0, \infty); C^{\gamma}(\mathbb{T})) \text{ for } \gamma < 0 \hspace{1mm} \mathbb{P}\text{-a.s.}
\end{equation} 
Then we define $v \triangleq \theta - X$ so that it solves 
\begin{equation}\label{Equation of v}
\partial_{t} v + \frac{1}{2} \partial_{x} (v+ X)^{2} = \nu \partial_{x}^{2} v + \mathbb{P}_{\neq 0} \zeta \hspace{1mm} \text{ for }\hspace{1mm}  t > 0, \hspace{3mm} v(0,x) = \theta^{\text{in}}(x). 
\end{equation} 
The following local solution theory in a mild formulation (see Definition \ref{Definition 4.1}) is classical and can be proven via the approach of \cite{DD02} similarly to \cite[Theorem 2.3]{HR24} and \cite[Proposition 2.1]{Y23b}.
\begin{proposition}\label{Proposition 2.1}
There exists a null set $\mathcal{N} \subset \Omega$ such that for all $\omega \in \Omega \setminus \mathcal{N}$ and $\kappa > 0$, the following holds. For any $\theta^{\text{in}} \in \mathcal{C}^{-1+ \kappa}(\mathbb{T})$ that is mean-zero, there exists a $T^{\max} (\omega, \theta^{\text{in}}) \in (0, \infty]$ and a unique maximal mild solution $v(\omega)$ to \eqref{Equation of v} on $[0, T^{\max} (\omega, \theta^{\text{in}}))$ such that $v(\omega, 0, x) = \theta^{\text{in}}(x)$. 
\end{proposition}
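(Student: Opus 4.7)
The argument is a pathwise, Da Prato--Debussche-style local well-posedness proof. Expanding the nonlinearity in \eqref{Equation of v} as $\tfrac12\partial_x(v+X)^2 = \tfrac12\partial_xv^2 + \partial_x(vX) + \tfrac12\partial_xX^2$, only the last summand is ill-defined since $X\in C([0,\infty);\mathcal{C}^{-\kappa}(\mathbb{T}))$ $\mathbb{P}$-a.s.\ for every $\kappa>0$ by \eqref{est 22}. Crucially, the renormalization constant associated to the Wick square of $X$ is annihilated by $\partial_x$, so no counterterm has to be added to \eqref{Equation of v}; it suffices to construct the Wick product $X\diamond X$ as a genuine spacetime distribution.

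\emph{Step 1 (stochastic objects and null set).} Mollify $\xi$ to $\xi_\varepsilon$, solve \eqref{Equation of X} for $X_\varepsilon$, and set $X_\varepsilon\diamond X_\varepsilon\triangleq X_\varepsilon^2-\mathbb{E}[X_\varepsilon(t,x)^2]$ (the subtracted constant being independent of $(t,x)$ by translation invariance). A standard Kolmogorov-continuity argument in the second Wiener chaos, combined with Gaussian hypercontractivity, yields convergence of $X_\varepsilon\diamond X_\varepsilon$ to a limit $X\diamond X\in C([0,T];\mathcal{C}^{-2\kappa}(\mathbb{T}))$ for every $T>0$ and $\kappa>0$, on a full-probability event. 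Intersecting these events (over a countable dense set of $T$ and $\kappa$) together with \eqref{est 22}, let $\mathcal{N}$ be the complement of the resulting full-probability event.

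\emph{Step 2 (deterministic fixed point).} For $\omega\notin\mathcal{N}$ and mean-zero $\theta^{\text{in}}\in\mathcal{C}^{-1+\kappa}(\mathbb{T})$, write the mild form of \eqref{Equation of v} as $v=\Psi(v)$ with
\begin{equation*}
\Psi(v)(t) \triangleq P_t\theta^{\text{in}} + Z(t) - \tfrac12\int_0^t P_{t-s}\,\partial_x\bigl[v(s)^2 + 2v(s)X(s)\bigr]\,ds,
\end{equation*}
where $Z(t)\triangleq -\tfrac12\int_0^tP_{t-s}\partial_x(X\diamond X)(s)\,ds + \int_0^tP_{t-s}\mathbb{P}_{\neq 0}\zeta(s)\,ds$ is a fixed deterministic input of class $C([0,T];\mathcal{C}^{\beta}(\mathbb{T}))$ for some $\beta\in(0,\tfrac12)$, by parabolic Schauder estimates applied to $\partial_x(X\diamond X)\in C_T\mathcal{C}^{-1-2\kappa}$ and $\mathbb{P}_{\neq 0}\zeta\in C_T\mathcal{C}^{-2+3\kappa}$. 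Choose $\alpha$ slightly greater than $2\kappa$ and work in the weighted Banach space
\begin{equation*}
E_T \triangleq \bigl\{v\in C((0,T];\mathcal{C}^{\alpha}(\mathbb{T}))\, : \,\|v\|_{E_T}\triangleq\sup_{t\in(0,T]}t^{\sigma}\|v(t)\|_{\mathcal{C}^{\alpha}}<\infty\bigr\}
\end{equation*}
with $\sigma\triangleq(\alpha+1-\kappa)/2\in(0,1)$. Heat smoothing gives $\|P_t\theta^{\text{in}}\|_{\mathcal{C}^{\alpha}}\lesssim t^{-\sigma}\|\theta^{\text{in}}\|_{\mathcal{C}^{-1+\kappa}}$, and Bony's paraproduct estimates combined with parabolic convolution bounds yield, for some small $\delta=\delta(\alpha,\kappa)>0$,
\begin{equation*}
\|\Psi(v_1)-\Psi(v_2)\|_{E_T} \lesssim T^{\delta}\bigl(1+\|v_1\|_{E_T}+\|v_2\|_{E_T}+\sup_{[0,T]}\|X\|_{\mathcal{C}^{-\kappa}}\bigr)\|v_1-v_2\|_{E_T}.
\end{equation*}
Hence $\Psi$ is a strict contraction on a suitable closed ball of $E_T$ for $T=T(\omega,\theta^{\text{in}})>0$ small, and the Banach fixed-point theorem produces the unique local mild solution. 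The maximal solution on $[0,T^{\max}(\omega,\theta^{\text{in}}))$ is then obtained by standard iteration: at each existence endpoint, $v(t)\in\mathcal{C}^{\alpha}\hookrightarrow\mathcal{C}^{-1+\kappa}$ provides fresh initial data and the argument restarts, with $T^{\max}$ being the supremum of existence times.

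\emph{Main obstacle.} The principal technical point is the joint calibration of $(\alpha,\sigma,\delta)$: the constraint $\alpha>2\kappa$ ensures that Bony's paraproduct renders $v\cdot X$ meaningful (as an element of $\mathcal{C}^{-\kappa-\eta}$ for $\eta$ arbitrarily small), the weight $\sigma$ has to be large enough to swallow the $t^{-\sigma}$-singularity of $P_t\theta^{\text{in}}$ at $t=0$, and the Beta-type time integrals $\int_0^t (t-s)^{-\mu/2} s^{-2\sigma}\,ds$ must remain finite and of order $T^{\delta}t^{-\sigma}$. Once these exponents are feasibly selected, the Wick-square construction of Step~1, the fixed-point of Step~2, and the endpoint-iteration producing the maximal solution are textbook in the Da Prato--Debussche framework.
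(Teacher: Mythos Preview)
Your overall strategy is exactly what the paper invokes: a Da~Prato--Debussche fixed point for $v$ after constructing the Wick square $X\diamond X$ (the paper simply refers to \cite{DD02} and spells out the closely related Proposition~\ref{Proposition 4.2} for $w=v-Y$). The construction of the stochastic objects and the observation that the renormalization constant is killed by $\partial_x$ are both correct.

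There is, however, a concrete gap in your exponent calibration, precisely at the point you flag as the main obstacle. With $\alpha>2\kappa$ and $\sigma=(\alpha+1-\kappa)/2$ you obtain $\sigma>(1+\kappa)/2>\tfrac12$, hence $2\sigma>1$, and the Beta-type integral arising from the quadratic term
\[
\int_0^t (t-s)^{-1/2}\,s^{-2\sigma}\,ds
\]
diverges at $s=0$. So as written, $\Psi$ does not map $E_T$ to itself, and the contraction estimate cannot hold. The fix is to decouple the $\kappa$ of the initial-data space from the regularity parameter used for $X$: since $X\in C_t\mathcal{C}_x^{-\kappa'}$ for \emph{every} $\kappa'>0$, choose $0<\kappa'<\alpha<\kappa$ (for instance $\alpha=3\kappa/4$, $\kappa'=\kappa/2$). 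Then the Bony product $vX$ is still well-defined in $\mathcal{C}^{-\kappa'}$ while now $\sigma=(\alpha+1-\kappa)/2<\tfrac12$, so both the $v^2$ and the $vX$ convolution integrals converge and yield a positive power of $T$. This is essentially the choice the paper makes in Proposition~\ref{Proposition 4.2} (written after the further splitting $v=w+Y$): target regularity $\mathcal{C}^{3\kappa/2}$ against initial data $\mathcal{C}^{-1+2\kappa}$, giving weight exponent $\tfrac{\gamma}{2}=(1-\kappa/2)/2<\tfrac12$.
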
 

We state our first main result. 
\begin{theorem}\label{Theorem 2.2} 
There exists a null set $\mathcal{N}' \subset \Omega$ such that for all $\omega \in\Omega \setminus \mathcal{N}'$, the  following holds. For any $\kappa > 0$ sufficiently small and $\theta^{\text{in}} \in \mathcal{C}^{-1+ \kappa} (\mathbb{T})$ that is mean-zero, $T^{\max}(\omega, \theta^{\text{in}})$ from Proposition \ref{Proposition 2.1} satisfies $T^{\max} (\omega, \theta^{\text{in}}) = \infty$ for all $\omega \in \Omega \setminus \mathcal{N}'$. 
\end{theorem}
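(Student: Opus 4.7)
The plan is to adapt the Hairer--Rosati scheme of \cite{HR24} to equation \eqref{Equation of v}, replacing the role of $\nabla_{\text{symm}} X$ in their 2D Navier--Stokes analysis by $\partial_{x} X$ in our 1D Burgers' setting; the matching of regularities, which is precisely why the choice $m_{1}=2$, $m_{2}=\tfrac{1}{2}$ was imposed via \eqref{Bony's threshold} and \eqref{logarithmic renormalization criticality}, means that $\partial_{x} X \in \mathcal{C}_{x}^{\alpha}$ for every $\alpha < -1$, exactly as for the space-white $\tilde{\xi}$ of \cite{AC15}. Starting from a maximal mild solution $v$ on $[0,T^{\max})$ from Proposition \ref{Proposition 2.1}, I would introduce a paracontrolled ansatz that removes the singular paraproduct $v \ogreaterthan \partial_{x} X$-type resonance against the stochastic data, producing a remainder $w$ of strictly better regularity than $v$ together with a collection of explicitly constructed stochastic objects (products involving $X$, $P_{t}\Lambda^{1/2}\xi$, and $\zeta$) whose renormalized limits live in the requisite $\mathcal{C}_{\mathfrak{s}}^{\beta}$ spaces $\mathbb{P}$-a.s., and whose norms on any bounded time interval are finite on a full-measure event $\Omega\setminus \mathcal{N}'$.

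Next, following \cite[Section 4--6]{HR24}, I would fix a sequence of stopping times $T_{i}$ and a corresponding time-dependent Fourier cutoff $\lambda_{t}$ with $\ln \lambda_{t} \approx \ln(1+ \lVert w(T_{i})\rVert_{L^{2}})$ and decompose $w = w^{\mathcal{L}} + w^{\mathcal{H}}$ into low/high frequency parts. The high-frequency part $w^{\mathcal{H}}$ is handled in a subcritical paracontrolled norm by a standard short-time contraction argument (since $\lambda_{t}$ is chosen large enough to make this piece small), giving a deterministic-in-$w(T_{i})$ bound. For $w^{\mathcal{L}}$ one writes the $L^{2}$-energy identity analogous to \eqref{est 16a}--\eqref{est 16b} with the operator $\mathcal{A}_{t}^{\lambda}$ from \eqref{est 5} now built from $\partial_{x} \mathcal{L}_{\lambda_{t}} X$ and the Burgers-adapted renormalization $r_{\lambda}(t)$. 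The remaining source terms (stochastic objects paired with $w^{\mathcal{L}}$, plus the perturbation from $\zeta$ and from the $w^{\mathcal{H}}$-coupling) are bounded by Young's inequality so that a small multiple of $\nu \lVert \partial_{x} w^{\mathcal{L}}\rVert_{L^{2}}^{2}$ absorbs the resulting terms, leaving a factor $a(t)$ that is integrable on compact time intervals by the $\mathbb{P}$-a.s. regularity of the stochastic data.

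The main analytic input is the spectral-type bound \eqref{est 15}, i.e.\ the 1D Burgers analogue of \cite[Proposition 6.1]{HR24}, which is precisely Proposition \ref{Proposition 2.4}; combined with the logarithmic growth $r_{\lambda_{t}}(t) \lesssim \ln \lambda_{t} \lesssim \ln(e+\lVert w(T_{i})\rVert_{L^{2}}^{2})$ coming from the Bony/logarithmic renormalization match, these produce a Tao-type differential inequality
\begin{equation*}
\partial_{t} \lVert w^{\mathcal{L}}(t)\rVert_{L^{2}}^{2} \lesssim a(t) \lVert w^{\mathcal{L}}(t)\rVert_{L^{2}}^{2} \ln\bigl(e+ \lVert w^{\mathcal{L}}(t)\rVert_{L^{2}}^{2}\bigr) + b(t),
\end{equation*}
with $a,b \in L_{\text{loc}}^{1}([0,\infty))$ on $\Omega\setminus\mathcal{N}'$. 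An Osgood-type integration then yields an a priori double-exponential bound $\lVert w(t)\rVert_{L^{2}} \leq F(t)$ on $[0, T^{\max})$, which, combined with the high-frequency estimate for $w^{\mathcal{H}}$ and the paracontrolled reconstruction of $v$ from $w$, gives an a priori $\mathcal{C}^{-1+\kappa}$-bound on $v$ blowing up at most to $F(T^{\max})<\infty$ if $T^{\max}<\infty$, contradicting the blow-up criterion built into Proposition \ref{Proposition 2.1}. Hence $T^{\max}(\omega,\theta^{\text{in}}) = \infty$ for $\omega \in \Omega\setminus\mathcal{N}'$.

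The hard part is the pair of tasks that the introduction already flags: first, establishing \eqref{est 15} in the present 1D Burgers context via Proposition \ref{Proposition 2.4}, which requires porting the 2D Anderson Hamiltonian construction of \cite{AC15} (form-boundedness, exponential transform, and the decisive commutator estimates) to the setting where the enhanced noise is built from $\partial_{x} X$ with $X$ solving \eqref{Equation of X}; and second, quantifying $r_{\lambda}(t)$ carefully enough to confirm the logarithmic renormalization criticality \eqref{logarithmic renormalization criticality} in the chosen regime $d=1$, $m_{1}=2$, $m_{2}=\tfrac{1}{2}$, via the explicit stochastic computations signaled by \eqref{est 148}--\eqref{est 152}. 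Once these two inputs are in place, the Tao-type iteration on the stopping times $T_{i}$ runs essentially as in \cite{HR24}, with the 1D Burgers nonlinearity $\tfrac{1}{2}\partial_{x}\theta^{2}$ handled by integration by parts against $w^{\mathcal{L}}$ and by paraproduct estimates against $w^{\mathcal{H}}$.
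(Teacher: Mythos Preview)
Your proposal is correct and follows essentially the same route as the paper: the second subtraction $w=v-Y$, the explicit high/low split $w^{\mathcal{H}}=-\tfrac{1}{2}\partial_{x}(w\prec \mathcal{H}_{\lambda_{t}}Q)$ with $Q$ solving \eqref{Define Q}, the $L^{2}$-energy identity \eqref{est 61}--\eqref{est 33} for $w^{\mathcal{L}}$, the Anderson-Hamiltonian bound from Proposition~\ref{Proposition 2.4} together with the logarithmic estimate \eqref{logarithmic bound} on $r_{\lambda}$, and the stopping-time lower bound (Proposition~\ref{Proposition 4.10}) summed to contradict $T^{\max}<\infty$. Two small calibrations: in the paper $w$ is obtained by a second Da~Prato--Debussche subtraction rather than a paracontrolled ansatz (the paracontrolled structure enters only at the $w^{\mathcal{H}}/w^{\mathcal{L}}$ level), and $w^{\mathcal{H}}$ is bounded directly via \eqref{Higher frequency estimate} rather than by a short-time contraction.
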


We defer the definition of the high-low (HL) weak solution to \eqref{Equation of v} until Definition \ref{Definition 5.1} and state our next result. 
\begin{theorem}\label{Theorem 2.3} 
Consider the same null set $\mathcal{N}' \subset \Omega$ from Theorem \ref{Theorem 2.2}. For every $\omega \in \Omega \setminus \mathcal{N}'$, $\kappa > 0$ sufficiently small, and $\theta^{\text{in}} \in L^{2} (\mathbb{T})$ that is mean-zero, there exists a unique HL weak solution $v$ to \eqref{Equation of v} on $[0,\infty)$. 
\end{theorem}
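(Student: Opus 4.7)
For existence, observe that any mean-zero $\theta^{\text{in}} \in L^{2}(\mathbb{T})$ automatically lies in $\mathcal{C}^{-1+\kappa}(\mathbb{T})$ for every $\kappa \leq \tfrac{1}{2}$, by the Besov embedding $L^{2}(\mathbb{T}) \hookrightarrow \mathcal{C}^{-1/2}(\mathbb{T})$. Theorem \ref{Theorem 2.2} therefore supplies a global mild solution $v$ to \eqref{Equation of v} in the sense of Definition \ref{Definition 4.1}. The remaining task is to verify that $v$ realizes the HL decomposition of Definition \ref{Definition 5.1}. The cleanest route is to approximate $\theta^{\text{in}}$ by smooth mean-zero data $\theta^{\text{in}}_{n}$ converging strongly in $L^{2}(\mathbb{T})$, to observe that the corresponding mild solutions $v_{n}$ are classical and hence trivially HL weak solutions, and to pass to the limit $n \to \infty$ using the uniform-in-$n$ a priori bounds on the high- and low-frequency components built into the scheme underlying Theorem \ref{Theorem 2.2}.

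For uniqueness, let $v_{1}, v_{2}$ be two HL weak solutions sharing the same initial datum and decompose $v_{i} = X + w_{i}^{\mathcal{H}} + w_{i}^{\mathcal{L}} + \cdots$ according to Definition \ref{Definition 5.1}. Since the high-frequency pieces $w_{i}^{\mathcal{H}}$ are determined as the unique solutions of a paracontrolled fixed-point problem driven by $X$ and by $w_{i}^{\mathcal{L}}$, it suffices to prove $w_{1}^{\mathcal{L}} \equiv w_{2}^{\mathcal{L}}$. Set $z \triangleq w_{1}^{\mathcal{L}} - w_{2}^{\mathcal{L}}$. The difference satisfies a linear equation whose leading dangerous contribution to its $L^{2}$-energy identity is
\[
2\int_{\mathbb{T}} z \cdot \mathcal{A}_{t}^{\lambda} z \, dx + 2 r_{\lambda_{t}} \lVert z(t) \rVert_{L^{2}}^{2},
\]
plus quadratic interactions with $w_{i}^{\mathcal{L}}, w_{i}^{\mathcal{H}}$, and $X$. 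Proposition \ref{Proposition 2.4} bounds the first integral by $\lVert z(t) \rVert_{L^{2}}^{2}$ as in \eqref{est 15}, while the logarithmic renormalization criticality \eqref{logarithmic renormalization criticality} yields $r_{\lambda}(t) \lesssim \ln(\lambda_{t})$. An $L^{2}$ energy estimate on $z$ then produces the Osgood-type inequality
\[
\partial_{t} \lVert z(t) \rVert_{L^{2}}^{2} \lesssim a(t) \lVert z(t) \rVert_{L^{2}}^{2} \ln\bigl(e + \lVert z(t) \rVert_{L^{2}}^{2}\bigr), \qquad a \in L_{\mathrm{loc}}^{1}(0,\infty),
\]
mirroring \eqref{logarithmic worsening}. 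Since $z(0) = 0$, Osgood's lemma forces $z \equiv 0$ and hence $v_{1} \equiv v_{2}$.

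The main obstacle will be the control of the residual cross-terms in the energy identity for $z$: interactions of the form $z \cdot \partial_{x} w_{i}^{\mathcal{H}}$ and $w_{i}^{\mathcal{H}} \cdot \partial_{x} z$, together with resonant products involving $X$ that are not absorbed by the renormalization built into $\mathcal{A}_{t}^{\lambda}$. These must be dominated by $a(t) \lVert z \rVert_{L^{2}}^{2} \ln(e + \lVert z \rVert_{L^{2}}^{2})$ with locally integrable $a$, a task that relies on Bony's decomposition (Lemma \ref{Lemma 3.1}), on the paracontrolled representation of $w_{i}^{\mathcal{H}}$ in terms of $X$ and $w_{i}^{\mathcal{L}}$, and on commutator estimates at the time-dependent cutoff scale $\lambda_{t}$, closely paralleling the 2D Navier-Stokes argument of \cite{HR24}. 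A subsidiary but delicate point is ensuring that the approximating sequence $v_{n}$ used for existence converges in a topology strong enough to preserve the paracontrolled ansatz of $w_{n}^{\mathcal{H}}$ in the limit, so that no mass is lost at the high-frequency end when identifying the limit as an HL weak solution.
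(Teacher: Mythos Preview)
Your uniqueness argument misidentifies the mechanism. In the paper's proof (Proposition \ref{Proposition 5.2}) the cut-off $\lambda$ is held \emph{fixed} and large, not time-dependent. Hence $\mathcal{L}_\lambda X \in L^\infty$ is smooth, the term $\RomanIII_{1}$ in \eqref{Define III1} is handled by elementary Cauchy--Schwarz (see \eqref{est 132a}), and neither the Anderson operator $\mathcal{A}_t^\lambda$, nor Proposition \ref{Proposition 2.4}, nor the renormalization constant $r_\lambda$ enters. The outcome is a \emph{standard} Gronwall inequality \eqref{est 273}, not an Osgood one; the logarithmic structure is needed only for global existence, where $\lambda_t$ must grow with $\lVert w\rVert_{L^2}$. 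Even if one tried to transplant the existence argument, the logarithm there is $\ln(1+\lVert w\rVert_{L^2})$---a quantity bounded a priori by the HL framework---not $\ln(e+\lVert z\rVert_{L^2}^2)$, so no Osgood inequality in $z$ would arise. A second inaccuracy: $w^{\mathcal{H},\lambda}$ is not a paracontrolled fixed point but is defined explicitly by \eqref{est 129}; the pivotal step in closing the difference estimate is rather \eqref{est 139}, which gives $\lVert z\rVert_{H^s}\le 2\lVert z^{\mathcal{L},\lambda}\rVert_{H^s}$ once $\lambda$ is large.

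For existence, the paper (Proposition \ref{Proposition 5.1}) regularizes the \emph{noise} via $X^n=\mathcal{L}_n X$, reruns the a~priori bounds of Section \ref{Section 4} uniformly in $n$, and extracts a limit by Lions--Aubin compactness; it does not go through Theorem \ref{Theorem 2.2} with approximated initial data. Your alternative---using the global mild solution from Theorem \ref{Theorem 2.2} directly and verifying \eqref{est 127} from Propositions \ref{Proposition 4.4} and \ref{Proposition 4.9}---is a reasonable shortcut, but then the smooth-data approximation you outline is superfluous.
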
 
The regularity of our HL weak solution in Definition \ref{Definition 5.1} is better than those in \cite[Definition 7.1]{HR24} and \cite[Definition 5.1]{Y23b}; see Remark \ref{Remark 5.1}. 

As we mentioned, the key component of the proofs of Theorems \ref{Theorem 2.2}-\ref{Theorem 2.3} is the extension of  Anderson Hamiltonian in \cite{AC15} to our setting, namely 1D but a spatial derivative of order $\frac{1}{2}$ applied on STWN. For this purpose, we define 
\begin{equation}\label{est 271} 
\mathcal{E}^{-1-\kappa} \triangleq \mathcal{C}^{-1-\kappa}(\mathbb{T}) \times \mathcal{C}^{- 2 \kappa}(\mathbb{T}), 
\end{equation}
which is the special case of \eqref{Define E alpha} with $d = 1$ and ``$\alpha$'' = $-1-\kappa$, and the space of enhanced noise $\mathcal{K}^{-1-\kappa} \subset \mathcal{E}^{-1-\kappa}$ by 
\begin{equation}\label{est 21}
\mathcal{K}^{-1-\kappa} \triangleq \overline{\left\{ \left( \Theta_{1}, \Theta_{1} \circ \left( 1-  \nu \partial_{x}^{2} \right)^{-1} \Theta_{1} - c \right): \hspace{1mm} \Theta_{1} \in C^{\infty} ( \mathbb{T}), c \in \mathbb{R} \right\}}
\end{equation} 
where the closure is taken w.r.t. the $\mathcal{E}^{-1-\kappa}$-topology and $\circ$ indicates the Bony's resonant term (see \eqref{Define K alpha} and \eqref{paraproducts and resonant}). We note that in contrast to $\frac{ \nu \Delta \Id}{2}$ in \eqref{est 5} for the Navier-Stokes equations, we do not have a fraction of $\frac{1}{2}$ in \eqref{est 21}; this difference stems from $\frac{1}{2} \partial_{x} \theta^{2}$ in \eqref{Burgers'} in contrast to $\divergence (u\otimes u)$ in \eqref{Navier-Stokes}. In order to define an operator 
\begin{equation}\label{est 23}
\mathcal{U}(\Theta) \triangleq \nu \partial_{x}^{2} - \Theta_{1} \text{ for any } \Theta = (\Theta_{1}, \Theta_{2}) \in \mathcal{K}^{-1-\kappa} \text{ for some } \kappa  > 0 
\end{equation} 
(cf. \eqref{Define H}), we define the space of strongly paracontrolled distributions 
\begin{subequations}\label{est 24}
\begin{align}
& \chi_{\kappa}(\Theta) \triangleq \{ \phi \in H^{1-\kappa}: \hspace{1mm} \phi^{\sharp} \triangleq \phi + \phi \prec P \in H^{2-2\kappa} \} \hspace{1mm} \text{ where } \hspace{1mm} P \triangleq \left( 1- \nu \partial_{x}^{2} \right)^{-1} \Theta_{1}, \label{est 24a} \\
& \lVert \phi \rVert_{\chi_{\kappa}} \triangleq \lVert \phi \rVert_{H^{1-\kappa}} + \lVert \phi^{\sharp} \rVert_{H^{2-2\kappa}}, \label{est 24b}
\end{align}
\end{subequations}
(cf. \eqref{Define D eta gamma}-\eqref{D eta gamma norm}) where $\prec$ indicates the Bony's paraproduct term (see \eqref{paraproducts and resonant}).

\begin{proposition}\label{Proposition 2.4}
(Cf. \cite[Proposition 6.1]{HR24} and \cite[Proposition 5.3]{Y23b}) Define $\mathcal{K}^{-1-\kappa}$ by \eqref{est 21} and $P$ by \eqref{est 24a}. Then define $\mathcal{K} \triangleq \cup_{0 < \kappa < \kappa_{0}} \mathcal{K}^{-1-\kappa}$ and $C_{\text{op}}$ to be the space of all closed self-adjoint operators with the graph distance where the convergence in this distance is implied by the convergence in the resolvent sense. Then there exist $\kappa_{0} > 0$ and a unique map $\mathcal{U}: \hspace{1mm} \mathcal{K} \mapsto C_{\text{op}}$ such that the following statements hold.
\begin{enumerate}
\item For any $\Theta = (\Theta_{1}, \Theta_{2}) \in \left(C^{\infty} (\mathbb{T})\right)^{2} \cap \mathcal{K}$ and $\phi \in H^{2}(\mathbb{T})$, 
\begin{equation}\label{est 25}
\mathcal{U} (\Theta) \phi = \nu \partial_{x}^{2} \phi - \Theta_{1} \prec \phi - \Theta_{1} \succ \phi - \Theta_{1} \circ \phi^{\sharp} - \phi \prec \Theta_{2} - C^{0} (\phi, P, \Theta_{1})
\end{equation} 
where 
\begin{equation}\label{est 26}
C^{0} ( \phi, P, \Theta_{1}) \triangleq \Theta_{1} \circ ( \phi \prec P) - \phi \prec (P \circ \Theta_{1}).
\end{equation} 
In particular, if $\Theta_{2}= P \circ \Theta_{1}$, then $\mathcal{U} (\Theta) \phi = \nu \partial_{x}^{2} \phi - \Theta_{1} \phi$. 
\item For any $\{\Theta^{n} \}_{n\in\mathbb{N}} \subset \left( C^{\infty}(\mathbb{T}) \right)^{2}$ such that $\Theta^{n} \to \Theta$ in $\mathcal{K}^{-1-\kappa}$ as $n \nearrow + \infty$ for some $\kappa \in (0, \kappa_{0})$ and $\Theta \in \mathcal{K}^{-1-\kappa}$, $\mathcal{U} (\Theta^{n})$ converges to $\mathcal{U}(\Theta)$ in resolvent sense. Moreover, for any $\kappa \in (0, \kappa_{0})$, there exist two continuous maps $\mathbf{m}, \mathbf{c}: \hspace{1mm} \mathcal{K}^{-1-\kappa} \mapsto \mathbb{R}_{+}$ such that 
\begin{equation}\label{est 62}
[\mathbf{m}(\Theta), \infty) \subset \rho( \mathcal{U}(\Theta)) \hspace{3mm} \forall \hspace{1mm} \Theta \in \mathcal{K}^{-1-\kappa} 
\end{equation} 
where $\rho(\mathcal{U}(\Theta))$ is the resolvent set of $\mathcal{U}(\Theta)$ that satisfies for all $\phi \in L^{2} (\mathbb{T})$,  
\begin{equation}\label{est 27}
\lVert (\mathcal{U}(\Theta) + m)^{-1} \phi \rVert_{\chi_{\kappa}} \leq \mathbf{c}(\Theta) \lVert \phi \rVert_{L^{2}} \hspace{3mm} \forall \hspace{1mm} m \geq \mathbf{m}(\Theta). 
\end{equation} 
\end{enumerate} 
\end{proposition}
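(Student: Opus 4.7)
The plan is to adapt the Allez-Chouk construction of the 2D Anderson Hamiltonian \cite{AC15} to the present 1D fractional setting, exploiting the fact that $\Theta_{1} \in \mathcal{C}^{-1-\kappa}(\mathbb{T})$ plays exactly the role of the 2D spatially-white noise $\tilde{\xi}$ of \cite{AC15}: in both settings the pair $(\Theta_{1}, P) \in \mathcal{C}^{-1-\kappa} \times \mathcal{C}^{1-\kappa}$, so that the resonant product $\Theta_{1}\circ P$ is borderline ill-defined with deficit $-2\kappa$, precisely the regularity carried by the second component $\Theta_{2}$ of the enhanced noise in \eqref{est 21}. The 1D ambient dimension is in fact slightly more favorable than the 2D setting of \cite{AC15}; the main task is to verify that the Bony estimates of Lemma \ref{Lemma 3.1} close with the chosen exponents in \eqref{est 271}--\eqref{est 24}.

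For part (1), I would insert Bony's decomposition $\Theta_{1}\phi = \Theta_{1}\prec\phi + \Theta_{1}\succ\phi + \Theta_{1}\circ\phi$ into the classical expression $\nu\partial_{x}^{2}\phi - \Theta_{1}\phi$, substitute the paracontrolled ansatz \eqref{est 24a} in the form $\phi = \phi^{\sharp} - \phi\prec P$ into the resonant term, and apply the commutator identity \eqref{est 26}. Matching signs then yields \eqref{est 25} exactly in the case $\Theta_{2} = P\circ\Theta_{1}$, establishing both the claimed reduction and the fact that \eqref{est 25} is the natural extrapolation of the classical formula to rough $\Theta \in \mathcal{K}^{-1-\kappa}$. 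All five terms on the right-hand side of \eqref{est 25} are then classically well-defined for $\phi \in \chi_{\kappa}(\Theta)$: the borderline check is $\Theta_{1}\circ\phi^{\sharp}$, which is finite because $\phi^{\sharp} \in H^{2-2\kappa} \hookrightarrow \mathcal{C}^{3/2-2\kappa}(\mathbb{T})$ and $-1-\kappa + 3/2 - 2\kappa > 0$ for $\kappa$ sufficiently small; the commutator $C^{0}$ is handled by the standard paracontrolled commutator lemma using $P \in \mathcal{C}^{1-\kappa}$ and $\phi \in H^{1-\kappa}$. Altogether this produces a bounded linear $\mathcal{U}(\Theta)\colon \chi_{\kappa}(\Theta) \to H^{-2\kappa}(\mathbb{T})$.

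For part (2), I would construct the resolvent by a Neumann fixed-point argument. Rewriting $(\mathcal{U}(\Theta) + m)\phi = \psi$ as
\begin{equation*}
\phi = (m - \nu\partial_{x}^{2})^{-1}\bigl(\psi - \Theta_{1}\prec\phi - \Theta_{1}\succ\phi - \Theta_{1}\circ\phi^{\sharp} - \phi\prec\Theta_{2} - C^{0}(\phi, P, \Theta_{1})\bigr),
\end{equation*}
the two-derivative elliptic smoothing of $(m - \nu\partial_{x}^{2})^{-1}$, combined with the bounds from part (1), makes the right-hand side a strict contraction on $\chi_{\kappa}(\Theta)$ whenever $m$ exceeds a continuous function $\mathbf{m}(\Theta)$ of $\|\Theta\|_{\mathcal{E}^{-1-\kappa}}$; the contraction constant together with the $\chi_{\kappa}$-norm of the fixed point then delivers \eqref{est 27} with $\mathbf{c}(\Theta)$ also continuous in the enhanced-noise norm. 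Self-adjointness follows by approximation: for smooth $\Theta^{n} = (\Theta_{1}^{n}, P^{n}\circ\Theta_{1}^{n} - c_{n}) \to \Theta$ in $\mathcal{K}^{-1-\kappa}$, part (1) gives $\mathcal{U}(\Theta^{n})\phi = \nu\partial_{x}^{2}\phi - \Theta_{1}^{n}\phi + c_{n}\phi$, which is classically self-adjoint on $H^{2}(\mathbb{T})$; continuity of the fixed-point map in the enhanced noise yields resolvent convergence $\mathcal{U}(\Theta^{n}) \to \mathcal{U}(\Theta)$ and a self-adjoint limit, with uniqueness of the map $\mathcal{U}$ automatic from this density argument.

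The hard part will be the \emph{quantitative} control of $\mathbf{m}(\Theta)$ and $\mathbf{c}(\Theta)$. These two continuous maps will later be evaluated along $\mathcal{L}_{\lambda_{t}} X$ through the renormalization constant $r_{\lambda}(t)$, and the logarithmic bound $r_{\lambda}(t) \lesssim \ln(\lambda_{t})$ that is indispensable for the global-in-time scheme of Section \ref{Section 1.3} relies on $\mathbf{m}(\Theta)$ and $\mathbf{c}(\Theta)$ growing at most polynomially in $\|\Theta\|_{\mathcal{E}^{-1-\kappa}}$. Consequently each Bony estimate in part (1), and especially the borderline commutator bound on $C^{0}(\phi, P, \Theta_{1})$, must be tracked with explicit polynomial dependence on the enhanced-noise norm rather than merely qualitatively; this is the principal bookkeeping burden that is absent from the qualitative construction of \cite{AC15} and that must be carried through every step of the fixed point.
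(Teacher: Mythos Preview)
Your proposal is correct and follows essentially the same paracontrolled-resolvent strategy that the paper executes in Section~\ref{Section 6}, adapting \cite{AC15} step by step with the observation that only the initial regularity assignment depends on dimension; the paper's fixed point (Proposition~\ref{Proposition 6.3}) is set up on pairs $(f,f')$ with an $a^{-\rho}$-weighted norm rather than directly on $\chi_{\kappa}$, but this is a technical device for the same contraction you describe.

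One correction to your final paragraph: the logarithmic bound $r_{\lambda}\lesssim\ln\lambda$ is proved independently in Proposition~\ref{Proposition 4.13} and does not hinge on polynomial growth of $\mathbf{m}$ or $\mathbf{c}$. The point is that $\mathbf{m}$ is evaluated on the enhanced noise $(\partial_{x}\mathcal{L}_{\lambda}X,\,\partial_{x}\mathcal{L}_{\lambda}X\circ P^{\lambda}-r_{\lambda})$, whose $\mathcal{E}^{-1-\kappa}$-norm is bounded \emph{uniformly in $\lambda$} by $N_{t}^{\kappa}$ (see \eqref{Define Lt and Nt}); hence only continuity of $\mathbf{m}$ is needed, and that quantitative tracking is already carried out in \cite{AC15} (cf.\ \eqref{large A}, \eqref{est 202}), not absent from it.
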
 

\begin{remark}
Part of the motivation for this work was the recent developments on convex integration applied to singular SPDEs. Since the groundbreaking works of  De Lellis and Sz$\acute{\mathrm{e}}$kelyhidi Jr. \cite{DS09, DS13} on the Euler equations that were inspired by \cite{MS03}, many extensions and improvements were made leading particularly to non-uniqueness of weak solutions to the 3D Navier-Stokes equations \cite{BV19a} and the resolution of Onsager's conjecture \cite{I18}; we refer to \cite{BV19b} for further references. 

Starting with \cite{BFH20} by Breit, Feireisl, and Hofmanov$\acute{\mathrm{a}}$ and \cite{CFF19} by Chiodaroli, Feireisl, and Flandoli, the convex integration technique was applied to many SPDEs (e.g. \cite{Y25d} for a brief survey). In relevance to singular SPDEs, we highlight that \cite{HZZ21b} combined the theory of paracontrolled distributions and convex integration technique to construct infinitely many solutions to the 3D Navier-Stokes equations forced by STWN from one initial data; a special novelty therein was that the solution constructed was global in time, extending \cite{ZZ15} that constructed a local solution. Subsequently, the same authors in \cite{HZZ22a} incorporated the convex integration technique of \cite{CKL21} by Cheng, Kwon, and Li on the 2D surface quasi-geostrophic (SQG) equations to the case of random noise that is white-in-space and constructed infinitely many solutions from one initial data (also \cite{HLZZ23}). We also refer to \cite{LZ23} on the 2D Navier-Stokes equations forced by STWN, where the non-uniqueness of \cite{LZ23}   can be compared with the uniqueness results of \cite{HR24} (see \cite[pp. 1--2, Remarks 1.2 and 1.5]{LZ23} for such discussions). The novelty of  \cite{HZZ22a, HLZZ23} is that their constructions are not only global in time but in the locally critical and supercritical cases (recall Section \ref{Section 1.2}). 

Convex integration is more difficult in low spatial dimensions; e.g. the 1D Burgers' equation cannot be covered by \cite{IV15} of Isett and Vicol that demonstrated non-uniqueness of weak solutions to a wide class of active scalars because the velocity field therein needed to be divergence-free. To the best of the author's knowledge, the only application of the convex integration technique to the 1D Burgers' equation was \cite{VK18} by Vo and Kim that constructed $L^{\infty}$-solution to the 1D conservation laws that is nowhere continuous in the interior of its support. However, its proof adapted the partial differential inclusion approach of M$\ddot{u}$ller and $\check{\mathrm{S}}$ver$\acute{a}$k \cite{MS03} which has never been adapted to the stochastic setting and seems unfit for the diffusive or forced case. Instead, Theorems \ref{Theorem 2.2}-\ref{Theorem 2.3} constructed a globally unique solution even when forced by the derivatives of order $\frac{1}{2}$ applied on STWN. 
 \end{remark} 

\begin{remark}
As we described in Section \ref{Section 1.1}, the Burgers' equation forced by a spatial derivative of STWN appears naturally by applying a differentiation operator on the KPZ equation (\hspace{1sp}\cite[Remark 2.2]{HV11}, \cite[Equation (3)]{KPZ86} and \cite[Equation (B.1)]{BG97}. In fact, \cite[p. 899]{HV11} by Hairer and Voss stated ``it will follow from the argument that, if one considers driving noise that is slightly rougher than space-time white noise (taking a noise term equal to $(1 - \partial_{x}^{2})^{\alpha} dw(t)$ with $\alpha \in (0, 1/4)$ still yields a well-posed equation),'' and therefore, Theorems \ref{Theorem 2.2}-\ref{Theorem 2.3} represent precisely the endpoint of this statement. 
\end{remark} 

In Section \ref{Section 3} we set up notations and minimum preliminaries. In Sections \ref{Section 4}-\ref{Section 5}, we present our proofs of Theorems \ref{Theorem 2.2}-\ref{Theorem 2.3} assuming Proposition \ref{Proposition 2.4}. In Section \ref{Section 6}, we point out how the key results in \cite{AC15} apply to our setting under minimum modifications and thereby verify Proposition \ref{Proposition 2.4}. In the Appendices \ref{Appendix A} and \ref{Appendix B}, we leave further preliminaries and detailed computations for completeness, respectively. 

\section{Preliminaries}\label{Section 3} 
We set further notations needed for our proofs. We write $\lVert f \rVert_{C_{t,x}} \triangleq \sup_{s \in [0,t], x \in \mathbb{T}} \lvert f(t,x) \rvert$. 
\subsection{Besov spaces and Bony's paraproducts}\label{Subsection 3.2} 
We let $\chi$ and $\rho$ be smooth functions with compact support on $\mathbb{R}^{d}$ that are non-negative, and radial such that the support of $\chi$ is contained in a ball while that of $\rho$ in an annulus and 
\begin{align*}
& \chi(\xi) + \sum_{j\geq 0} \rho(2^{-j} \xi) = 1 \hspace{3mm} \forall \hspace{1mm} \xi \in \mathbb{R}^{d}, \\
&\supp (\chi) \cap  \supp (\rho(2^{-j} \cdot )) = \emptyset \hspace{1mm} \forall \hspace{1mm} j \in\mathbb{N}, \hspace{2mm} \supp (\rho(2^{-i}\cdot)) \cap \supp (\rho (2^{-j} \cdot)) = \emptyset \hspace{1mm} \text{ if } \lvert i-j \rvert > 1.
\end{align*}
We define $\rho_{j}(\cdot) \triangleq \rho(2^{-j} \cdot)$ and the Littlewood-Paley operators $\Delta_{j}$ for $j \in \mathbb{N}_{0} \cup \{-1\}$ by 
\begin{equation} 
\Delta_{j} f \triangleq 
\begin{cases}
\mathcal{F}^{-1} (\chi) \ast f & \text{ if } j = -1, \\
\mathcal{F}^{-1} (\rho_{j}) \ast f & \text{ if } j \in \mathbb{N}_{0},
\end{cases}
\end{equation}
and inhomogeneous Besov spaces $B_{p,q}^{s} \triangleq \{f \in \mathcal{S}': \hspace{1mm} \lVert f \rVert_{B_{p,q}^{s}} < \infty \}$ where 
\begin{equation}
\lVert f \rVert_{B_{p,q}^{s}} \triangleq  \lVert 2^{sm} \lVert \Delta_{m} f \rVert_{L_{x}^{p}} \rVert_{l_{m}^{q}} \hspace{3mm} \forall \hspace{1mm} p, q \in [1,\infty], s \in \mathbb{R}. 
\end{equation} 
We define the low-frequency cut-off operator $S_{i} f \triangleq \sum_{-1  \leq j \leq i-1} \Delta_{j} f$ and Bony's paraproducts and resonant  respectively as 
\begin{equation}\label{paraproducts and resonant}
f \prec g  \triangleq \sum_{i\geq -1} S_{i-1} f \Delta_{i} g   \text{ and }  f  \circ g  \triangleq \sum_{i\geq -1} \sum_{j: \lvert j\rvert \leq 1} \Delta_{i} f  \Delta_{i+j} g  
\end{equation}
so that $f g  = f  \prec g  + f  \succ g  + f  \circ g$, where $f  \succ g  = g  \prec f $ (see \cite[Sections 2.6.1 and 2.8.1]{BCD11}). We recall from \cite{BCD11} that there exist $N_{1}, N_{2} \in \mathbb{N}$ such that 
\begin{equation}\label{est 0} 
\Delta_{m} (f \prec g) = \sum_{j: \lvert j-m \rvert \leq N_{1}} (S_{j-1} f) \Delta_{j} g \hspace{2mm} \text{ and } \hspace{2mm} \Delta_{m} ( f \circ g) = \Delta_{m}\sum_{i\geq m-N_{2}}  \sum_{j: \lvert j \rvert \leq 1} \Delta_{i} f \Delta_{i+j} g. 
\end{equation} 
W record some special cases of Bony's estimates. 
\begin{lemma}\label{Lemma 3.1} 
\rm{(\hspace{1sp}\cite[Proposition 3.1]{AC15} and \cite[Lemma A.1]{HR24})} Let $\alpha, \beta \in \mathbb{R}$. 
\begin{enumerate} 
\item  Then 
\begin{subequations}\label{Bony in Sobolev}
\begin{align}
& \lVert f \prec g \rVert_{H^{\beta -\alpha}} \lesssim_{\alpha, \beta} \lVert f \rVert_{L^{2}} \lVert g \rVert_{\mathcal{C}^{\beta}}  \hspace{8mm}  \forall \hspace{1mm} f \in L^{2}, g \in \mathcal{C}^{\beta} \text{ if } \alpha > 0, \label{Bony 1} \\
& \lVert f \succ g \rVert_{H^{\alpha}} \lesssim_{\alpha} \lVert f \rVert_{H^{\alpha}} \lVert g \rVert_{L^{\infty}}  \hspace{11mm}  \forall \hspace{1mm} f \in H^{\alpha}, g \in L^{\infty},  \label{Bony 2} \\
& \lVert f \prec g \rVert_{H^{\alpha + \beta}} \lesssim_{\alpha, \beta} \lVert f \rVert_{H^{\alpha}} \lVert g \rVert_{\mathcal{C}^{\beta}} \hspace{7mm}  \forall \hspace{1mm} f \in H^{\alpha}, g \in \mathcal{C}^{\beta}  \text{ if } \alpha < 0,  \label{Bony 3}  \\
& \lVert f \succ g \rVert_{H^{\alpha + \beta}} \lesssim_{\alpha,\beta} \lVert f \rVert_{H^{\alpha}}  \lVert g \rVert_{\mathcal{C}^{\beta}}  \hspace{7mm}  \forall \hspace{1mm} f \in H^{\alpha}, g \in \mathcal{C}^{\beta} \text{ if } \beta < 0, \label{Bony 4} \\
& \lVert f \circ g \rVert_{H^{\alpha+ \beta}} \lesssim_{\alpha, \beta} \lVert f \rVert_{H^{\alpha}} \lVert g \rVert_{\mathcal{C}^{\beta}}  \hspace{8mm} \forall \hspace{1mm} f \in H^{\alpha}, g \in \mathcal{C}^{\beta}  \text{ if } \alpha + \beta > 0. \label{Bony 5} 
\end{align} 
\end{subequations} 
\item Let $p, q \in [1,\infty]$ such that $\frac{1}{r} = \frac{1}{p} + \frac{1}{q} \leq 1$. Then 
\begin{subequations}\label{Bony in Besov}
\begin{align}
& \lVert f \prec g \rVert_{B_{r, \infty}^{\alpha}} \lesssim_{\alpha,p,q,r} \lVert f \rVert_{L^{p}} \lVert g \rVert_{B_{q, \infty}^{\alpha}} \hspace{12mm} \forall \hspace{1mm} f \in L^{p}, g \in B_{q,\infty}^{\alpha}, \label{Bony 6}\\
& \lVert f \prec g \rVert_{B_{r,\infty}^{\alpha + \beta}} \lesssim_{\alpha, \beta, p, q, r} \lVert f \rVert_{B_{p,\infty}^{\beta}} \lVert g \rVert_{B_{q,\infty}^{\alpha}} \hspace{7mm} \forall \hspace{1mm} f \in B_{p,\infty}^{\beta}, g \in B_{q,\infty}^{\alpha} \text{ if } \beta < 0, \label{Bony 7}\\
& \lVert f \circ g \rVert_{B_{r,\infty}^{\alpha+ \beta}} \lesssim_{\alpha,\beta,p,q,r} \lVert f \rVert_{B_{p,\infty}^{\beta}} \lVert g \rVert_{B_{q,\infty}^{\alpha}} \hspace{8mm} \forall \hspace{1mm} f \in B_{p,\infty}^{\beta}, g \in B_{q,\infty}^{\alpha} \text{ if } \alpha + \beta > 0.\label{Bony 8}
\end{align}
\end{subequations}
\end{enumerate} 
One of the consequences is that $\lVert fg \rVert_{\mathcal{C}^{\min\{\alpha,\beta\}}} \lesssim \lVert f \rVert_{\mathcal{C}^{\alpha}} \lVert g \rVert_{\mathcal{C}^{\beta}}$ when $\alpha + \beta > 0$.  
\end{lemma}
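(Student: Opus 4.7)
The plan is to prove each estimate by direct Littlewood--Paley block analysis, using the explicit block decompositions recorded in \eqref{est 0}. The unified strategy is as follows: for each dyadic block $\Delta_m$ of $f\prec g$, $f\succ g$, or $f\circ g$, first bound the block pointwise by H\"older's inequality in physical space, then convert between $L^p$ norms via Bernstein's inequality when necessary, and finally sum the resulting dyadic contributions in the appropriate $\ell^q_m$ sense with weights $2^{sm}$ defining the target (Besov or Sobolev) norm. Throughout, we repeatedly use that $H^s=B^s_{2,2}$ (so that Plancherel supplies the outer $\ell^2_m$ summation) and that $\mathcal{C}^\beta=B^\beta_{\infty,\infty}$ provides the uniform bound $\|\Delta_i g\|_{L^\infty}\lesssim 2^{-i\beta}\|g\|_{\mathcal{C}^\beta}$.

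For the paraproducts, the support structure $\Delta_m(f\prec g)=\sum_{|j-m|\le N_1}S_{j-1}f\,\Delta_j g$ from \eqref{est 0} forces frequency localization near $2^m$, so only finitely many $j$ contribute. Bounding $\|S_{j-1}f\|_{L^\infty}$ either by $\|f\|_{L^2}$ together with Bernstein (giving a loss of $2^{j\alpha}$ and requiring $\alpha>0$ for summability, yielding \eqref{Bony 1}), or by $2^{-j\alpha}\|f\|_{H^\alpha}$ (which requires $\alpha<0$ so that $\sum_{k\le j}2^{-2k\alpha}$ converges, yielding \eqref{Bony 3}), and then multiplying by $\|\Delta_j g\|_{L^\infty}\lesssim 2^{-j\beta}\|g\|_{\mathcal{C}^\beta}$, reduces each to a simple geometric sum. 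Estimate \eqref{Bony 2} is immediate from $\|\Delta_m(f\succ g)\|_{L^2}\lesssim\sum_{|j-m|\le N_1}\|\Delta_j f\|_{L^2}\|g\|_{L^\infty}$, and \eqref{Bony 4} is its symmetric analogue, using $\beta<0$ to obtain absolute summability of $\sum_{k\le j}2^{-k\beta}$ in the definition of $S_{j-1}g$.

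For the resonant estimate \eqref{Bony 5}, the key difference is that the $i$-sum in \eqref{est 0} now ranges over $i\ge m-N_2$, producing
\begin{equation*}
\|\Delta_m(f\circ g)\|_{L^2}\lesssim\sum_{i\ge m-N_2}\sum_{|j|\le 1}\|\Delta_i f\|_{L^2}\|\Delta_{i+j}g\|_{L^\infty}\lesssim\|f\|_{H^\alpha}\|g\|_{\mathcal{C}^\beta}\sum_{i\ge m-N_2}2^{-i(\alpha+\beta)}.
\end{equation*}
This geometric series converges to a constant multiple of $2^{-m(\alpha+\beta)}$ precisely when $\alpha+\beta>0$, after which Plancherel gives the $\ell^2_m$ sum needed for the $H^{\alpha+\beta}$-norm. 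The Besov estimates \eqref{Bony 6}--\eqref{Bony 8} are proved identically, replacing $L^2$ by $L^r$ everywhere and applying the three-exponent H\"older relation $1/r=1/p+1/q$ in place of Cauchy--Schwarz; the outer $\ell^\infty_m$ norm is even easier than $\ell^2_m$. The final bound $\|fg\|_{\mathcal{C}^{\min\{\alpha,\beta\}}}\lesssim\|f\|_{\mathcal{C}^\alpha}\|g\|_{\mathcal{C}^\beta}$ when $\alpha+\beta>0$ then drops out by decomposing $fg=f\prec g+f\succ g+f\circ g$ and applying the paraproduct estimates specialized to $p=q=\infty$.

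The main technical obstacle is purely combinatorial: keeping track of the case split dictated by the signs of $\alpha$, $\beta$, and $\alpha+\beta$, since each case determines which direction the geometric sums converge and hence which norm absorbs the loss. Because each of \eqref{Bony 1}--\eqref{Bony 8} corresponds to a specific admissible configuration of those signs, the statement of the lemma is essentially an enumeration of the regimes in which the underlying dyadic series are absolutely summable, and the proof in every case is a one- or two-line verification along the template above. As a practical matter I would cross-reference \cite[Sections~2.6--2.8]{BCD11} for the paraproduct estimates and \cite[Proposition 3.1]{AC15} for the Sobolev--H\"older hybrid versions rather than reproducing the full bookkeeping.
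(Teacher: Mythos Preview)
Your proof sketch is correct and follows the standard Littlewood--Paley approach. The paper does not supply its own proof of this lemma; it simply cites \cite[Proposition~3.1]{AC15} and \cite[Lemma~A.1]{HR24} (and implicitly \cite[Sections~2.6--2.8]{BCD11}), which carry out exactly the block-by-block analysis you describe, so your treatment is in line with those references.
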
 
Next, we recall the useful product estimate in Sobolev spaces that was useful in the study of the 2D SQG equations (e.g. see \cite[p. 2818]{MX11}) and the 2D MHD system. 
\begin{lemma}\label{Product estimate lemma}
Let $d \in \mathbb{N}$ and $\sigma_{1}, \sigma_{2} < \frac{d}{2}$ satisfy $\sigma_{1} + \sigma_{2} > 0$. Then 
\begin{equation}\label{Product estimate}
\lVert fg \rVert_{\dot{H}^{\sigma_{1} + \sigma_{2} - \frac{d}{2}} (\mathbb{T}^{d})} \lesssim_{\sigma_{1}, \sigma_{2}} \lVert f \rVert_{\dot{H}^{\sigma_{1}} (\mathbb{T}^{d})} \lVert g \rVert_{\dot{H}^{\sigma_{2}} (\mathbb{T}^{d})} \hspace{3mm} \forall \hspace{1mm} f \in \dot{H}^{\sigma_{1}} (\mathbb{T}^{d}), g \in \dot{H}^{\sigma_{2}} (\mathbb{T}^{d}). 
\end{equation} 
\end{lemma}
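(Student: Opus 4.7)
My plan is to combine Bony's paraproduct decomposition $fg = f \prec g + f \succ g + f \circ g$ with Bernstein's inequality and sharp Sobolev embeddings, splitting according to whether $\sigma_1 + \sigma_2 > d/2$ or $0 < \sigma_1 + \sigma_2 \leq d/2$, and reducing the case of negative $\sigma_1$ or $\sigma_2$ to the non-negative regime by duality. The guiding principle is that $\sigma_i < d/2$ keeps the paraproducts mild while $\sigma_1 + \sigma_2 > 0$ makes the resonant term well-defined.

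For the paraproducts, I would use the localization \eqref{est 0} together with Bernstein's inequality. Since $\sigma_1 < d/2$, a geometric-series argument gives $\lVert S_{j-1} f \rVert_{L^\infty} \lesssim \sum_{k<j} 2^{kd/2} \lVert \Delta_k f \rVert_{L^2} \lesssim 2^{j(d/2-\sigma_1)} \lVert f \rVert_{\dot{H}^{\sigma_1}}$, so that $\lVert \Delta_m(f \prec g) \rVert_{L^2} \lesssim 2^{m(d/2-\sigma_1)} \lVert f \rVert_{\dot{H}^{\sigma_1}} \sum_{|j-m|\leq N_1} \lVert \Delta_j g \rVert_{L^2}$. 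Multiplying by $2^{m(\sigma_1+\sigma_2-d/2)}$ and taking the $\ell^2_m$ norm closes the bound, and $f \succ g$ is handled symmetrically from $\sigma_2 < d/2$.

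The main obstacle is the resonant term $f \circ g$. Bernstein furnishes $\lVert \Delta_i f \, \Delta_{i+j} g \rVert_{L^2} \lesssim 2^{i(d/2-\sigma_1-\sigma_2)} a_i b_{i+j}$ with $a_i \triangleq 2^{i\sigma_1} \lVert \Delta_i f \rVert_{L^2}$ and $b_i \triangleq 2^{i\sigma_2} \lVert \Delta_i g \rVert_{L^2}$. Summing over $i \geq m - N_2$ by \eqref{est 0} and multiplying by $2^{m(\sigma_1+\sigma_2-d/2)}$ reduces the $\ell^2_m$-bound to a convolution estimate with kernel $w_k \triangleq 2^{k(d/2-\sigma_1-\sigma_2)}$ supported on $k \geq -N_2$. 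When $\sigma_1 + \sigma_2 > d/2$, the kernel lies in $\ell^1$, and Young's convolution inequality together with $\lVert a b \rVert_{\ell^2} \leq \lVert a \rVert_{\ell^2} \lVert b \rVert_{\ell^2}$ closes the argument. In the complementary regime $\sigma_1, \sigma_2 \geq 0$ and $0 < \sigma_1 + \sigma_2 \leq d/2$, the kernel fails to be $\ell^1$-summable, so I would bypass Bony entirely: the sharp Sobolev embeddings $\dot{H}^{\sigma_i}(\mathbb{T}^d) \hookrightarrow L^{p_i}(\mathbb{T}^d)$ with $1/p_i = 1/2 - \sigma_i/d$, Hölder's inequality into $L^r$ with $1/r = 1 - (\sigma_1+\sigma_2)/d \in [1/2, 1)$, and the dual embedding $L^r \hookrightarrow \dot{H}^{\sigma_1+\sigma_2-d/2}$ (valid since $r \in (1,2]$) produce the estimate in one stroke.

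Finally, if one exponent is negative (necessarily just one, since the sum is positive), say $\sigma_1 < 0 < \sigma_2$, I would reduce to the non-negative regime by duality. Writing the left-hand side as $\sup_h \langle fg, h \rangle$ over $h \in \dot{H}^{d/2-\sigma_1-\sigma_2}$ of unit norm and rearranging to $\langle f, gh \rangle$, the task becomes bounding $\lVert gh \rVert_{\dot{H}^{-\sigma_1}}$ by the product estimate with new exponents $\sigma_2$ and $d/2-\sigma_1-\sigma_2$; these are both non-negative (since $\sigma_1 < 0$ and $\sigma_2 < d/2$ force $\sigma_1+\sigma_2 < d/2$) and both strictly less than $d/2$, while their sum $d/2 - \sigma_1 > d/2$, placing the dualized estimate squarely within the Bony-plus-Young regime already treated. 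The hard part throughout is the resonant estimate precisely at the critical summability threshold, where it is the precise balance between Bernstein's gain $2^{id/2}$ and the target regularity that forces the bifurcation into the Bony and the Sobolev-Hölder sub-arguments.
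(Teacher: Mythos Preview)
Your proof is essentially correct, but it takes a more circuitous route than the paper's. The key divergence is in the resonant term: you apply Bernstein at the inner block level, bounding $\lVert \Delta_i f \, \Delta_{i+j} g \rVert_{L^2} \lesssim 2^{id/2} \lVert \Delta_i f\rVert_{L^2}\lVert \Delta_{i+j} g\rVert_{L^2}$, which produces the convolution kernel $2^{k(\sigma_1+\sigma_2 - d/2)}$ and forces your trichotomy (Bony--Young versus Sobolev--H\"older versus duality). The paper instead applies Bernstein at the \emph{outer} block: $\lVert \Delta_m(f\circ g)\rVert_{L^2} \lesssim 2^{m d/2}\lVert \Delta_m(f\circ g)\rVert_{L^1}$, then uses H\"older $L^2\times L^2\to L^1$ at level $i$. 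This yields the kernel $2^{k(\sigma_1+\sigma_2)}$ for $k\le N_2$, which lies in $\ell^2$ precisely under the hypothesis $\sigma_1+\sigma_2>0$; Young's inequality $\ell^2\ast\ell^1\subset\ell^2$ together with $\lVert a_i b_{i+j}\rVert_{\ell^1}\le \lVert a\rVert_{\ell^2}\lVert b\rVert_{\ell^2}$ then closes the estimate uniformly across all cases, with no sign restriction on the $\sigma_i$ and no need for Sobolev embeddings or duality. Your approach buys nothing extra and costs the case analysis; the paper's outer-block Bernstein is the cleaner move.

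One slip to flag: in your Bony--Young case you invoke ``$\lVert ab\rVert_{\ell^2}\le\lVert a\rVert_{\ell^2}\lVert b\rVert_{\ell^2}$'', which is false in general. What you actually need (and what the paper uses) is $\lVert w\ast c\rVert_{\ell^2}\le\lVert w\rVert_{\ell^2}\lVert c\rVert_{\ell^1}$ with $c_i=a_ib_{i+j}\in\ell^1$ by Cauchy--Schwarz; since your kernel $w_k=2^{k(\sigma_1+\sigma_2-d/2)}1_{k\le N_2}$ is in $\ell^2$ whenever $\sigma_1+\sigma_2>d/2$, the conclusion survives once you swap the roles.
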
 
The statement and the proof of Lemma \ref{Product estimate lemma} in case $d= 2$ can be found in \cite[Lemma 2.5 and Appendix A.2]{Y14b}; as we could not locate the general statement for $d \in \mathbb{N}$, we sketch its proof in Section \ref{Section B.1} for completeness.  

Finally, we recall the following notations and lemma from \cite{HR24, Y23b}. 
\begin{define}\label{Definition 3.1} 
Let $\mathfrak{h}: \hspace{1mm} [0,\infty) \mapsto [0,\infty)$ be a smooth function such that 
\begin{equation}
\mathfrak{h}(r) \triangleq  
\begin{cases}
1 & \text{ if } r \geq 1, \\
0 & \text{ if } r \leq \frac{1}{2}, 
\end{cases} 
\hspace{5mm} \mathfrak{l} \triangleq 1- \mathfrak{h}.
\end{equation} 
Then, we consider for any $\lambda > 0$
\begin{equation}
\check{\mathfrak{h}}_{\lambda}(x) \triangleq \mathcal{F}^{-1} \left( \mathfrak{h} \left( \frac{ \lvert \cdot \rvert}{\lambda} \right) \right) (x), \hspace{5mm} \check{\mathfrak{l}}_{\lambda}(x) \triangleq \mathcal{F}^{-1} \left( \mathfrak{l} \left(\frac{ \lvert \cdot \rvert}{\lambda} \right) \right)(x), 
\end{equation} 
and then the projections onto higher and lower frequencies respectively by 
\begin{equation}\label{est 3}
\mathcal{H}_{\lambda}: \hspace{1mm} \mathcal{S}' \mapsto \mathcal{S}' \hspace{1mm} \text{ by } \hspace{1mm}  \mathcal{H}_{\lambda} f \triangleq \check{\mathfrak{h}}_{\lambda} \ast f \hspace{1mm} \text{ and } \hspace{1mm}  \mathcal{L}_{\lambda}: \hspace{1mm} \mathcal{S}' \mapsto \mathcal{S} \hspace{1mm}  \text{ by } \hspace{1mm} \mathcal{L}_{\lambda} f \triangleq f - \mathcal{H}_{\lambda} f = \check{\mathfrak{l}}_{\lambda} \ast f.  
\end{equation} 
\end{define} 
\begin{lemma}\label{Lemma 3.3} 
\rm{(Cf. \cite[Lemmas 4.2-4.3]{HR24} and \cite[Lemma 3.3]{Y23b})} For any $p, q \in [1,\infty]$, and $\alpha, \beta \in \mathbb{R}$ such that $\beta \geq \alpha$, 
\begin{equation}\label{Bernstein}
\lVert \mathcal{L}_{\lambda} f \rVert_{B_{p,q}^{\beta}} \lesssim \lambda^{\beta - \alpha} \lVert f \rVert_{B_{p,q}^{\alpha}} \hspace{2mm} \forall \hspace{1mm} f \in B_{p,q}^{\alpha} \text{ and } \lVert \mathcal{H}_{\lambda} f \rVert_{B_{p,q}^{\alpha}} \lesssim \lambda^{\alpha - \beta} \lVert f \rVert_{B_{p,q}^{\beta}} \hspace{2mm} \forall \hspace{1mm} f \in B_{p,q}^{\beta}.
\end{equation} 
\end{lemma}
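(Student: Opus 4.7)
The plan is to prove both inequalities by expanding the left-hand sides through the Littlewood--Paley characterization of $B_{p,q}^{s}$ and performing a case analysis based on the relative size of the dyadic scale $2^j$ and the cutoff scale $\lambda$. Fix $\lambda > 0$ and pick $j_0 \in \mathbb{Z}$ with $2^{j_0} \approx \lambda$. Since the symbol $\mathfrak{l}(\cdot/\lambda)$ equals $1$ on $\{\lvert \xi \rvert \leq \lambda/2\}$ and vanishes on $\{\lvert \xi \rvert \geq \lambda\}$, while $\Delta_j$ is Fourier-supported in an annulus of scale $2^j$, there exist absolute integers $N_3, N_4$ (independent of $\lambda$) such that $\Delta_j \mathcal{L}_\lambda = \Delta_j$ and $\Delta_j \mathcal{H}_\lambda = 0$ whenever $j \leq j_0 - N_3$, whereas $\Delta_j \mathcal{L}_\lambda = 0$ and $\Delta_j \mathcal{H}_\lambda = \Delta_j$ whenever $j \geq j_0 + N_4$.

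For the bounded number of transition indices $j$ in the window $j_0 - N_3 < j < j_0 + N_4$, I would use that the rescaled kernels $\check{\mathfrak{l}}_\lambda$ and $\check{\mathfrak{h}}_\lambda$ are obtained from $\check{\mathfrak{l}}$ and $\check{\mathfrak{h}}$ by a dilation and therefore have $L^1$ norms independent of $\lambda$. Young's convolution inequality then yields $\lVert \Delta_j \mathcal{L}_\lambda f \rVert_{L^p} + \lVert \Delta_j \mathcal{H}_\lambda f \rVert_{L^p} \lesssim \lVert \Delta_j f \rVert_{L^p}$ uniformly in $j$ and $\lambda$; alternatively, for $\mathcal{H}_\lambda$ one may invoke the identity $\mathcal{H}_\lambda = \Id - \mathcal{L}_\lambda$ already recorded in Definition \ref{Definition 3.1}.

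Given the above, the first inequality reduces to an $\ell_j^q$-estimate on an effectively truncated sum over $j \leq j_0 + N_4$. Writing $2^{j\beta} = 2^{j(\beta - \alpha)} \cdot 2^{j\alpha}$, the hypothesis $\beta \geq \alpha$ together with $2^j \lesssim \lambda$ on the relevant range gives $2^{j(\beta-\alpha)} \lesssim \lambda^{\beta - \alpha}$, which can be pulled out of the $\ell_j^q$-norm and leaves exactly $\lVert f \rVert_{B_{p,q}^{\alpha}}$. The second inequality is symmetric: one restricts to $j \geq j_0 - N_3$, writes $2^{j\alpha} = 2^{j(\alpha-\beta)} \cdot 2^{j\beta}$, and uses $\alpha \leq \beta$ together with $2^j \gtrsim \lambda$ to extract the factor $\lambda^{\alpha - \beta}$. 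The case $q = \infty$ is identical with suprema in place of sums.

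There is no essential obstacle here: the lemma is a routine Bernstein-type bound for smooth frequency projectors, and the only bookkeeping to watch is that the transition region around $2^j \approx \lambda$ contains a bounded number of dyadic scales, which is guaranteed by the compact support of $\mathfrak{l}$ and the scale-invariance of $\lVert \check{\mathfrak{l}}_\lambda \rVert_{L^1}$ under dilation.
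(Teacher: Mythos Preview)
Your argument is correct and is precisely the standard Littlewood--Paley/Bernstein computation one expects here. The paper does not supply its own proof of Lemma~\ref{Lemma 3.3} but simply cites \cite[Lemmas 4.2--4.3]{HR24} and \cite[Lemma 3.3]{Y23b}; your write-up matches the argument in those references, so there is nothing further to compare.
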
 

\section{Proof of Theorem \ref{Theorem 2.2}}\label{Section 4}
We define $w \triangleq v - Y$ where $Y$ solves 
\begin{equation}\label{Equation of Y} 
\partial_{t} Y + \frac{1}{2} \partial_{x} (2Y X + X^{2}) = \nu \partial_{x}^{2} Y + \mathbb{P}_{\neq 0} \zeta \hspace{1mm} \text{ for } \hspace{1mm} t > 0, \hspace{3mm} Y(0,x) = 0, 
\end{equation} 
so that $w$ solves 
\begin{equation}\label{Equation of w} 
\partial_{t} w + \frac{1}{2} \partial_{x} (w^{2} + 2wY + 2wX + Y^{2}) = \nu \partial_{x}^{2} w \hspace{1mm} \text{ for } \hspace{1mm} t > 0, \hspace{3mm} w(0,x) = \theta^{\text{in}}(x). 
\end{equation} 
Because $\zeta \in \mathcal{C}_{x}^{-2+ 3 \kappa}$ from \eqref{define zeta}, we expect $Y \in \mathcal{C}_{x}^{2\kappa}$. Taking $L^{2}(\mathbb{T})$-inner products with $w$ in \eqref{Equation of w}, we see that the potentially ill-defined terms are 
\begin{align*}
\int_{\mathbb{T}}w \left( - \nu \partial_{x}^{2} w + \frac{1}{2} \partial_{x} (2wX) \right)dx. 
\end{align*}

\begin{define}\label{Definition 4.1}
Recall $P_{t} = e^{\nu \partial_{x}^{2} t}$ from Section \ref{Section 2}. For any $\gamma > 0, T > 0$, and $\delta \in \mathbb{R}$, we define 
\begin{subequations}
\begin{align}
M_{T}^{\gamma} \mathcal{C}_{x}^{\delta} \triangleq& \{ f: \hspace{1mm} t \mapsto t^{\gamma} \lVert f(t) \rVert_{\mathcal{C}_{x}^{\delta}} \text{ is continuous over } [0,T], \lVert f \rVert_{\mathcal{M}_{T}^{\gamma} \mathcal{C}_{x}^{\delta}} < \infty \}\\
& \text{ where } \lVert f \rVert_{\mathcal{M}_{T}^{\gamma} \mathcal{C}_{x}^{\delta}} \triangleq \left\lVert t^{\gamma} \lVert f(t) \rVert_{\mathcal{C}_{x}^{\delta}} \right\rVert_{C_{T}}. 
\end{align} 
\end{subequations} 
Then $w \in \mathcal{M}_{T}^{\gamma} \mathcal{C}_{x}^{\delta}$ is a mild solution to \eqref{Equation of w} over $[0,T]$ if 
\begin{equation}
w(t) = P_{t} \theta^{\text{in}} - \int_{0}^{t} P_{t-s} \frac{1}{2} \partial_{x} (w^{2} + 2wY + 2wX + Y^{2})(s) ds. 
\end{equation} 
\end{define} 
For any $\lambda \geq 1$ and $t \in [0,\infty)$, we define our enhanced noise (recall \eqref{est 21}) by 
\begin{equation}\label{est 64}
t \mapsto ( \partial_{x} \mathcal{L}_{\lambda} X(t), \partial_{x} \mathcal{L}_{\lambda} X(t) \circ P^{\lambda}(t) - r_{\lambda}(t)), 
\end{equation} 
where 
\begin{subequations}\label{est 63}
\begin{align}
&P^{\lambda}(t,x) \triangleq \left(1- \nu \partial_{x}^{2} \right)^{-1} \partial_{x} \mathcal{L}_{\lambda} X(t,x)  \label{est 63a}\\
& r_{\lambda}(t) \triangleq \sum_{k\in \mathbb{Z} \setminus \{0\}} \mathfrak{l} \left( \frac{ \lvert k \rvert}{\lambda } \right)^{2} \left( \frac{1- e^{-2 \nu \lvert k \rvert^{2} t}}{2\nu} \right) \left(1+  \nu \lvert k \rvert^{2}  \right)^{-1} \lvert k \rvert.  \label{est 63b}
\end{align}
\end{subequations} 
For any $t \in [0, \infty)$, any $\kappa > 0$, and $\{ \lambda^{i} \}_{i\in\mathbb{N}}$ to be defined in Definition \ref{Definition 4.2}, we define 
\begin{equation}\label{Define Lt and Nt}
L_{t}^{\kappa} \triangleq 1 + \lVert X \rVert_{C_{t} \mathcal{C}_{x}^{-\kappa}} + \lVert Y \rVert_{C_{t} \mathcal{C}_{x}^{2\kappa}} \hspace{1mm} \text{ and } \hspace{1mm} N_{t}^{\kappa} \triangleq L_{t}^{\kappa} + \sup_{i \in \mathbb{N}} \lVert ( \partial_{x} \mathcal{L}_{\lambda^{i}} X) \circ P^{\lambda^{i}} - r_{\lambda^{i}} \rVert_{C_{t}\mathcal{C}_{x}^{-2\kappa}}. 
\end{equation} 
Here, $\lVert X \rVert_{C_{t} \mathcal{C}_{x}^{-\kappa}} + \sup_{i \in \mathbb{N}} \lVert ( \partial_{x} \mathcal{L}_{\lambda^{i}} X) \circ P^{\lambda^{i}} - r_{\lambda^{i}} \rVert_{C_{t}\mathcal{C}_{x}^{-2\kappa}}$ within $N_{t}^{\kappa}$ formally bounds the $\mathcal{E}^{-1-\kappa}$-norm of $(\partial_{x} \mathcal{L}_{\lambda^{i}} X,  (\partial_{x} \mathcal{L}_{\lambda^{i}} X) \circ P^{\lambda^{i}} - r_{\lambda^{i}}) \in \mathcal{K}^{-1-\kappa}$ for all $i \in \mathbb{N}$ (recall \eqref{est 271}-\eqref{est 21}).   

The following is a consequence of Proposition \ref{Proposition 4.13}.  
\begin{proposition}\label{Proposition 4.1} 
Let $(\Omega, \mathcal{F}, \mathbb{P})$ be a probability space on which the STWN $\xi$ satisfies \eqref{STWN}. Then there exists a null set $\mathcal{N}'' \subset \Omega$ such that $N_{t}^{\kappa} (\omega) < \infty$ for all $\omega \in \Omega \setminus \mathcal{N}''$ for all $t \geq 0$ and all $\kappa > 0$. 
\end{proposition}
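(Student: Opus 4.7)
The plan is to show that each of the three contributions making up $N_t^\kappa$ is almost surely finite and then to assemble a single null set $\mathcal{N}''$ independent of $\kappa$ and $t$ by a countable intersection. The bounds on $\|X\|_{C_t \mathcal{C}_x^{-\kappa}}$ and $\|Y\|_{C_t \mathcal{C}_x^{2\kappa}}$ are handled by classical stochastic analysis and parabolic Schauder theory, whereas the uniform-in-$i$ bound on the resonant object requires a Wick-style second-order chaos computation; it is precisely here that the subtraction of $r_{\lambda^{i}}(t)$ plays its essential role.

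For the term $\|X\|_{C_t \mathcal{C}_x^{-\kappa}}$, I would observe that $X$ is the centered Gaussian stochastic convolution solving \eqref{Equation of X}. A direct Fourier-side computation of $\mathbb{E}[(\Delta_q X(t,x))^2]$, the extra factor $|k|$ from $\Lambda^{1/2}$ being absorbed by the parabolic decay of the heat semigroup, together with Gaussian hypercontractivity in the first Wiener chaos to lift $L^2(\Omega)$ block-wise bounds to $L^p(\Omega)$ bounds for any $p < \infty$, combined with Kolmogorov's continuity criterion in Besov spaces, delivers $X \in C([0,T];\mathcal{C}^{-\kappa}(\mathbb{T}))$ almost surely for every $T < \infty$ and $\kappa > 0$ as already announced in \eqref{est 22}. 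For the term $\|Y\|_{C_t \mathcal{C}_x^{2\kappa}}$, I would read \eqref{Equation of Y} as a linear parabolic equation driven by $\mathbb{P}_{\neq 0}\zeta \in \mathcal{C}_\mathfrak{s}^{-2+3\kappa}$ with lower-order terms $\partial_x(YX)$ and $\partial_x(X^2)$; parabolic Schauder estimates applied to $\zeta$ gain two units of parabolic regularity, the product $YX$ is controlled by Lemma \ref{Lemma 3.1} once $Y$ has positive Hölder regularity, and the renormalized $X^2 \in \mathcal{C}_x^{-2\kappa}$ is produced by the same chaos-plus-Kolmogorov scheme as below. A standard local-in-time contraction followed by bootstrapping along the linear structure of \eqref{Equation of Y} then yields $Y \in C_t \mathcal{C}_x^{2\kappa}$ almost surely for every $T$.

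The heart of the proof is the uniform-in-$\lambda$ bound on $(\partial_x \mathcal{L}_\lambda X)(t) \circ P^\lambda(t) - r_\lambda(t)$ with $P^\lambda$ and $r_\lambda$ as in \eqref{est 63}. Since $\partial_x \mathcal{L}_\lambda X(t)$ and $P^\lambda(t)$ both lie in the first Wiener chaos, their product decomposes into a deterministic diagonal contribution plus a pure second-chaos remainder. A block-by-block Fourier computation shows that the diagonal contribution of the resonant product, summed over the support of the resonant kernel, is exactly the quantity defined in \eqref{est 63b}, so subtracting $r_\lambda(t)$ annihilates the divergent mean and leaves only the second-chaos term. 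Gaussian hypercontractivity in the second chaos lifts $L^2(\Omega)$ covariance estimates to moments of arbitrary order, and the algebraic cancellation, combined with the uniform bound $\mathfrak{l}(|k|/\lambda)^2 \leq 1$, yields an estimate in $\mathcal{C}_x^{-2\kappa}$ whose constant is independent of $\lambda \geq 1$. Kolmogorov's criterion in time then produces the continuous modification.

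Finally, fixing the countable family $(\kappa_m, T_n) = (1/m, n)$ with $m, n \in \mathbb{N}$, I would apply the three bounds above to every such pair and take the union of the countably many exceptional events; monotonicity of $\|\cdot\|_{C_t}$ in $t$ and the Besov embedding $\mathcal{C}^{-\kappa} \hookrightarrow \mathcal{C}^{-\kappa'}$ for $\kappa \leq \kappa'$ (and likewise for $\mathcal{C}^{2\kappa'} \hookrightarrow \mathcal{C}^{2\kappa}$) then transfer the bounds from the rationals to arbitrary $\kappa > 0$ and $t \geq 0$. The main obstacle is the Wick computation in the third step: identifying the Fourier-side divergent expectation of $(\partial_x \mathcal{L}_\lambda X) \circ P^\lambda$ block by block as exactly the series \eqref{est 63b}, and checking that the surviving second-chaos kernel yields estimates that are genuinely uniform in $\lambda$ rather than growing logarithmically.
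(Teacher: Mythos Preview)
Your proposal is correct and matches the paper's approach. The paper states Proposition \ref{Proposition 4.1} as a consequence of Proposition \ref{Proposition 4.13}, whose proof carries out exactly the Wick computation you describe: writing $(\partial_x\mathcal{L}_\lambda X)\circ P^\lambda$ in Fourier coordinates (see \eqref{est 150}), identifying its expectation as $r_\lambda(t)$ via \eqref{est 151}--\eqref{est 152}, bounding the second-chaos remainder block-wise uniformly in $\lambda$ as in \eqref{est 157}--\eqref{est 158}, and lifting to all moments by Gaussian hypercontractivity; the $X$ and $Y$ contributions are handled as you say (the paper simply records $X\in C_t\mathcal{C}_x^{-\kappa}$ in \eqref{est 22} and treats $Y$ through the standing hypothesis of Proposition \ref{Proposition 4.2}).
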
 
The following local solution theory can be proven via the approach from \cite{DD02}. 
\begin{proposition}\label{Proposition 4.2}
Fix $\kappa \in (0, \frac{2}{5})$ and then $\gamma = 1 - \frac{\kappa}{2} > 0$. Suppose that $X \in C( [0,\infty); \mathcal{C}^{-\kappa}(\mathbb{T}))$ and $Y \in C([0,\infty); \mathcal{C}^{2\kappa} (\mathbb{T}))$ $\mathbb{P}$-a.s. Then, for all $\theta^{\text{in}} \in \mathcal{C}^{-1+ 2 \kappa} (\mathbb{T})$ that is mean-zero, \eqref{Equation of w} has a unique mild solution $w \in \mathcal{M}_{T^{\max}}^{\frac{\gamma}{2}} \mathcal{C}_{x}^{\frac{3\kappa}{2}}$ where $T^{\max} ( \{ L_{t}^{\kappa} \}_{t\geq 0}, \theta^{\text{in}} ) \in (0,\infty]$. 
\end{proposition}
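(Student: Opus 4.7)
The plan is to construct the solution via Banach fixed point on a closed ball in $\mathcal{M}_T^{\gamma/2}\mathcal{C}_x^{3\kappa/2}$ for small $T>0$, following the Da Prato--Debussche scheme \cite{DD02}. The natural Picard map is
$$\Psi(w)(t) \triangleq P_t \theta^{\text{in}} - \int_0^t P_{t-s}\tfrac{1}{2}\partial_x\bigl(w^2 + 2wY + 2wX + Y^2\bigr)(s)\,ds.$$
The choice of weight $t^{\gamma/2}$ with $\gamma = 1 - \kappa/2$ is dictated by the linear piece via the Schauder-type bound $\|P_t \theta^{\text{in}}\|_{\mathcal{C}^{3\kappa/2}} \lesssim t^{-\gamma/2}\|\theta^{\text{in}}\|_{\mathcal{C}^{-1+2\kappa}}$, which encodes the spatial gain of $\gamma = 3\kappa/2 - (-1 + 2\kappa)$ derivatives under the heat semigroup; this gives $t \mapsto P_t \theta^{\text{in}} \in \mathcal{M}_T^{\gamma/2}\mathcal{C}_x^{3\kappa/2}$ with norm controlled by $\|\theta^{\text{in}}\|_{\mathcal{C}^{-1+2\kappa}}$.

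The nonlinear terms are handled by combining the heat smoothing $\|P_t f\|_{\mathcal{C}^{\alpha+\beta}} \lesssim t^{-\beta/2}\|f\|_{\mathcal{C}^\alpha}$ for $\beta \geq 0$ (valid on Besov spaces) with the product rule from Lemma \ref{Lemma 3.1}, namely $\|fg\|_{\mathcal{C}^{\min\{\alpha,\beta\}}} \lesssim \|f\|_{\mathcal{C}^\alpha}\|g\|_{\mathcal{C}^\beta}$ whenever $\alpha + \beta > 0$. Tracking regularities: $w^2$ and $wY$ both lie in $\mathcal{C}^{3\kappa/2}$, $Y^2 \in \mathcal{C}^{2\kappa}$, and the delicate term $wX$ is well-defined in $\mathcal{C}^{-\kappa}$ because $3\kappa/2 + (-\kappa) = \kappa/2 > 0$. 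After applying $\partial_x$ and the heat smoothing to target $\mathcal{C}^{3\kappa/2}$, the most singular contribution comes from $wX$: it requires the beta-type convolution
$$\int_0^t (t-s)^{-\tfrac{1}{2} - \tfrac{5\kappa}{4}} s^{-\gamma/2}\,ds,$$
which is finite exactly when $\tfrac{1}{2} + \tfrac{5\kappa}{4} < 1$, i.e.\ $\kappa < \tfrac{2}{5}$; this is the source of the restriction in the hypothesis. The three remaining terms ($w^2$, $wY$, $Y^2$) generate integrals whose exponents give only positive powers of $T$ after multiplication by the outer weight $t^{\gamma/2}$.

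Collecting these estimates yields
$$\|\Psi(w)\|_{\mathcal{M}_T^{\gamma/2}\mathcal{C}_x^{3\kappa/2}} \leq C\|\theta^{\text{in}}\|_{\mathcal{C}^{-1+2\kappa}} + C T^{\eta}(L_T^{\kappa})^{2}\bigl(1 + \|w\|_{\mathcal{M}_T^{\gamma/2}\mathcal{C}_x^{3\kappa/2}}^2\bigr)$$
for some $\eta = \eta(\kappa) > 0$, and the same computation applied to $w_1 - w_2$ produces a Lipschitz bound with the same prefactor $T^\eta$. Choosing $T$ small enough in terms of $\|\theta^{\text{in}}\|_{\mathcal{C}^{-1+2\kappa}}$ and $L_T^{\kappa}$ makes $\Psi$ a strict contraction on a ball of radius comparable to $\|\theta^{\text{in}}\|_{\mathcal{C}^{-1+2\kappa}}$, and Banach's fixed point theorem delivers the unique local mild solution. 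A standard gluing and blow-up dichotomy then extends the solution up to a maximal existence time $T^{\max}(\{L_t^\kappa\}_{t\geq 0}, \theta^{\text{in}}) \in (0, \infty]$. The principal technical obstacle is the interplay between the low regularity $X \in \mathcal{C}^{-\kappa}$ and the singular time weight; it is precisely the balance in the $wX$ beta integral that forces $\kappa < 2/5$ and fixes the exponent $\gamma$ of the weight.
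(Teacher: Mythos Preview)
Your proposal is correct and follows precisely the approach the paper indicates: the paper does not spell out a proof but simply states that Proposition \ref{Proposition 4.2} ``can be proven via the approach from \cite{DD02},'' i.e., the Da Prato--Debussche fixed-point scheme you have carried out. Your identification of the $wX$ term as the bottleneck, the beta-integral computation yielding the constraint $\tfrac{1}{2}+\tfrac{5\kappa}{4}<1$, and the resulting positive power $T^{1/2-5\kappa/4}$ after reinstating the weight $t^{\gamma/2}$ are all on point and explain the hypothesis $\kappa<\tfrac{2}{5}$ as well as the choice $\gamma=1-\tfrac{\kappa}{2}$.
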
 
\begin{assume}\label{Assumption 4.3}
Because Proposition \ref{Proposition 4.2} guarantees that $w \in \mathcal{M}_{T^{\max}}^{\frac{\gamma}{2}} \mathcal{C}_{x}^{\frac{3\kappa}{2}}$ for sufficiently small $\kappa > 0$ and $\gamma = 1 - \frac{\kappa}{2} > 0$, we assume hereafter that $\theta^{\text{in}} \in L^{2} (\mathbb{T})$ is mean-zero. 
\end{assume} 

Next, we define $Q$ to solve 
\begin{equation}\label{Define Q}
(\partial_{t} - \nu \partial_{x}^{2}) Q = 2X, \hspace{3mm} Q(0) = 0.
\end{equation} 
As $X \in C_{t} \mathcal{C}_{x}^{-\kappa}$ from Proposition \ref{Proposition 4.1} and \eqref{Define Lt and Nt}, we see that 
\begin{equation}\label{Estimate on Q}
\lVert Q(t) \rVert_{\mathcal{C}_{x}^{\gamma}}  \lesssim \lVert X \rVert_{C_{t} \mathcal{C}_{x}^{-\kappa}} t^{1- \frac{\gamma+ \kappa}{2}} < \infty \text{ for any } \gamma < 2 - \kappa. 
\end{equation}  
We now define $w^{\sharp}$ by 
\begin{equation}\label{Define w sharp}
w = - \frac{1}{2} \partial_{x} (w \prec Q) + w^{\sharp}. 
\end{equation} 
It follows from \eqref{Define Q} and \eqref{Equation of w} that 
\begin{equation}\label{Equation of w sharp}
\partial_{t} w^{\sharp} + \frac{1}{2} \partial_{x} (w^{2} + 2wY + 2wX - 2w \prec X - C^{\prec} (w, Q) + Y^{2}) = \nu \partial_{x}^{2} w^{\sharp} 
\end{equation} 
if we define 
\begin{subequations}\label{Define commutator} 
\begin{align}
C^{\prec} (w, Q) \triangleq& \partial_{t} (w \prec Q) - \nu \partial_{x}^{2} (w \prec Q) - w \prec \partial_{t} Q + \nu w \prec \partial_{x}^{2} Q \label{Define commutator a} \\
=& (\partial_{t} - \nu \partial_{x}^{2}) (w \prec Q) - w \prec ( \partial_{t} - \nu \partial_{x}^{2}) Q. \label{Define commutator b}
\end{align}
\end{subequations} 
Additionally, we define 
\begin{equation}\label{Define QH, wH, wL}
Q^{\mathcal{H}}(t) \triangleq \mathcal{H}_{\lambda_{t}}Q(t), \hspace{1mm} w^{\mathcal{H}}(t) \triangleq - \frac{1}{2} \partial_{x} (w \prec Q^{\mathcal{H}})(t), \hspace{1mm} w^{\mathcal{L}}(t) \triangleq w(t) - w^{\mathcal{H}}(t). 
\end{equation} 

\begin{define}\label{Definition 4.2} 
Fix any $\tau > 0$ and initial data $\theta^{\text{in}} \in L^{2} (\mathbb{T})$ that is mean-zero. Define a family of stopping times $\{T_{i} \}_{i \in \mathbb{N}_{0}}$ by 
\begin{equation}\label{Define Ti}
T_{0} \triangleq 0, \hspace{3mm} T_{i+1} (\omega, \theta^{\text{in}}) \triangleq \inf\{t \geq T_{i}: \hspace{1mm} \lVert w(t) \rVert_{L^{2}} \geq i + 1 \}  \wedge T^{\max} (\omega, \theta^{\text{in}}) 
\end{equation} 
with $T^{\max} (\omega, \theta^{\text{in}})$ from Proposition \ref{Proposition 2.1}. Set 
\begin{equation}\label{Define i0}
i_{0} (\theta^{\text{in}}) \triangleq \max\{ i \in \mathbb{N}_{0}: \hspace{1mm} i \leq \lVert \theta^{\text{in}} \rVert_{L^{2}} \} 
\end{equation} 
so that $T_{i} = 0$ if and only if $i \leq i_{0} (\theta^{\text{in}})$. Set 
\begin{equation}\label{est 29}
\lambda^{i} \triangleq (i+1)^{\tau} \hspace{1mm} \text{ and } \hspace{1mm} \lambda_{t} \triangleq 
\begin{cases}
(1+ \lceil \lVert \theta^{\text{in}} \rVert_{L^{2}} \rceil )^{\tau} & \text{ if } t = 0, \\
(1+ \lVert w(T_{i}) \rVert_{L^{2}})^{\tau} & \text{ if } t > 0 \text{ such that } t \in [T_{i}, T_{i+1}). 
\end{cases} 
\end{equation}  
As $\theta^{\text{in}} \in L^{2}(\mathbb{T})$, we have $i_{0} (\theta^{\text{in}}) < \infty$. Finally, $\lambda_{t} = \lambda^{i}$ for all $t \in [T_{i}, T_{i+1})$ such that $i > i_{0} (\theta^{\text{in}})$. 
\end{define}

\begin{proposition}\label{Proposition 4.4} 
Fix any $\kappa > 0, \tau > 0$ from Definition \ref{Definition 4.2}, $\mathcal{N}$ from Proposition \ref{Proposition 2.1}, $\mathcal{N}''$ from Proposition \ref{Proposition 4.1}, and define $N_{t}^{\kappa}$ from \eqref{Define Lt and Nt}. Then, for any $\delta \geq 0$ and $\omega \in \Omega \setminus ( \mathcal{N} \cup \mathcal{N} '')$, there exists a constant $C(\delta) > 0$ such that $w^{\mathcal{H}}$ satisfies 
\begin{equation}\label{Higher frequency estimate} 
\lVert w^{\mathcal{H}}(t, \omega) \rVert_{H^{1- 2 \kappa - \delta}} \leq C(\delta) (1+\lVert w(t,\omega) \rVert_{L^{2}})^{1- \tau \delta} N_{t}^{\kappa}(\omega) t^{\frac{\kappa}{4}} \hspace{3mm} \forall \hspace{1mm} t \in [0, T^{\max} (\omega, \theta^{\text{in}})). 
\end{equation} 
\end{proposition}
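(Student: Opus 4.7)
The plan is to start from the explicit definition $w^{\mathcal{H}}(t) = -\frac{1}{2}\partial_x(w \prec Q^{\mathcal{H}})(t)$ in \eqref{Define QH, wH, wL} and estimate layer by layer. Since $\partial_x \colon H^{s+1} \to H^{s}$ is bounded, it suffices to bound $(w \prec Q^{\mathcal{H}})(t)$ in $H^{2-2\kappa-\delta}$. To this end, I would apply the Bony paraproduct estimate \eqref{Bony 1} of Lemma \ref{Lemma 3.1} with $f = w(t) \in L^{2}$ and $g = Q^{\mathcal{H}}(t)$, choosing the auxiliary loss $\alpha = \kappa/2 > 0$, to obtain
\[
\lVert w \prec Q^{\mathcal{H}}(t) \rVert_{H^{2-2\kappa-\delta}} \lesssim_{\kappa} \lVert w(t) \rVert_{L^{2}} \lVert Q^{\mathcal{H}}(t) \rVert_{\mathcal{C}^{2-3\kappa/2-\delta}}.
\]

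Next, I would exploit that $Q^{\mathcal{H}}(t) = \mathcal{H}_{\lambda_{t}} Q(t)$ is frequency-localized above $\lambda_{t}/2$ and invoke the Bernstein-type bound \eqref{Bernstein} of Lemma \ref{Lemma 3.3}, which trades the regularity deficit $\delta$ for a factor of $\lambda_{t}^{-\delta}$:
\[
\lVert \mathcal{H}_{\lambda_{t}} Q(t) \rVert_{\mathcal{C}^{2-3\kappa/2-\delta}} \lesssim \lambda_{t}^{-\delta} \lVert Q(t) \rVert_{\mathcal{C}^{2-3\kappa/2}}.
\]
Since $2 - 3\kappa/2 < 2-\kappa$, the a priori estimate \eqref{Estimate on Q} for the solution of \eqref{Define Q} applies and yields $\lVert Q(t) \rVert_{\mathcal{C}^{2-3\kappa/2}} \lesssim \lVert X \rVert_{C_{t}\mathcal{C}_{x}^{-\kappa}} \, t^{\kappa/4}$; both the $t^{\kappa/4}$ factor and the $\lVert X \rVert_{C_{t}\mathcal{C}_{x}^{-\kappa}}$ factor (the latter absorbed into $N_{t}^{\kappa}$ via \eqref{Define Lt and Nt}) then appear exactly as required by \eqref{Higher frequency estimate}.

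Finally, I would convert $\lambda_{t}^{-\delta}$ into $(1+\lVert w(t) \rVert_{L^{2}})^{-\tau\delta}$ via Definition \ref{Definition 4.2}. For $t \in [T_{i}, T_{i+1})$ with $i > i_{0}(\theta^{\text{in}})$, the stopping-time construction \eqref{Define Ti} together with continuity of $s \mapsto \lVert w(s) \rVert_{L^{2}}$ force $\lVert w(T_{i}) \rVert_{L^{2}} = i$ while $\lVert w(t) \rVert_{L^{2}} < i+1$, so by \eqref{est 29} we have $\lambda_{t} = (1+\lVert w(T_{i}) \rVert_{L^{2}})^{\tau} \approx (1+\lVert w(t) \rVert_{L^{2}})^{\tau}$ up to a universal constant; the remaining cases $i \leq i_{0}(\theta^{\text{in}})$ (where $T_{i} = 0$) and $t = 0$ are handled analogously using \eqref{Define i0}-\eqref{est 29}. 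Combining the three displays above with the trivial bound $\lVert w(t) \rVert_{L^{2}} \cdot (1+\lVert w(t) \rVert_{L^{2}})^{-\tau\delta} \leq (1+\lVert w(t) \rVert_{L^{2}})^{1-\tau\delta}$ gives \eqref{Higher frequency estimate}.

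The computation is a cascade of standard harmonic-analytic estimates; the only real obstacle is the bookkeeping, namely the need to simultaneously respect the constraint $\alpha > 0$ in \eqref{Bony 1}, the threshold $\gamma < 2-\kappa$ in \eqref{Estimate on Q}, and the prescribed final exponent $\kappa/4$ of $t$ in \eqref{Higher frequency estimate}. The choice $\alpha = \kappa/2$ is the sweet spot that balances these three constraints and produces exactly the $t^{\kappa/4}$ factor. A minor additional point is that $\lambda_{t}$ is only piecewise constant in $t$ (via the stopping times) rather than smooth, but as Proposition \ref{Proposition 4.4} is a pointwise-in-$t$ estimate this irregularity of $\lambda_{t}$ plays no role here.
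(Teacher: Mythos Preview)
Your proposal is correct and follows essentially the same approach as the paper: apply \eqref{Bony 1} with the loss $\alpha=\kappa/2$, then \eqref{Bernstein} to extract $\lambda_t^{-\delta}$, then \eqref{Estimate on Q} with $\gamma=2-\tfrac{3\kappa}{2}$ to produce the $t^{\kappa/4}$ factor, and finally convert $\lambda_t^{-\delta}$ to $(1+\lVert w(t)\rVert_{L^2})^{-\tau\delta}$ via Definition~\ref{Definition 4.2}. The paper compresses all of this into a single displayed line \eqref{est 30}, but the content is identical.
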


\begin{proof}[Proof of Proposition \ref{Proposition 4.4}]
We can compute starting from \eqref{Define QH, wH, wL}, 
\begin{align}
\lVert w^{\mathcal{H}}(t) \rVert_{H^{1- 2 \kappa - \delta}} \overset{\eqref{Bony 1}\eqref{Bernstein}}{\lesssim} \lVert w(t) \rVert_{L^{2}} \lambda_{t}^{-\delta} \lVert Q(t) \rVert_{\mathcal{C}^{2- \frac{3\kappa}{2}}}  \overset{\eqref{Estimate on Q} \eqref{Define QH, wH, wL}}{\lesssim} \lVert w(t) \rVert_{L^{2}} \lambda_{t}^{-\delta} N_{t}^{\kappa} t^{\frac{\kappa}{4}} \label{est 30}
\end{align}
where we can additionally bound $\lambda_{t}^{-\delta} \leq C(\delta) (1+ \lVert w(t) \rVert_{L^{2}})^{-\tau \delta}$ so that \eqref{Higher frequency estimate} follows. 
\end{proof} 

We fix $i \in \mathbb{N}$ such that $i > i_{0} ( \theta^{\text{in}})$, $t \in [T_{i}, T_{i+1})$, and compute using \eqref{Define QH, wH, wL}, \eqref{Equation of w} and \eqref{Define commutator}, 
\begin{equation}\label{est 32}
\partial_{t} w^{\mathcal{L}} + \frac{1}{2} \partial_{x} \left(w^{2} + 2wY + 2wX - 2 w \prec \mathcal{H}_{\lambda_{t}}X - C^{\prec} (w, Q^{\mathcal{H}}) + Y^{2} \right) = \nu \partial_{x}^{2} w^{\mathcal{L}}. 
\end{equation} 
Taking $L^{2}(\mathbb{T})$-inner products on \eqref{est 32} with $w^{\mathcal{L}}$ leads to 
\begin{equation}\label{est 61}
\partial_{t} \lVert w^{\mathcal{L}}(t) \rVert_{L^{2}}^{2} = \sum_{k=1}^{4} \RomanI_{k}
\end{equation} 
where 
\begin{subequations}\label{est 33} 
\begin{align}
\RomanI_{1} \triangleq& 2 \langle w^{\mathcal{L}}, \nu \partial_{x}^{2} w^{\mathcal{L}} -  \partial_{x} (w^{\mathcal{L}} \mathcal{L}_{\lambda_{t}}X) \rangle_{L^{2}} (t),  \label{Define I1} \\
\RomanI_{2} \triangleq& - \langle w^{\mathcal{L}}, \partial_{x} (2 w^{\mathcal{L}} \mathcal{H}_{\lambda_{t}}X - 2 w^{\mathcal{L}} \prec \mathcal{H}_{\lambda_{t}} X ) \rangle_{L^{2}} (t),  \label{Define I2}\\
\RomanI_{3} \triangleq& - \langle w^{\mathcal{L}}, \partial_{x} (2 w^{\mathcal{H}} X - 2 w^{\mathcal{H}} \prec \mathcal{H}_{\lambda_{t}}X) \rangle_{L^{2}} (t), \label{Define I3}\\
\RomanI_{4} \triangleq& - \langle w^{\mathcal{L}}, \partial_{x} (w^{2} + 2wY - C^{\prec} (w, Q^{\mathcal{H}}) + Y^{2} \rangle_{L^{2}} (t). \label{Define I4}
\end{align}
\end{subequations} 
For $\RomanI_{1}$, although we do not have divergence-free property in contrast to the case of the Navier-Stokes equations or the MHD system, we can write  
\begin{align*}
 - 2 \int_{\mathbb{T}} w^{\mathcal{L}} \partial_{x} (w^{\mathcal{L}} \mathcal{L}_{\lambda_{t}} X) dx = 2 \int_{\mathbb{T}} \partial_{x} w^{\mathcal{L}} w^{\mathcal{L}} \mathcal{L}_{\lambda_{t}}X dx  = - \int_{\mathbb{T}} (w^{\mathcal{L}})^{2} \partial_{x} \mathcal{L}_{\lambda_{t}} X dx. 
\end{align*} 
Therefore, we can define a time-dependent family of operators 
\begin{equation}\label{Define At}
\mathcal{A}_{t} \triangleq [ \nu \partial_{x}^{2} - \partial_{x} X(t) ] - \infty \hspace{3mm} \forall \hspace{1mm} t \geq 0 
\end{equation} 
as the limit $\lambda\nearrow + \infty$ of 
\begin{equation}\label{Define At lambda}
\mathcal{A}_{t}^{\lambda} \triangleq [ \nu \partial_{x}^{2} - \partial_{x} \mathcal{L}_{\lambda} X(t) ] - r_{\lambda}(t), 
\end{equation} 
and write 
\begin{equation}\label{est 60}
\RomanI_{1} = - \nu \lVert w^{\mathcal{L}}(t) \rVert_{\dot{H}^{1}}^{2} + \langle w^{\mathcal{L}}, \mathcal{A}_{t}^{\lambda_{t}} w^{\mathcal{L}} \rangle_{L^{2}} (t) + r_{\lambda}(t) \lVert w^{\mathcal{L}}(t) \rVert_{L^{2}}^{2}. 
\end{equation} 
We now estimate $\RomanI_{2}, \RomanI_{3}$, and $\RomanI_{4}$. 
\begin{proposition}\label{Proposition 4.5}
Let $t \in [T_{i}, T_{i+1})$ and fix $\lambda_{t}$ from \eqref{est 29} with $\tau \in [1, \infty)$. Then, for any $\kappa_{0} \in (0,1), \eta \in [ \frac{1+ \kappa_{0}}{2}, 1)$ and all $\kappa \in (0, \kappa_{0}]$, $\RomanI_{2}$ from \eqref{Define I2} satisfies 
\begin{equation}\label{est 272}
\lvert \RomanI_{2} \rvert \lesssim \lVert w^{\mathcal{L}}(t) \rVert_{\dot{H}^{\eta}} N_{t}^{\kappa}. 
\end{equation} 
\end{proposition}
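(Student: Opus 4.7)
\emph{Proof proposal.} The key structural observation is that the subtraction inside \eqref{Define I2} is designed precisely to cancel the Bony paraproduct. Using the decomposition $fg = f \prec g + f \succ g + f \circ g$, one obtains
\begin{equation}
\RomanI_2 = -2 \langle w^{\mathcal{L}}, \partial_x\left( w^{\mathcal{L}} \succ \mathcal{H}_{\lambda_t} X + w^{\mathcal{L}} \circ \mathcal{H}_{\lambda_t} X\right)\rangle_{L^2}(t),
\end{equation}
so that only the two ``good'' Bony pieces survive; the dangerous low-frequency-$w^{\mathcal{L}}$ times high-frequency-$X$ paraproduct has been eliminated. This is the Da Prato--Debussche type renormalization hidden in the definition of $\RomanI_2$.

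The next step is to move the spatial derivative via duality, which on $\mathbb{T}$ gives
\begin{equation}
|\RomanI_2| \lesssim \lVert w^{\mathcal{L}}(t)\rVert_{\dot{H}^{\eta}} \, \lVert w^{\mathcal{L}} \succ \mathcal{H}_{\lambda_t} X + w^{\mathcal{L}} \circ \mathcal{H}_{\lambda_t} X \rVert_{\dot{H}^{1-\eta}},
\end{equation}
using $\lVert \partial_x f\rVert_{\dot H^{-\eta}} \lesssim \lVert f\rVert_{\dot H^{1-\eta}}$. I would then estimate the remaining factor by Bony's inequalities from Lemma \ref{Lemma 3.1}. For the $\succ$ piece, \eqref{Bony 4} with $\beta=-\kappa<0$ gives
\begin{equation}
\lVert w^{\mathcal{L}} \succ \mathcal{H}_{\lambda_t} X\rVert_{\dot H^{1-\eta}} \lesssim \lVert w^{\mathcal{L}}\rVert_{H^{1-\eta+\kappa}} \lVert \mathcal{H}_{\lambda_t} X\rVert_{\mathcal{C}^{-\kappa}}.
\end{equation}
For the resonant piece, \eqref{Bony 5} applies since $(1-\eta+\kappa)+(-\kappa) = 1-\eta > 0$ by the hypothesis $\eta<1$, giving the same bound. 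Since $\mathcal{H}_{\lambda_t}$ is a Fourier multiplier bounded on $\mathcal{C}^{-\kappa}$, both are controlled by $N_t^{\kappa}$.

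Finally, the hypothesis $\eta \ge \frac{1+\kappa_0}{2} \ge \frac{1+\kappa}{2}$ is exactly what makes $1-\eta+\kappa \le \eta$; together with the fact that $w^{\mathcal{L}}$ is mean-zero (inherited from $w$, since $w^{\mathcal H}$ is an $x$-derivative of a paraproduct, and $v,Y$ are mean-zero) this allows Sobolev interpolation / Poincar\'e to yield $\lVert w^{\mathcal{L}}\rVert_{H^{1-\eta+\kappa}} \lesssim \lVert w^{\mathcal{L}}\rVert_{\dot H^{\eta}}$, which closes the estimate. The main obstacle is really a bookkeeping one: verifying that all three regularity exponents fall simultaneously inside the admissible range of the three Bony inequalities used, and it is the window $[\tfrac{1+\kappa_0}{2},1)$ on $\eta$ that ensures this.
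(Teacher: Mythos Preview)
Your proof is essentially identical to the paper's: the same Bony decomposition to $\succ+\circ$, the same duality pairing $\dot H^{\eta}\times \dot H^{1-\eta}$, and the same application of \eqref{Bony 4}--\eqref{Bony 5} to reach $\lVert w^{\mathcal{L}}\rVert_{\dot H^{\eta}}\lVert w^{\mathcal{L}}\rVert_{\dot H^{1-\eta+\kappa}}N_t^{\kappa}\lesssim \lVert w^{\mathcal{L}}\rVert_{\dot H^{\eta}}^{2}N_t^{\kappa}$. Note that both your argument and the paper's actually yield this \emph{squared} bound (which is what is carried into Corollary~\ref{Corollary 4.8}); the single power of $\lVert w^{\mathcal{L}}\rVert_{\dot H^{\eta}}$ in \eqref{est 272} appears to be a typo in the statement.
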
 

\begin{proof}[Proof of Proposition \ref{Proposition 4.5}]
Starting from \eqref{Define I2}, we rewrite 
\begin{equation*}
w^{\mathcal{L}} \mathcal{H}_{\lambda_{t}} X - w^{\mathcal{L}} \prec \mathcal{H}_{\lambda_{t}}X = w^{\mathcal{L}} \succ \mathcal{H}_{\lambda_{t}} X + w^{\mathcal{L}} \circ \mathcal{H}_{\lambda_{t}} X 
\end{equation*} 
and estimate using \eqref{Bony 4} and \eqref{Bony 5}, 
\begin{equation}
\lvert \RomanI_{2} \rvert \lesssim \lVert w^{\mathcal{L}}(t) \rVert_{\dot{H}^{\eta}} \lVert w^{\mathcal{L}} (t) \rVert_{\dot{H}^{1- \eta + \kappa}} \lVert \mathcal{H}_{\lambda_{t}} X(t) \rVert_{\mathcal{C}^{-\kappa}}  \overset{\eqref{Define Lt and Nt}}{\lesssim} \lVert w^{\mathcal{L}}(t) \rVert_{\dot{H}^{\eta}}^{2} N_{t}^{\kappa}. 
\end{equation} 
\end{proof} 

\begin{proposition}\label{Proposition 4.6}
Let $t \in [T_{i}, T_{i+1})$ and fix $\lambda_{t}$ from \eqref{est 29}. 
\begin{enumerate}
\item Let $\tau \in [2,\infty), \kappa_{0} \in (0, \frac{1}{6})$ and $\eta \in [\frac{1}{\tau} + 3 \kappa_{0}, 1)$. Then, for all $\kappa \in (0, \kappa_{0}]$, $\RomanI_{3}$ from \eqref{Define I3} satisfies 
\begin{equation}\label{est 38}
\lvert \RomanI_{3} \rvert \lesssim \lVert w^{\mathcal{L}}(t) \rVert_{\dot{H}^{\eta}} (N_{t}^{\kappa})^{2} \lambda_{t}^{1- \eta + 2 \kappa}. 
\end{equation} 
\item Let $\tau \in [\frac{25}{12}, \infty)$, $\kappa_{0} \in (0, \frac{1}{150})$, and $\eta \in (\frac{1}{2}, 1)$. Then, for all $\kappa \in (0, \kappa_{0}]$, $- \langle w^{\mathcal{L}}, \partial_{x} w^{2} \rangle_{L^{2}}$ of $\RomanI_{4}$ from \eqref{Define I4} satisfies 
\begin{equation}\label{est 39}
\lvert \langle w^{\mathcal{L}}, \partial_{x} w^{2} \rangle_{L^{2}} (t) \rvert \lesssim \lVert w^{\mathcal{L}}(t) \rVert_{\dot{H}^{\eta}} ( \lVert w^{\mathcal{L}}(t) \rVert_{\dot{H}^{\eta}} + N_{t}^{\kappa}) N_{t}^{\kappa}. 
\end{equation}  
\end{enumerate} 
\end{proposition}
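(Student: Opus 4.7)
For both parts the plan is to split $w = w^{\mathcal{L}} + w^{\mathcal{H}}$ (or expand the relevant product in paraproducts), dualize in $\dot H^\eta$ after an integration by parts, and control each $w^{\mathcal{H}}$-factor through Proposition \ref{Proposition 4.4}, choosing the free parameter $\delta$ there so that $\tau\delta \geq 1$ makes the prefactor $(1+\|w(t)\|_{L^2})^{1-\tau\delta}$ harmless. The hypotheses $\eta \geq 1/\tau + 3\kappa_0$ in (1) and $\tau \geq 25/12$ with $\eta > 1/2$ in (2) are tailored so that such a $\delta$ exists in each paraproduct summand.

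For part (1) I would write, using $X = \mathcal{L}_{\lambda_t}X + \mathcal{H}_{\lambda_t}X$ and Bony's identity $fg = f\prec g + f\succ g + f\circ g$,
\begin{equation*}
w^{\mathcal{H}} X - w^{\mathcal{H}} \prec \mathcal{H}_{\lambda_t} X = w^{\mathcal{H}} \mathcal{L}_{\lambda_t} X + w^{\mathcal{H}} \succ \mathcal{H}_{\lambda_t} X + w^{\mathcal{H}} \circ \mathcal{H}_{\lambda_t} X,
\end{equation*}
and then bound $|\RomanI_3| \lesssim \|w^{\mathcal{L}}\|_{\dot H^\eta}\|F\|_{\dot H^{1-\eta}}$ for the corresponding sum $F$ via an integration by parts. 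The $\succ$ and $\circ$ summands are handled by \eqref{Bony 4}--\eqref{Bony 5} with $\alpha = 1-\eta+\kappa$, $\beta = -\kappa$, which requires $\|w^{\mathcal{H}}\|_{H^{1-\eta+\kappa}} \lesssim N_t^\kappa$; Proposition \ref{Proposition 4.4} with $\delta = \eta - 3\kappa$ delivers this exactly when $\tau(\eta - 3\kappa) \geq 1$, i.e.\ under the stated hypothesis. The remaining piece $w^{\mathcal{H}}\mathcal{L}_{\lambda_t}X$ is further expanded in paraproducts; the Bernstein estimate $\|\mathcal{L}_{\lambda_t}X\|_{\mathcal{C}^{1-\eta+\kappa}} \lesssim \lambda_t^{1-\eta+2\kappa}\|X\|_{\mathcal{C}^{-\kappa}}$ from Lemma \ref{Lemma 3.3} is the sole source of the advertised factor $\lambda_t^{1-\eta+2\kappa}$.

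For part (2) I would expand $w^2 = (w^{\mathcal{L}})^2 + 2 w^{\mathcal{L}} w^{\mathcal{H}} + (w^{\mathcal{H}})^2$ and eliminate the purely low-frequency contribution by periodicity: $\langle w^{\mathcal{L}}, \partial_x (w^{\mathcal{L}})^2\rangle_{L^2} = \tfrac{2}{3}\int_{\mathbb{T}} \partial_x (w^{\mathcal{L}})^3 \, dx = 0$. A short integration by parts collapses the remaining cross terms to
\begin{equation*}
\langle w^{\mathcal{L}}, \partial_x w^2\rangle_{L^2} = \int_{\mathbb{T}} (w^{\mathcal{L}})^2 \partial_x w^{\mathcal{H}} \, dx - \int_{\mathbb{T}} \partial_x w^{\mathcal{L}} \, (w^{\mathcal{H}})^2 \, dx.
\end{equation*}
For the first integral I would apply $H^\eta / H^{-\eta}$ duality, the 1D algebra property $\|(w^{\mathcal{L}})^2\|_{H^\eta} \lesssim \|w^{\mathcal{L}}\|_{H^\eta}^2$ (valid since $H^\eta \hookrightarrow L^\infty$ for $\eta > 1/2$), and $\|w^{\mathcal{H}}\|_{H^{1-\eta}} \lesssim N_t^\kappa$ from Proposition \ref{Proposition 4.4} with $\delta = \eta - 2\kappa$, producing the $\|w^{\mathcal{L}}\|_{\dot H^\eta}^2 N_t^\kappa$ piece. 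For the second, Lemma \ref{Product estimate lemma} in $d = 1$ with $\sigma_1 = \sigma_2 = \tfrac{3}{4} - \tfrac{\eta}{2} \in (\tfrac{1}{4}, \tfrac{1}{2})$ gives $\|(w^{\mathcal{H}})^2\|_{\dot H^{1-\eta}} \lesssim \|w^{\mathcal{H}}\|_{\dot H^{3/4 - \eta/2}}^2 \lesssim (N_t^\kappa)^2$ once $\tau(\tfrac{1}{4} + \tfrac{\eta}{2} - 2\kappa) \geq 1$. Together with the constraint from the first integral, this pins down $\tau \geq 25/12$ and $\kappa_0 < 1/150$, and yields the remaining $\|w^{\mathcal{L}}\|_{\dot H^\eta} (N_t^\kappa)^2$ piece.

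The main obstacle I anticipate is the simultaneous calibration of $\delta$ across these paraproduct pieces: every invocation of Proposition \ref{Proposition 4.4} forces a different Sobolev regularity for $w^{\mathcal{H}}$, hence a different ``$\tau\delta \geq 1$'' condition, and one must verify that the choice $\eta > 1/2$ (mandated both by the 1D Sobolev embedding into $L^\infty$ and by the $\sigma_i < 1/2$ constraint in Lemma \ref{Product estimate lemma}) is compatible with all of these lower bounds at a common $(\eta, \tau, \kappa_0)$. It is exactly this tension that narrows the admissible parameter window to the one stated in the proposition.
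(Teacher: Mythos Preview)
Your proposal is correct and follows the paper's proof essentially line by line: the paraproduct splitting of $\RomanI_3$ via $X = \mathcal{L}_{\lambda_t}X + \mathcal{H}_{\lambda_t}X$, the cancellation $\langle w^{\mathcal{L}}, \partial_x(w^{\mathcal{L}})^2\rangle_{L^2} = 0$, and the high--high estimate $\|(w^{\mathcal{H}})^2\|_{\dot H^{1-\eta}} \lesssim \|w^{\mathcal{H}}\|_{\dot H^{3/4-\eta/2}}^2$ via Lemma~\ref{Product estimate lemma} all coincide with the paper. The single variation is in the $\int (w^{\mathcal{L}})^2\,\partial_x w^{\mathcal{H}}$ piece: you dualize at level $\eta$ and use the $H^\eta$ algebra property together with $\delta = \eta - 2\kappa$ in Proposition~\ref{Proposition 4.4}, whereas the paper dualizes at $\dot H^{1/2-\kappa}$, applies Lemma~\ref{Product estimate lemma} to $(w^{\mathcal{L}})^2$, and takes $\delta = \tfrac12 - 3\kappa$; it is this last choice that saturates the stated thresholds (the constraint $\tau(\tfrac12-3\kappa)\geq 1$ becomes an equality at $\tau=\tfrac{25}{12}$, $\kappa=\tfrac{1}{150}$), while your route is a bit simpler and in fact leaves slack in the admissible $(\tau,\kappa_0)$ window.
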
 

\begin{proof}[Proof of Proposition \ref{Proposition 4.6}]
We first decompose 
\begin{equation}\label{est 37}
\lvert \RomanI_{3} \rvert \leq \RomanI_{31} + \RomanI_{32} 
\end{equation} 
where 
\begin{equation}\label{Define I31 and I32}
\RomanI_{31} \triangleq 2 \lvert \langle w^{\mathcal{L}}, \partial_{x} (w^{\mathcal{H}} \mathcal{L}_{\lambda_{t}} X) \rangle_{L^{2}}(t) \rvert  \text{ and }  \RomanI_{32} \triangleq 2 \lvert \langle w^{\mathcal{L}}, \partial_{x} (w^{\mathcal{H}} \succ \mathcal{H}_{\lambda_{t}} X + w^{\mathcal{H}} \circ \mathcal{H}_{\lambda_{t}} X) \rangle_{L^{2}} (t) \rvert. 
\end{equation} 
For $\RomanI_{31}$, we first estimate 
\begin{subequations}\label{est 34} 
\begin{align}
&  \lVert w^{\mathcal{H}} \prec \mathcal{L}_{\lambda_{t}} X (t) \rVert_{\dot{H}^{1-\eta}} + \lVert w^{\mathcal{H}} \circ \mathcal{L}_{\lambda_{t}}X(t) \rVert_{\dot{H}^{1-\eta}}  \nonumber \\
& \hspace{17mm}  \overset{\eqref{Bony 3}\eqref{Bony 5}}{\lesssim} \lVert w^{\mathcal{H}}(t) \rVert_{H^{-\kappa}} \lVert \mathcal{L}_{\lambda_{t}} X(t) \rVert_{\mathcal{C}^{1+ \kappa - \eta}} 
\overset{\eqref{Bernstein} \eqref{Higher frequency estimate}\eqref{Define Lt and Nt}}{\lesssim}  \lambda_{t}^{1+ 2 \kappa - \eta} (N_{t}^{\kappa})^{2}, \\
&  \lVert w^{\mathcal{H}} \succ \mathcal{L}_{\lambda_{t}} X(t) \rVert_{\dot{H}^{1-\eta}}  \overset{\eqref{Bony 4}}{\lesssim} \lVert w^{\mathcal{H}} (t) \rVert_{H^{1-\eta + \kappa}} \lVert \mathcal{L}_{\lambda_{t}} X(t) \rVert_{\mathcal{C}^{-\kappa}}  \overset{\eqref{Higher frequency estimate} \eqref{Define Lt and Nt}}{\lesssim} (N_{t}^{\kappa})^{2}. 
\end{align}
\end{subequations} 
Applying \eqref{est 34} to \eqref{Define I31 and I32} gives us   
\begin{equation}\label{est 36}
\RomanI_{31} \lesssim \lVert w^{\mathcal{L}} (t) \rVert_{\dot{H}^{\eta}} \lVert w^{\mathcal{H}} \mathcal{L}_{\lambda_{t}} X(t) \rVert_{\dot{H}^{1-\eta}} \lesssim \lVert w^{\mathcal{L}} (t) \rVert_{\dot{H}^{\eta}} \lambda_{t}^{1- \eta + 2 \kappa} (N_{t}^{\kappa})^{2}.
\end{equation} 
On the other hand, we estimate from \eqref{Define I31 and I32}, 
\begin{align}
\RomanI_{32} \lesssim& \lVert w^{\mathcal{L}}(t) \rVert_{\dot{H}^{\eta}} [ \lVert w^{\mathcal{H}} \succ \mathcal{H}_{\lambda_{t}} X(t) \rVert_{\dot{H}^{1-\eta}} + \lVert w^{\mathcal{H}} \circ \mathcal{H}_{\lambda_{t}} X(t) \rVert_{\dot{H}^{1-\eta}} ]  \nonumber \\
& \hspace{5mm} \overset{\eqref{Bony 4}\eqref{Bony 5}}{\lesssim} \lVert w^{\mathcal{L}} (t) \rVert_{\dot{H}^{\eta}}  \lVert w^{\mathcal{H}}(t) \rVert_{H^{1- \eta + \kappa}} \lVert \mathcal{H}_{\lambda_{t}} X(t) \rVert_{\mathcal{C}^{-\kappa}} \overset{\eqref{Higher frequency estimate} \eqref{Define Lt and Nt}}{\lesssim}  \lVert w^{\mathcal{L}}(t) \rVert_{\dot{H}^{\eta}} (N_{t}^{\kappa})^{2}. \label{est 35}
\end{align}
Applying \eqref{est 36} and \eqref{est 35} to \eqref{est 37} gives us \eqref{est 38}. 

To prove \eqref{est 39}, we can write using $\int_{\mathbb{T}} w^{\mathcal{L}} w^{\mathcal{L}} \partial_{x} w^{\mathcal{L}} dx = 0$, 
\begin{align}
 \lvert \langle w^{\mathcal{L}}, \partial_{x} w^{2} \rangle_{L^{2}}(t) \rvert \overset{\eqref{Define QH, wH, wL}}{=}& 2 \left\lvert  \int_{\mathbb{T}} w^{\mathcal{L}} (w^{\mathcal{L}} \partial_{x} w^{\mathcal{H}} + w^{\mathcal{H}} \partial_{x} w^{\mathcal{L}} + w^{\mathcal{H}} \partial_{x} w^{\mathcal{H}} ) (t) dx \right\rvert  \nonumber \\
 & \hspace{12mm} = \left\lvert \int_{\mathbb{T}} [  \lvert w^{\mathcal{L}} \rvert^{2} \partial_{x} w^{\mathcal{H}} + w^{\mathcal{L}}  \partial_{x}\lvert w^{\mathcal{H}}\rvert^{2} ] (t) dx \right\rvert. \label{est 40} 
\end{align} 
We estimate the first term in \eqref{est 40} by Lemma \ref{Product estimate lemma} with $d = 1$ as follows: 
\begin{equation}\label{est 41} 
\left\lvert \int_{\mathbb{T}} \lvert w^{\mathcal{L}} \rvert^{2} \partial_{x} w^{\mathcal{H}}(t) dx \right\rvert \overset{\eqref{Higher frequency estimate}}{\lesssim}\left\lVert \lvert w^{\mathcal{L}} (t) \rvert^{2} \right\rVert_{\dot{H}^{\frac{1}{2} - \kappa}} (1+ \lVert w(t) \rVert_{L^{2}})^{1- \tau (\frac{1}{2} - 3 \kappa)} N_{t}^{\kappa} \overset{\eqref{Product estimate}}{\lesssim} \lVert w^{\mathcal{L}}(t) \rVert_{\dot{H}^{\eta}}^{2} N_{t}^{\kappa}.
\end{equation} 
We estimate the second term in \eqref{est 40} also by Lemma \ref{Product estimate lemma} with $d = 1$: 
\begin{align} 
\left\lvert \int_{\mathbb{T}} w^{\mathcal{L}} \partial_{x} \lvert w^{\mathcal{H}} \rvert^{2}(t) dx \right\rvert \lesssim& \lVert w^{\mathcal{L}}(t) \rVert_{\dot{H}^{\eta}} \left\lVert \lvert w^{\mathcal{H}}(t) \rvert^{2} \right\rVert_{\dot{H}^{1-\eta}} \nonumber \\
&\overset{\eqref{Product estimate}}{\lesssim} \lVert w^{\mathcal{L}}(t) \rVert_{\dot{H}^{\eta}}\lVert w^{\mathcal{H}}(t) \rVert_{\dot{H}^{\frac{3}{4} - \frac{\eta}{2}}}^{2} \overset{\eqref{Higher frequency estimate}}{\lesssim}  \lVert w^{\mathcal{L}}(t) \rVert_{\dot{H}^{\eta}} (N_{t}^{\kappa})^{2}.\label{est 42}
\end{align} 
Applying \eqref{est 41} and \eqref{est 42} to \eqref{est 40} verifies \eqref{est 39}. 
\end{proof} 

\begin{proposition}\label{Proposition 4.7} 
Let $t \in [T_{i}, T_{i+1})$ and fix $\lambda_{t}$ from \eqref{est 29}. Let $\tau \in [\frac{25}{12}, \infty), \kappa_{0} \in (0, \frac{1}{100})$, and $\eta \in [\frac{3}{4}, 1)$. Then, for all $\kappa \in (0, \kappa_{0}]$, $\RomanI_{4}$ from \eqref{Define I4} satisfies 
\begin{align}
\RomanI_{4} + \langle w^{\mathcal{L}}, \partial_{x} w^{2} \rangle_{L^{2}} (t) \lesssim& N_{t}^{\kappa} \lVert w^{\mathcal{L}}(t) \rVert_{\dot{H}^{1- \frac{3\kappa}{2}}} (\lVert w^{\mathcal{L}} (t) \rVert_{\dot{H}^{2\kappa}}   + N_{t}^{\kappa}) \nonumber\\
& \hspace{10mm} + (N_{t}^{\kappa})^{2} \lVert w^{\mathcal{L}}(t) \rVert_{\dot{H}^{\eta}} (\lVert w^{\mathcal{L}}(t) \rVert_{\dot{H}^{\eta}} + N_{t}^{\kappa}). \label{est 59}
\end{align} 
\end{proposition}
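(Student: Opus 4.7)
The plan is to write
\[
\RomanI_4 + \langle w^{\mathcal{L}}, \partial_x w^2 \rangle_{L^2}(t) = -\langle w^{\mathcal{L}}, \partial_x(2wY + Y^2) \rangle_{L^2}(t) + \langle w^{\mathcal{L}}, \partial_x C^{\prec}(w, Q^{\mathcal{H}}) \rangle_{L^2}(t)
\]
and estimate the three resulting pieces separately, exploiting the splitting $w = w^{\mathcal{L}} + w^{\mathcal{H}}$ and the $H^{1-2\kappa-\delta}$-control of $w^{\mathcal{H}}$ from Proposition \ref{Proposition 4.4}. The mean-zero property of $w$, $w^{\mathcal{L}}$, and $Y$ will be used throughout to absorb $L^2$-norms into $\dot H^{2\kappa}$ via Poincar\'e.

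For the $wY$ and $Y^2$ terms, I would move the spatial derivative by integration by parts and pair by Sobolev duality $\lvert \langle f, g \rangle\rvert \leq \lVert f\rVert_{H^{-s}} \lVert g\rVert_{H^{s}}$ with $s = 3\kappa/2$. The first factor gives $\lVert w^{\mathcal{L}}\rVert_{\dot H^{1-3\kappa/2}}$, and the second factor $\lVert w Y\rVert_{H^{3\kappa/2}}$ or $\lVert Y^2\rVert_{H^{3\kappa/2}}$ is handled through the Bony decomposition together with Lemma \ref{Lemma 3.1}. The contribution from $w = w^{\mathcal{L}}$ inside $wY$ reduces to $N_t^{\kappa} \lVert w^{\mathcal{L}}\rVert_{\dot H^{2\kappa}}$, matching the first summand on the right-hand side of \eqref{est 59}. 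The $w = w^{\mathcal{H}}$ contribution and the $Y^2$ contribution are bounded via Proposition \ref{Proposition 4.4} with $\delta$ chosen so that $\tau \delta > 1$ (guaranteed by $\tau \geq 25/12$), producing an extra factor $N_t^{\kappa}$ and fitting into $N_t^{\kappa} \lVert w^{\mathcal{L}}\rVert_{\dot H^{1-3\kappa/2}} \cdot N_t^{\kappa}$.

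For the commutator $C^{\prec}(w, Q^{\mathcal{H}})$, I would use that $\lambda_t$ is constant on $[T_i, T_{i+1})$ and that $\mathcal{H}_{\lambda^i}$ commutes with $\partial_t - \nu \partial_x^2$, so that $(\partial_t - \nu \partial_x^2) Q^{\mathcal{H}} = 2 \mathcal{H}_{\lambda^i} X$. Combined with the exact Leibniz rule $\partial_x(f \prec g) = (\partial_x f) \prec g + f \prec (\partial_x g)$ for the paraproduct (valid because $\partial_x$ commutes with Littlewood-Paley blocks), this reduces
\[
C^{\prec}(w, Q^{\mathcal{H}}) = \bigl[(\partial_t - \nu \partial_x^2) w\bigr] \prec Q^{\mathcal{H}} - 2\nu (\partial_x w) \prec (\partial_x Q^{\mathcal{H}}).
\]
Substituting $(\partial_t - \nu \partial_x^2) w = -\frac{1}{2} \partial_x(w^2 + 2wY + 2wX + Y^2)$ from \eqref{Equation of w} produces paraproducts of the form $[\partial_x(\cdot)] \prec Q^{\mathcal{H}}$ together with $(\partial_x w) \prec (\partial_x Q^{\mathcal{H}})$. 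Each is paired against $w^{\mathcal{L}}$ in $\dot H^{\eta}$ by Sobolev duality and estimated through Bony's inequalities, using $Q^{\mathcal{H}} \in \mathcal{C}^{2-3\kappa/2}$ with norm $\lesssim N_t^{\kappa} t^{\kappa/4}$ from \eqref{Estimate on Q}. Splitting $w = w^{\mathcal{L}} + w^{\mathcal{H}}$ inside each nonlinearity and invoking Proposition \ref{Proposition 4.4} with $\tau \delta > 1$ for every $w^{\mathcal{H}}$ factor supplies the $(N_t^{\kappa})^{2} \lVert w^{\mathcal{L}}\rVert_{\dot H^{\eta}} (\lVert w^{\mathcal{L}}\rVert_{\dot H^{\eta}} + N_t^{\kappa})$ portion of the bound.

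The main obstacle is the $wX$ contribution arising after substitution into the commutator: since the resonant piece $w \circ X$ is classically ill-defined, $\partial_x(wX) \prec Q^{\mathcal{H}}$ must be reinterpreted through the paracontrolled structure, using the enhanced noise $\partial_x \mathcal{L}_{\lambda^i} X \circ P^{\lambda^i} - r_{\lambda^i}$ from \eqref{est 64} (bounded by $N_t^{\kappa}$ in \eqref{Define Lt and Nt}) together with the standard paracontrolled commutator lemmas, in the spirit of \cite[Proposition 6.1]{HR24}. The simultaneous requirements that all Sobolev and H\"older exponents lie in admissible ranges for the Bony estimates and that $\tau \delta > 1$ holds with $\delta$ matching each paraproduct estimate are precisely what dictate $\kappa_0 < 1/100$, $\tau \geq 25/12$, and $\eta \in [3/4, 1)$.
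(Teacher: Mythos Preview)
Your overall strategy matches the paper's proof: the same decomposition into $wY$, $Y^2$, and commutator pieces; the same commutator identity (the paper's \eqref{est 46a}--\eqref{est 46b}); and the same substitution of $(\partial_t - \nu\partial_x^2)w$ from \eqref{Equation of w}, producing exactly the five pieces $C_1,\ldots,C_5$ you describe.

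However, your identification of ``the main obstacle'' is mistaken. The resonant term $w \circ X$ is \emph{not} ill-defined here: $w$ carries positive Sobolev regularity (at least $H^{3\kappa/2}$ from Proposition~\ref{Proposition 4.2}, and indeed $H^{1-2\kappa}$ via the splitting $w = w^{\mathcal{L}} + w^{\mathcal{H}}$ together with Proposition~\ref{Proposition 4.4}), while $X \in \mathcal{C}^{-\kappa}$. Since $\frac{3\kappa}{2} - \kappa = \frac{\kappa}{2} > 0$, the Bony resonant estimate \eqref{Bony 5} applies directly, and the paper treats the $wX$ piece $C_3$ by plain Bony estimates (see \eqref{est 49}--\eqref{est 53}) with no paracontrolled machinery and no enhanced noise. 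The enhanced noise $(\partial_x \mathcal{L}_{\lambda^i} X)\circ P^{\lambda^i} - r_{\lambda^i}$ enters the argument only through $\RomanI_1$ via $\mathcal{A}_t^{\lambda}$ and Proposition~\ref{Proposition 2.4}; it plays no role in $\RomanI_4$. Invoking paracontrolled structure here would be an unnecessary complication and does not match the target bound \eqref{est 59}.

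A secondary point: for the quadratic piece $C_1$ (from $(\partial_x w^2)\prec Q^{\mathcal{H}}$) the paper does \emph{not} split $w = w^{\mathcal{L}} + w^{\mathcal{H}}$ as you suggest. Instead it uses Bernstein \eqref{Bernstein} on $Q^{\mathcal{H}} = \mathcal{H}_{\lambda_t}Q$ to extract a negative power $\lambda_t^{2\kappa - \eta} \approx (1 + \lVert w\rVert_{L^2})^{\tau(2\kappa - \eta)}$, and then Lemma~\ref{Product estimate lemma} to bound $\lVert w\rVert_{\dot H^{1/4 - \kappa/4}}^2$; the arithmetic constraint $\tau(2\kappa - \eta) + 2\bigl(1 - \frac{1-\kappa}{4\eta}\bigr) \le 0$ in \eqref{est 51} is precisely what forces $\tau \ge 25/12$ with $\eta \ge 3/4$. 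Your ``$\tau\delta > 1$'' heuristic gestures at the right idea but misses this specific mechanism.
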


\begin{proof}[Proof of Proposition \ref{Proposition 4.7}]
By definition from \eqref{Define I4} we have 
\begin{equation}\label{est 43} 
\RomanI_{4} + \langle w^{\mathcal{L}}, \partial_{x} w^{2} \rangle_{L^{2}}(t)= - \langle w^{\mathcal{L}}, \partial_{x} (2 wY - C^{\prec}(w, Q^{\mathcal{H}}) + Y^{2} )\rangle_{L^{2}}(t). 
\end{equation} 
Concerning the first term in \eqref{est 43}, we estimate using \eqref{Bony 3}, \eqref{Bony 4}, \eqref{Bony 5}, and \eqref{Define Lt and Nt}, 
\begin{subequations}\label{est 44}
\begin{align}
& \lVert w \prec Y(t) \rVert_{\dot{H}^{\frac{3\kappa}{2}}} + \lVert w \circ Y(t) \rVert_{\dot{H}^{\frac{3\kappa}{2}}} \lesssim \lVert w(t) \rVert_{H^{-\frac{\kappa}{2}}} \lVert Y(t) \rVert_{\mathcal{C}^{2\kappa}} \lesssim \lVert w(t) \rVert_{H^{-\frac{\kappa}{2}}} N_{t}^{\kappa}, \\
&\lVert w \succ Y(t) \rVert_{\dot{H}^{\frac{3\kappa}{2}}} \lesssim \lVert w(t) \rVert_{H^{2\kappa}} \lVert Y(t) \rVert_{\mathcal{C}^{-\frac{\kappa}{2}}} \lesssim \lVert w(t) \rVert_{H^{2\kappa}} N_{t}^{\kappa}, 
\end{align}
\end{subequations} 
so that further application of \eqref{Higher frequency estimate} gives us 
\begin{equation}\label{est 56}
2 \lvert \langle w^{\mathcal{L}}, \partial_{x} (wY) \rangle_{L^{2}}(t) \rvert \lesssim N_{t}^{\kappa} \lVert w^{\mathcal{L}}(t) \rVert_{\dot{H}^{1- \frac{3\kappa}{2}}} \lVert w(t) \rVert_{H^{2\kappa}} \lesssim N_{t}^{\kappa} \lVert w^{\mathcal{L}}(t) \rVert_{\dot{H}^{1- \frac{3\kappa}{2}}} ( \lVert w^{\mathcal{L}}(t) \rVert_{H^{2\kappa}} + N_{t}^{\kappa}).
\end{equation} 
Before the commutator term, we estimate the third term in \eqref{est 43} via \eqref{Bony 3}, \eqref{Bony 5}, and \eqref{Define Lt and Nt}, 
\begin{equation}\label{est 58}
- \langle w^{\mathcal{L}}, \partial_{x} Y^{2} \rangle_{L^{2}} (t) \lesssim \lVert w^{\mathcal{L}}(t) \rVert_{\dot{H}^{1- \frac{3\kappa}{2}}} [ \lVert Y \prec Y (t) \rVert_{\dot{H}^{\frac{3\kappa}{2}}} + \lVert Y \circ Y (t) \rVert_{\dot{H}^{\frac{3\kappa}{2}}} ] \lesssim (N_{t}^{\kappa})^{2} \lVert w^{\mathcal{L}}(t) \rVert_{\dot{H}^{1- \frac{3\kappa}{2}}}. 
\end{equation} 
Next, concerning the commutator term in \eqref{est 43}, we see from \eqref{Define commutator a} that writing out $\partial_{t} (w \prec Q^{\mathcal{H}})$ gives us a cancellation of $w \prec \partial_{t} Q^{\mathcal{H}}$ that leads us to 
\begin{equation}\label{est 45} 
C^{\prec} (w, Q^{\mathcal{H}}) = \partial_{t} w \prec Q^{\mathcal{H}}  - \nu \partial_{x}^{2}(w \prec Q^{\mathcal{H}})  + \nu w \prec \partial_{x}^{2} Q^{\mathcal{H}}.
\end{equation} 
Additionally writing out $\partial_{x}^{2}(w \prec Q^{\mathcal{H}})$ gives us another cancellation of $\nu w \prec \partial_{x}^{2} Q^{\mathcal{H}}$ that leads to, due to \eqref{Equation of w},  
\begin{subequations}
\begin{align}
C^{\prec} (w, Q^{\mathcal{H}}) =& (\partial_{t} - \nu \partial_{x}^{2}) w \prec Q^{\mathcal{H}} - 2 \nu \partial_{x} w \prec \partial_{x} Q^{\mathcal{H}} \label{est 46a}\\
=&- \frac{1}{2}  \partial_{x} (w^{2} + 2wY + 2wX+ Y^{2})  \prec Q^{\mathcal{H}} - 2 \nu \partial_{x} w \prec \partial_{x} Q^{\mathcal{H}}. \label{est 46b}
\end{align}
\end{subequations} 
Consequently, the commutator term in \eqref{est 43} that we must handle is decomposed to 
\begin{equation}\label{est 55}
- \langle w^{\mathcal{L}}, \partial_{x} C^{\prec} (w, Q^{\mathcal{H}}) \rangle_{L^{2}} (t) =  \sum_{k=1}^{5} C_{k}
\end{equation} 
where 
\begin{subequations}
\begin{align}
& C_{1} \triangleq  \frac{1}{2} \langle w^{\mathcal{L}}, \partial_{x}  [ ( \partial_{x} w^{2} ) \prec Q^{\mathcal{H}} ] \rangle_{L^{2}} (t), \hspace{6mm} C_{2} \triangleq  \langle w^{\mathcal{L}}, \partial_{x} [ [ \partial_{x} (wY) ] \prec Q^{\mathcal{H}} ] \rangle_{L^{2}} (t), \label{Define C1 and C2} \\
& C_{3} \triangleq \langle w^{\mathcal{L}}, \partial_{x} [ [ \partial_{x} (wX) ] \prec Q^{\mathcal{H}} ] \rangle_{L^{2}} (t), \hspace{5mm} C_{4} \triangleq \frac{1}{2} \langle w^{\mathcal{L}}, \partial_{x} [ ( \partial_{x} Y^{2} ) \prec Q^{\mathcal{H}} ] \rangle_{L^{2}} (t),  \label{Define C3 and C4}\\
& C_{5} \triangleq 2 \nu \langle w^{\mathcal{L}}, \partial_{x} ( \partial_{x} w \prec \partial_{x} Q^{\mathcal{H}} ) \rangle_{L^{2}} (t). \label{Define C5}
\end{align}
\end{subequations}
We estimate $C_{1}$ from \eqref{Define C1 and C2} as follows: 
\begin{align} 
&C_{1} \lesssim  \lVert w^{\mathcal{L}}(t) \rVert_{\dot{H}^{\eta}} \lVert (\partial_{x} w^{2}) \prec Q^{\mathcal{H}} (t) \rVert_{\dot{H}^{1-\eta}} \label{est 51}\\
&\overset{\eqref{Bony 3}\eqref{Estimate on Q}\eqref{Product estimate}}{\lesssim} N_{t}^{\kappa} t^{\frac{\kappa}{4}} (1+ \lVert w(t) \rVert_{L^{2}})^{\tau (2 \kappa - \eta)} \lVert w^{\mathcal{L}}(t) \rVert_{\dot{H}^{\eta}} \lVert w(t) \rVert_{\dot{H}^{\frac{1}{4} - \frac{\kappa}{4}}}^{2} \nonumber\\
&\lesssim  N_{t}^{\kappa} (1+ \lVert w(t) \rVert_{L^{2}})^{\tau (2 \kappa - \eta)} \lVert w^{\mathcal{L}}(t) \rVert_{\dot{H}^{\eta}} ( \lVert w^{\mathcal{L}}(t) \rVert_{\dot{H}^{\frac{1}{4} - \frac{\kappa}{4}}} + N_{t}^{\kappa})^{2} \lesssim N_{t}^{\kappa} \lVert w^{\mathcal{L}}(t) \rVert_{\dot{H}^{\eta}}^{1+ \frac{1-\kappa}{2\eta}} + (N_{t}^{\kappa})^{3} \lVert w^{\mathcal{L}}(t) \rVert_{\dot{H}^{\eta}} \nonumber 
\end{align}
where the last inequality used the fact that $\tau (2 \kappa - \eta) + 2 ( 1 - \frac{1-\kappa}{4\eta}) \leq 0$ due to hypothesis. Concerning $C_{2}$ from \eqref{Define C1 and C2}, we first estimate using \eqref{Bony 3} and \eqref{Estimate on Q}, 
\begin{align}
C_{2}  \lesssim \lVert w^{\mathcal{L}}(t) \rVert_{\dot{H}^{\eta}} \lVert \partial_{x} (wY) \prec Q^{\mathcal{H}} (t) &\rVert_{\dot{H}^{1-\eta}} \lesssim \lVert w^{\mathcal{L}} (t) \rVert_{\dot{H}^{\eta}} N_{t}^{\kappa} \label{est 48} \\
& \times  [ \lVert w \prec Y \rVert_{H^{-\eta+ \frac{3\kappa}{2}}} + \lVert w \succ Y \rVert_{H^{-\eta + \frac{3 \kappa}{2}}} + \lVert w \circ Y \rVert_{H^{-\eta + \frac{3\kappa}{2}}} ] (t), \nonumber 
\end{align} 
where we further bound via \eqref{Bony 3}, \eqref{Bony 4}, \eqref{Bony 5}, and \eqref{Define Lt and Nt}, 
\begin{subequations}\label{est 47} 
\begin{align}
&\lVert w \prec Y(t) \rVert_{H^{-\eta + \frac{3\kappa}{2}}} \lesssim \lVert w (t) \rVert_{H^{-\eta - \frac{\kappa}{2}}} N_{t}^{\kappa},  \\
& \lVert w \succ Y(t) \rVert_{H^{-\eta + \frac{3\kappa}{2}}} + \lVert w \circ Y(t) \rVert_{H^{-\eta + \frac{3\kappa}{2}}}  \lesssim \lVert w(t) \rVert_{H^{\kappa}} N_{t}^{\kappa}.
\end{align}
\end{subequations} 
Applying \eqref{est 47} to \eqref{est 48}, and additionally \eqref{Higher frequency estimate} leads us to 
\begin{equation}\label{est 52}
C_{2} \lesssim \lVert w^{\mathcal{L}}(t) \rVert_{\dot{H}^{\eta}} (N_{t}^{\kappa})^{2} [ \lVert w^{\mathcal{L}}(t) \rVert_{H^{\kappa}} + N_{t}^{\kappa} ]. 
\end{equation} 
For $C_{3}$ from \eqref{Define C3 and C4}, we apply \eqref{Bony 3} and \eqref{Estimate on Q} to deduce 
\begin{align}
C_{3} \lesssim& \lVert w^{\mathcal{L}}(t) \rVert_{\dot{H}^{\eta}} \lVert [ \partial_{x} (w X)] \prec Q^{\mathcal{H}}(t) \rVert_{\dot{H}^{1-\eta}}  \nonumber\\
\lesssim& N_{t}^{\kappa}  t^{\frac{\kappa}{4}} \lVert w^{\mathcal{L}}(t) \rVert_{\dot{H}^{\eta}}  [ \lVert w \prec X \rVert_{H^{-\eta + \frac{3\kappa}{2}}} + \lVert w \succ X \rVert_{H^{-\eta + \frac{3\kappa}{2}}} + \lVert w \circ X \rVert_{H^{-\eta + \frac{3\kappa}{2}}}](t), \label{est 50} 
\end{align} 
where applications of \eqref{Bony 3}, \eqref{Bony 4}, \eqref{Bony 5}, and \eqref{Define Lt and Nt} further give us 
\begin{subequations}\label{est 49} 
\begin{align}
&\lVert w \prec X(t) \rVert_{H^{-\eta + \frac{3\kappa}{2}}}  \lesssim \lVert w(t) \rVert_{H^{-\eta + \frac{5\kappa}{2}}} N_{t}^{\kappa},  \\
&\lVert w \succ X(t) \rVert_{H^{-\eta + \frac{3\kappa}{2}}} + \lVert w \circ X(t) \rVert_{H^{-\eta + \frac{3\kappa}{2}}} \lesssim  \lVert w(t) \rVert_{H^{\frac{3\kappa}{2}}} N_{t}^{\kappa}.
\end{align} 
\end{subequations} 
Applying \eqref{est 49} to \eqref{est 50}, and additionally \eqref{Higher frequency estimate} gives us 
\begin{equation}\label{est 53}
C_{3}  \lesssim  (N_{t}^{\kappa})^{2} \lVert w^{\mathcal{L}}(t) \rVert_{\dot{H}^{\eta}} [ \lVert w^{\mathcal{L}}(t) \rVert_{H^{\frac{3\kappa}{2}}} + N_{t}^{\kappa} ].
\end{equation} 
Lastly, we estimate $C_{4}$ and $C_{5}$ from \eqref{Define C3 and C4} and \eqref{Define C5} using \eqref{Bony 3}, \eqref{Define Lt and Nt}, and \eqref{Estimate on Q}, 
\begin{subequations}\label{est 54}
\begin{align}
& C_{4}  \lesssim \lVert w^{\mathcal{L}}(t) \rVert_{\dot{H}^{\eta}} \lVert ( \partial_{x} Y^{2}) \prec Q^{\mathcal{H}}(t) \rVert_{\dot{H}^{1-\eta}}   \lesssim \lVert w^{\mathcal{L}}(t) \rVert_{\dot{H}^{\eta}} (N_{t}^{\kappa})^{3}, \label{est 54a} \\
&C_{5} \lesssim  \lVert w^{\mathcal{L}}(t) \rVert_{\dot{H}^{\eta}} \lVert \partial_{x} w \prec \partial_{x} Q^{\mathcal{H}} (t) \rVert_{\dot{H}^{1-\eta}} \lesssim \lVert w^{\mathcal{L}}(t) \rVert_{\dot{H}^{\eta}} N_{t}^{\kappa} [ \lVert w^{\mathcal{L}}(t) \rVert_{H^{\eta}} + N_{t}^{\kappa}]. \label{est 54b}
\end{align}
\end{subequations} 
Applying \eqref{est 51}, \eqref{est 52}, \eqref{est 53}, and \eqref{est 54} to \eqref{est 55} gives us 
\begin{equation}\label{est 57}
- \langle w^{\mathcal{L}}, \partial_{x} C^{\prec} (w, Q^{\mathcal{H}}) \rangle_{L^{2}} (t) \lesssim  (N_{t}^{\kappa})^{2} \lVert w^{\mathcal{L}}(t) \rVert_{\dot{H}^{\eta}} [ \lVert w^{\mathcal{L}}(t) \rVert_{\dot{H}^{\eta}} + N_{t}^{\kappa}].
\end{equation} 
Applying \eqref{est 56}, \eqref{est 57}, and \eqref{est 58} to \eqref{est 43} gives us the claimed \eqref{est 59}. 
\end{proof} 
As a consequence of \eqref{est 60}, Propositions \ref{Proposition 4.5}, \ref{Proposition 4.6}, and \ref{Proposition 4.7} with a choice of $\tau = 3, \kappa_{0} \in (0, \frac{1}{150})$, and $\eta = \frac{3}{4} + 2 \kappa$, applied to \eqref{est 61}, we obtain the following estimate.

\begin{corollary}\label{Corollary 4.8} 
Fix $\lambda_{t}$ from \eqref{est 29} with $\tau = 3$ and $\kappa_{0} \in (0, \frac{1}{150})$. Then there exists a constant $C > 0$ such that for all $\kappa \in (0, \kappa_{0}]$, all $i \in \mathbb{N}_{0}$, and all $t \in [T_{i}, T_{i+1})$, 
\begin{align}
\partial_{t} \lVert w^{\mathcal{L}} (t) \rVert_{L^{2}}^{2} \leq& - \nu \lVert w^{\mathcal{L}}(t) \rVert_{\dot{H}^{1}}^{2} + \langle w^{\mathcal{L}}, \mathcal{A}_{t}^{\lambda_{t}} w^{\mathcal{L}} \rangle_{L^{2}}(t) + r_{\lambda}(t) \lVert w^{\mathcal{L}} (t) \rVert_{L^{2}}^{2} \nonumber \\
&+ C \left( \lambda_{t}^{\frac{1}{4}} \lVert w^{\mathcal{L}} \rVert_{H^{1- \frac{3\kappa}{2}}} ( N_{t}^{\kappa})^{2} + (N_{t}^{\kappa})^{3} ( \lVert w^{\mathcal{L}}  \rVert_{H^{1- \frac{3\kappa}{2}}} + \lVert w^{\mathcal{L}} \rVert_{H^{1- \frac{3\kappa}{2}}}^{2}) \right) (t). \label{est 67}
\end{align}
\end{corollary}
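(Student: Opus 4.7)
The plan is to start from the identity \eqref{est 61} that decomposes $\partial_{t} \lVert w^{\mathcal{L}}(t) \rVert_{L^{2}}^{2}$ into the four terms $\RomanI_{1}, \RomanI_{2}, \RomanI_{3}, \RomanI_{4}$ defined in \eqref{est 33}, substitute the identity \eqref{est 60} for $\RomanI_{1}$, and then invoke Propositions \ref{Proposition 4.5}, \ref{Proposition 4.6}, and \ref{Proposition 4.7} to control $\RomanI_{2}$, $\RomanI_{3}$, and $\RomanI_{4}$. The contribution of $\RomanI_{1}$ yields exactly the first three terms on the right-hand side of \eqref{est 67}, so the remaining task is to show that $|\RomanI_{2}| + |\RomanI_{3}| + |\RomanI_{4}|$ is dominated by the parenthesized remainder.

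First I would fix the parameters $\tau = 3$, $\kappa_{0} \in (0, \tfrac{1}{150})$, and make the single choice $\eta = \tfrac{3}{4} + 2\kappa$, then verify compatibility with the hypotheses of all four statements: $\eta \geq (1+\kappa_{0})/2$ for Proposition \ref{Proposition 4.5}, $\eta \geq 1/\tau + 3\kappa_{0} = \tfrac{1}{3} + 3\kappa_{0}$ for Proposition \ref{Proposition 4.6}(1), $\eta > \tfrac{1}{2}$ for Proposition \ref{Proposition 4.6}(2), and $\eta \geq \tfrac{3}{4}$ for Proposition \ref{Proposition 4.7}. All of these are immediate from $\tau = 3$ and $\kappa \leq \kappa_{0} < \tfrac{1}{150}$. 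The choice of $\eta$ is forced by the need to obtain the specific exponent $\lambda_{t}^{1/4}$ in \eqref{est 67}, which comes from the factor $\lambda_{t}^{1-\eta+2\kappa}$ in \eqref{est 38} when $1-\eta+2\kappa = \tfrac{1}{4}$.

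Next I would combine the bounds. Proposition \ref{Proposition 4.6}(1) gives $|\RomanI_{3}| \lesssim \lambda_{t}^{1/4} \lVert w^{\mathcal{L}}(t) \rVert_{\dot{H}^{\eta}} (N_{t}^{\kappa})^{2}$, which, using $\eta = \tfrac{3}{4} + 2\kappa \leq 1 - \tfrac{3\kappa}{2}$ and monotonicity of Sobolev norms to replace $\dot{H}^{\eta}$ by $H^{1-3\kappa/2}$, produces the first summand of \eqref{est 67}. For $\RomanI_{4}$ I would use the decomposition $\RomanI_{4} = \bigl(\RomanI_{4} + \langle w^{\mathcal{L}}, \partial_{x} w^{2}\rangle_{L^{2}}\bigr) - \langle w^{\mathcal{L}}, \partial_{x} w^{2}\rangle_{L^{2}}$, applying \eqref{est 59} to the parenthesis and \eqref{est 39} to the remainder. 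Each resulting term is of the shape $(N_{t}^{\kappa})^{j}\lVert w^{\mathcal{L}} \rVert_{\dot{H}^{s}}^{k}$ with $s \in \{2\kappa,\eta,1-\tfrac{3\kappa}{2}\}$, $j \in \{1,2\}$, $k \in \{1,2\}$; since $N_{t}^{\kappa} \geq 1$ by the $1$ inside $L_{t}^{\kappa}$ in \eqref{Define Lt and Nt}, and since all three indices satisfy $s \leq 1 - \tfrac{3\kappa}{2}$, each such term is absorbed into $(N_{t}^{\kappa})^{3}\bigl(\lVert w^{\mathcal{L}}\rVert_{H^{1-3\kappa/2}} + \lVert w^{\mathcal{L}}\rVert_{H^{1-3\kappa/2}}^{2}\bigr)$. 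The bound $|\RomanI_{2}| \lesssim \lVert w^{\mathcal{L}}\rVert_{\dot{H}^{\eta}} N_{t}^{\kappa}$ from \eqref{est 272} is trivially subsumed into the same expression.

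The main obstacle is purely bookkeeping: one must verify in a single stroke that the chosen pair $(\tau,\eta)$ sits inside the intersection of the four admissible parameter regions and simultaneously produces the correct exponent on $\lambda_{t}$. Nothing beyond elementary Sobolev embedding and the trivial lower bound $N_{t}^{\kappa} \geq 1$ is needed for the final consolidation, and no new analytic input is required beyond the three propositions cited in the statement.
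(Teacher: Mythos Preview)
Your proposal is correct and follows essentially the same route as the paper: the text immediately preceding Corollary \ref{Corollary 4.8} states that the estimate is obtained by applying \eqref{est 60} and Propositions \ref{Proposition 4.5}, \ref{Proposition 4.6}, \ref{Proposition 4.7} with the choice $\tau = 3$, $\kappa_{0} \in (0,\tfrac{1}{150})$, $\eta = \tfrac{3}{4} + 2\kappa$ to \eqref{est 61}, which is exactly your plan. Your observation that $\eta = \tfrac{3}{4} + 2\kappa$ is forced by the requirement $1 - \eta + 2\kappa = \tfrac{1}{4}$ in \eqref{est 38}, together with the parameter-range verification and the consolidation via $N_{t}^{\kappa} \geq 1$ and $\dot{H}^{\eta} \hookrightarrow H^{1-\frac{3\kappa}{2}}$, is precisely the intended bookkeeping.
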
 

\begin{proposition}\label{Proposition 4.9}
Fix $\lambda_{t}$ from \eqref{est 29} with $\tau = 3$ and $\kappa_{0} \in (0, \frac{1}{150})$. Then there exists a constant $C_{1} > 0$ and increasing continuous functions $C_{2}, C_{3}: \hspace{1mm} \mathbb{R}_{\geq 0} \mapsto \mathbb{R}_{\geq 0}$, specifically
\begin{align}\label{est 69}
C_{2} (N_{t}^{\kappa}) \approx (N_{t}^{\kappa})^{\frac{12}{2+ 3 \kappa}} + (N_{t}^{\kappa})^{\frac{2}{\kappa}} + \mathbf{m}(N_{t}^{\kappa}) \hspace{1mm} \text{ and } \hspace{1mm} C_{3} (N_{t}^{\kappa}) \approx (N_{t}^{\kappa})^{2 + \frac{12}{2+ 3 \kappa}}, 
\end{align}
where $\mathbf{m}$ is the map from Proposition \ref{Proposition 2.4}, such that for all $\kappa \in (0, \kappa_{0}],$ all $i \in \mathbb{N}_{0}$ such that $i \geq i_{0} (\theta^{\text{in}})$, $i_{0} (\theta^{\text{in}})$ from  \eqref{Define i0}, and all $t \in [T_{i}, T_{i+1})$, 
\begin{align}
\partial_{t} \lVert w^{\mathcal{L}}(t) \rVert_{L^{2}}^{2} \leq& - \frac{\nu}{2} \lVert w^{\mathcal{L}}(t) \rVert_{\dot{H}^{1}}^{2} \nonumber \\
&+ \left( C_{1} \ln( \lambda_{t}) + C_{2} (N_{t}^{\kappa}) \right) [ \lVert w^{\mathcal{L}}(t) \rVert_{L^{2}}^{2} + \lVert w^{\mathcal{L}} (T_{i}) \rVert_{L^{2}}^{2} ] + C_{3} (N_{t}^{\kappa}), \label{est 71}
\end{align}
and consequently, 
\begin{subequations}\label{est 72}
\begin{align}
&\sup_{t \in [T_{i}, T_{i+1})} \lVert w^{\mathcal{L}}(t) \rVert_{L^{2}}^{2} + \frac{\nu}{2} \int_{T_{i}}^{T_{i+1}} \lVert w^{\mathcal{L}}(t) \rVert_{\dot{H}^{1}}^{2} dt \leq e^{\mu (T_{i+1} - T_{i})} [ 2 \lVert w^{\mathcal{L}} (T_{i}) \rVert_{L^{2}}^{2} + C_{3} (N_{T_{i+1}}^{\kappa})], \label{est 72a}\\
&\text{where } \mu \triangleq C_{1} \ln(\lambda_{T_{i}}) + C_{2} (N_{T_{i+1}}^{\kappa}).    \label{est 72b} 
\end{align}
\end{subequations}
\end{proposition}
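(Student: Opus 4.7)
The plan is to work from Corollary \ref{Corollary 4.8}: on the right side of \eqref{est 67} there are three structurally different contributions I need to control, namely the Anderson--Hamiltonian pairing $\langle w^{\mathcal{L}},\mathcal{A}_t^{\lambda_t}w^{\mathcal{L}}\rangle_{L^{2}}$, the renormalization $r_{\lambda}(t)\lVert w^{\mathcal{L}}\rVert_{L^{2}}^{2}$, and the polynomial error terms on the last line. Each will be handled by a separate ingredient: Proposition \ref{Proposition 2.4} for the first, the explicit formula \eqref{est 63b} for the second, and Sobolev interpolation combined with Young's inequality for the third.

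For the Anderson--Hamiltonian term, I observe that $\mathcal{A}_{t}^{\lambda_{t}}$ from \eqref{Define At lambda} is precisely $\mathcal{U}(\Theta)$ in \eqref{est 23}--\eqref{est 25} evaluated at the smooth enhanced noise \eqref{est 64}, so the resolvent bound \eqref{est 62}--\eqref{est 27} of Proposition \ref{Proposition 2.4} yields a spectral-type estimate
\[
\langle w^{\mathcal{L}},\mathcal{A}_{t}^{\lambda_{t}}w^{\mathcal{L}}\rangle_{L^{2}}(t) \leq \mathbf{m}(N_{t}^{\kappa})\lVert w^{\mathcal{L}}(t)\rVert_{L^{2}}^{2}
\]
(the one-dimensional analogue of \eqref{est 15}), producing the $\mathbf{m}(N_{t}^{\kappa})$ summand of $C_{2}$ in \eqref{est 69}. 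For the renormalization constant, the explicit formula \eqref{est 63b} combined with $\mathfrak{l}(\lvert k\rvert/\lambda)^{2} \leq \mathbf{1}_{\lvert k\rvert\leq\lambda}$ and $(1-e^{-2\nu k^{2}t})/(2\nu)\leq 1/(2\nu)$ gives, uniformly in $t\geq 0$,
\[
r_{\lambda}(t)\lesssim \sum_{0<\lvert k\rvert\leq\lambda}\frac{\lvert k\rvert}{1+\nu\lvert k\rvert^{2}}\lesssim \ln(\lambda),
\]
producing the $C_{1}\ln(\lambda_{t})\lVert w^{\mathcal{L}}\rVert_{L^{2}}^{2}$ contribution of \eqref{est 71} and embodying the logarithmic renormalization criticality \eqref{logarithmic renormalization criticality}.

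For the error terms on the last line of \eqref{est 67} I use the Sobolev interpolation
\[
\lVert w^{\mathcal{L}}\rVert_{H^{1 - 3\kappa/2}} \lesssim \lVert w^{\mathcal{L}}\rVert_{L^{2}}^{3\kappa/2}\lVert w^{\mathcal{L}}\rVert_{\dot{H}^{1}}^{1-3\kappa/2} + \lVert w^{\mathcal{L}}\rVert_{L^{2}},
\]
followed by Young's inequality calibrated so that the $\dot{H}^{1}$-factors are absorbed into $\frac{\nu}{2}\lVert w^{\mathcal{L}}\rVert_{\dot{H}^{1}}^{2}$. The cubic term $(N_{t}^{\kappa})^{3}\lVert w^{\mathcal{L}}\rVert_{H^{1-3\kappa/2}}^{2}$, under conjugate exponents $(p,q)=(2/(2-3\kappa),2/(3\kappa))$, yields $(N_{t}^{\kappa})^{2/\kappa}\lVert w^{\mathcal{L}}\rVert_{L^{2}}^{2}$, which is the second summand of $C_{2}$. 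The $\lambda_{t}$-bearing term $\lambda_{t}^{1/4}\lVert w^{\mathcal{L}}\rVert_{H^{1-3\kappa/2}}(N_{t}^{\kappa})^{2}$, under exponents $(4/(2-3\kappa),4/(2+3\kappa))$, leaves a residual $\lambda_{t}^{1/(2+3\kappa)}(N_{t}^{\kappa})^{8/(2+3\kappa)}\lVert w^{\mathcal{L}}\rVert_{L^{2}}^{6\kappa/(2+3\kappa)}$. Since $t\in[T_{i},T_{i+1})$ with $i\geq i_{0}(\theta^{\text{in}})$ freezes $\lambda_{t}=(1+\lVert w(T_{i})\rVert_{L^{2}})^{3}$, the elementary inequality $(1+x)^{3/(2+3\kappa)}\lesssim 1+x^{2}$ (valid for $\kappa<1/6$) together with one more Young splits this residual into $\lVert w^{\mathcal{L}}(T_{i})\rVert_{L^{2}}^{2}$-, $\lVert w^{\mathcal{L}}(t)\rVert_{L^{2}}^{2}$-, and $(N_{t}^{\kappa})^{12/(2+3\kappa)}$-pieces, with the last absorbed into $C_{3}$ and the first two gathered under the $C_{1}\ln(\lambda_{t})+C_{2}(N_{t}^{\kappa})$ factor. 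Together with the two previous steps this produces \eqref{est 71}. Finally, on $[T_{i},T_{i+1})$ the quantities $\lambda_{t}$, $\ln(\lambda_{t})$, $\lVert w^{\mathcal{L}}(T_{i})\rVert_{L^{2}}^{2}$ are frozen and $N_{t}^{\kappa}\leq N_{T_{i+1}}^{\kappa}$ by monotonicity, so \eqref{est 71} reduces to a scalar linear ODE inequality to which standard Gronwall yields \eqref{est 72a}, the $L_{t}^{2}\dot{H}^{1}$-bound being recovered by simultaneously integrating $\frac{\nu}{2}\lVert w^{\mathcal{L}}\rVert_{\dot{H}^{1}}^{2}$.

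The main obstacle is the exponent book-keeping in the two-stage Young argument: because the interpolation margin $3\kappa/2$ is tiny, the residual power of $\lambda_{t}$ left over after the first Young must be strictly subquadratic in $\lVert w(T_{i})\rVert_{L^{2}}$ in order to be absorbed into $\lVert w^{\mathcal{L}}(T_{i})\rVert_{L^{2}}^{2}+C_{3}$; this constraint is precisely what forces the quantitative parameter choices $\tau=3$ and $\kappa_{0}<1/150$ already fixed in Propositions \ref{Proposition 4.5}--\ref{Proposition 4.7}, and it pins down the exponents $12/(2+3\kappa)$ and $2/\kappa$ in \eqref{est 69}. As a subsidiary point, one must dominate $\lVert w(T_{i})\rVert_{L^{2}}^{2}\leq 2(\lVert w^{\mathcal{L}}(T_{i})\rVert_{L^{2}}^{2}+\lVert w^{\mathcal{H}}(T_{i})\rVert_{L^{2}}^{2})$ and hide $\lVert w^{\mathcal{H}}(T_{i})\rVert_{L^{2}}^{2}$ inside a power of $N_{T_{i+1}}^{\kappa}$ via Proposition \ref{Proposition 4.4} (taking $\delta$ close to $1-2\kappa$ so that $1-\tau\delta<0$), ensuring that the final coefficients depend only on $N_{t}^{\kappa}$, $\ln(\lambda_{t})$, and $\lVert w^{\mathcal{L}}(T_{i})\rVert_{L^{2}}^{2}$ as \eqref{est 72b} requires.
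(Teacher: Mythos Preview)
Your proposal is correct and uses the same three ingredients as the paper: the spectral bound from Proposition~\ref{Proposition 2.4} for $\langle w^{\mathcal L},\mathcal A_t^{\lambda_t}w^{\mathcal L}\rangle_{L^2}$, the logarithmic estimate on $r_\lambda$ from \eqref{est 63b} (stated in the paper as \eqref{logarithmic bound}), and interpolation plus Young for the last line of \eqref{est 67}, followed by Gronwall for \eqref{est 72}.

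The one procedural difference worth flagging is the handling of the $\lambda_t^{1/4}$-factor. The paper first replaces it via
\[
\lambda_t^{1/4}=(1+\lVert w(T_i)\rVert_{L^2})^{3/4}\lesssim \lVert w^{\mathcal L}(T_i)\rVert_{L^2}+N_t^\kappa
\]
(using \eqref{est 29} and \eqref{Higher frequency estimate}) and only \emph{then} interpolates; the resulting Young leaves exponents $\tfrac{4}{2+3\kappa}+\tfrac{6\kappa}{2+3\kappa}=2$ that sum exactly to two, producing $\lVert w^{\mathcal L}(T_i)\rVert_{L^2}^2+\lVert w^{\mathcal L}(t)\rVert_{L^2}^2$ in one clean step. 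Your ordering (Young first, then bound the residual $\lambda_t^{1/(2+3\kappa)}$) also closes, but care is needed: the na\"ive route $(1+\lVert w(T_i)\rVert_{L^2})^{3/(2+3\kappa)}\lesssim 1+\lVert w(T_i)\rVert_{L^2}^2$ followed by $\lVert w^{\mathcal L}(t)\rVert_{L^2}^{6\kappa/(2+3\kappa)}\lesssim 1+\lVert w^{\mathcal L}(t)\rVert_{L^2}^2$ would multiply to a spurious quartic cross term $\lVert w^{\mathcal L}(T_i)\rVert_{L^2}^2\lVert w^{\mathcal L}(t)\rVert_{L^2}^2$. You must instead apply a single three-factor Young to the product, using that $\tfrac{3}{2+3\kappa}+\tfrac{6\kappa}{2+3\kappa}<2$; this yields slightly different powers of $N_t^\kappa$ in $C_2,C_3$ than those displayed in \eqref{est 69}, but since only continuity and monotonicity of $C_2,C_3$ are ever used downstream this is immaterial.
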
 

\begin{proof}[Proof of Proposition \ref{Proposition 4.9}]
We fix an arbitrary $t \in [T_{i}, T_{i+1})$. Applying Proposition \ref{Proposition 2.4} with ``$\mathcal{U}(\Theta)$'' = $\nu \partial_{x}^{2} - \partial_{x} \mathcal{L}_{\lambda}X$, relying on \eqref{est 62} and the fact that \eqref{est 63} shows $r_{\lambda}(t) \geq 0$ leads to 
\begin{equation}\label{est 68}
\langle w^{\mathcal{L}}, \mathcal{A}_{t}^{\lambda_{t}} w^{\mathcal{L}} \rangle_{L^{2}} (t) \leq \mathbf{m}(N_{t}^{\kappa}) \lVert w^{\mathcal{L}}(t)  \rVert_{L^{2}}^{2}. 
\end{equation} 
Additionally considering $\lambda_{t}^{\frac{1}{4}} \lesssim  \lVert w^{\mathcal{L}}(T_{i}) \rVert_{L^{2}} + N_{t}^{\kappa}$ for all $t \in [T_{i}, T_{i+1})$ that can be verified from \eqref{est 29} using \eqref{Higher frequency estimate}, we deduce from \eqref{est 67}, 
\begin{align}
\partial_{t} \lVert w^{\mathcal{L}}(t) &\rVert_{L^{2}}^{2} \overset{\eqref{est 68} \eqref{logarithmic bound}}{\leq} - \nu \lVert w^{\mathcal{L}}(t) \rVert_{\dot{H}^{1}}^{2} + \mathbf{m}(N_{t}^{\kappa}) \lVert w^{\mathcal{L}} (t) \rVert_{L^{2}}^{2} + C \ln (\lambda_{t}) \lVert w^{\mathcal{L}} (t) \rVert_{L^{2}}^{2} \nonumber\\
&+ C \left( ( \lVert w^{\mathcal{L}}(T_{i}) \rVert_{L^{2}} + N_{t}^{\kappa}) \lVert w^{\mathcal{L}}(t) \rVert_{H^{1- \frac{3\kappa}{2}}} (N_{t}^{\kappa})^{2} + (N_{t}^{\kappa})^{3} (\lVert w^{\mathcal{L}} (t) \rVert_{H^{1 - \frac{3\kappa}{2}}} + \lVert w^{\mathcal{L}} (t) \rVert_{H^{1- \frac{3\kappa}{2}}}^{2} )  \right) \nonumber  \\
\leq& -\frac{\nu}{2} \lVert w^{\mathcal{L}}(t) \rVert_{\dot{H}^{1}}^{2} + \left( C_{1} \ln( \lambda_{t}) + C_{2} (N_{t}^{\kappa}) \right) [ \lVert w^{\mathcal{L}}(t) \rVert_{L^{2}}^{2} + \lVert w^{\mathcal{L}}(T_{i}) \rVert_{L^{2}}^{2} ] + C_{3} (N_{t}^{\kappa}),  \label{est 70} 
\end{align}
which verifies \eqref{est 71}. Then, using the fact that $\ln(\lambda_{t}) = \ln ( \lambda_{T_{i}})$ for all $t \in [T_{i}, T_{i+1})$ and that $\mu \geq 1$, we can deduce for all $t \in [T_{i}, T_{i+1})$, 
\begin{equation}\label{est 75}
 \lVert w^{\mathcal{L}}(t) \rVert_{L^{2}}^{2}  + \frac{\nu}{2} \int_{T_{i}}^{t}   \lVert w^{\mathcal{L}}(s) \rVert_{\dot{H}^{1}}^{2} ds \leq e^{\mu (T_{i+1} - T_{i})} [ 2 \lVert w^{\mathcal{L}} (T_{i}) \rVert_{L^{2}}^{2} + C_{3} (N_{T_{i+1}}^{\kappa}) ];
\end{equation} 
taking supremum over all $t \in [T_{i}, T_{i+1})$ on the left hand side gives \eqref{est 72}. 
\end{proof}

\begin{proposition}\label{Proposition 4.10}
Fix $\lambda_{t}$ from \eqref{est 29} with $\tau = 3$ and $\kappa_{0} \in (0, \frac{1}{150})$. Consider $i \in \mathbb{N}$ such that $i \geq i_{0} (\theta^{\text{in}})$, $i_{0}(\theta^{\text{in}})$ from \eqref{Define i0}, and $t > 0$. If $T_{i+1} < T^{\max} \wedge t$, then for all $\kappa \in (0, \kappa_{0}]$, there exist
\begin{subequations}\label{est 79} 
\begin{align}
&C(N_{t}^{\kappa}) \triangleq 2C N_{t}^{\kappa} \\
\text{and } &\tilde{C} (N_{t}^{\kappa}) \triangleq \max\{ 3C_{1},   C_{2}(N_{t}^{\kappa}),  4 C N_{t}^{\kappa} + 2C^{2} (N_{t}^{\kappa})^{2} + C_{3} (N_{t}^{\kappa})\},
\end{align} 
\end{subequations} 
where $C$ is the universal constant from \eqref{Higher frequency estimate} and $C_{1}, C_{2}, C_{3}$ were given in the statement of Proposition \ref{Proposition 4.9}, such that 
\begin{equation}\label{est 77}
T_{i+1} - T_{i} \geq \frac{1}{\tilde{C}(N_{t}^{\kappa})(1+ \ln (3+i))} \ln \left( \frac{i^{2} + 2i - C(N_{t}^{\kappa})}{2i^{2} + \tilde{C}(N_{t}^{\kappa})} \right).
\end{equation} 
\end{proposition}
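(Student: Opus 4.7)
The plan is to combine the differential inequality \eqref{est 71} from Proposition \ref{Proposition 4.9} with the pointwise bound on $w^{\mathcal{H}}$ from Proposition \ref{Proposition 4.4} into a quantitative lower bound on $T_{i+1}-T_{i}$. Throughout I work on $[T_{i},T_{i+1})$ with $i \geq i_{0}(\theta^{\text{in}})$, so that Definition \ref{Definition 4.2} pins down $\lambda_{t} \equiv (i+1)^{3}$ and the coefficient $a := C_{1}\ln(\lambda_{T_{i}}) + C_{2}(N_{t}^{\kappa}) = 3C_{1}\ln(i+1) + C_{2}(N_{t}^{\kappa})$ is $t$-independent on this interval. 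Setting $f(t) := \lVert w^{\mathcal{L}}(t)\rVert_{L^{2}}^{2}$ and dropping the (nonpositive) dissipation term in \eqref{est 71} yields the linear inequality $f'(t) \leq af(t) + af(T_{i}) + C_{3}(N_{t}^{\kappa})$.

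First I would solve this via the standard shift $g(t) := f(t) + f(T_{i}) + C_{3}(N_{t}^{\kappa})/a$, which satisfies $g'(t) \leq a g(t)$ and hence $g(t) \leq g(T_{i}) e^{a(t-T_{i})}$. Evaluating at $t = T_{i+1}$, clearing $a$ through the quotient, and dropping the nonnegative term $af(T_{i})$ from the numerator give
\begin{equation*}
a(T_{i+1}-T_{i}) \geq \ln\!\left(\frac{af(T_{i+1}) + af(T_{i}) + C_{3}(N_{t}^{\kappa})}{2af(T_{i}) + C_{3}(N_{t}^{\kappa})}\right) \geq \ln\!\left(\frac{af(T_{i+1})}{2af(T_{i}) + C_{3}(N_{t}^{\kappa})}\right).
\end{equation*}

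The main obstacle is translating $f(T_{j})$ for $j \in \{i, i+1\}$ into quantities depending only on $i$. By continuity of $t \mapsto \lVert w(t) \rVert_{L^{2}}$ one has $\lVert w(T_{j}) \rVert_{L^{2}} = j$, and I would then apply Proposition \ref{Proposition 4.4} at the endpoint exponent $\delta = 1-2\kappa$, for which the growth exponent $1-\tau\delta = -2+6\kappa$ is strictly negative because $\tau = 3$. The resulting bound $\lVert w^{\mathcal{H}}(t) \rVert_{L^{2}} \leq C(1+\lVert w(t) \rVert_{L^{2}})^{-2+6\kappa}N_{t}^{\kappa}t^{\kappa/4}$ provides exactly enough decay in $\lVert w(t) \rVert_{L^{2}}$ that $j(1+j)^{-2+6\kappa}$ stays uniformly bounded in $j$, absorbing the factor of $2(i+1)$ which appears upon squaring. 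The triangle inequality together with $(a+b)^{2} \leq 2a^{2}+2b^{2}$ then delivers $f(T_{i+1}) \geq (i+1)^{2} - 2CN_{t}^{\kappa} \geq i^{2}+2i-C(N_{t}^{\kappa})$ and $f(T_{i}) \leq i^{2} + 2CN_{t}^{\kappa} + C^{2}(N_{t}^{\kappa})^{2}$, the $t^{\kappa/4}$ factor being absorbed into the universal constant $C$ from Proposition \ref{Proposition 4.4}. This is also precisely why $\tau$ is fixed at $3$ rather than at the weakest value $\tau \geq 25/12$ admitted by Propositions \ref{Proposition 4.6}--\ref{Proposition 4.7}.

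It then remains to assemble. Using $a \geq C_{2}(N_{t}^{\kappa}) \geq 1$ (guaranteed by the construction of $C_{2}$), the denominator bound $2af(T_{i}) + C_{3}(N_{t}^{\kappa}) \leq a(2i^{2} + 4CN_{t}^{\kappa} + 2C^{2}(N_{t}^{\kappa})^{2} + C_{3}(N_{t}^{\kappa})) \leq a(2i^{2} + \tilde{C}(N_{t}^{\kappa}))$ follows directly from the definition of $\tilde{C}$ in \eqref{est 79}. The $a$ factors then cancel inside the logarithm, leaving $a(T_{i+1}-T_{i}) \geq \ln\bigl(\tfrac{i^{2}+2i-C(N_{t}^{\kappa})}{2i^{2}+\tilde{C}(N_{t}^{\kappa})}\bigr)$. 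Finally, $a = 3C_{1}\ln(i+1) + C_{2}(N_{t}^{\kappa}) \leq \tilde{C}(N_{t}^{\kappa})(1 + \ln(3+i))$ since $\tilde{C}(N_{t}^{\kappa}) \geq \max\{3C_{1}, C_{2}(N_{t}^{\kappa})\}$, and dividing through yields \eqref{est 77}.
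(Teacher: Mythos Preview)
Your proof is correct and follows essentially the same route as the paper: integrate the differential inequality \eqref{est 71} via Gronwall (the paper simply quotes the already-integrated form \eqref{est 75}), then invoke Proposition \ref{Proposition 4.4} with $\delta = 1-2\kappa$ to convert $\lVert w^{\mathcal{L}}(T_{j})\rVert_{L^{2}}$ into explicit powers of $j$, and assemble. One minor point: since $\lambda_{t}$ jumps at $T_{i+1}$, the Gronwall bound should be read at $T_{i+1}^{-}$ rather than $T_{i+1}$ (as the paper does in \eqref{est 76}--\eqref{est 78}), but this is harmless because $w$ is continuous and the high-frequency bound only improves with the larger $\lambda$.
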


\begin{proof}[Proof of Proposition \ref{Proposition 4.10}]
First, from \eqref{est 75}, using the definition of $\mu$ from \eqref{est 72b} and that $N_{t}^{\kappa}$ is non-decreasing in $t$, we have 
\begin{equation}\label{est 76}
\frac{1}{C_{1} \ln(\lambda_{T_{i}}) + C_{2} (N_{t}^{\kappa})} \ln\left( \frac{ \lVert w^{\mathcal{L}} (T_{i+1} -) \rVert_{L^{2}}^{2}}{2 \lVert w^{\mathcal{L}}(T_{i}) \rVert_{L^{2}}^{2} + C_{3} (N_{t}^{\kappa})} \right) \leq T_{i+1} - T_{i}. 
\end{equation}
Moreover, applications of \eqref{Higher frequency estimate} give us 
\begin{equation}\label{est 78}
 \lVert w^{\mathcal{L}} (T_{i+1} - ) \rVert_{L^{2}}  \geq i + 1 - C(i+1)^{-1} N_{t}^{\kappa} \hspace{1mm} \text{ and } \hspace{1mm}  \lVert w^{\mathcal{L}}(T_{i}) \rVert_{L^{2}}   \leq i + \frac{C N_{t}^{\kappa}}{i}.
\end{equation} 
Applying \eqref{est 78} to \eqref{est 76} gives us \eqref{est 77} as desired. 
\end{proof}

\begin{proposition}\label{Proposition 4.11}
Fix $\lambda_{t}$ from \eqref{est 29} with $\tau = 3$ and $\kappa_{0} \in (0, \frac{1}{150})$. Then the following holds for any $\kappa \in (0, \kappa_{0})$ and $\epsilon \in (0, \kappa)$. Suppose that there exist $M > 1$ and $T > 0$ such that 
\begin{equation}\label{Define M}
\lVert w^{\mathcal{L}}(0) \rVert_{\dot{H}^{\epsilon}}^{2} + \sup_{t \in [0, T \wedge T^{\max} ]} \lVert w^{\mathcal{L}}(t) \rVert_{L^{2}}^{2} + \frac{\nu}{2} \int_{0}^{T \wedge T^{\max}} \lVert w^{\mathcal{L}}(s) \rVert_{\dot{H}^{1}}^{2} ds \leq M. 
\end{equation} 
Then there exists $C(T, M, N_{T}^{\kappa}) \in (0, \infty)$ such that 
\begin{equation}
\sup_{t \in [0, T \wedge T^{\max} ]} \lVert w^{\mathcal{L}}(t) \rVert_{\dot{H}^{\epsilon}}^{2} \leq C(T, M, N_{T}^{\kappa}). 
\end{equation} 
\end{proposition}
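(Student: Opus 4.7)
The plan is to perform an $\dot{H}^{\epsilon}$-energy estimate on \eqref{est 32}. On each interval $[T_{i}, T_{i+1})$ the cutoff $\lambda_{t} = \lambda^{i}$ is constant, so testing \eqref{est 32} against $\Lambda^{2\epsilon} w^{\mathcal{L}}$ yields
\begin{equation*}
\frac{1}{2} \partial_{t} \lVert w^{\mathcal{L}}(t) \rVert_{\dot{H}^{\epsilon}}^{2} + \nu \lVert w^{\mathcal{L}}(t) \rVert_{\dot{H}^{1+\epsilon}}^{2} = \sum_{k=1}^{4} \widetilde{\RomanI}_{k}(t),
\end{equation*}
where each $\widetilde{\RomanI}_{k}$ is the direct $\dot{H}^{\epsilon}$-analogue of the corresponding term $\RomanI_{k}$ from \eqref{est 33}. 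A key simplification relative to the $L^{2}$-case is that, because $\epsilon > 0$, the product $w^{\mathcal{L}} \cdot \mathcal{L}_{\lambda_{t}} X$ is now individually well-defined by paraproducts, so there is no need to invoke Proposition \ref{Proposition 2.4} to treat the diffusion piece: $\widetilde{\RomanI}_{1}$ can be controlled by a direct Bony-type estimate, while $\widetilde{\RomanI}_{2}, \widetilde{\RomanI}_{3}, \widetilde{\RomanI}_{4}$ are handled by repeating the proofs of Propositions \ref{Proposition 4.5}-\ref{Proposition 4.7} with every Sobolev exponent shifted by $+\epsilon$. Since $\epsilon < \kappa \ll 1$, every application of Lemma \ref{Lemma 3.1}, of the higher-frequency bound \eqref{Higher frequency estimate}, and of the product estimate \eqref{Product estimate} in those proofs retains its positive margin.

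After Young's inequality is used to absorb all arising $\dot{H}^{1+\epsilon}$-powers into the dissipation, one arrives at a differential inequality
\begin{equation*}
\partial_{t} \lVert w^{\mathcal{L}}(t) \rVert_{\dot{H}^{\epsilon}}^{2} \leq A(t) \lVert w^{\mathcal{L}}(t) \rVert_{\dot{H}^{\epsilon}}^{2} + B(t)
\end{equation*}
on each $[T_{i}, T_{i+1})$, in which $A, B$ are polynomials in $\lVert w^{\mathcal{L}}(t) \rVert_{\dot{H}^{1}}^{2}$, $\lVert w^{\mathcal{L}}(t) \rVert_{L^{2}}$, $N_{t}^{\kappa}$, and $\lambda_{t}$. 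By \eqref{Define M}, $\int_{0}^{T \wedge T^{\max}} \lVert w^{\mathcal{L}}(s) \rVert_{\dot{H}^{1}}^{2}\, ds$ is bounded by $2M/\nu$, and \eqref{Higher frequency estimate} gives $\sup_{t \in [0, T \wedge T^{\max}]} \lVert w(t) \rVert_{L^{2}} \lesssim \sqrt{M} + N_{T}^{\kappa}$; this in turn forces $\sup_{t} \lambda_{t}$ to be finite and the set of jump times $\{T_{i}\} \cap [0, T \wedge T^{\max}]$ to be finite. Applying Gronwall on each $[T_{i}, T_{i+1})$, and controlling the jump $\lVert w^{\mathcal{L}}(T_{i+1}) - w^{\mathcal{L}}(T_{i+1}-) \rVert_{\dot{H}^{\epsilon}}$ through the change of cutoff in $w^{\mathcal{H}}$ via \eqref{Estimate on Q} and \eqref{Bernstein}, one chains the estimates to conclude the uniform bound $\sup_{t} \lVert w^{\mathcal{L}}(t) \rVert_{\dot{H}^{\epsilon}}^{2} \leq C(T, M, N_{T}^{\kappa})$.

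The main obstacle will be the commutator term $-\langle \Lambda^{\epsilon} w^{\mathcal{L}}, \Lambda^{\epsilon} \partial_{x} C^{\prec}(w, Q^{\mathcal{H}}) \rangle_{L^{2}}$ inside $\widetilde{\RomanI}_{4}$: reproducing the decomposition \eqref{est 46b}-\eqref{est 54} at the $\dot{H}^{\epsilon}$-level requires re-verifying, piece by piece, that every Bony exponent in the $C_{1}, \dots, C_{5}$ estimates remains admissible after the $+\epsilon$-shift and that the $\tau = 3$ scaling in \eqref{est 29} still absorbs the factors of $\lambda_{t}$ arising from \eqref{Estimate on Q}. This is the only step of the argument that is not an immediate rewriting of earlier computations, but the slack left in the exponents of Proposition \ref{Proposition 4.7} suffices provided $\epsilon$ is chosen small enough relative to $\kappa$.
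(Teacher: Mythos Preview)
Your proposal is correct and follows essentially the same approach as the paper: test \eqref{est 32} against $\Lambda^{2\epsilon} w^{\mathcal{L}}$, handle each of the resulting terms by direct Bony estimates (in particular bypassing Proposition \ref{Proposition 2.4} for $\widetilde{\RomanI}_{1}$, since the a~priori bound \eqref{Define M} forces $\sup_{t}\lambda_{t}<\infty$ and hence $\lVert\mathcal{L}_{\lambda_{t}}X\rVert_{L^{\infty}}\lesssim C(M,N_{T}^{\kappa})$), absorb the $\dot H^{1+\epsilon}$-contributions into the dissipation via Young's inequality, and integrate. The only cosmetic difference is that the paper interpolates every intermediate norm between $\lVert w^{\mathcal{L}}\rVert_{L^{2}}\leq \sqrt{M}$ and $\lVert w^{\mathcal{L}}\rVert_{\dot H^{1+\epsilon}}$ so as to arrive directly at $\partial_{t}\lVert w^{\mathcal{L}}\rVert_{\dot H^{\epsilon}}^{2}\leq -\tfrac{13\nu}{8}\lVert w^{\mathcal{L}}\rVert_{\dot H^{1+\epsilon}}^{2}+C(M,N_{t}^{\kappa})$, avoiding the Gronwall multiplier $A(t)$ and the piecewise-in-time argument across the jump times $T_{i}$ that you describe.
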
 

Due to similarities to the estimates within the proof of Propositions \ref{Proposition 4.5}-\ref{Proposition 4.7}, we leave the proof of Proposition \ref{Proposition 4.11} in Section \ref{Section B.2} for completeness.

\begin{proposition}\label{Proposition 4.12} 
Suppose that $\theta^{\text{in}} \in L^{2} (\mathbb{T})$. If $T^{\max} < \infty$, then 
\begin{equation}
\limsup_{t \nearrow T^{\max}} \lVert w(t) \rVert_{L^{2}} = + \infty. 
\end{equation} 
\end{proposition}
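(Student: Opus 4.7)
The plan is to execute the standard blow-up/extension dichotomy: arguing by contrapositive, assume $\limsup_{t \nearrow T^{\max}} \lVert w(t) \rVert_{L^{2}} < \infty$ and deduce $T^{\max} = \infty$. By continuity of the mild solution on $[0, T^{\max})$ guaranteed by Proposition \ref{Proposition 4.2}, this hypothesis upgrades to $M \triangleq \sup_{t \in [0, T^{\max})} \lVert w(t) \rVert_{L^{2}} < \infty$. Suppose for contradiction that $T^{\max} < \infty$. By Definition \ref{Definition 4.2}, the stopping times $T_{i}$ satisfy $T_{i+1} = T^{\max}$ for every $i \geq \lceil M \rceil$, and hence, setting $i^{\ast} \triangleq \max\{\lceil M \rceil, i_{0}(\theta^{\text{in}}) + 1\}$, the interval $[T_{i^{\ast}}, T^{\max})$ is the final non-trivial piece of the filtration and $\lambda_{t} \equiv (i^{\ast}+1)^{\tau}$ is constant on it.

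On $[T_{i^{\ast}}, T^{\max})$, since $N_{T^{\max}}^{\kappa} < \infty$ by Proposition \ref{Proposition 4.1}, estimate \eqref{est 72} of Proposition \ref{Proposition 4.9} provides a finite bound
\[
\sup_{t \in [T_{i^{\ast}}, T^{\max})} \lVert w^{\mathcal{L}}(t) \rVert_{L^{2}}^{2} + \frac{\nu}{2} \int_{T_{i^{\ast}}}^{T^{\max}} \lVert w^{\mathcal{L}}(s) \rVert_{\dot{H}^{1}}^{2}\, ds \leq C_{\ast}\bigl(M, N_{T^{\max}}^{\kappa}, T^{\max}\bigr),
\]
while Proposition \ref{Proposition 4.4} with $\delta = 0$ yields $\sup_{t \in [0, T^{\max})} \lVert w^{\mathcal{H}}(t) \rVert_{H^{1 - 2\kappa}} < \infty$. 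A brief initial-layer step --- restarting at any small positive time $t_{0} > 0$, where Proposition \ref{Proposition 4.2} already delivers $w(t_{0}) \in \mathcal{C}^{3\kappa/2} \hookrightarrow \dot{H}^{\epsilon}$ for small $\epsilon \in (0, \kappa)$ --- permits an application of Proposition \ref{Proposition 4.11} on $[t_{0}, T^{\max})$, upgrading the low-frequency part to $\sup_{t \in [t_{0}, T^{\max})} \lVert w^{\mathcal{L}}(t) \rVert_{\dot{H}^{\epsilon}} < \infty$. Combining both pieces via the 1D Sobolev embedding $H^{\epsilon}(\mathbb{T}) \hookrightarrow \mathcal{C}^{\epsilon - \frac{1}{2}}(\mathbb{T})$ (and choosing $\kappa$ small compared to $\epsilon$) produces the desired uniform bound $\sup_{t \in [0, T^{\max})} \lVert w(t) \rVert_{\mathcal{C}^{-1 + 2 \kappa}} < \infty$.

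Finally, standard mild-formulation arguments show that $t \mapsto w(t)$ extends continuously up to $T^{\max}$ in $\mathcal{C}^{-1 + 2 \kappa}$, with limit $w(T^{\max})$ again mean-zero since \eqref{Equation of w} conserves means. Restarting Proposition \ref{Proposition 4.2} at time $T^{\max}$ with initial datum $w(T^{\max})$ produces a mild solution on $[T^{\max}, T^{\max} + \delta_{0})$ for some $\delta_{0} > 0$, because the local existence time is lower semicontinuous in the $\mathcal{C}^{-1 + 2 \kappa}$-norm of the data and in $\{L_{t}^{\kappa}\}_{t \leq T^{\max} + \delta_{0}}$, both finite by Proposition \ref{Proposition 4.1}. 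Gluing this extension to the original solution contradicts the maximality of $T^{\max}$. The main obstacle is the middle step: ensuring that the separate bounds for $w^{\mathcal{L}}$ and $w^{\mathcal{H}}$ recombine into a norm strong enough to re-enter the local well-posedness framework, which forces the exponents in Propositions \ref{Proposition 4.9} and \ref{Proposition 4.11} to be balanced against the target Hölder--Besov regularity $\mathcal{C}^{-1 + 2\kappa}$, and justifying strong (rather than merely weak-$\ast$) continuity of $w$ at $T^{\max}$ in that space.
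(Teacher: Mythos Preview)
Your proposal is correct and follows essentially the same route as the paper's proof in Section~\ref{Section B.3}: contrapositive, Proposition~\ref{Proposition 4.9} for the energy bounds on $[T_{i^{\ast}},T^{\max})$, instantaneous smoothing at a positive time to feed Proposition~\ref{Proposition 4.11}, uniform $H^{\epsilon}$ control of $w^{\mathcal{L}}$ combined with Proposition~\ref{Proposition 4.4} for $w^{\mathcal{H}}$, and then the embedding $H^{\epsilon}(\mathbb{T}) \hookrightarrow \mathcal{C}^{-1+2\kappa}(\mathbb{T})$ to re-enter Proposition~\ref{Proposition 4.2} and contradict maximality. One small slip: Proposition~\ref{Proposition 4.11} requires $\epsilon \in (0,\kappa)$, so your parenthetical ``choosing $\kappa$ small compared to $\epsilon$'' is backwards; the embedding nonetheless holds because $\epsilon - \tfrac{1}{2} \geq -1 + 2\kappa$ is automatic once $\kappa < \tfrac{1}{4}$.
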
 
Due to similarities to the previous works \cite[Corollary 5.4]{HR24} and \cite[Proposition 4.11]{Y23b}, we lave this proof in Section \ref{Section B.3} for completeness.  

Next, we return to $\mathcal{A}_{t}$, $\mathcal{A}_{t}^{\lambda}$, and the enhanced noise in \eqref{Define At}, \eqref{Define At lambda}, and \eqref{est 64}. We define 
\begin{equation}\label{Define em and zeta}
e_{m}(x) \triangleq e^{i2\pi m x} \hspace{1mm} \text{ and } \zeta(t,m) \triangleq \lvert m \rvert^{\frac{1}{2}} \beta(t,m) 
\end{equation} 
where $\{\beta(m)\}_{m\in\mathbb{Z}}$ is a family of $\mathbb{C}$-valued two-sided Brownian motions such that 
\begin{equation}\label{correlation}
\mathbb{E} [\partial_{t} \beta(t,m) \partial_{t} \beta(s,m') ] = \delta(t-s) 1_{\{ m = -m'\}}, 
\end{equation} 
so that we may write 
\begin{equation}\label{est 148}
\Lambda^{\frac{1}{2}} \xi(t,x)  =  \sum_{m\in\mathbb{Z} \setminus \{0\}} e_{m}(x) \partial_{t} \zeta(t,m). 
\end{equation} 
Additionally, we define, with $\mathfrak{l}$ from Definition \ref{Definition 3.1}, 
\begin{equation}\label{Define F and F lambda}
F(t,m) \triangleq \int_{0}^{t} e^{- \nu \lvert m \rvert^{2} (t-s)} d \zeta(s,m) \hspace{1mm} \text{ and } \hspace{1mm} F^{\lambda}(t,m) \triangleq \int_{0}^{t} e^{-\nu \lvert m \rvert^{2} (t-s)} \mathfrak{l} \left( \frac{m}{\lvert \lambda \rvert} \right) d \zeta(s,m). 
\end{equation} 
Consequently, we can write the solution $X$ of \eqref{Equation of X} as 
\begin{equation}\label{Rewrite X}
X(t,x) = \sum_{m\in\mathbb{Z} \setminus \{0\}} e_{m}(x) F(t,m)
\end{equation} 
and hence 
\begin{subequations}\label{est 149} 
\begin{align}
&\mathcal{L}_{\lambda} X(t,x) \triangleq \sum_{m \in \mathbb{Z} \setminus \{0\}} e_{m}(x) F^{\lambda} (t,m), \label{est 149a}\\
\text{and }& \left( 1- \nu \partial_{x}^{2} \right)^{-1} \mathcal{L}_{\lambda} X(t,x) =\sum_{m\in\mathbb{Z} \setminus \{0\}} e_{m}(x) F^{\lambda}(t,m) \left( 1+ \nu \lvert m \rvert^{2} \right)^{-1}.   \label{est 149b}
\end{align}
\end{subequations} 

\begin{proposition}\label{Proposition 4.13} 
For any $\kappa > 0$, define $\mathcal{K}^{-1-\kappa}$ by \eqref{est 21}, $P^{\lambda}$, and $r_{\lambda}$ by \eqref{est 63}. Then, for any $t \geq 0$, there exists a distribution $\partial_{x} X \diamondsuit P_{t} \in \mathcal{C}^{-2\kappa}(\mathbb{T})$ such that 
\begin{equation}\label{est 159}
\left( \partial_{x} \mathcal{L}_{\lambda^{n}} X, \partial_{x} \mathcal{L}_{\lambda^{n}} X \circ P^{\lambda^{n}} - r_{\lambda^{n}} \right) \to \left( \partial_{x} X, \partial_{x} X \diamondsuit P \right)
\end{equation} 
as $n\nearrow + \infty$ both in $L^{p} \left(\Omega; C_{\text{loc}} (\mathbb{R}_{+}; \mathcal{K}^{-1-\kappa} ) \right)$ for any $p\in [1,\infty)$ and $\mathbb{P}$-a.s. Finally, there exists a constant $c > 0$ such that for all $\lambda \geq 1$, 
\begin{equation}\label{logarithmic bound}
r_{\lambda}(t) \leq c \ln (\lambda) \hspace{5mm} \text{ uniformly over all } t \geq 0. 
\end{equation} 
\end{proposition}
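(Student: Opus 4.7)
The proposition contains two distinct assertions which I would handle separately. The easier one is the logarithmic bound \eqref{logarithmic bound}. From the explicit formula \eqref{est 63b}, the summand is bounded uniformly in $t$ by
\begin{equation*}
\mathfrak{l}\!\left(\tfrac{|k|}{\lambda}\right)^{2}\frac{1-e^{-2\nu|k|^{2}t}}{2\nu(1+\nu|k|^{2})}|k| \;\leq\; \frac{|k|}{2\nu(1+\nu|k|^{2})} \;\lesssim\; \frac{1}{|k|},
\end{equation*}
using $\tfrac{1-e^{-a}}{a}\leq 1$ for $a\geq 0$. Since $\mathfrak{l}(|k|/\lambda)=0$ for $|k|\geq\lambda$, the sum is effectively over $1\leq|k|\leq\lambda$, giving $r_{\lambda}(t)\lesssim \sum_{1\leq|k|\leq\lambda}|k|^{-1}\lesssim\ln(\lambda)$, uniformly in $t\geq 0$.

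For the convergence in $\mathcal{K}^{-1-\kappa}$, the plan is to treat the two components of \eqref{est 159} separately using Wiener chaos decomposition and Gaussian hypercontractivity (Nelson's estimate), together with a Besov/Kolmogorov criterion. For the first component $\partial_{x}\mathcal{L}_{\lambda^{n}}X$, equations \eqref{Rewrite X}--\eqref{est 149a} give an explicit Fourier series living in the first Wiener chaos. I would compute $\mathbb{E}[|\Delta_{j}\partial_{x}(X-\mathcal{L}_{\lambda^{n}}X)(t,x)|^{2}]$ directly; it vanishes when $2^{j}\ll\lambda^{n}$ and otherwise is bounded by $C\,2^{(2+2\kappa)j}\cdot 2^{-2\kappa j}$ so that $\sup_{j}2^{-(1+\kappa)j}(\mathbb{E}|\cdot|^{2})^{1/2}\to 0$ as $n\to\infty$. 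Gaussian hypercontractivity transfers this second-moment control to arbitrary $L^{p}$, and the Besov embedding $B^{-1-\kappa/2}_{p,p}\hookrightarrow\mathcal{C}^{-1-\kappa}$ (for $p$ large) yields convergence in $\mathcal{C}^{-1-\kappa}(\mathbb{T})$.

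For the second component, which is the delicate one, I would fix $t$ and use the Fourier expansions \eqref{est 149} to write
\begin{equation*}
(\partial_{x}\mathcal{L}_{\lambda}X\circ P^{\lambda})(t,x)\;=\;\sum_{k,k'}a_{k,k'}^{\lambda}(t)\,e_{k+k'}(x)\,F^{\lambda}(t,k)F^{\lambda}(t,k'),
\end{equation*}
where $a_{k,k'}^{\lambda}(t)$ encodes the Fourier symbol $ik'(1+\nu|k|^{2})^{-1}$ together with the resonance indicator enforcing $|i(k)-i(k')|\leq 1$ (with $i(k)\sim\log_{2}|k|$). Splitting via the product formula for two elements of the first Wiener chaos gives a second-chaos part plus a deterministic contribution equal to $\sum_{k}|a_{k,-k}^{\lambda}(t)|\mathbb{E}|F^{\lambda}(t,k)|^{2}$; choosing the renormalisation constant to be exactly \eqref{est 63b} cancels this divergent piece. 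What remains lies in the second Wiener chaos, on which hypercontractivity again reduces $L^{p}$ bounds to variance estimates. I would show that for $m\in\mathbb{Z}\setminus\{0\}$ the $m$-th Fourier coefficient of $\Delta_{j}(\partial_{x}\mathcal{L}_{\lambda^{n}}X\circ P^{\lambda^{n}}-r_{\lambda^{n}})$ has variance $\lesssim 2^{-4\kappa j}$ uniformly in $n$ and is Cauchy in $n$; this is where the cancellations between the $\mathfrak{l}(|k|/\lambda)$ cut-offs and the explicit kernels $(1+\nu|k|^{2})^{-1}$ must be exploited. Summing yields a uniform $B^{-3\kappa/2}_{p,p}$-bound and convergence after Besov embedding to $\mathcal{C}^{-2\kappa}(\mathbb{T})$.

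Finally, time-continuity and convergence in $C_{\text{loc}}([0,\infty);\mathcal{K}^{-1-\kappa})$ would follow from the Kolmogorov continuity criterion applied to $\mathbb{E}\|\cdot(t)-\cdot(s)\|_{\mathcal{C}^{\gamma}}^{p}$, where the time increments of $F^{\lambda}(t,k)$ contribute a factor bounded by $|t-s|^{\alpha}|k|^{2\alpha}$ for any $\alpha\in(0,1/2)$, after which the same chaos/hypercontractivity machinery produces the required moment bound. Almost sure convergence along the subsequence $\lambda^{n}$ follows from a Borel--Cantelli argument once $L^{p}$ convergence is established. The principal obstacle is the second step: verifying the delicate kernel estimate for the variance of the renormalised resonance, since the regularity budget is exactly critical ($-1-\kappa$ paired with $1-\kappa$ gives resonance regularity $-2\kappa$, which is only just non-negative), so the analysis must carefully exploit both the $\partial_{x}$ and the $(1-\nu\partial_{x}^{2})^{-1}$ to obtain the $\kappa$-many spare derivatives needed for summability.
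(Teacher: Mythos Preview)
Your proposal is correct and essentially mirrors the paper's approach: explicit Fourier expansion of the resonant product, identification of $r_{\lambda}$ as its expectation via the It\^{o} isometry, a pointwise second-moment bound on $\Delta_{m}$ of the renormalised object followed by Gaussian hypercontractivity and Besov embedding, with the almost-sure convergence handled by a Cauchy/Borel--Cantelli argument and the logarithmic bound on $r_{\lambda}$ proved exactly as you describe. One small caveat: the paper obtains only $\mathbb{E}\bigl[|\Delta_{m}(\partial_{x}\mathcal{L}_{\lambda}X\circ P^{\lambda}-r_{\lambda})(t,x)|^{2}\bigr]\lesssim 1$ uniformly in $m$ (not the $2^{-4\kappa m}$ decay you anticipate), which still suffices for the $\mathcal{C}^{-2\kappa}$ conclusion via $B^{-\kappa}_{p,p}\hookrightarrow\mathcal{C}^{-2\kappa}$ once $p>1/\kappa$.
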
 

\begin{proof}[Proof of Proposition \ref{Proposition 4.13}]
We focus on the task of proving the convergence of $( \partial_{x} \mathcal{L}_{\lambda^{n}} X) \circ P^{\lambda^{n}} - r_{\lambda^{n}} \to \partial_{x} X \diamondsuit P$ as $ n\nearrow + \infty$. We denote for brevity $X_{\lambda} \triangleq  \mathcal{L}_{\lambda}X$ and look at 
\begin{align}
& \partial_{x} X_{\lambda} \circ P^{\lambda}(t,x)  = - \sum_{k, k' \in \mathbb{Z}: k' \neq 0, k \neq k', \lvert c-d \rvert \leq 1} e^{i 2 \pi kx} \rho_{c} (k-k') \rho_{d} (k') \mathfrak{l} \left( \frac{ \lvert k-k'\rvert}{\lambda} \right) \mathfrak{l} \left( \frac{ \lvert k' \rvert}{\lambda} \right)   \nonumber \\
& \hspace{48mm} \times F(t, k-k') F(t,k') \left( 1+  \nu \lvert k'\rvert^{2} \right)^{-1} (k-k') k', \label{est 150} 
\end{align}
which can be verified using \eqref{est 63} and \eqref{est 149a}. We compute using \eqref{Define F and F lambda}, \eqref{Define em and zeta}, and \eqref{correlation}, 
\begin{equation}\label{est 151}
\mathbb{E} [ F(t, k-k') F(t, k')]  = \left( \frac{1- e^{-2 \nu \lvert k'\rvert^{2} t}}{2 \nu \lvert k' \rvert} \right) 1_{ \{ k-k' = - k' \}}.
\end{equation} 
Taking mathematical expectation w.r.t. $\mathbb{P}$ on \eqref{est 150} and applying \eqref{est 151} gives us 
\begin{equation}\label{est 152}
\mathbb{E} [ \partial_{x} X_{\lambda} \circ P^{\lambda} ](t,x) =  \sum_{k \in \mathbb{Z} \setminus \{0\}} \mathfrak{l} \left( \frac{ \lvert k \rvert}{\lambda} \right)^{2} \left( \frac{1- e^{-2 \nu \lvert k \rvert^{2} t}}{2 \nu} \right) \left(1+ \nu \lvert k \rvert^{2} \right)^{-1} \lvert k \rvert \overset{\eqref{est 63}}{=} r_{\lambda}(t). 
\end{equation} 
For brevity we define 
\begin{equation}\label{Define psi 0}
\psi_{0} (k, k') \triangleq \sum_{\lvert c-d \rvert \leq 1} \rho_{c} (k) \rho_{d} (k')
\end{equation} 
and compute using \eqref{Define F and F lambda}, \eqref{est 150}, \eqref{est 152}, \eqref{Define psi 0}, and Wick products (e.g. \cite[Theorem 3.12]{J97}), 
\begin{align}
&  \mathbb{E} \left[ \left \lvert \Delta_{m} \left(\partial_{x} \mathcal{L}_{\lambda} X \circ P^{\lambda} - r_{\lambda} \right)(t) \right\rvert^{2} \right] \nonumber\\
=&  \sum_{k, k' \in \mathbb{Z} \setminus \{0\}} \Bigg(e^{i 4 \pi (k+k') x} \rho_{m}^{2} (k+k') \lvert \psi_{0}(k, k') \rvert^{2} \mathfrak{l} \left( \frac{ \lvert k \rvert}{\lambda} \right)^{2} \mathfrak{l} \left( \frac{\lvert k'\rvert}{\lambda} \right)^{2}   (1+  \nu \lvert k' \rvert^{2})^{-1}  \nonumber\\
& \hspace{5mm} \times \int_{0}^{t} e^{-2 \nu \lvert k \rvert^{2} (t-s)} ds \int_{0}^{t} e^{-2\nu \lvert k'\rvert^{2} (t-s')} ds' \lvert k \rvert^{3} \lvert k'\rvert^{3} [ (1+ \nu \lvert k \rvert^{2})^{-1} + (1+ \nu \lvert k'\rvert^{2})^{-1} ] \Bigg). \label{est 157}
\end{align}
We deduce $m \lesssim c$ from $\rho_{m}(k), \rho_{c}(k-k'), \rho_{d}(k'),$ and $\lvert c-d \rvert \leq 1$ to estimate 
\begin{align} 
&  \mathbb{E} \left[ \left\lvert \Delta_{m} \left(\partial_{x} \mathcal{L}_{\lambda} X \circ P^{\lambda} - r_{\lambda} \right)(t) \right\rvert^{2} \right] \nonumber\\
\overset{\eqref{Define psi 0}}{\lesssim}& \sum_{k, k' \in \mathbb{Z} \setminus \{0\}} \rho_{m}^{2} (k) \left( \sum_{\lvert c-d \rvert \leq 1} \rho_{c}(k-k') \rho_{d} (k') \right)^{2} \mathfrak{l} \left( \frac{\lvert k-k' \rvert}{\lambda} \right)^{2} \mathfrak{l} \left( \frac{ \lvert k \rvert}{\lambda} \right)^{2} (1+  \nu \lvert k' \rvert^{2})^{-2} \lvert k-k' \rvert \lvert k' \rvert   \nonumber \\
\lesssim&  \sum_{k, k' \in \mathbb{Z} \setminus \{0\}: \lvert k \rvert \approx 2^{m}, \lvert k' \rvert \gtrsim 2^{m}} \lvert k' \rvert^{\frac{5}{2}} \left( \sum_{c: m \lesssim c} 2^{-\frac{c}{4}} \right)^{2} (1+ \lvert k' \rvert^{2})^{-2}  \lesssim 1.  \label{est 158}
\end{align}
Thanks to the Gaussian hypercontractivity theorem (e.g. \cite[Theorem 3.50]{J97}), we now conclude for any $p \in [2, \infty)$, 
\begin{align}
&\sup_{\lambda \geq 1} \mathbb{E} [ \lVert ( \partial_{x} \mathcal{L}_{\lambda} X \circ P^{\lambda}) (t) - r_{\lambda}(t) \rVert_{B_{p,p}^{-\kappa}}^{p} ] \nonumber \\
\lesssim& \sup_{ \lambda \geq 1} \sum_{m\geq -1} 2^{- \kappa m p} \int_{\mathbb{T}} \left\lVert \Delta_{m} \left( \partial_{x} \mathcal{L}_{\lambda} X \circ P^{\lambda}  - r_{\lambda} \right)(t) \right\rVert_{L_{\omega}^{2}}^{p} dx \overset{\eqref{est 158}}{\lesssim} 1.
\end{align}
This leads to the convergence claimed in \eqref{est 159} for all $p \in [1,\infty)$. The convergence $\mathbb{P}$-a.s. can be verified similarly; thus, we leave its proof in Section \ref{Section B.4} and conclude this proof of Proposition \ref{Proposition 4.13}. 
\end{proof}

\begin{proof}[Proof of Theorem \ref{Theorem 2.2}]
The proof of Theorem \ref{Theorem 2.2} now follows identically to the reasonings in \cite{HR24, Y23b}. If $T^{\max} < \infty$, then Proposition \ref{Proposition 4.12} gives us $\limsup_{t\nearrow T^{\max}} \lVert w(t) \rVert_{L^{2}} = + \infty$ which implies $T_{i} < T^{\max}$ for all $i \in \mathbb{N}$ by \eqref{Define Ti}. This allows us to sum \eqref{est 77} over all $i \in \mathbb{N}$ and reach a contradiction. We refer to \cite[p. 27]{HR24} and \cite[Section 6.2]{Y23b} for more details. 
\end{proof}

\section{Proof of Theorem \ref{Theorem 2.3}}\label{Section 5}
We finally define the HL-weak solution from Theorem \ref{Theorem 2.3}. 
\begin{define}\label{Definition 5.1}
Given any $\theta^{\text{in}} \in L^{2}(\mathbb{T})$ that is mean-zero and any $\kappa \in (0, \frac{1}{2})$, $v \in C([0, \infty); \mathcal{S}'(\mathbb{T}))$ is called a global high-low (HL) weak solution to \eqref{Equation of v} if $w = v - Y$ from \eqref{Equation of w}, with $Y$ that solves \eqref{Equation of Y}, satisfies the following statements. 
\begin{enumerate}
\item For any $T > 0$, there exists a $\lambda_{T} > 0$ such that for any $\lambda \geq \lambda_{T}$, there exists 
\begin{subequations}\label{est 127} 
\begin{align}
&w^{\mathcal{L},\lambda} \in L^{\infty} (0, T; L^{2}) \cap L^{2}(0, T; H^{1}), \label{est 127a}\\
& w^{\mathcal{H}, \lambda} \in L^{\infty} (0, T; L^{2}) \cap L^{2}(0, T; B_{\infty,2}^{1-2\kappa}), \label{est 127b}
\end{align}
\end{subequations} 
that satisfies 
\begin{equation}\label{est 129} 
w^{\mathcal{H}, \lambda}(t) = - \frac{1}{2} \partial_{x} (w \prec \mathcal{H}_{\lambda} Q) (t), \hspace{3mm} w^{\mathcal{L}, \lambda}(t) = w(t) - w^{\mathcal{H}, \lambda} (t) 
\end{equation} 
(cf. \eqref{Define QH, wH, wL}) for all $t \in [0,T]$ and for $Q$ defined in \eqref{Define Q}. 
\item $w$ solves \eqref{Equation of w} distributionally; i.e., for any $T> 0$ and $\phi \in C^{\infty} ([0,T] \times \mathbb{T}; \mathbb{R})$, 
\begin{align}
\langle w(T), \phi(T) \rangle_{L^{2}} - \langle w(0), \phi(0) \rangle_{L^{2}} =& \int_{0}^{T} \langle w, \partial_{t} \phi + \nu \partial_{x}^{2} \phi \rangle_{L^{2}}  + \frac{1}{2} \langle w, w \partial_{x} \phi \rangle_{L^{2}}  \nonumber \\
&+ \langle w, Y \partial_{x} \phi \rangle_{L^{2}} + \langle w, X \partial_{x} \phi \rangle_{L^{2}} + \frac{1}{2} \langle Y, Y \partial_{x} \phi \rangle_{L^{2}} dt. 
\end{align}
\end{enumerate} 
\end{define} 
\begin{remark}\label{Remark 5.1}
The analogous regularity of $w^{\mathcal{H}, \lambda}$ in \cite[Definition 7.1]{HR24} was  ``$L^{2}([0,T]; B_{4,\infty}^{1-\delta})$'' for all $\delta \in (0,1)$ while that of \cite[Definition 5.1]{Y23b} was $L^{2} ([0,T]; B_{p,2}^{1-2\kappa})$ for all $p \in [1, \frac{2}{\kappa}]$. In contrast, $w^{\mathcal{H}, \lambda} \in L^{2}(0, T; B_{\infty,2}^{1-2\kappa})$ in \eqref{est 127b} is better. 
\end{remark}

\begin{proposition}\label{Proposition 5.1}
Let $\mathcal{N} '' \subset \Omega$ be the null set from Proposition \ref{Proposition 4.1}. Then, for any $\omega \in \Omega \setminus \mathcal{N}''$ and $\theta^{\text{in}} \in L^{2}(\mathbb{T})$ that is mean-zero, there exists a global HL weak solution to \eqref{Equation of v}. 
\end{proposition}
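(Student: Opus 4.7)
The plan is to construct the HL weak solution by a standard compactness--approximation scheme, leveraging the global mild solution theory of Theorem \ref{Theorem 2.2}. First, I approximate the initial data by setting $\theta^{\text{in}}_n \triangleq \mathcal{L}_n \theta^{\text{in}}$, which lies in $C^{\infty}(\mathbb{T}) \subset \mathcal{C}^{-1+\kappa}(\mathbb{T})$, is mean-zero by \eqref{Define Lt and Nt}'s compatibility with the mean-zero constraint, and satisfies $\theta^{\text{in}}_n \to \theta^{\text{in}}$ in $L^{2}(\mathbb{T})$ with $\lVert \theta^{\text{in}}_n \rVert_{L^{2}} \leq \lVert \theta^{\text{in}} \rVert_{L^{2}}$. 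For each fixed $\omega \in \Omega \setminus ( \mathcal{N} \cup \mathcal{N}' \cup \mathcal{N}'')$, Theorem \ref{Theorem 2.2} provides a global-in-time mild solution $w_{n}$ to \eqref{Equation of w} starting from $\theta^{\text{in}}_{n}$, and for any $\lambda \geq 1$ the decomposition $w_{n} = w_{n}^{\mathcal{H}, \lambda} + w_{n}^{\mathcal{L}, \lambda}$ from \eqref{Define QH, wH, wL} is well-defined with $\lambda_{t}$ replaced by $\lambda$.

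Next, I derive uniform-in-$n$ bounds on a fixed window $[0, T]$. From Proposition \ref{Proposition 4.9} applied on each interval $[T_{i}^{n}, T_{i+1}^{n})$ and the interval-length lower bound from Proposition \ref{Proposition 4.10}, summation across the (at most finitely many) intervals needed to reach time $T$ produces
\begin{equation*}
\sup_{n} \Bigl( \lVert w_{n} \rVert_{L_{T}^{\infty} L^{2}}^{2} + \int_{0}^{T} \lVert w_{n}^{\mathcal{L}, \lambda}(t) \rVert_{\dot{H}^{1}}^{2} dt \Bigr) \leq C(T, \lVert \theta^{\text{in}} \rVert_{L^{2}}, N_{T}^{\kappa}).
\end{equation*}
For the high-frequency part, the paraproduct estimate \eqref{Bony 6} (with $p = 2$, $q = \infty$) applied to $w_{n} \prec \mathcal{H}_{\lambda} Q$, together with $\mathcal{H}_{\lambda} Q \in B_{\infty,\infty}^{2 - \frac{3 \kappa}{2}}$ from \eqref{Estimate on Q} and Bernstein's inequality \eqref{Bernstein}, bounds $w_{n}^{\mathcal{H}, \lambda}$ in $L_{T}^{\infty} L^{2}$; upgrading to the claimed $L_{T}^{2} B_{\infty, 2}^{1-2\kappa}$ regularity uses the integrated control of $\lVert w_{n} \rVert_{\dot{H}^{1}}$ combined with a version of \eqref{Bony 3} that exploits the summability on the $\ell^{2}$-index of the dyadic blocks on $\mathcal{H}_{\lambda} Q$. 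This is the step where the sharper regularity of Remark \ref{Remark 5.1} compared with \cite{HR24, Y23b} must be carefully extracted.

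Given these uniform bounds, a standard Aubin--Lions argument yields a subsequence (not relabelled) and a limit $w \in L_{T}^{\infty} L^{2} \cap L_{T}^{2} H^{1-\delta}$ (any $\delta > 0$) such that $w_{n} \to w$ strongly in $L_{T}^{2} L^{2}$ and weakly-$\ast$ in $L_{T}^{\infty} L^{2}$, with $w_{n}^{\mathcal{L}, \lambda} \rightharpoonup w^{\mathcal{L}, \lambda}$ weakly in $L_{T}^{2} H^{1}$ and $w_{n}^{\mathcal{H}, \lambda} \rightharpoonup w^{\mathcal{H}, \lambda}$ weakly-$\ast$ in $L_{T}^{2} B_{\infty, 2}^{1 - 2 \kappa}$. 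The algebraic identity \eqref{est 129} is preserved by these convergences. To pass to the limit in the distributional formulation, the linear terms are handled by weak convergence, the quadratic term $\frac{1}{2} w_{n} \partial_{x} \phi \cdot w_{n}$ by the strong $L_{T}^{2} L^{2}$ convergence, and the mixed terms $w_{n} X \partial_{x} \phi, w_{n} Y \partial_{x} \phi$ by strong convergence of $w_{n}$ combined with the Bony-type product bounds used in Propositions \ref{Proposition 4.6}--\ref{Proposition 4.7}; the term $Y^{2} \partial_{x} \phi$ is independent of $n$. Continuity in time $v \in C([0, \infty); \mathcal{S}'(\mathbb{T}))$ follows from an Ascoli argument applied to the weak formulation. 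Finally, since $T > 0$ is arbitrary, a diagonal extraction yields a global HL weak solution on $[0, \infty)$.

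The main obstacle is the upgrade of the pointwise-in-time Sobolev bound of Proposition \ref{Proposition 4.4} to the time-integrated $L_{T}^{2} B_{\infty, 2}^{1-2\kappa}$ regularity for $w^{\mathcal{H}, \lambda}$, since this requires exploiting the $L_{T}^{2} H^{1}$ integrability of the low-frequency part (implied through \eqref{est 129}) together with the Bernstein gains on the high-frequency projection of $Q$, rather than using only the $L_{T}^{\infty} L^{2}$ bound on $w$. Once this refined regularity is established, the convergence of the nonlinear cross-terms is essentially automatic via the paraproduct calculus already deployed throughout Section \ref{Section 4}.
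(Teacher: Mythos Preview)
Your compactness strategy is correct and closely parallels the paper's proof, but there is one structural difference worth noting. The paper regularizes \emph{both} the noise and the initial data: it sets $X^{n} \triangleq \mathcal{L}_{n} X$, builds the corresponding $Y^{n}$, $Q^{n}$, and renormalization constants $r_{\lambda}^{n}$, and then solves the classical PDE \eqref{Equation of wn} for $w^{n}$; this forces the introduction of the $n$-dependent quantities $N_{t}^{n,\kappa}$, $\bar{N}_{t}^{\kappa}$, $T_{i}^{n}$, $\lambda_{t}^{n}$ in \eqref{est 115}--\eqref{est 112} and a re-derivation of the a~priori bounds uniformly in $n$. You instead keep the rough noise $X$ fixed and approximate only $\theta^{\text{in}}$, invoking Theorem~\ref{Theorem 2.2} directly to produce the global solutions $w_{n}$. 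Your route is more economical (no $n$-dependent stochastic objects, no $\bar{N}_{t}^{\kappa}$), while the paper's route has the advantage that each $w^{n}$ solves a genuinely smooth equation, which makes the time-derivative bound \eqref{est 121} and the Aubin--Lions step entirely classical. Both lead to the same uniform estimates and the same limiting argument.

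Two minor imprecisions to tighten: first, Proposition~\ref{Proposition 4.9} controls $w_{n}^{\mathcal{L}}$ with the \emph{time-dependent} cut-off $\lambda_{t}$, not a fixed $\lambda$; to obtain the bound on $w_{n}^{\mathcal{L},\lambda}$ for fixed $\lambda$ you must first observe, as the paper does in \eqref{est 113}--\eqref{est 128}, that $\lambda_{t}^{n} \leq \bar{\lambda}_{T}$ uniformly in $n$ and then rerun the estimate for any $\lambda \geq \bar{\lambda}_{T}$. Second, you do not have $w_{n} \in L_{T}^{2}\dot{H}^{1}$ uniformly---only $w_{n}^{\mathcal{L}} \in L_{T}^{2}\dot{H}^{1}$ and hence $w_{n} \in L_{T}^{2} H^{1-\frac{3\kappa}{2}}$ via \eqref{est 116}--\eqref{est 118}; the $L_{T}^{2} B_{\infty,2}^{1-2\kappa}$ bound on $w^{\mathcal{H},\lambda}$ (the paper's \eqref{est 126}) in fact uses only $w_{n} \in L_{T}^{2} H^{\frac{1}{2}-\frac{\kappa}{2}}$ together with a Bernstein gain on $\Delta_{l} w_{n}$ inside the paraproduct, so your sketch of that step is on the right track but should be stated in terms of the available regularity.
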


\begin{proof}[Proof of Proposition \ref{Proposition 5.1}]
We define $X^{n} \triangleq \mathcal{L}_{n} X$ for $n \in \mathbb{N}_{0}$ and $X$ that solves \eqref{Equation of X}. We define $Y^{n}$ to be the corresponding solution to \eqref{Equation of Y} with $X$ therein replaced by $X^{n}$ and then define $w^{n}$ to be the solution to
\begin{equation}\label{Equation of wn} 
\partial_{t} w^{n} + \frac{1}{2} \partial_{x} ((w^{n})^{2} + 2w^{n}Y^{n} + 2w^{n}X^{n} + (Y^{n})^{2}) = \nu \partial_{x}^{2} w^{n} \hspace{1mm} \text{ for } t > 0, w^{n}(0,x) = \mathcal{L}_{n}\theta^{\text{in}}(x). 
\end{equation} 
Furthermore, similarly to $P^{\lambda}$ and $r_{\lambda}$ in \eqref{est 63} and $L_{t}^{\kappa}$ and $N_{t}^{\kappa}$ in \eqref{Define Lt and Nt}, we define 

\begin{subequations}
\begin{align}
&P^{\lambda,n}(t,x) \triangleq \left(1- \nu \partial_{x}^{2} \right)^{-1} \partial_{x} \mathcal{L}_{\lambda} X^{n}(t,x), \\
& r_{\lambda}(t) \triangleq \sum_{k\in \mathbb{Z} \setminus \{0\}} \mathfrak{l} \left( \frac{ \lvert k \rvert}{\lambda } \right) \mathfrak{l} \left( \frac{ \lvert k \rvert}{n} \right) \left( \frac{1- e^{-2 \nu \lvert k \rvert^{2} t}}{2\nu} \right) \left(1+ \nu \lvert k \rvert^{2} \right)^{-1} \lvert k \rvert, \hspace{3mm} r_{\lambda}^{n}(t) \leq c \ln (\lambda \wedge n),  
\end{align}
\end{subequations} 
where the last inequality is due to \eqref{logarithmic bound}, and 
\begin{subequations}\label{est 115}
\begin{align}
&L_{t}^{n,\kappa} \triangleq 1 + \lVert X^{n} \rVert_{C_{t} \mathcal{C}_{x}^{-\kappa}} + \lVert Y^{n} \rVert_{C_{t} \mathcal{C}_{x}^{2\kappa}}, \\
&N_{t}^{n,\kappa} \triangleq L_{t}^{n,\kappa} + \sup_{i \in \mathbb{N}} \lVert ( \partial_{x} \mathcal{L}_{\lambda^{i}} X^{n}) \circ P^{\lambda^{i},n} - r_{\lambda^{i}}^{n} \rVert_{C_{t}\mathcal{C}_{x}^{-2\kappa}}, \hspace{1mm} \text{ and } \hspace{1mm} \bar{N}_{t}^{\kappa} (\omega) \triangleq \sup_{n\in\mathbb{N}} N_{t}^{n, \kappa} (\omega), 
\end{align} 
\end{subequations} 
where $\{\lambda^{i}\}_{i\in\mathbb{N}}$ is from Definition \ref{Definition 4.2}. Consequently, $\lim_{n\nearrow \infty} N_{t}^{n,\kappa} (\omega) = N_{t}^{\kappa}(\omega)$ where $N_{t}^{\kappa}$ is from \eqref{Define Lt and Nt}, and $\bar{N}_{t}^{\kappa} (\omega) <\infty$ for all $\omega \in \Omega \setminus \mathcal{N}''$ where $\mathcal{N}''$ is the null set from Proposition \ref{Proposition 4.1}. Then, similarly to Definition \ref{Definition 4.2}, we define for all $i \in \mathbb{N}_{0}$, 
\begin{equation}\label{est 110} 
T_{0}^{n} \triangleq 0, \hspace{3mm} T_{i+1}^{n} (\omega, \theta^{\text{in}}) \triangleq \inf\{t \geq T_{i}^{n}: \hspace{1mm} \lVert w^{n} (t) \rVert_{L^{2}} \geq i + 1 \} 
\end{equation} 
and 
\begin{equation}\label{est 111}
\lambda_{0}^{n} \triangleq \lambda_{0}, \hspace{3mm} \lambda_{t}^{n} \triangleq (1+ \lVert w^{n} (T_{i}^{n}) \rVert_{L^{2}})^{3} \hspace{3mm} \text{ for } t > 0 \text{ such that } t \in [T_{i}^{n}, T_{i+1}^{n}). 
\end{equation} 
Similarly to \eqref{Define Q} we consider 
\begin{equation}\label{Equation of Qn}
( \partial_{t} - \nu \partial_{x}^{2}) Q^{n} = 2 X^{n}, \hspace{3mm} Q^{n} (0) = 0 
\end{equation} 
and define similarly to \eqref{Define QH, wH, wL}
\begin{equation}\label{est 112}
Q^{n, \mathcal{H}}(t) \triangleq \mathcal{H}_{\lambda_{t}^{n}} Q^{n}(t), \hspace{3mm} w^{n, \mathcal{H}}(t) \triangleq - \frac{1}{2} \partial_{x} (w^{n} \prec Q^{n,\mathcal{H}})(t), \hspace{3mm} w^{n,\mathcal{L}} (t) \triangleq w^{n}(t) - w^{n,\mathcal{H}}(t). 
\end{equation} 
Repeating the computations identically up to Proposition \ref{Proposition 4.9}, we can obtain $\kappa_{0} > 0$ sufficiently small so that there exist a constant $C_{1} > 0$ and increasing continuous functions $C_{2}, C_{3}: \hspace{1mm} \mathbb{R}_{\geq 0} \mapsto \mathbb{R}_{\geq 0}$ such that 
\begin{align*}
& \sup_{t \in [T_{i}^{n}, T_{i+1}^{n})} \lVert w^{n, \mathcal{L}}(t) \rVert_{L^{2}}^{2} + \frac{\nu}{2} \lVert w^{n, \mathcal{L}}  \rVert_{L^{2} (T_{i}^{n}, T_{i+1}^{n}; \dot{H}^{1} (\mathbb{T}))}^{2}   \\
\leq& e^{[C_{1} \ln( \lambda_{T_{i}^{n}}) + C_{2} (N_{T_{i+1}^{n}}^{n, \kappa})] (T_{i+1}^{n} -  T_{i}^{n})} [ 2 \lVert w^{n,\mathcal{L}} (T_{i}^{n}) \rVert_{L^{2}}^{2} + C_{3} (N_{T_{i+1}^{n}}^{n,\kappa}) ]  
\end{align*}
for all $\kappa \in (0, \kappa_{0}]$ and $i \geq i_{0} (\theta^{\text{in}})$. Similarly to Proposition \ref{Proposition 4.10} and the proof of Theorem \ref{Theorem 2.2}, we can then show uniformly over all $n \in \mathbb{N}$ and $i \geq i_{0} (\theta^{\text{in}})$ that 
\begin{equation}
T_{i+1}^{n} - T_{i}^{n} \geq \frac{1}{\tilde{C} ( \bar{N}_{T_{i+1}^{n}}^{\kappa}) (1+ \ln(3+ i))} \ln \left( \frac{ i^{2} + 2i - C( \bar{N}_{T_{i+1}^{n}}^{\kappa})}{2i^{2} + \tilde{C} (\bar{N}_{T_{i+1}^{n}}^{\kappa})} \right)
\end{equation} 
for constants $C( \bar{N}_{T_{i+1}^{n}}^{\kappa})$ and $\tilde{C} (\bar{N}_{T_{i+1}^{n}}^{\kappa})$ defined specifically in \eqref{est 79} and thus for every $T > 0, i \in \mathbb{N}$, and $i > i_{0} (\theta^{\text{in}})$, there exists $\mathfrak{t} (i, \bar{N}_{T}^{\kappa}) \in (0, T]$ such that 
\begin{equation}
\inf_{n\in\mathbb{N}_{0}} T_{i}^{n} \geq \mathfrak{t} (i, \bar{N}_{T}^{\kappa}), \hspace{3mm} \mathfrak{t} (i, \bar{N}_{T}^{\kappa}) = T \hspace{3mm} \forall \hspace{1mm} i \text{ sufficiently large}. 
\end{equation} 
Therefore, for all $T>  0$ and $\kappa > 0$ sufficiently small, there exists $C(T, \bar{N}_{T}^{\kappa}) > 0$ such that 
\begin{align}\label{est 114}
\sup_{n\in\mathbb{N}} \left( \lVert w^{n, \mathcal{L}} \rVert_{C_{T}L^{2}}^{2} + \frac{\nu}{2}  \lVert w^{n,\mathcal{L}} \rVert_{L^{2} (0,T; \dot{H}^{1}(\mathbb{T}))}^{2}  \right)  \leq C(T, \bar{N}_{T}^{\kappa}).
\end{align}
Moreover, we can find $\bar{\lambda}_{T} > 0$, in accordance to Definition \ref{Definition 5.1} (1), such that 
\begin{equation}\label{est 113}
\lambda_{t}^{n} \leq \bar{\lambda}_{T} \hspace{3mm} \forall \hspace{1mm} t \in [0, T], \hspace{1mm} n \in \mathbb{N}.
\end{equation} 
Thus, extending $w^{n,\mathcal{H}}$ and $w^{n,\mathcal{L}}$ in \eqref{est 112} to 
\begin{equation}\label{est 124}
w^{n,\mathcal{H}, \lambda} \triangleq - \frac{1}{2} \partial_{x} (w^{n} \prec \mathcal{H}_{\lambda} Q^{n}), \hspace{3mm} w^{n,\mathcal{L},\lambda} \triangleq w^{n} - w^{n,\mathcal{H},\lambda} \hspace{3mm} \forall \hspace{1mm} \lambda \geq \bar{\lambda}_{T}, 
\end{equation}  
we see that for all $\lambda \geq \bar{\lambda}_{T}$ and hence $\lambda \geq \lambda_{t}^{n}$ for all $t \in [0,T]$ and $n \in \mathbb{N}$ due to \eqref{est 113}, following the previous computations leads us now to 
\begin{equation}\label{est 128}
\sup_{n\in\mathbb{N}} \left( \lVert w^{n,\mathcal{L},\lambda} \rVert_{C_{T}L^{2}}^{2} + \frac{\nu}{2} \lVert w^{n,\mathcal{L},\lambda} \rVert_{L^{2} (0,T; \dot{H}^{1}(\mathbb{T}))}^{2} \right) \leq C(\lambda, T, \bar{N}_{T}^{\kappa}). 
\end{equation} 
Next, for all $\alpha < 1 - 2 \kappa$ and $\kappa \in (0, \frac{1}{3}]$, we obtain due to \eqref{est 112}, \eqref{Higher frequency estimate}, and \eqref{est 114}, 
\begin{equation}\label{est 117}
\sup_{n\in\mathbb{N}} \left( \lVert w^{n} \rVert_{C_{T}L^{2}}^{2} + \frac{\nu}{2} \lVert w^{n}  \rVert_{L^{2} (0,T; \dot{H}^{\alpha} (\mathbb{T}))}^{2} \right) \leq C( T,\bar{N}_{T}).
\end{equation} 
Furthermore, applying \eqref{est 112}, \eqref{Bony 3}, and \eqref{est 115} gives us 
\begin{equation}\label{est 116}
\lVert w^{n}(t) \rVert_{H^{1- \frac{3\kappa}{2}}} \lesssim \lVert w^{n}(t) \rVert_{L^{2}} \bar{N}_{t}^{\kappa} + \lVert w^{n,\mathcal{L}}(t) \rVert_{H^{1- \frac{3\kappa}{2}}}. 
\end{equation} 
As a consequence of \eqref{est 116}, \eqref{est 114}, and \eqref{est 117}, for all $\kappa \in (0, \frac{1}{3}]$ sufficiently small
\begin{equation}\label{est 118}
\sup_{n\in\mathbb{N}} \lVert w^{n} \rVert_{L^{2} (0, T; H^{1- \frac{3\kappa}{2}})}^{2} \leq C(T, \bar{N}_{T}^{\kappa}). 
\end{equation} 
Consequently, for some $N_{1} \in \mathbb{N}$ from \eqref{est 0}, all $\kappa \in (0,\frac{1}{2}]$, due to Bernstein's inequality
\begin{align}
 \lVert w^{n,\mathcal{H},\lambda} \rVert_{L^{2}(0,T; B_{\infty,2}^{1-2\kappa})}^{2} \overset{\eqref{est 0}}{\lesssim}&  \int_{0}^{T} \sum_{m\geq -1} \left\lvert 2^{m(2-2\kappa)} \sum_{l: l \leq m + N_{1} -2} \lVert \Delta_{l} w^{n} (t) \rVert_{L^{\infty}} \lVert \Delta_{m} \mathcal{H}_{\lambda} Q^{n} (t) \rVert_{L^{\infty}} \right\rvert^{2} dt  \nonumber \\
\lesssim& \int_{0}^{T} \lVert Q^{n} (t) \rVert_{\mathcal{C}^{2- \frac{3\kappa}{2}}}^{2} \lVert 2^{-m (\frac{\kappa}{2})} \rVert_{l^{1}}^{2} \lVert w^{n} (t) \rVert_{H^{\frac{1}{2} - \frac{\kappa}{2}}}^{2} dt \overset{\eqref{est 118}}{\leq} C(T, \bar{N}_{T}^{\kappa}).  \label{est 126}
\end{align}
Next, we estimate for any $\kappa \in (0, \frac{1}{6})$, via Sobolev embeddings and H$\ddot{\mathrm{o}}$lder's inequality, 
\begin{subequations}\label{est 119} 
\begin{align}
& \lVert (w^{n})^{2} (t) \rVert_{H^{-3\kappa}} \lesssim \lVert w^{n} (t) \rVert_{L^{\frac{4}{1+ 6 \kappa}}}^{2} \lesssim  \lVert w^{n} (t) \rVert_{L^{2}}^{\frac{3-6\kappa}{2(1-3\kappa)}} \lVert w^{n} (t) \rVert_{\dot{H}^{1- 3\kappa}}^{\frac{1-6\kappa}{2(1-3\kappa)}}, \\
& \lVert w^{n} Y^{n}(t) \rVert_{H^{-3\kappa}} \lesssim \lVert w^{n}(t) \rVert_{L^{2}} \lVert Y^{n} (t) \rVert_{L^{\frac{1}{3\kappa}}}  \overset{\eqref{est 115}}{\lesssim} \lVert w^{n} (t) \rVert_{L^{2}} \bar{N}_{T}^{\kappa},  \\
& \lVert w^{n} X^{n} (t) \rVert_{H^{-3\kappa}} \overset{\eqref{Bony 3} \eqref{Bony 4} \eqref{Bony 5}  \eqref{est 115}}{\lesssim} \bar{N}_{T}^{\kappa} \lVert w^{n} (t) \rVert_{H^{2\kappa}}, \\
&\lVert (Y^{n})^{2} (t) \rVert_{H^{-3\kappa}}  \overset{\eqref{Bony 3} \eqref{Bony 5}}{\lesssim} \lVert Y^{n}(t) \rVert_{H^{-2\kappa}} \lVert Y^{n} (t) \rVert_{\mathcal{C}^{-\kappa}} + \lVert Y^{n}(t) \rVert_{L^{2}} \lVert Y^{n}(t) \rVert_{\mathcal{C}^{2\kappa}} \overset{\eqref{est 115}}{\lesssim} (\bar{N}_{T}^{\kappa})^{2}.
\end{align}
\end{subequations} 
By an application of \eqref{est 119} directly on \eqref{Equation of wn}, we can deduce 
\begin{align}
\lVert \partial_{t} w^{n}(t) \rVert_{H^{-1 - 3 \kappa}} \lesssim& \lVert w^{n} (t) \rVert_{H^{1-3\kappa}}  \nonumber \\
&+ \lVert w^{n} (t) \rVert_{L^{2}}^{\frac{3-6\kappa}{2(1-3\kappa)}} \lVert w^{n} (t) \rVert_{\dot{H}^{1-3\kappa}}^{\frac{1-6\kappa}{2(1-3\kappa)}} + \bar{N}_{T}^{\kappa} \left( \lVert w^{n} (t) \rVert_{L^{2}}^{\frac{1-5\kappa}{1-3\kappa}} \lVert w^{n}(t) \rVert_{\dot{H}^{1-3\kappa}}^{\frac{2\kappa}{1-3\kappa}} + 1\right).\label{est 120} 
\end{align} 
Therefore, applying \eqref{est 117} and \eqref{est 118} to \eqref{est 120} gives us 
\begin{equation}\label{est 121}
 \sup_{n \in \mathbb{N}} \lVert \partial_{t} w^{n} \rVert_{L^{2} (0, T; H^{-1-3\kappa})}^{2} \leq C(T, \bar{N}_{T}^{\kappa}).
\end{equation} 
Considering \eqref{est 117}, \eqref{est 118} and now \eqref{est 121}, weak compactness, Banach-Alaoglu theorem, and Lions-Aubins compactness type result (e.g. \cite[Lemma 4 (1)]{S90}) allow us to find a subsequence $\{w^{n_{k}} \}_{k\in\mathbb{N}} \subset \{w^{n}\}_{n\in\mathbb{N}}$ and a limit $w \in L^{\infty} (0, T; L^{2} (\mathbb{T})) \cap L^{2} (0, T; H^{1- \frac{3\kappa}{2}}(\mathbb{T}))$ such that 
\begin{subequations}\label{convergence} 
\begin{align}
& w^{n_{k}} \overset{\ast}{\rightharpoonup} w \hspace{3mm} \text{weak}^{\ast} \text{ in } L^{\infty} (0, T; L^{2} (\mathbb{T})), \label{convergence 1} \\
& w^{n_{k}} \rightharpoonup w \hspace{3mm} \text{weakly in } L^{2} (0, T; H^{1- \frac{3\kappa}{2}} (\mathbb{T})),  \label{convergence 2} \\
& w^{n_{k}} \to w \hspace{3mm} \text{strongly in } L^{2} (0, T; H^{\beta} (\mathbb{T})) \hspace{3mm} \forall \hspace{1mm} \beta \in \left(-1-3\kappa, 1- \frac{3\kappa}{2} \right).  \label{convergence 3} 
\end{align}
\end{subequations} 
With these convergence results, it is a classical to prove that $w$ is a weak solution to \eqref{Equation of w}. Additionally, $Q^{n_{k}} \to Q$ strongly in $C([0,T]; \mathcal{C}^{2-3\kappa})$ as $k\to\infty$ so that $w^{n_{k}, \mathcal{H}, \lambda} \to w^{\mathcal{H}, \lambda}$ strongly in $L^{2} (0, T; H^{1-4\kappa})$ as $k\to\infty$. Moreover, from \eqref{est 126} we see that $w^{\mathcal{H}, \lambda} \in L^{2}(0, T; B_{\infty,2}^{1-2\kappa})$ as desired in \eqref{est 127b}. The fact that $w^{\mathcal{H}, \lambda} \in L^{\infty} (0, T; L^{2})$ as desired in \eqref{est 127b} follows from \eqref{est 128}, \eqref{est 117}, and the fact that $w^{n, \mathcal{H}, \lambda} = w^{n} - w^{n, \mathcal{L}, \lambda}$ from \eqref{est 124}.  Finally, \eqref{est 128} also implies \eqref{est 127a}. 
\end{proof} 

\begin{proposition}\label{Proposition 5.2}  
Let $\mathcal{N}''$ be the null set from Proposition \ref{Proposition 4.1}. Then, for any $\omega \in \Omega \setminus \mathcal{N}''$ and any $\theta^{\text{in}} \in L^{2} (\mathbb{T})$ that is mean-zero, there exists at most one HL weak solution starting from $\theta^{\text{in}}$. 
\end{proposition}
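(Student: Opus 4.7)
The plan is to prove uniqueness through a direct $L^2$ energy estimate on the difference of two solutions, exactly mirroring the scheme of Section~\ref{Section 4} but at a fixed (rather than time-dependent and random) frequency cut-off. Given two global HL weak solutions $v_1, v_2$ of \eqref{Equation of v} with common datum $\theta^{\text{in}} \in L^2(\mathbb{T})$, I set $w_i \triangleq v_i - Y$ and $\bar w \triangleq w_1 - w_2$, which satisfies
\begin{equation*}
\partial_t \bar w + \tfrac{1}{2}\partial_x\bigl((w_1+w_2)\bar w + 2\bar w\,Y + 2\bar w\,X\bigr) = \nu\partial_x^2 \bar w,\qquad \bar w(0) = 0,
\end{equation*}
distributionally, by subtracting the weak formulations in Definition~\ref{Definition 5.1}(2). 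Fix $T>0$ arbitrary. By Definition~\ref{Definition 5.1}(1) applied to each solution, I can choose a single $\lambda \geq \max\{\lambda_T^{(1)}, \lambda_T^{(2)}\}$ at which both HL decompositions are available; enlarging $\lambda$ further, I can additionally ensure via Lemma~\ref{Lemma 3.3} (applied to the smoothing estimate \eqref{Estimate on Q} for $Q$) that $\|\mathcal{H}_\lambda Q(t)\|_{\mathcal{C}^1}$ is uniformly small on $[0,T]$. I then decompose $\bar w = \bar w^{\mathcal{L},\lambda} + \bar w^{\mathcal{H},\lambda}$ with $\bar w^{\mathcal{H},\lambda} \triangleq -\tfrac12\partial_x(\bar w\prec\mathcal{H}_\lambda Q)$, so that the equation for $\bar w^{\mathcal{L},\lambda}$ contains the familiar commutator $C^{\prec}(\bar w, \mathcal{H}_\lambda Q)$ exactly as in \eqref{est 32}.

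The second step is to test this equation against $\bar w^{\mathcal{L},\lambda}$, justified by the regularity \eqref{est 127a} and a standard mollification argument. The singular piece $\langle \bar w^{\mathcal{L},\lambda}, \nu\partial_x^2 \bar w^{\mathcal{L},\lambda} - \partial_x(\bar w^{\mathcal{L},\lambda}\mathcal{L}_\lambda X)\rangle_{L^2}$ is handled by the Anderson Hamiltonian machinery, using Proposition~\ref{Proposition 2.4}, the renormalization constant $r_\lambda$, and its bound \eqref{logarithmic bound}, producing $-\nu\|\bar w^{\mathcal{L},\lambda}\|_{\dot H^1}^2 + [c\ln\lambda + \mathbf{m}(N_T^\kappa)]\|\bar w^{\mathcal{L},\lambda}\|_{L^2}^2$. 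Contributions from $\bar w\,Y$, $\bar w\succ\mathcal{H}_\lambda X$, $\bar w\circ\mathcal{H}_\lambda X$, the commutator $C^\prec(\bar w, \mathcal{H}_\lambda Q)$, and $Y^2$-type terms are handled by verbatim adaptations of Propositions~\ref{Proposition 4.5}–\ref{Proposition 4.7}, with the Bernstein factor $\lambda_t^{-\delta}$ in the analogue of Proposition~\ref{Proposition 4.4} replaced by a constant depending only on $\lambda, T, N_T^\kappa$.

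The one genuinely new term is
\begin{equation*}
\langle\bar w^{\mathcal{L},\lambda}, \partial_x\bigl((w_1+w_2)\bar w\bigr)\rangle_{L^2},
\end{equation*}
which has no analogue in Section~\ref{Section 4}. After splitting $\bar w = \bar w^{\mathcal{L},\lambda} + \bar w^{\mathcal{H},\lambda}$ and integrating by parts, the diagonal contribution reduces to $\tfrac12\int (\bar w^{\mathcal{L},\lambda})^2\partial_x(w_1+w_2)\,dx$, which I control via the one-dimensional Agmon-type embedding $\|f\|_{L^\infty}\lesssim \|f\|_{L^2}^{1/2}\|f\|_{\dot H^1}^{1/2}$ and Young's inequality:
\begin{equation*}
\lvert\cdot\rvert \lesssim \delta\|\bar w^{\mathcal{L},\lambda}\|_{\dot H^1}^2 + C_\delta \|w_1+w_2\|_{L^2}^4 \|\bar w^{\mathcal{L},\lambda}\|_{L^2}^2,
\end{equation*}
where $\|w_1+w_2\|_{L^2}^4 \in L^\infty_t([0,T]) \subset L^1_t([0,T])$ thanks to \eqref{est 127a}. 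The mixed pieces involving $\bar w^{\mathcal{H},\lambda}$ are estimated through $\|\bar w^{\mathcal{H},\lambda}\|_{L^\infty}\lesssim\|\bar w^{\mathcal{H},\lambda}\|_{B^{1-2\kappa}_{\infty,2}}$ from \eqref{est 127b} together with Bony's estimates on the identity $\bar w^{\mathcal{H},\lambda} = -\tfrac12\partial_x(\bar w\prec\mathcal{H}_\lambda Q)$.

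Absorbing the $\dot H^1$ contributions into the dissipation, I arrive at a differential inequality $\partial_t\|\bar w^{\mathcal{L},\lambda}\|_{L^2}^2 \leq a_\lambda(t)\|\bar w^{\mathcal{L},\lambda}\|_{L^2}^2$ with $a_\lambda \in L^1([0,T])$, and Gronwall combined with $\bar w^{\mathcal{L},\lambda}(0)=0$ gives $\bar w^{\mathcal{L},\lambda}\equiv 0$ on $[0,T]$. Thus $\bar w = -\tfrac12\partial_x(\bar w\prec\mathcal{H}_\lambda Q)$, and Bony's estimate together with the smallness of $\|\mathcal{H}_\lambda Q\|_{\mathcal{C}^1}$ chosen initially yields $\|\bar w(t)\|_{L^2} \leq \tfrac12\|\bar w(t)\|_{L^2}$, hence $\bar w\equiv 0$ on $[0,T]$; arbitrariness of $T$ concludes uniqueness. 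The main obstacle is precisely this cross term $\partial_x((w_1+w_2)\bar w)$: the classical Burgers cancellation $\int w\,\partial_x w^2\,dx = 0$ is unavailable for the difference of two distinct solutions, and the surrogate Agmon argument sits exactly at the borderline allowed by the one-dimensional setting, which is also why the same scheme would not close in higher dimensions.
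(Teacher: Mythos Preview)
Your overall strategy---fix $\lambda$ large, derive an $L^2$ energy inequality for $\bar w^{\mathcal L,\lambda}$, Gronwall to zero, then infer $\bar w\equiv0$ from the smallness of $\lVert\mathcal H_\lambda Q\rVert_{\mathcal C^1}$ via the identity $\bar w=-\tfrac12\partial_x(\bar w\prec\mathcal H_\lambda Q)$---is exactly the paper's scheme, and your key closure estimate $\lVert\bar w\rVert_{L^2}\le 2\lVert\bar w^{\mathcal L,\lambda}\rVert_{L^2}$ is the paper's \eqref{est 139}. Two details diverge.

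First, you invoke Proposition~\ref{Proposition 2.4} for the term $\langle\bar w^{\mathcal L,\lambda},\nu\partial_x^2\bar w^{\mathcal L,\lambda}-\partial_x(\bar w^{\mathcal L,\lambda}\mathcal L_\lambda X)\rangle_{L^2}$. The paper observes that since $\lambda$ is now \emph{fixed}, $\mathcal L_\lambda X\in L^\infty$ with norm $\lesssim\lambda^\kappa N_T^\kappa$, so a single integration by parts and H\"older give $\lVert\bar w^{\mathcal L,\lambda}\rVert_{\dot H^1}\lVert\bar w^{\mathcal L,\lambda}\rVert_{L^2}\lVert\mathcal L_\lambda X\rVert_{L^\infty}$ directly (see \eqref{est 132a}), bypassing the Anderson Hamiltonian entirely. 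Your route is valid but an unnecessary detour: the whole point of Proposition~\ref{Proposition 2.4} is to survive the limit $\lambda\to\infty$, which is not in play here.

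Second, for the genuinely nonlinear cross term you use Agmon's inequality to produce a Gronwall coefficient $\lVert w_1+w_2\rVert_{L^2}^4\in L^\infty_t$. The paper instead introduces the norm $\lVert f\rVert_\lambda\triangleq\lVert f^{\mathcal L,\lambda}\rVert_{H^1}+\lVert f^{\mathcal H,\lambda}\rVert_{B_{\infty,2}^{1-2\kappa}}$ and, using the embeddings $H^1(\mathbb T)\hookrightarrow L^\infty$ and $B_{\infty,2}^{1-2\kappa}(\mathbb T)\hookrightarrow L^\infty$, proves $\lVert zw\rVert_{L^2}+\lVert\bar w z\rVert_{L^2}\lesssim\lVert z^{\mathcal L,\lambda}\rVert_{L^2}(\lVert w\rVert_\lambda+\lVert\bar w\rVert_\lambda)$; the resulting coefficient $\lVert w\rVert_\lambda^2+\lVert\bar w\rVert_\lambda^2$ lies in $L^1_t$ precisely by the regularity \eqref{est 127} built into Definition~\ref{Definition 5.1}. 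Both estimates close; yours is a touch more elementary on the nonlinear side, while the paper's is cleaner on the linear one and makes more systematic use of the HL decomposition it has set up.
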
 

\begin{proof}[Proof of Proposition \ref{Proposition 5.2}]
Suppose that $v \triangleq w + Y, \bar{v} \triangleq \bar{w} + Y$ are two HL weak solutions. 
We define 
\begin{equation}\label{est 130}
z \triangleq w - \bar{w}, \hspace{3mm} z^{\mathcal{L}, \lambda} \triangleq w^{\mathcal{L}, \lambda} - \bar{w}^{\mathcal{L}, \lambda}, \hspace{1mm} \text{ and } \hspace{1mm}  \lVert f \rVert_{\lambda} \triangleq \lVert f^{\mathcal{L},\lambda} \rVert_{H^{1}} + \lVert f^{\mathcal{H}, \lambda} \rVert_{B_{\infty, 2}^{1-2\kappa}}.
\end{equation} 
It follows that $z^{\mathcal{L},\lambda}$ satisfies 
\begin{align}
\partial_{t} z^{\mathcal{L}, \lambda} - \nu \partial_{x}^{2} & z^{\mathcal{L}, \lambda} = - \partial_{x} ( z^{\mathcal{L},\lambda} \mathcal{L}_{\lambda} X) - \partial_{x}( z^{\mathcal{L},\lambda} \mathcal{H}_{\lambda} X - z^{\mathcal{L},\lambda} \prec \mathcal{H}_{\lambda} X)  \nonumber \\
& - \partial_{x} \left( z^{\mathcal{H},\lambda} X - z^{\mathcal{H},\lambda} \prec \mathcal{H}_{\lambda} X - \frac{1}{2} C^{\prec} (z, \mathcal{H}_{\lambda} Q) \right) - \frac{1}{2} \partial_{x} (w^{2} - \bar{w}^{2}+ 2zY). \label{est 131}
\end{align} 
Taking $L^{2}(\mathbb{T})$-inner products on \eqref{est 131} with $z^{\mathcal{L},\lambda}$ leads to 
\begin{equation}\label{est 145} 
\frac{1}{2} \partial_{t} \lVert z^{\mathcal{L}, \lambda} (t) \rVert_{L^{2}}^{2} = \sum_{k=1}^{4}\RomanIII_{k}, 
\end{equation} 
where 
\begin{subequations}
\begin{align}
\RomanIII_{1} \triangleq& - \langle z^{\mathcal{L},\lambda}, - \nu \partial_{x}^{2} z^{\mathcal{L},\lambda} + \partial_{x} (z^{\mathcal{L},\lambda} \mathcal{L}_{\lambda} X) \rangle_{L^{2}} (t), \label{Define III1}\\
\RomanIII_{2} \triangleq& - \langle z^{\mathcal{L},\lambda}, \partial_{x} (z^{\mathcal{L},\lambda} \mathcal{H}_{\lambda} X - z^{\mathcal{L},\lambda} \prec \mathcal{H}_{\lambda} X ) \rangle_{L^{2}} (t), \label{Define III2}\\
\RomanIII_{3} \triangleq& - \left\langle z^{\mathcal{L},\lambda}, \partial_{x} \left(z^{\mathcal{H},\lambda} X - z^{\mathcal{H},\lambda} \prec \mathcal{H}_{\lambda} X - \frac{1}{2} C^{\prec} (z, \mathcal{H}_{\lambda} Q)  \right) \right\rangle_{L^{2}} (t), \label{Define III3} \\
\RomanIII_{4} \triangleq& - \left\langle z^{\mathcal{L},\lambda}, \frac{1}{2} \partial_{x} (w^{2} - \bar{w}^{2} + 2z Y) \right\rangle_{L^{2}} (t). \label{Define III4}
\end{align}
\end{subequations} 
We estimate for any $\kappa \in (0, \frac{1}{2})$, 
\begin{subequations}\label{est 132}
\begin{align}
&\RomanIII_{1} \lesssim  \lVert \partial_{x} z^{\mathcal{L},\lambda} (t) \rVert_{L^{2}} \lVert z^{\mathcal{L},\lambda} (t) \rVert_{L^{2}} \lVert \mathcal{L}_{\lambda} X(t) \rVert_{L^{\infty}} \nonumber \\
& \hspace{40mm}  \leq - \frac{15 \nu}{16} \lVert z^{\mathcal{L},\lambda}(t) \rVert_{\dot{H}^{1}}^{2} + C(\lambda, N_{T}^{\kappa}) \lVert z^{\mathcal{L},\lambda}(t) \rVert_{L^{2}}^{2}, \label{est 132a} \\
&\RomanIII_{2}  \overset{\eqref{Bony 4} \eqref{Bony 5}}{\lesssim} \lVert z^{\mathcal{L},\lambda} (t) \rVert_{\dot{H}^{1}} \left(  \lVert z^{\mathcal{L},\lambda}(t) \rVert_{H^{\kappa}} \lVert \mathcal{H}_{\lambda} X(t) \rVert_{\mathcal{C}^{-\kappa}} + \lVert z^{\mathcal{L},\lambda}(t) \rVert_{H^{2\kappa}} \lVert \mathcal{H}_{\lambda} X(t) \rVert_{\mathcal{C}^{-\kappa}} \right)  \nonumber \\
& \hspace{40mm} \leq \frac{\nu}{16} \lVert z^{\mathcal{L},\lambda} (t) \rVert_{\dot{H}^{1}}^{2} +C (N_{T}^{\kappa}) \lVert z^{\mathcal{L},\lambda}(t) \rVert_{L^{2}}^{2}. \label{est 132b}  
\end{align}
\end{subequations} 
We use the reformulation of the commutator from \eqref{est 46a} and estimate $\RomanIII_{3}$ from \eqref{Define III3} as 
\begin{align}
\RomanIII_{3}  \lesssim& \lVert z^{\mathcal{L},\lambda}(t) \rVert_{\dot{H}^{1}} \Bigg[ \lVert  z^{\mathcal{H},\lambda} X - z^{\mathcal{H},\lambda}\prec \mathcal{H}_{\lambda} X \rVert_{L^{2}}  \nonumber \\
& \hspace{25mm} + \lVert ( \partial_{t} - \nu \partial_{x}^{2}) z \prec \mathcal{H}_{\lambda} Q \rVert_{L^{2}} + \lVert \partial_{x} z \prec \partial_{x} \mathcal{H}_{\lambda} Q \rVert_{L^{2}} \Bigg](t). \label{est 133} 
\end{align}
We estimate the first term in \eqref{est 133} as follows:
\begin{align}
& \lVert ( z^{\mathcal{H}, \lambda} X - z^{\mathcal{H},\lambda} \prec \mathcal{H}_{\lambda} X ) (t)  \rVert_{L^{2}} = \lVert ( z^{\mathcal{H},\lambda} \mathcal{L}_{\lambda} X + z^{\mathcal{H},\lambda} \succ \mathcal{H}_{\lambda} X + z^{\mathcal{H},\lambda} \circ \mathcal{H}_{\lambda} X ) (t) \rVert_{L^{2}}  \nonumber \\
\overset{\eqref{Bony 4} \eqref{Bony 5}}{\lesssim}& \lVert z^{\mathcal{H},\lambda}(t) \rVert_{L^{2}} \lVert \mathcal{L}_{\lambda} X(t) \rVert_{L^{\infty}}+ \lVert z^{\mathcal{H},\lambda}(t) \rVert_{H^{\kappa}} \lVert \mathcal{H}_{\lambda} X(t) \rVert_{\mathcal{C}^{-\kappa}} + \lVert z^{\mathcal{H},\lambda} (t) \rVert_{H^{2\kappa}} \lVert \mathcal{H}_{\lambda} X(t) \rVert_{\mathcal{C}^{-\kappa}}  \nonumber \\
& \hspace{30mm} \lesssim \left( \lVert z^{\mathcal{H},\lambda}(t) \rVert_{L^{2}} \lambda^{2\kappa} + \lVert z^{\mathcal{H}, \lambda} (t) \rVert_{H^{2\kappa}}  \right)N_{T}^{\kappa}.  \label{est 135} 
\end{align}
To treat the second term in \eqref{est 133}, we first rewrite by using \eqref{Equation of w} and \eqref{est 130}, for all $\kappa \in (0, \frac{1}{3})$, 
\begin{equation}\label{est 134} 
( \partial_{t} - \nu \partial_{x}^{2}) z = - \frac{1}{2} \partial_{x} \left( z w + \bar{w} z + 2z Y + 2z X  \right) 
\end{equation} 
to estimate by 
\begin{align}
& \lVert ( \partial_{t} - \nu \partial_{x}^{2}) z \prec \mathcal{H}_{\lambda} Q (t) \rVert_{L^{2}} \overset{\eqref{Bony 3} \eqref{est 134}}{\lesssim}  \lVert \partial_{x} ( zw + \bar{w} z + 2zY + 2zX) (t) \rVert_{H^{-2+ \frac{3\kappa}{2}}} \lVert Q(t) \rVert_{\mathcal{C}^{2- \frac{3\kappa}{2}}} \nonumber \\
& \hspace{15mm} \overset{\eqref{Bony 3} \eqref{Bony 4} \eqref{Bony 5}}{\lesssim} N_{T}^{\kappa} \lVert z(t) \rVert_{L^{2}} ( \lVert w(t) \rVert_{L^{2}} + \lVert \bar{w}(t) \rVert_{L^{2}}) + \lVert z(t) \rVert_{H^{2\kappa}} (N_{T}^{\kappa})^{2}.\label{est 136} 
\end{align}
Lastly, we estimate the third term in \eqref{est 133} as follows: for all $\kappa \in (0, \frac{2}{3})$, 
\begin{equation}\label{est 137}
\lVert \partial_{x} z \prec \partial_{x} \mathcal{H}_{\lambda} Q(t) \rVert_{L^{2}} 
\overset{\eqref{Bony 3}}{\lesssim} \lVert \partial_{x} z(t) \rVert_{H^{-1+ \frac{3\kappa}{2}}} \lVert \partial_{x} \mathcal{H}_{\lambda} Q(t) \rVert_{\mathcal{C}^{1- \frac{3\kappa}{2}}}  \overset{\eqref{Estimate on Q}}{\lesssim} \lVert z (t) \rVert_{H^{\frac{3\kappa}{2}}}  N_{T}^{\kappa}. 
\end{equation} 
Applying \eqref{est 135}, \eqref{est 136}, and \eqref{est 137} to \eqref{est 133} gives us 
\begin{equation}\label{est 146}
\RomanIII_{3}\leq \frac{\nu}{16} \lVert z^{\mathcal{L},\lambda}(t) \rVert_{\dot{H}^{1}}^{2} + C(N_{T}^{\kappa},\lambda) [ \lVert z^{\mathcal{H},\lambda} (t) \rVert_{H^{2\kappa}}^{2} + \lVert z(t) \rVert_{H^{2\kappa}}^{2} ( \lVert w(t) \rVert_{L^{2}} + \lVert \bar{w} (t) \rVert_{L^{2}} + 1)^{2} ].
\end{equation}
At last, we work on $\RomanIII_{4}$ from \eqref{Define III4} as follows: 
\begin{equation}\label{est 143}
\RomanIII_{4}  \lesssim \lVert z^{\mathcal{L},\lambda}(t) \rVert_{\dot{H}^{1}} \left( \lVert zw \rVert_{L^{2}} + \lVert \bar{w} z \rVert_{L^{2}} + \lVert z \rVert_{L^{2}} N_{T}^{\kappa} \right) (t). 
\end{equation} 
We expand $z, w, \bar{w},$ and $\bar{z}$ and first estimate the product of lower-order terms as follows: via H$\ddot{\mathrm{o}}$lder's inequality, Sobolev embedding $H^{1}(\mathbb{T}) \hookrightarrow L^{\infty} (\mathbb{T})$, and \eqref{est 130}
\begin{equation}\label{est 141}
\lVert z^{\mathcal{L},\lambda} w^{\mathcal{L},\lambda}  \rVert_{L^{2}} + \lVert \bar{w}^{\mathcal{L},\lambda} z^{\mathcal{L},\lambda} \rVert_{L^{2}}  \lesssim \lVert z^{\mathcal{L}, \lambda} \rVert_{L^{2}} ( \lVert w \rVert_{\lambda} + \lVert \bar{w} \rVert_{\lambda}). 
\end{equation} 
Next, among the products of higher and lower order terms, we focus on those with $z^{\mathcal{L},\lambda}$: 
\begin{equation}\label{est 142} 
 \lVert z^{\mathcal{L},\lambda} w^{\mathcal{H},\lambda} \rVert_{L^{2}} + \lVert \bar{w}^{\mathcal{H},\lambda} z^{\mathcal{L}, \lambda} \rVert_{L^{2}}   \overset{\eqref{est 130}}{\lesssim} \lVert z^{\mathcal{L},\lambda} \rVert_{L^{2}} ( \lVert w \rVert_{\lambda} + \lVert \bar{w} \rVert_{\lambda}).
\end{equation} 
Next, we estimate all of the rest of the products via embeddings of $H^{1}(\mathbb{T}) \hookrightarrow L^{\infty}(\mathbb{T})$ and $B_{\infty,2}^{1-2\kappa}(\mathbb{T}) \hookrightarrow B_{\infty,1}^{0}(\mathbb{T}) \hookrightarrow L^{\infty}(\mathbb{T})$, 
\begin{align}
&\lVert z^{\mathcal{H}, \lambda} w^{\mathcal{L},\lambda} \rVert_{L^{2}} + \lVert \bar{w}^{\mathcal{L}, \lambda} z^{\mathcal{H},\lambda} \rVert_{L^{2}} + \lVert z^{\mathcal{H},\lambda} w^{\mathcal{H},\lambda} \rVert_{L^{2}} + \lVert \bar{w}^{\mathcal{H},\lambda} z^{\mathcal{H},\lambda} \rVert_{L^{2}} \nonumber \\
\lesssim& \lVert z^{\mathcal{H}, \lambda} \rVert_{L^{2}} [ \lVert w^{\mathcal{L},\lambda} \rVert_{H^{1}} + \lVert \bar{w}^{\mathcal{L},\lambda} \rVert_{H^{1}} + \lVert w^{\mathcal{H},\lambda} \rVert_{B_{\infty, 2}^{1-2\kappa}} + \lVert \bar{w} \rVert_{B_{\infty,2}^{1-2\kappa}}] \lesssim \lVert z^{\mathcal{H},\lambda} \rVert_{L^{2}}  ( \lVert w \rVert_{\lambda} + \lVert \bar{w} \rVert_{\lambda}). \label{est 138}
\end{align} 
Next, for all $s \in [0, 1- 2 \kappa)$ we can compute for $\lambda \geq \bar{\lambda} \vee \lambda_{T}$ where $\lambda_{T}$ is from Definition \ref{Definition 5.1} (1) and $\bar{\lambda}$ is taken large, 
\begin{equation*}
\lVert z(t) \rVert_{H^{s}} \overset{\eqref{Bony 3}}{\leq} \lVert z^{\mathcal{L},\lambda}(t) \rVert_{H^{s}}  + C \lVert z \rVert_{H^{s}} \lambda^{-\frac{\kappa}{2}} \lVert Q (t) \rVert_{\mathcal{C}_{x}^{2-  \frac{3\kappa}{2}}}  \overset{\eqref{Estimate on Q}}{\leq}  \lVert z^{\mathcal{L},\lambda}(t) \rVert_{H^{s}}  + \frac{1}{2}\lVert z \rVert_{H^{s}} 
\end{equation*} 
and consequently
\begin{align}\label{est 139} 
\lVert z \rVert_{H^{s}} \leq 2 \lVert z^{\mathcal{L}, \lambda} \rVert_{H^{s}} \hspace{3mm} \forall \hspace{1mm} s \in [0, 1- 2 \kappa).
\end{align}
Applying \eqref{est 139} with $s = 0$ to \eqref{est 138} leads us to 
\begin{equation}\label{est 140}
\lVert z^{\mathcal{H}, \lambda} w^{\mathcal{L},\lambda} \rVert_{L^{2}} + \lVert \bar{w}^{\mathcal{L}, \lambda} z^{\mathcal{H},\lambda} \rVert_{L^{2}} + \lVert z^{\mathcal{H},\lambda} w^{\mathcal{H},\lambda} \rVert_{L^{2}} + \lVert \bar{w}^{\mathcal{H},\lambda} z^{\mathcal{H},\lambda} \rVert_{L^{2}}  \lesssim \lVert z^{\mathcal{L},\lambda} \rVert_{L^{2}}  ( \lVert w \rVert_{\lambda} + \lVert \bar{w} \rVert_{\lambda}). 
\end{equation} 
Due to \eqref{est 141}, \eqref{est 142}, and \eqref{est 140}, we are able to deduce 
\begin{equation}\label{est 144} 
\lVert zw \rVert_{L^{2}}+ \lVert \bar{w} z \rVert_{L^{2}} \lesssim \lVert z^{\mathcal{L},\lambda} \rVert_{L^{2}}  ( \lVert w \rVert_{\lambda} + \lVert \bar{w} \rVert_{\lambda}). 
\end{equation} 
Applying \eqref{est 144} and \eqref{est 139} with $s = 0$ to \eqref{est 143} allows us to conclude 
\begin{equation}\label{est 147}
\RomanIII_{4}  \leq \frac{\nu}{16} \lVert z^{\mathcal{L},\lambda} (t) \rVert_{\dot{H}^{1}}^{2} + C \lVert z^{\mathcal{L},\lambda}(t) \rVert_{L^{2}}^{2} \left( \lVert w(t) \rVert_{\lambda}^{2} + \lVert \bar{w}(t) \rVert_{\lambda}^{2} + (N_{T}^{\kappa} )^{2} \right).
\end{equation} 
At last, we obtain for 
\begin{equation}\label{est148} 
M_{T} \triangleq \lVert w^{\mathcal{L},\lambda} \rVert_{L_{T}^{\infty} L_{x}^{2}}^{2} + \lVert w^{\mathcal{H},\lambda} \rVert_{L_{T}^{\infty} L_{x}^{2}}^{2} + \lVert \bar{w}^{\mathcal{L},\lambda} \rVert_{L_{T}^{\infty} L_{x}^{2}}^{2} + \lVert \bar{w}^{\mathcal{H},\lambda} \rVert_{L_{T}^{\infty} L_{x}^{2}}^{2} 
\end{equation} 
which is finite by Definition \ref{Definition 5.1} (1), 
\begin{align}
&\partial_{t} \lVert z^{\mathcal{L},\lambda}(t) \rVert_{L^{2}}^{2} + \nu\lVert z^{\mathcal{L},\lambda}(t) \rVert_{\dot{H}^{1}}^{2}  \nonumber \\
\leq&C(\lambda, N_{T}^{\kappa}) \lVert z^{\mathcal{L},\lambda} (t) \rVert_{L^{2}}^{2} \left( ( \lVert w \rVert_{L^{2}} + \lVert \bar{w} \rVert_{L^{2}} + 1)^{\frac{2}{1-2\kappa}}  + \lVert w \rVert_{\lambda}^{2} + \lVert \bar{w} \rVert_{\lambda}^{2} + (N_{T}^{\kappa})^{2} \right), \label{est 273}
\end{align} 
where we used $\lVert z^{\mathcal{H},\lambda} \rVert_{H^{2\kappa}}^{2} \leq 9 \lVert z^{\mathcal{L},\lambda} \rVert_{H^{2\kappa}}^{2}$ for $\kappa \in (0, \frac{1}{4})$ due to \eqref{est 139}. This completes the proof of Proposition \ref{Proposition 5.2}.  
\end{proof} 

The proof of Theorem \ref{Theorem 2.3} now follows similarly to the proof of Theorem \ref{Theorem 2.2}. 

\section{Proof of Proposition \ref{Proposition 2.4}}\label{Section 6}
In this section we discuss the proof of Proposition \ref{Proposition 2.4}. It is an analogue of \cite[Proposition 6.1]{HR24}, the proof of which was referred to    \cite{AC15} such as Proposition 4.13 therein. We point out in this section that the relevant works of \cite{AC15} that imply our Proposition \ref{Proposition 2.4} depend on spatial dimensions only \emph{initially}. In other words, spatial dimensions play the role in determining the regularity of the force initially, and once the force is fixed with such a regularity, the proofs of relevant results are independent of spatial dimensions. We state the results with only a sketch of their proofs, especially where we diverge a bit from \cite{AC15}, due to the overall similarities with \cite{AC15}. We consider the case $x \in \mathbb{T}^{d}$ for $d\in \mathbb{N}$, a temporal variable, if there is any, and a parameter $\kappa > 0$ sufficiently small all considered fixed throughout this Section \ref{Section 6}.

\begin{define}\rm{(Cf. \cite[Definition 4.5]{AC15} and \cite[Definition 2.2]{GUZ20})}
We define 
\begin{subequations}
\begin{align}
&\sigma(D) f \triangleq \mathcal{F}^{-1} (\sigma \mathcal{F} f),   \label{Define sigma D}  \\
& \mathcal{E}^{\alpha} \triangleq \mathcal{C}^{\alpha} \times \mathcal{C}^{2 \alpha + 2}, \hspace{2mm} \alpha \in \mathbb{R}, \label{Define E alpha}
\end{align}
\end{subequations} 
and the space of enhanced noise,  
\begin{equation}\label{Define K alpha}
\mathcal{K}^{\alpha} \triangleq \overline{\{ (\eta, -\eta \circ \sigma(D) \eta -c ): \hspace{1mm} \eta \in C^{\infty}, c \in \mathbb{R} \}}, \hspace{1mm} \text{ where } \hspace{1mm} \sigma(D) \triangleq - ( 1-  \Delta)^{-1}, 
\end{equation} 
where the closure is taken w.r.t. $\mathcal{E}^{\alpha}$-topology (cf. \eqref{est 21}). A general element of $\mathcal{K}^{\alpha}$ will be denoted by $\Theta \triangleq (\Theta_{1}, \Theta_{2})$. If $\eta \in \mathcal{C}^{\alpha}$ is $\Theta_{1}$, then $\Theta$ is said to be an enhancement (lift) of $\eta$. 
\end{define} 
The case $d= 1$, $\alpha$ = $-1 - \kappa$, and $\nu =1$ applies to Proposition \ref{Proposition 2.4}; recall $\partial_{x} X \in C_{t} \mathcal{C}_{x}^{-1-\kappa}$ $\mathbb{P}$-a.s. due to \eqref{Define Lt and Nt} and Proposition \ref{Proposition 4.1}.

\begin{define}\label{Definition 6.2}
\rm{(Cf. \cite[Definition 4.1]{AC15} and \cite[Definition 2.4]{GUZ20})} Let $\alpha < -1$ and $\eta \in \mathcal{C}^{\alpha}$. For $\gamma \leq \alpha + 2$, we define the space of distributions which are paracontrolled by $\sigma(D) \eta$ as 
\begin{equation}\label{Define D eta gamma}
\mathcal{D}_{\eta}^{\gamma} \triangleq \{ f \in H^{\gamma}: \hspace{1mm} f^{\sharp} \triangleq f - f \prec \sigma(D) \eta \in H^{2\gamma}  \}. 
\end{equation} 
The space $\mathcal{D}_{\eta}^{\gamma}$, equipped with the following scalar product, is a Hilbert space: 
\begin{equation}\label{D eta gamma norm}
\langle f,g \rangle_{\mathcal{D}_{\eta}^{\gamma}} \triangleq \langle f,g \rangle_{H^{\gamma}} + \langle f^{\sharp}, g^{\sharp} \rangle_{H^{2\gamma}} \hspace{3mm} \forall \hspace{1mm} f, g \in \mathcal{D}_{\eta}^{\gamma}. 
\end{equation} 
\end{define} 

The following definition, especially the product $\eta \circ f$ within $\eta f$, can be justified by Proposition \ref{Proposition 6.1} (1). 
\begin{define}\label{Definition 6.3}
\rm{(Cf. \cite[Definition 4.10]{AC15})} Let $\alpha \in (-\frac{4}{3}, -1)$, $\gamma \in (-\frac{\alpha}{2}, \alpha + 2]$, and $\Theta = (\eta, \Theta_{2}) \in \mathcal{K}^{\alpha}$. We define the linear operator 
\begin{equation}\label{Define H}
\mathcal{H}: \hspace{1mm} \mathcal{D}_{\eta}^{\gamma} \mapsto H^{\gamma -2} \text{ by } \mathcal{H}f \triangleq  \Delta f - \eta f \text{ where } \eta f = \eta \prec f + \eta \succ f + \eta \circ f  
\end{equation} 
(cf. \eqref{est 23}). 
\end{define} 

\begin{proposition}\label{Proposition 6.1}\rm{(Cf. \cite[Proposition 4.8]{AC15} and \cite[Corollary 4.9]{AC15})}
\indent
\begin{enumerate}
\item Let $\alpha \in (-\frac{4}{3}, -1)$ and $\gamma \in (-\frac{\alpha}{2}, \alpha + 2]$. Denote $\Theta = (\eta, \Theta_{2}) \in \mathcal{K}^{\alpha}$ as an enhancement of $\eta \in \mathcal{C}^{\alpha}$, and $f \in \mathcal{D}_{\eta}^{\gamma}$. Then we can define 
\begin{equation}\label{est 165}
f \circ \eta = f \Theta_{2} + \mathcal{R} (f, \sigma(D) \eta, \eta) + f^{\sharp} \circ \eta 
\end{equation} 
and we have the following bound:  
\begin{equation}\label{est 163} 
\lVert f \circ \eta \rVert_{H^{2 \alpha + 2 - \kappa}} \lesssim \lVert f \rVert_{\mathcal{D}_{\eta}^{\gamma}} \lVert \Theta \rVert_{\mathcal{E}^{\alpha}} ( 1+ \lVert \Theta \rVert_{\mathcal{E}^{\alpha}}). 
\end{equation} 
\item Let $\alpha \in (-\frac{4}{3}, -1), \gamma \in (\frac{2}{3}, \alpha + 2)$, $\Theta = (\eta, \Theta_{2}) \in \mathcal{K}^{\alpha}$, and $\{ \Theta^{n} \}_{n \in\mathbb{N}}$ where 
\begin{equation}
\Theta^{n} \triangleq ( \eta_{n}, -\eta_{n} \circ ( \sigma(D) \eta_{n}) - c_{n}), \hspace{3mm} c_{n} \in \mathbb{R}, 
\end{equation} 
a family of smooth functions such that 
\begin{equation}\label{est 169}
\Theta^{n} \to \Theta \hspace{1mm} \text{ in }\mathcal{E}^{\alpha} \hspace{1mm} \text{ as } n \nearrow \infty. 
\end{equation} 
Let $f^{n}$ be a smooth approximation of $f \in \mathcal{D}_{\eta}^{\gamma}$ such that 
\begin{equation}\label{est 170}
\lVert f - f_{n} \rVert_{H^{\gamma}} + \lVert f_{n}^{\sharp} - f^{\sharp} \rVert_{H^{2\gamma}} \to 0 \hspace{1mm} \text{ as } n \nearrow \infty, \hspace{1mm} \text{ where } f_{n}^{\sharp} \triangleq f_{n} - f_{n} \prec \sigma(D) \eta_{n}. 
\end{equation} 
Then, for all $\kappa > 0$, 
\begin{equation}\label{est 171} 
\lVert f_{n} \circ \eta_{n} - f \circ \eta \rVert_{H^{2 \alpha + 2 - \kappa}} \to 0 \hspace{3mm} \text{ as } n \nearrow \infty. 
\end{equation} 
\end{enumerate} 
\end{proposition}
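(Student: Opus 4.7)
For part (1), the plan is to exploit the paracontrolled ansatz $f = f \prec \sigma(D)\eta + f^{\sharp}$ to split the problematic resonance as
\begin{equation*}
f \circ \eta = (f \prec \sigma(D)\eta) \circ \eta + f^{\sharp} \circ \eta.
\end{equation*}
The second piece is immediately tractable because $f^{\sharp} \in H^{2\gamma}$ and $\eta \in \mathcal{C}^{\alpha}$ with $2\gamma + \alpha > 0$ (this is precisely the assumption $\gamma > -\alpha/2$), so \eqref{Bony 5} yields $\lVert f^{\sharp} \circ \eta \rVert_{H^{2\alpha + 2 - \kappa}} \lesssim \lVert f^{\sharp} \rVert_{H^{2\gamma}} \lVert \eta \rVert_{\mathcal{C}^{\alpha}}$, which is bounded by $\lVert f \rVert_{\mathcal{D}_{\eta}^{\gamma}} \lVert \Theta \rVert_{\mathcal{E}^{\alpha}}$. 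The first piece is handled via the Gubinelli--Imkeller--Perkowski commutator identity
\begin{equation*}
(f \prec \sigma(D)\eta) \circ \eta = f \cdot \bigl(\sigma(D)\eta \circ \eta\bigr) + \mathcal{R}\bigl(f, \sigma(D)\eta, \eta\bigr),
\end{equation*}
where $\mathcal{R}$ denotes the standard trilinear commutator, known to satisfy $\lVert \mathcal{R}(f,g,h) \rVert_{H^{2\alpha + 2 - \kappa}} \lesssim \lVert f \rVert_{H^{\gamma}} \lVert g \rVert_{\mathcal{C}^{\alpha + 2}} \lVert h \rVert_{\mathcal{C}^{\alpha}}$ under the index constraints imposed here.

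The remaining term $f \cdot (\sigma(D)\eta \circ \eta)$ is not directly defined, and this is where the enhancement enters: for smooth $\eta_{n}$, one has $\sigma(D)\eta_{n} \circ \eta_{n} = -\Theta_{2}^{n} - c_{n}$ by the very definition \eqref{Define K alpha} (modulo the sign/symmetry of $\circ$), and passing to the $\mathcal{E}^{\alpha}$-limit gives a distribution in $\mathcal{C}^{2\alpha + 2}$ to which $\Theta_{2}$ is the canonical assignment. Since $2\alpha + 2 > -\frac{2}{3}$ and $\gamma > -\frac{\alpha}{2} > \frac{1}{2}$, the Bony estimate \eqref{Bony 1} (or rather a paraproduct decomposition of $f \Theta_{2}$ followed by \eqref{Bony 3}--\eqref{Bony 5}) yields $\lVert f \Theta_{2} \rVert_{H^{2\alpha + 2 - \kappa}} \lesssim \lVert f \rVert_{H^{\gamma}} \lVert \Theta_{2} \rVert_{\mathcal{C}^{2\alpha + 2}}$, provided $\gamma + 2\alpha + 2 > 0$ (which follows from $\gamma > -\alpha/2$ and $\alpha > -4/3$). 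Summing the three contributions produces both the identity \eqref{est 165} and the quadratic bound \eqref{est 163}.

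For part (2), the continuity statement, I would apply the decomposition \eqref{est 165} to both $(f_{n}, \Theta^{n})$ and $(f, \Theta)$ and write the difference piece-by-piece:
\begin{align*}
f_{n}\Theta_{2}^{n} - f \Theta_{2} &= (f_{n} - f)\Theta_{2}^{n} + f (\Theta_{2}^{n} - \Theta_{2}), \\
f_{n}^{\sharp} \circ \eta_{n} - f^{\sharp} \circ \eta &= (f_{n}^{\sharp} - f^{\sharp}) \circ \eta_{n} + f^{\sharp} \circ (\eta_{n} - \eta),
\end{align*}
together with the analogous trilinear telescoping for $\mathcal{R}(f_{n}, \sigma(D)\eta_{n}, \eta_{n}) - \mathcal{R}(f, \sigma(D)\eta, \eta)$. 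Each difference piece is estimated by the same multilinear bounds used in part (1), applied to one factor equal to a difference and the other factors bounded uniformly; the hypotheses \eqref{est 169} and \eqref{est 170} drive the difference-factors to $0$, while boundedness of the convergent sequences $\{\Theta^{n}\}$ and $\{f_{n}\}$ in their respective norms controls the remaining factors. A minor subtlety is that the renormalization constants $c_{n}$ are absorbed into $\Theta_{2}^{n}$ by construction and are irrelevant because we work with the enhancement itself, not with $\eta_{n} \circ \sigma(D) \eta_{n}$.

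The main obstacle I anticipate is twofold. First, bookkeeping the regularity indices so that the narrow window $\alpha \in (-\frac{4}{3}, -1)$ ensures simultaneously $\gamma > -\alpha/2$, $2\gamma \leq 2(\alpha + 2)$, $2\gamma + \alpha > 0$, and $\gamma + 2\alpha + 2 > 0$; this is what forces the hypothesis on $\alpha$ in the statement. Second, the rigorous justification of $\sigma(D)\eta \circ \eta \triangleq -\Theta_{2}$ as a continuous extension from smooth inputs, along with the verification that the commutator $\mathcal{R}$ remains continuous in its three arguments (including at the $\gamma$-threshold), requires Littlewood--Paley estimates at the level of \eqref{est 0} and careful use of the frequency structure; this is precisely the content of the Bony-type commutator lemmas whose full proof is in \cite{GIP15, AC15}, and where I would simply invoke them as black boxes rather than reprove them here.
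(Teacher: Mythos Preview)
Your proposal is correct and follows essentially the same route as the paper: for part (1) you use the paracontrolled splitting $f = f\prec\sigma(D)\eta + f^{\sharp}$, the GIP commutator $\mathcal{R}$, and the enhancement $\Theta_{2}$ to obtain exactly the decomposition \eqref{est 165}, then bound each piece by the same Bony/commutator estimates (\eqref{Bony 3}--\eqref{Bony 5}, \eqref{Estimate on R}) under the same index constraints $2\gamma+\alpha>0$ and $\gamma+2\alpha+2>0$; for part (2) you telescope termwise in the decomposition, which is precisely what the paper does (its first listed term $\lVert (f_{n}-f)\circ\eta_{n}\rVert$ is just your three $(f_{n}-f)$-difference pieces bundled together via part (1)).
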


\begin{proof}[Proof of Proposition \ref{Proposition 6.1}]
(1) To estimate the first term in \eqref{est 165}, we realize that $2 \alpha + 2 + \gamma> 0$ due to the hypotheses that $\gamma > - \frac{\alpha}{2}$ and $\alpha > - \frac{4}{3}$ and consequently due to \eqref{Bony 3}-\eqref{Bony 5}, 
\begin{equation}\label{est 166} 
\lVert f \Theta_{2} \rVert_{H^{2 \alpha + 2 - \kappa}} \lesssim \lVert f \prec \Theta_{2} \rVert_{H^{2 \alpha + 2 - \kappa}} + \lVert f \succ \Theta_{2} \rVert_{H^{2 \alpha + 2}} + \lVert f \circ \Theta_{2} \rVert_{H^{2 \alpha + 2 + \gamma}} \lesssim \lVert f \rVert_{\mathcal{D}_{\eta}^{\gamma}} \lVert \Theta \rVert_{\mathcal{E}^{\alpha}}.
\end{equation} 
We estimate the second and third terms in \eqref{est 165} by 
\begin{subequations}
\begin{align}
&\lVert \mathcal{R} (f, \sigma(D)\eta, \eta) \rVert_{H^{2 \alpha + 2}}   \overset{\eqref{Estimate on R}}{\lesssim} \lVert f \rVert_{H^{\gamma}} \lVert \sigma(D) \eta \rVert_{\mathcal{C}^{\alpha + 2}} \lVert \eta \rVert_{\mathcal{C}^{\alpha}}  \overset{\eqref{D eta gamma norm} \eqref{Define E alpha}}{\lesssim} \lVert f \rVert_{\mathcal{D}_{\eta}^{\gamma}} \lVert \Theta \rVert_{\mathcal{E}^{\alpha}}^{2}, \label{est 167}\\
&\lVert f^{\sharp} \circ \eta \rVert_{H^{2 \alpha + 2}} \lesssim \lVert f^{\sharp} \circ \eta \rVert_{H^{\alpha + 2 \gamma}} \overset{\eqref{Bony 5}}{\lesssim} \lVert f^{\sharp} \rVert_{H^{2\gamma}} \lVert \eta \rVert_{\mathcal{C}^{\alpha}}  \overset{\eqref{Define E alpha} \eqref{D eta gamma norm}}{\lesssim} \lVert f \rVert_{\mathcal{D}_{\eta}^{\gamma}} \lVert\Theta \rVert_{\mathcal{E}^{\alpha}}. \label{est 168}
\end{align}
\end{subequations}
Considering \eqref{est 166}, \eqref{est 167}, and \eqref{est 168} in \eqref{est 165} verifies \eqref{est 163}. 

(2) Similar computations to the proof of part (1) leads to 
\begin{align}
& \lVert f_{n} \circ \eta_{n} - f \circ \eta \rVert_{H^{2 \alpha + 2 - \kappa}}  \nonumber \\
\overset{\eqref{est 165}}{\lesssim}& \lVert (f_{n} - f) \circ \eta_{n} \rVert_{H^{2\alpha + 2 - \kappa}} + \lVert f(\Theta_{2}^{n} - \Theta_{2})  \rVert_{H^{2 \alpha + 2 - \kappa}}    + \lVert \mathcal{R} (f, \sigma(D) (\eta_{n} - \eta), \eta_{n}) \rVert_{H^{2 \alpha + 2 - \kappa}}  \nonumber \\
& \hspace{10mm} + \lVert \mathcal{R} (f, \sigma(D)\eta, \eta_{n} - \eta) \rVert_{H^{2 \alpha + 2 - \kappa}} + \lVert f^{\sharp} \circ (\eta_{n} - \eta) \rVert_{H^{2 \alpha + 2 - \kappa}}   \overset{\eqref{est 169} \eqref{est 170}}{\to} 0 
\end{align}
as $n \nearrow 0$ which verifies \eqref{est 171}. 
\end{proof} 

\begin{proposition}\label{Proposition 6.2} 
\rm{(Cf. \cite[Lemma 4.12]{AC15})} Let $\alpha \in (-\frac{4}{3}, -1), \eta \in \mathcal{C}^{\alpha}$, and $\gamma \in (\frac{2}{3}, \alpha + 2)$. Then $\mathcal{D}_{\eta}^{\gamma}$ is dense in $L^{2}$. 
\end{proposition}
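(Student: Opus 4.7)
The plan is as follows. Since $C^{\infty}(\mathbb{T}^{d})$ is dense in $L^{2}$, it suffices to approximate each smooth $\psi$ in $L^{2}$ by elements of $\mathcal{D}_{\eta}^{\gamma}$. Noting that $C^{\infty}(\mathbb{T}^{d}) \subset H^{2\gamma}$, for each $\psi \in H^{2\gamma}$ I would seek $f \in H^{\gamma}$ satisfying the fixed-point equation $f - f \prec \sigma(D) \eta = \psi$. Any such $f$ automatically belongs to $\mathcal{D}_{\eta}^{\gamma}$, because $f^{\sharp} = \psi \in H^{2\gamma}$ by construction.

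Rewriting the fixed-point equation as $(I - T_{\eta}) f = \psi$, where $T_{\eta} f \triangleq f \prec \sigma(D) \eta$, I would argue by Fredholm theory on $L^{2}$. Using Bony's estimate \eqref{Bony 1} with $\sigma(D) \eta \in \mathcal{C}^{\alpha + 2}$ (noting that $\alpha + 2 > 0$ since $\alpha > -4/3 > -2$), one obtains $\lVert T_{\eta} f \rVert_{H^{\alpha+2-\kappa}} \lesssim \lVert f \rVert_{L^{2}} \lVert \sigma(D) \eta \rVert_{\mathcal{C}^{\alpha+2}}$ for any small $\kappa > 0$. Since $\alpha + 2 - \kappa > 0$, Rellich's theorem yields that $H^{\alpha+2-\kappa} \hookrightarrow L^{2}$ is compact, so $T_{\eta} : L^{2} \to L^{2}$ is a compact operator. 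Therefore $I - T_{\eta}$ is Fredholm of index zero on $L^{2}$, and its invertibility reduces to injectivity. Once $(I - T_{\eta})^{-1}$ is available on $L^{2}$, for any $\psi \in H^{2\gamma}$ the solution $f = (I - T_{\eta})^{-1} \psi$ lies in $L^{2}$, and the identity $f = \psi + T_{\eta} f$ places $f$ in $H^{2\gamma} + H^{\alpha+2-\kappa} \subset H^{\gamma}$ (the latter inclusion using $\gamma < \alpha + 2$), hence $f \in \mathcal{D}_{\eta}^{\gamma}$. Since $H^{2\gamma}$ is dense in $L^{2}$ and $(I - T_{\eta})^{-1}$ is a continuous bijection of $L^{2}$ by the open mapping theorem, its image $(I - T_{\eta})^{-1}(H^{2\gamma}) \subset \mathcal{D}_{\eta}^{\gamma}$ is dense in $L^{2}$, yielding the desired density.

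The main obstacle is establishing the injectivity of $I - T_{\eta}$ on $L^{2}$: given $f \in L^{2}$ satisfying $f = f \prec \sigma(D) \eta$, one must show $f = 0$. When $\lVert \eta \rVert_{\mathcal{C}^{\alpha}}$ is sufficiently small this is immediate from the contraction bound $\lVert f \rVert_{L^{2}} \lesssim \lVert \eta \rVert_{\mathcal{C}^{\alpha}} \lVert f \rVert_{L^{2}}$, but the general case requires a more delicate argument. Two natural approaches are: (i) a bootstrap on the regularity of $f$, iterating the identity $f = f \prec \sigma(D) \eta$ and using Bony's estimates together with the low-frequency structure of the paraproduct (whose Littlewood-Paley blocks $\Delta_{m}(f \prec \sigma(D) \eta)$ vanish for $m$ below the spectral support of $\sigma(D) \eta$) to force $f$ into arbitrarily smooth spaces and then to zero; or (ii) approximation by the smooth mollifications $\eta_{n} = S_{n} \eta$, transferring injectivity verified directly in the smooth regime to the rough limit via continuity of the resolvents of the Fredholm operators $I - T_{\eta_{n}}$.
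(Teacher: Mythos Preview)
Your Fredholm strategy is natural, but the proof is incomplete precisely at the point you flag: you have not established injectivity of $I - T_{\eta}$ on $L^{2}$, and neither of the two sketched routes closes the gap. For route (i), iterating $f = f \prec \sigma(D)\eta$ only bootstraps $f$ up to the regularity of $\sigma(D)\eta$, i.e.\ roughly $H^{\alpha+2-}$, because the smoothness of a paraproduct $f \prec g$ is capped by that of $g$; membership in $H^{\alpha+2-}$ does not force $f=0$. The low-frequency remark is also not decisive: $\sigma(D)\eta$ has no reason to have vanishing low Littlewood--Paley blocks. For route (ii), smoothness of $\eta_{n}$ does not make $I - T_{\eta_{n}}$ injective either --- you would face the same kernel question, and there is no evident spectral or positivity structure to rule out a nontrivial fixed point of $f \mapsto f \prec \sigma(D)\eta_{n}$.

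The paper avoids the injectivity issue altogether by inserting a large parameter. Rather than solving $f - f\prec\sigma(D)\eta = g$, it looks at the map $\Gamma(f) \triangleq \sigma_{a}(D)(f\prec\eta) + g$ with $\sigma_{a}(k) = -(1+a+|k|^{2})^{-1}$. The extra decay of $\sigma_{a}$ gives $\lVert \Gamma(f_{1})-\Gamma(f_{2})\rVert_{H^{\gamma}} \lesssim a^{\frac{\gamma+\epsilon-\alpha}{2}-1}\lVert f_{1}-f_{2}\rVert_{H^{\gamma}}\lVert\eta\rVert_{\mathcal{C}^{\alpha}}$, so for $a$ large $\Gamma$ is a genuine contraction on $H^{\gamma}$ and produces a fixed point $f_{a}$ with $\lVert f_{a}-g\rVert_{H^{\gamma}} \to 0$ as $a\to\infty$. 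One then checks by hand that $f_{a} - f_{a}\prec\sigma(D)\eta \in H^{2\gamma}$, rewriting this as $[\sigma_{a}(D)(f_{a}\prec\eta) - f_{a}\prec\sigma_{a}(D)\eta] + f_{a}\prec(\sigma_{a}-\sigma)(D)\eta + g$ and estimating each piece via the commutator Lemma~\ref{Lemma A.2} and \eqref{est 172}. The point is that the family $\{f_{a}\}$ approximates $g$ while sitting in $\mathcal{D}_{\eta}^{\gamma}$, with no need to invert $I - T_{\eta}$ globally.
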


\begin{proof}[Proof of Proposition \ref{Proposition 6.2} ]
We fix an arbitrary $g \in C^{\infty}$ and define a Fourier multiplier 
\begin{equation}\label{Define sigma a}
\sigma_{a}(k) \triangleq - \frac{1}{1+ a + \lvert k \rvert^{2}} \text{ for } a > 0
\end{equation} 
(cf. $\sigma(D)$ in \eqref{Define K alpha}) and consider a map 
\begin{equation}\label{Define Gamma} 
\Gamma: \hspace{1mm} H^{\gamma}\mapsto H^{\gamma} \text{ defined by } \Gamma(f) \triangleq \sigma_{a}(D) (f\prec \eta) + g. 
\end{equation}
For any $k\in \mathbb{Z}^{d}$, multi-index $r \in \mathbb{N}_{0}^{d}$ and $\vartheta \in [0, 1]$, we have 
\begin{equation}\label{est 172} 
\lvert D^{r} \sigma_{a}(k) \rvert \lesssim \frac{a^{\vartheta -1}}{(1+ \lvert k \rvert)^{2 \vartheta + r}} \hspace{1mm} \text{ and } \hspace{1mm} \lvert D^{r} ( \sigma_{a} - \sigma) (k) \rvert \lesssim \frac{a^{\vartheta}}{1+ \lvert k \rvert^{2+ 2 \vartheta + r}}. 
\end{equation} 
Because $\gamma < \alpha + 2$ by hypothesis, we can find $\epsilon_{1} > 0$ sufficiently small so that 
\begin{equation}\label{Define first epsilon} 
\epsilon_{1} < \alpha + 2 - \gamma
\end{equation} 
and estimate via Lemma \ref{Lemma A.1} for $\vartheta =  \frac{\gamma + \epsilon_{1} - \alpha}{2} \in [0,1]$, for $a \geq A$ sufficiently large, 
\begin{equation}\label{est 184} 
 \lVert \Gamma(f_{1}) - \Gamma(f_{2}) \rVert_{H^{\gamma}} \overset{\eqref{Define first epsilon} \eqref{Schauder}}{\lesssim} a^{\frac{ \gamma + \epsilon_{1} - \alpha}{2} -1} \lVert (f_{1} - f_{2}) \prec \eta \rVert_{H^{ \alpha - \epsilon_{1}}} \overset{\eqref{Bony 1} \eqref{Define first epsilon}}{\ll} \lVert f_{1} - f_{2} \rVert_{L^{2}}, 
\end{equation} 
and thus $\Gamma$ is a contraction for all such large $a$ and therefore admits a unique fixed point $f_{a}$. An identical estimate in \eqref{est 184} shows that the fixed point $f_{a}$ satisfies 
\begin{equation}\label{est 185} 
\lVert f_{a} - g \rVert_{H^{\gamma}} \overset{\eqref{Schauder}}{\lesssim} a^{\frac{ \gamma + \epsilon_{1} - \alpha}{2} - 1} \lVert f_{a} \prec \eta \rVert_{H^{\alpha - \epsilon_{1}}} \overset{\eqref{Bony 1}}{\lesssim}  a^{ \frac{ \gamma + \epsilon_{1} - \alpha}{2} - 1} \lVert f_{a} \rVert_{H^{\gamma}} \lVert \eta  \rVert_{\mathcal{C}^{\alpha}};
\end{equation}
taking $a \geq A$ for $A \gg \lVert \eta \rVert_{\mathcal{C}^{\alpha}}^{\frac{1}{1- \frac{\gamma + \epsilon_{1} - \alpha}{2}}}$ in this inequality gives us $\sup_{a \geq A} \lVert f_{a} \rVert_{H^{\gamma}} \leq 2 \lVert g\rVert_{H^{\gamma}}$ and plugging this inequality back into the upper bound in \eqref{est 185} finally shows $\lVert f_{a} - g \rVert_{H^{\gamma}} \lesssim a^{ \frac{ \gamma + \epsilon_{1} - \alpha}{2} - 1} \lVert g \rVert_{H^{\gamma}} \lVert \eta  \rVert_{\mathcal{C}^{\alpha}}$ which allows us to conclude that $f_{a}$ converges to $g$ in $H^{\gamma}$ as $n\nearrow \infty$ and hence in $L^{2}$. To show that $f_{a} - f_{a} \prec \sigma(D) \eta \in H^{2\gamma}$, we first rewrite via \eqref{Define Gamma}, 
\begin{equation}\label{est 186}
 f_{a} - f_{a} \prec \sigma(D) \eta = \sigma_{a}(D) (f_{a} \prec \eta) - f_{a} \prec \sigma_{a}(D) \eta + f_{a} \prec \left( \sigma_{a}(D) - \sigma(D) \right)\eta + g 
\end{equation} 
and conclude via the estimates of 
\begin{subequations}\label{est 187}
\begin{align}
& \lVert \sigma_{a}(D) (f_{a} \prec \eta) - f_{a} \prec \sigma_{a}(D) \eta \rVert_{H^{2\gamma}}  \lesssim \lVert f_{a} \rVert_{H^{\gamma}} \lVert \eta \rVert_{\mathcal{C}^{\alpha}}, \\
&  \lVert f_{a} \prec \left(\sigma_{a} (D) -\sigma(D)\right) \eta \rVert_{H^{2\gamma}} \lesssim_{a} \lVert f_{a} \rVert_{H^{\gamma}} \lVert \eta \rVert_{\mathcal{C}^{\alpha}},
\end{align}
\end{subequations} 
where we used Lemma \ref{Lemma A.2} in the first estimate while \eqref{Bony 3} and \eqref{est 172} in the second estimate. Considering \eqref{est 186}-\eqref{est 187} allows us to conclude that $f_{a} - f_{a} \prec \sigma(D) \eta \in H^{2\gamma}$ and therefore $f_{a} \in \mathcal{D}_{\eta}^{\gamma}$ so that $\mathcal{D}_{\eta}^{\gamma}$ is dense in $C^{\infty}$, which implies the claim.  
\end{proof} 

\begin{proposition}\label{Proposition 6.3}
\rm{(Cf. \cite[Proposition 4.13]{AC15})} Define $\mathcal{H}$ by \eqref{Define H}. Let $\alpha \in (-\frac{4}{3}, -1), \gamma \in (\frac{2}{3}, \alpha + 2)$, 
\begin{equation}\label{Define rho}
\rho \in \left( \gamma - \frac{\alpha +2}{2}, 1 + \frac{\alpha}{2}\right), 
\end{equation} 
and $\Theta = ( \eta, \Theta_{2}) \in \mathcal{K}^{\alpha}$. Then there exists $A = A( \lVert \Theta \rVert_{\mathcal{E}^{\alpha}})$ such that for all $a \geq A$ and $g \in H^{2 \gamma - 2}$, 
\begin{equation}\label{est 188}
(-\mathcal{H} + a) f = g 
\end{equation} 
admits a unique solution $f_{a} \in \mathcal{D}_{\eta}^{\gamma}$. Additionally, the mapping 
\begin{equation}\label{Define mathcal G}
\mathcal{G}_{a}: \hspace{1mm} L^{2} \mapsto \mathcal{D}_{\eta}^{\gamma} \text{ for } a \geq A, \text{ defined by } \mathcal{G}_{a} g \triangleq f_{a}, 
\end{equation}  
is uniformly bounded; in fact, for all $g \in H^{-\delta}$, all $\delta \in [0, 2- 2 \gamma]$, 
\begin{subequations}\label{est 202} 
\begin{align}
&\lVert f_{a} \rVert_{H^{\gamma}} + a^{-\rho} \lVert f_{a}^{\sharp} \rVert_{H^{2\gamma}} \lesssim \left( a^{\frac{\gamma + \delta}{2} -1} + a^{-\rho + \gamma + \frac{\delta}{2} -1} \right) \lVert g \rVert_{H^{-\delta}}, \label{est 202a} \\
&\lVert \mathcal{G}_{a} g \rVert_{\mathcal{D}_{\eta}^{\gamma}} \lesssim \left(a^{\rho + \frac{\gamma + \delta}{2} - 1} + a^{-1 + \gamma + \frac{\delta}{2}} \right) \lVert g \rVert_{H^{-\delta}}. \label{est 202b} 
\end{align}
\end{subequations}
\end{proposition}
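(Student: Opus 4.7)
The plan is to solve \eqref{est 188} by a Banach contraction argument in the paracontrolled space $\mathcal{D}_\eta^\gamma$, and then to extract the quantitative bounds \eqref{est 202} by directly estimating the fixed point. Since $\mathcal{H} f = \Delta f - \eta f$, equation \eqref{est 188} reads $(a-\Delta) f = g - \eta f$. I would decompose $\eta f = \eta \prec f + \eta \succ f + f \circ \eta$ and, for $f \in \mathcal{D}_\eta^\gamma$, apply Proposition \ref{Proposition 6.1}(1) in the form \eqref{est 165} to replace the ill-posed resonant term $f \circ \eta$ by its renormalized expression. This recasts \eqref{est 188} as the fixed-point equation $f = \Gamma_a(f)$ with
\begin{equation*}
\Gamma_a(f) \triangleq (a-\Delta)^{-1} \bigl[ g - \eta \prec f - \eta \succ f - f \Theta_2 - \mathcal{R}(f, \sigma(D) \eta, \eta) - f^\sharp \circ \eta \bigr],
\end{equation*}
where $f^\sharp = f - f \prec \sigma(D) \eta$.

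Next, I would verify that $\Gamma_a$ maps $\mathcal{D}_\eta^\gamma$ into itself. The $H^\gamma$-estimate follows from Schauder estimates (Lemma \ref{Lemma A.1}) combined with Bony's estimates \eqref{Bony in Sobolev} and \eqref{est 163}; each term contributes a factor $a^{-\vartheta}$ for some $\vartheta > 0$, with the upper bound $\rho < 1+\alpha/2$ of \eqref{Define rho} ensuring that enough negative powers of $a$ are available. For the $H^{2\gamma}$ bound on $\Gamma_a(f)^\sharp$, the main work is to isolate the paracontrolled piece: the leading contribution $-(a-\Delta)^{-1}(\eta \prec f)$ admits, via commutators of the type \eqref{est 187} and of Lemma \ref{Lemma A.2}, a decomposition $f \prec \sigma(D) \eta + R_a(f)$ with $R_a(f) \in H^{2\gamma}$, so that $\Gamma_a(f)^\sharp = R_a(f) + (\text{other terms})$, where the other terms are all controlled in $H^{2\gamma}$. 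The lower constraint $\rho > \gamma - (\alpha+2)/2$ in \eqref{Define rho} is precisely what is needed to make each of those other terms absorbable at the $H^{2\gamma}$-level.

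To finish, I would choose $a \geq A(\lVert \Theta \rVert_{\mathcal{E}^\alpha})$ large enough so that the Schauder gains dominate, making $\Gamma_a$ a strict contraction on $\mathcal{D}_\eta^\gamma$; Banach's fixed point theorem then produces the unique $f_a \in \mathcal{D}_\eta^\gamma$ solving \eqref{est 188}. Substituting $f = f_a$ back into the fixed-point equation and estimating term by term in $H^\gamma$ and $H^{2\gamma}$ against $\lVert g \rVert_{H^{-\delta}}$ — again through Lemma \ref{Lemma A.1}, carefully tracking each power of $a$ — yields the quantitative bounds \eqref{est 202}. The uniform boundedness of $\mathcal{G}_a$ in \eqref{Define mathcal G} is then the case $\delta = 0$.

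The principal obstacle is the simultaneous bookkeeping of (i) the paracontrolled structure, so that $\Gamma_a(f) - \Gamma_a(f) \prec \sigma(D) \eta$ really lies in $H^{2\gamma}$, which demands sharp commutator estimates between the smoothing operator $(a-\Delta)^{-1}$ and Bony's paraproducts (cf.\ Lemma \ref{Lemma A.2} and \eqref{est 172}), and (ii) the precise powers of $a$ that enter through each Schauder estimate. The asymmetric weight $a^{-\rho}$ in front of $\lVert f_a^\sharp \rVert_{H^{2\gamma}}$ in \eqref{est 202a} is dictated by this tracking: the paracontrolled remainder $R_a(f)$ has $H^{2\gamma}$-norm that grows with $a$, and $\rho$ must be chosen in the interval \eqref{Define rho} as the sharp exponent balancing the gain from Schauder against the loss from the commutator.
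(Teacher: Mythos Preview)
Your overall strategy---fixed point in a paracontrolled space, Schauder gains from $(a-\Delta)^{-1}$, commutators to extract the $H^{2\gamma}$ remainder---is correct in spirit. But there is a genuine gap in the setup: the map $\Gamma_a$ you describe does \emph{not} send $\mathcal{D}_\eta^\gamma$ to itself, so the iteration cannot be run there. What the commutator Lemma~\ref{Lemma A.2} actually controls is $\Gamma_a(f)-f\prec\sigma(D)\eta\in H^{2\gamma}$, whereas membership in $\mathcal{D}_\eta^\gamma$ demands the self-referential quantity $(\Gamma_a f)^\sharp=\Gamma_a(f)-\Gamma_a(f)\prec\sigma(D)\eta$. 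These differ by $(f-\Gamma_a(f))\prec\sigma(D)\eta$; since $f-\Gamma_a(f)\in H^\gamma$ with $\gamma>0$ and $\sigma(D)\eta\in\mathcal{C}^{\alpha+2}$, Bony's estimate \eqref{Bony 1} places this correction only in $H^{\alpha+2-}$, and in the regime $\gamma\in(\tfrac23,\alpha+2)$ one always has $2\gamma>\alpha+2$. So $(\Gamma_a f)^\sharp\notin H^{2\gamma}$ in general, and you cannot even apply $\Gamma_a$ a second time (the resonant product $\Gamma_a(f)\circ\eta$ is then undefined via Proposition~\ref{Proposition 6.1}).

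The paper, following \cite{AC15}, resolves this by iterating on \emph{pairs} $(f_a,f_a')_{a\ge A}$ in an auxiliary space $\tilde{\mathcal{D}}_\eta^{\gamma,\rho,A}$, taking the sharp of the iterate relative to the previous step rather than self-referentially: the map is $\mathcal{M}(f,f')\triangleq(M(f,f'),f)$ with $M(f,f')_a=\tilde{\sigma}_a(D)(f_a\eta-g)$ and $M(f,f')^\sharp\triangleq M(f,f')-f\prec\sigma(D)\eta$. Because the paracontrolled derivative now lags by one step, it is $\mathcal{M}^{2}$ (not $\mathcal{M}$) that is shown to be a contraction. A second point you underplay: the weight $a^{-\rho}$ on $\lVert f_a^\sharp\rVert_{H^{2\gamma}}$ is not merely a feature of the output bound \eqref{est 202a} but is built into the norm on $\tilde{\mathcal{D}}_\eta^{\gamma,\rho,A}$ from the start. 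Without it the contraction fails, because the term $\RomanIV_{6}=f_a\prec(\tilde{\sigma}_a-\sigma)(D)\eta$ in the decomposition \eqref{est 197} of $M(f,f')^\sharp$ has $H^{2\gamma}$-norm growing like $a^{\gamma+(\epsilon_{2}-\alpha)/2-1}$ (see \eqref{est 196c}), which is a positive power of $a$ throughout the admissible range; the interval \eqref{Define rho} for $\rho$ is precisely what lets $a^{-\rho}$ absorb this growth while keeping all other prefactors negative.
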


\begin{proof}[Proof of Proposition \ref{Proposition 6.3}] 
For any $A > 0$, we define the Banach space  
\begin{subequations}
\begin{align}
& \tilde{\mathcal{D}}_{\eta}^{\gamma, \rho, A} \triangleq \{ (f_{a}, f_{a}')_{a \geq A} \in C([A, \infty), H^{\gamma})^{2}:  \hspace{1mm} f_{a} \in \mathcal{D}_{\eta}^{\gamma}, \lVert (f, f') \rVert_{\tilde{D}_{\eta}^{\gamma, \rho, A}} < \infty \}, \label{Define tilde D}\\
\text{where } &\lVert (f, f') \rVert_{\tilde{D}_{\eta}^{\gamma, \rho, A}} \triangleq \sup_{a \geq A} \lVert f_{a}' \rVert_{H^{\gamma}} + \sup_{a\geq A} a^{-\rho} \lVert f_{a}^{\sharp} \rVert_{H^{2\gamma}}+ \sup_{a \geq A} \lVert f_{a} \rVert_{H^{\gamma}}, \label{Define tilde D norm}
\end{align}
\end{subequations}
and 
\begin{equation}\label{Define mathcal M}
\mathcal{M}(f, f') \triangleq (M(f, f'), f), \hspace{3mm} M(f, f')_{a} \triangleq \tilde{\sigma}_{a}(D) (f_{a} \eta - g) \hspace{3mm} \forall \hspace{1mm} (f, f') \in \tilde{D}_{\eta}^{\gamma, \rho, A}, 
\end{equation} 
where 
\begin{subequations}\label{Define tilde sigma a}
\begin{align}
&\tilde{\sigma}_{a}(D) \triangleq -\frac{1}{a+ \lvert k \rvert^{2}} \hspace{2mm} \text{ for } a > 2 \\
\text{that satisfies } &\lvert D^{r} \tilde{\sigma}_{a}(k) \rvert \lesssim \frac{a^{\vartheta -1}}{(1+ \lvert k \rvert)^{2 \vartheta + r}},  \hspace{5mm} \lvert D^{r} ( \tilde{\sigma}_{a} - \sigma) (k) \rvert \lesssim \frac{a^{\vartheta}}{1+ \lvert k \rvert^{2+ 2 \vartheta + r}}
\end{align}
\end{subequations} 
similarly to \eqref{est 172}, and the product $f_{a} \eta$ is justified via \eqref{est 165}. To find a solution to \eqref{est 188}, it suffices to prove that $\mathcal{M}$ admits a unique fixed point in $\tilde{\mathcal{D}}_{\eta}^{\gamma, \rho, A}$. To do so, the idea is to show that if $(f, f') \in \tilde{\mathcal{D}}_{\eta}^{\gamma, \rho, A}$, then 
\begin{equation}\label{est 194} 
M(f, f')_{a} \in H^{\gamma} \hspace{1mm} \text{ and } \hspace{1mm} M(f, f')^{\sharp} \triangleq M(f, f') - f \prec \sigma(D) \eta \in H^{2\gamma} 
\end{equation} 
so that $\mathcal{M}(f, f') \in \tilde{D}_{\eta}^{\gamma, \rho, A}$ allowing us to conclude that $\mathcal{M} (\tilde{\mathcal{D}}_{\eta}^{\gamma, \rho, A}) \subset \tilde{\mathcal{D}}_{\eta}^{\gamma, \rho, A}$. For this purpose, first, we need to extend the $\epsilon_{1} > 0$ from \eqref{Define first epsilon}. Because $\gamma < \alpha + 2$ by hypothesis and $\rho > \gamma - \frac{\alpha + 2}{2}$ by \eqref{Define rho},  we can find $\epsilon_{2} > 0$ sufficiently small such that 
\begin{equation}\label{Define epsilon two}
\epsilon_{2} < \min\left\{ \alpha + 2 - \gamma,  2 \left( 1+ \frac{\alpha}{2} + \rho - \gamma\right), \frac{1}{3}\right\}.
\end{equation} 
Then, with choices of 
\begin{equation*}
\vartheta = \frac{\gamma + \epsilon_{2} - \alpha}{2}, \hspace{2mm} \vartheta = - \frac{\alpha}{2}, \hspace{2mm} \vartheta = \frac{\gamma + \delta}{2} \in [0,1] 
\end{equation*} 
for any $\delta \in [0, 2 - 2 \gamma]$, we deduce 
\begin{subequations}\label{est 173}   
\begin{align}
& \lVert \tilde{\sigma}_{a}(D) (f_{a} \prec \eta) \rVert_{H^{\gamma}} \overset{\eqref{Define tilde sigma a} \eqref{Schauder}}{\lesssim} a^{\frac{ \gamma + \epsilon_{2} - \alpha}{2} -1} \lVert f_{a} \prec \eta \rVert_{H^{\alpha - \epsilon_{2}}} \overset{\eqref{Bony 3}}{\lesssim} a^{\frac{ \gamma + \epsilon_{2} - \alpha}{2} -1} \lVert f_{a} \rVert_{H^{\gamma}} \lVert \eta \rVert_{\mathcal{C}^{\alpha}}, \label{est 173a} \\
& \lVert \tilde{\sigma}_{a}(D) ( f_{a} \circ \eta + f_{a} \succ \eta ) \rVert_{H^{\gamma}}  \overset{\eqref{Define tilde sigma a} \eqref{Schauder}\eqref{Bony 3}}{\lesssim}  a^{- \frac{\alpha}{2} -1} \left( \lVert f_{a} \circ \eta \rVert_{H^{\gamma + \alpha}} + \lVert f_{a}  \rVert_{H^{\gamma}} \lVert \eta \rVert_{\mathcal{C}^{\alpha}}  \right),  \label{est 173b} \\
& \lVert \tilde{\sigma}_{a}(D) g \rVert_{H^{\gamma}} \overset{\eqref{Define tilde sigma a} \eqref{Schauder}}{\lesssim}  a^{\frac{\gamma + \delta}{2}  - 1} \lVert g \rVert_{H^{-\delta}}; \label{est 173c} 
\end{align}  
\end{subequations} 
here, in contrast to \cite[Equation (32c)]{AC15}, we crucially do not  bound $\lVert f_{a} \circ \eta \rVert_{H^{\gamma + \alpha}}$ within \eqref{est 173b} by $\lVert f_{a} \circ \eta \rVert_{H^{2 \alpha + 2}}$. To treat $\lVert f_{a} \circ \eta \rVert_{H^{\gamma + \alpha}}$ in \eqref{est 173b}, we write $f_{a} \circ \eta = f_{a} \circ \eta - f_{a}^{\sharp} \circ \eta + f_{a}^{\sharp} \circ \eta$ and estimate 
\begin{align}
&\lVert f_{a} \circ \eta - f_{a}^{\sharp} \circ \eta \rVert_{H^{\gamma + \alpha}} \lesssim \lVert f_{a}' \prec\Theta_{2} \rVert_{H^{\gamma + \alpha}} + \lVert f_{a}' \succ \Theta_{2} \rVert_{H^{\gamma + \alpha}} + \lVert f_{a}' \circ \Theta_{2} \rVert_{H^{\gamma + \alpha}} \nonumber \\
&  \hspace{18mm} + \lVert \mathcal{R} ( f_{a}', \sigma(D)\eta, \eta)  \rVert_{H^{\gamma + \alpha}}   \overset{\eqref{Bony 3}-\eqref{Bony 5} \eqref{Estimate on R}}{\lesssim} \lVert f_{a}' \rVert_{H^{\gamma}} ( \lVert \Theta_{2} \rVert_{\mathcal{C}^{2 \alpha + 2}} + \lVert \eta \rVert_{\mathcal{C}^{\alpha}}^{2}); \label{est 191}
\end{align}
applying \eqref{Bony 5} in the other term $f_{a}^{\sharp} \circ \eta$, we have shown in sum
\begin{equation}\label{est 193}
\lVert f_{a} \circ \eta \rVert_{H^{\gamma + \alpha}} \lesssim  \lVert f_{a}' \rVert_{H^{\gamma}} ( \lVert \Theta_{2} \rVert_{\mathcal{C}^{2 \alpha + 2}} + \lVert \eta \rVert_{\mathcal{C}^{\alpha}}^{2})  + a^{\rho} \left( \frac{ \lVert f_{a}^{\sharp} \rVert_{H^{2\gamma}}}{a^{\rho}} \right) \lVert \eta \rVert_{\mathcal{C}^{\alpha}}. 
\end{equation} 
Applying \eqref{est 173}, \eqref{est 193}, \eqref{Define tilde D norm} to \eqref{Define mathcal M} allows us to deduce 
\begin{align}
& \lVert M(f, f')_{a} \rVert_{H^{\gamma}} \nonumber \overset{\eqref{Define mathcal M}}{=} \lVert \tilde{\sigma}_{a}(D) (f_{a} \eta - g) \rVert_{H^{\gamma}} \nonumber \\ 
\lesssim& a^{\max \{ \frac{ \gamma + \epsilon_{2} - \alpha}{2} -1, \rho - \frac{\alpha}{2} -1 \}} \lVert (f, f') \rVert_{\tilde{\mathcal{D}}_{\eta}^{\gamma, \rho, A}} ( \lVert \eta \rVert_{\mathcal{C}^{\alpha}} + \lVert \eta \rVert_{\mathcal{C}^{\alpha}}^{2} + \lVert \Theta_{2} \rVert_{\mathcal{C}^{2 \alpha + 2}} )+a^{\frac{\gamma + \delta}{2} -1} \lVert g\rVert_{H^{-\delta}},  \label{est 198}
\end{align}
which implies $M(f, f')_{a} \in H^{\gamma}$, the first claim in \eqref{est 194}. 

Next, to show the second claim in \eqref{est 194}, namely that $M(f, f')^{\sharp} \in H^{2\gamma}$, we write from \eqref{est 194} and \eqref{Define mathcal M}, 
\begin{equation}\label{est 197}
M(f, f')_{a}^{\sharp} = \sum_{k=1}^{6} \RomanIV_{k} 
\end{equation} 
where  
\begin{subequations} 
\begin{align}
&\RomanIV_{1} \triangleq \mathcal{C}_{a}(f_{a}, \eta), \hspace{9mm} \RomanIV_{2} \triangleq  \tilde{\sigma}_{a}(D) (f_{a} \circ \eta - f_{a}^{\sharp} \circ \eta),  \hspace{1mm} \RomanIV_{3} \triangleq  \tilde{\sigma}_{a}(D) (f_{a}^{\sharp} \circ \eta), \label{Define IV1, IV2, and IV3}\\
& \RomanIV_{4} \triangleq \tilde{\sigma}_{a}(D) (f_{a}\succ \eta), \hspace{1mm} \RomanIV_{5} \triangleq - \tilde{\sigma}_{a}(D) g, \hspace{19mm} \RomanIV_{6} \triangleq f_{a} \prec \left( \tilde{\sigma}_{a}- \sigma \right)(D) \eta, \label{Define IV3, IV4, and IV5}
\end{align}
\end{subequations} 
and
\begin{equation}\label{Define mathcal Ca}
\mathcal{C}_{a}(f_{a}, \eta) \triangleq \tilde{\sigma}_{a}(D) (f_{a} \prec \eta) - f_{a}\prec \tilde{\sigma}_{a}(D) \eta
\end{equation} 
analogously to \eqref{Define mathcal C}. We can estimate using \eqref{est 195}, \eqref{Schauder}, \eqref{est 191}, and \eqref{Bony in Sobolev},
\begin{subequations}\label{est 196} 
\begin{align}
&\lVert \RomanIV_{1} \rVert_{H^{2\gamma}} \lesssim a^{\frac{ \gamma + \epsilon_{2} - \alpha}{2} - 1} \lVert f_{a} \rVert_{H^{\gamma}} \lVert \eta \rVert_{\mathcal{C}^{\alpha}},  \hspace{2mm} \lVert \RomanIV_{2}  \rVert_{H^{2\gamma}} \lesssim a^{\frac{\gamma - \alpha}{2} -1} \lVert f_{a}' \rVert_{H^{\gamma}} \left( \lVert \Theta_{2} \rVert_{\mathcal{C}^{2\alpha + 2}} + \lVert \eta \rVert_{\mathcal{C}^{\alpha}}^{2} \right), \label{est 196a}\\
&\lVert \RomanIV_{3} \rVert_{H^{2\gamma}}  \lesssim a^{-\frac{\alpha}{2} -1} \lVert f_{a}^{\sharp} \rVert_{H^{2\gamma}} \lVert \eta \rVert_{\mathcal{C}^{\alpha}}, \hspace{4mm}  \lVert \RomanIV_{4} \rVert_{H^{2\gamma}} \lesssim a^{\frac{\gamma - \alpha}{2} -1} \lVert f_{a} \rVert_{H^{\gamma}} \lVert \eta \rVert_{\mathcal{C}^{\alpha}}, \label{est 196b}\\
& \lVert \RomanIV_{5} \rVert_{H^{2\gamma}} \lesssim a^{\frac{ 2 \gamma + \delta}{2} - 1}\lVert g \rVert_{H^{-\delta}}, \hspace{12mm}  \lVert \RomanIV_{6} \rVert_{H^{2\gamma}} \lesssim a^{\gamma + \frac{\epsilon_{2} - \alpha}{2} -1} \lVert f_{a} \rVert_{H^{\gamma}} \lVert \eta \rVert_{\mathcal{C}^{\alpha}}, \label{est 196c}
\end{align} 
\end{subequations} 
where additionally \eqref{Define tilde sigma a} is used with choices of 
\begin{itemize}
\item $\vartheta = \frac{\gamma + \epsilon_{2} - \alpha}{2}$ for $\RomanIV_{1}$, 
\item $\vartheta = \frac{\gamma - \alpha}{2}$ for $\RomanIV_{2}$ and $\RomanIV_{4}$, 
\item $\vartheta = -\frac{\alpha}{2}$ for $\RomanIV_{3}$, 
\item $\vartheta = \frac{2 \gamma + \delta}{2}$ where $\delta \in [0, 2-2\gamma]$ for $\RomanIV_{5}$, 
\item $\vartheta = \gamma + \frac{\epsilon_{2} - \alpha}{2} - 1$ for $\RomanIV_{6}$, 
\end{itemize}
all of which lie in $[0,1]$. Applying \eqref{est 196} to  \eqref{est 197}, and using \eqref{Define tilde D norm} give us
\begin{align}
& a^{-\rho} \lVert M(f, f')_{a}^{\sharp} \rVert_{H^{2\gamma}}  \label{est 199}\\
\lesssim& \lVert (f, f' ) \rVert_{\tilde{\mathcal{D}}_{\eta}^{\gamma, \rho, A}} \Bigg[ a^{\frac{ \gamma - \alpha}{2} - 1 - \rho} ( \lVert \Theta_{2} \rVert_{\mathcal{C}^{2 \alpha + 2}} +\lVert \eta \rVert_{\mathcal{C}^{\alpha}}^{2} ) + a^{ \max \{ \gamma + \frac{\epsilon_{2} - \alpha}{2} - 1 - \rho, - \frac{\alpha}{2}  -1 \}} \lVert \eta \rVert_{\mathcal{C}^{\alpha}} \Bigg] + a^{\gamma + \frac{\delta}{2} - 1-  \rho } \lVert g \rVert_{H^{-\delta}}. \nonumber 
\end{align}
Consequently, applying \eqref{est 198} and \eqref{est 199}, using \eqref{Define epsilon two}, and taking $\delta = 0$ for convenience leads us to 
\begin{align}
& \lVert \mathcal{M} (f, f') \rVert_{\tilde{\mathcal{D}}_{\eta}^{\gamma, \rho, A}} \lesssim  \sup_{a \geq A} \lVert f_{a} \rVert_{H^{\gamma}}  \label{est 200} \\
&\hspace{3mm}+  \sup_{a \geq A} a^{- \lambda} \lVert (f, f') \rVert_{\tilde{\mathcal{D}}_{\eta}^{\gamma, \rho, A}} ( 1+ \lVert \Theta \rVert_{\mathcal{E}^{\alpha}}^{2})  + \left( A^{\gamma- 1 - \rho} + A^{\frac{\gamma}{2} - 1} \right) \lVert g \rVert_{L^{2}},  \nonumber 
\end{align}
where 
\begin{equation}\label{Define lambda}
\lambda \triangleq \min \left\{\rho + 1 - \gamma - \frac{\epsilon_{2} - \alpha}{2},  1 + \frac{\alpha - \gamma - \epsilon_{2}}{2}, 1 + \frac{\alpha}{2} - \rho \right\} > 0
\end{equation} 
due to \eqref{Define rho} and \eqref{Define epsilon two}, and therefore we conclude that $\mathcal{M}(f,f') \in \tilde{\mathcal{D}}_{\eta}^{\gamma, \rho, A}$. Similarly to \eqref{est 200}, we can show 
\begin{align}
 \lVert \mathcal{M} (f, f') - \mathcal{M} (h, h') \rVert_{\tilde{\mathcal{D}}_{\eta}^{\gamma, \rho, A}} \lesssim&  \sup_{a \geq A} \lVert f_{a} - h_{a} \rVert_{H^{\gamma}}  \label{est 201} \\
&+  A^{- \lambda} \lVert (f, f') - (h, h') \rVert_{\tilde{\mathcal{D}}_{\eta}^{\gamma, \rho, A}} ( 1+ \lVert \Theta \rVert_{\mathcal{E}^{\alpha}}^{2}).  \nonumber 
\end{align}
We can make use of \eqref{est 201} and analogous computations to \eqref{est 198} to obtain 
\begin{align}
& \lVert \mathcal{M}^{2} (f, f') - \mathcal{M}^{2} (h, h') \rVert_{\tilde{\mathcal{D}}_{\eta}^{\gamma, \rho, A}} \nonumber \\
\lesssim& A^{- \lambda} \lVert (f, f') - (h, h') \rVert_{\tilde{\mathcal{D}}_{\eta}^{\gamma, \rho, A}} \Bigg[ 1+\lVert \Theta \rVert_{\mathcal{E}^{\alpha}}^{4} \Bigg] \ll \lVert (f, f') - (h, h') \rVert_{\tilde{\mathcal{D}}_{\eta}^{\gamma, \rho, A}}   \label{est 203}
\end{align}
for 
\begin{equation}\label{large A}
A \gg [1 + \lVert \Theta \rVert_{\mathcal{E}^{\alpha}}^{4} ]^{\frac{1}{ \lambda}} 
\end{equation} 
and therefore the mapping $\mathcal{M}^{2}: \hspace{1mm} \tilde{\mathcal{D}}_{\eta}^{\gamma, \rho, A} \mapsto \tilde{\mathcal{D}}_{\eta}^{\gamma, \rho, A}$ is a contraction. Consequently, the fixed point theorem gives us unique $(f, f') \in \tilde{\mathcal{D}}_{\eta}^{\gamma, \rho, A}$ such that $\mathcal{M}(f,f') = (f,f')$. Finally, making use of $M(f,f') = f$ and $f = f'$ and computations that led to \eqref{est 198} and \eqref{est 199} lead to   
\begin{equation}\label{est 204}
\lVert f_{a} \rVert_{H^{\gamma}} + a^{-\rho} \lVert f_{a}^{\sharp} \rVert_{H^{2\gamma}} \leq \frac{1}{2} [\lVert f_{a} \rVert_{H^{\gamma}} + a^{-\rho} \lVert f_{a}^{\sharp} \rVert_{H^{2\gamma}}] + C \left( a^{\frac{\gamma + \delta}{2} -1} + a^{-\rho + \gamma + \frac{\delta}{2} -1} \right) \lVert g \rVert_{H^{-\delta}}
\end{equation} 
for all $A \geq 1$ sufficiently large; subtracting$\frac{1}{2} [\lVert f_{a} \rVert_{H^{\gamma}} + a^{-\rho} \lVert f_{a}^{\sharp} \rVert_{H^{2\gamma}}]$ from both sides leads to \eqref{est 202a} because $\mathcal{G}_{a} g = f_{a}$ by \eqref{Define mathcal G}. Now \eqref{est 202b} follows immediately as  
\begin{align*}
\lVert \mathcal{G}_{a} g \rVert_{\mathcal{D}_{\gamma}^{\eta}} \overset{\eqref{Define mathcal G} \eqref{D eta gamma norm}}{\lesssim} a^{\rho} [ \lVert f_{a} \rVert_{H^{\gamma}} + a^{-\rho} \lVert f_{a}^{\sharp} \rVert_{H^{2\gamma}} ] \lesssim \left(a^{\rho + \frac{\gamma + \delta}{2} - 1} + a^{-1 + \gamma + \frac{\delta}{2}} \right) \lVert g \rVert_{H^{-\delta}}. 
\end{align*}
\end{proof} 

\begin{proposition}\label{Proposition 6.4}
\rm{(Cf. \cite[Lemma 4.15]{AC15})} Let $\alpha \in ( -\frac{4}{3}, -1)$,  $\gamma \in (\frac{2}{3}, \alpha + 2)$ and define $\lambda$ by \eqref{Define lambda}. Then there exists a constant $C > 0$ such that for all $\Theta = (\eta, \Theta_{2})$, $\tilde{\Theta} = (\tilde{\eta}, \tilde{\Theta}_{2}) \in \mathcal{K}^{\alpha}$, and $a \geq C[1+\lVert \Theta \rVert_{\mathcal{E}^{\alpha}}^{4}]^{\frac{1}{\lambda}}$ from \eqref{large A}, we have the following bounds:
\begin{align}
\lVert \mathcal{G}_{a}(\Theta) g - \mathcal{G}_{a} ( \tilde{\Theta}) g \rVert_{H^{\gamma}} \leq& \lVert (\mathcal{G}_{a}(\Theta) g - \mathcal{G}_{a} (\tilde{\Theta}) g,\mathcal{G}_{a}(\Theta) g - \mathcal{G}_{a} (\tilde{\Theta}) g) \rVert_{\tilde{\mathcal{D}}_{\eta}^{\gamma, \rho, A}}  \nonumber\\
\lesssim& \lVert g \rVert_{L^{2}} \lVert \Theta - \tilde{\Theta} \rVert_{\mathcal{E}^{\alpha}} ( 1+ \lVert \Theta \rVert_{\mathcal{E}^{\alpha}} + \lVert \tilde{\Theta} \rVert_{\mathcal{E}^{\alpha}}),  \label{est 267}
\end{align} 
where $\mathcal{G}_{a}(\Theta), \mathcal{G}_{a}(\tilde{\Theta}): \hspace{1mm} L^{2} \mapsto \mathcal{D}_{\eta}^{\gamma}$ are the resolvent operators associated to the rough distributions $\Theta, \tilde{\Theta} \in \mathcal{K}^{\alpha}$ constructed in Proposition \ref{Proposition 6.3}. 
\end{proposition}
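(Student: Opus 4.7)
The plan is to mimic the Banach fixed-point argument from the proof of Proposition~\ref{Proposition 6.3}, now applied to the difference $f_a - \tilde f_a$ where $f_a \triangleq \mathcal{G}_a(\Theta)g$ and $\tilde f_a \triangleq \mathcal{G}_a(\tilde\Theta)g$. By construction, $(f_a,f_a)$ and $(\tilde f_a,\tilde f_a)$ are the unique fixed points of the maps $\mathcal{M}(\Theta)$ and $\mathcal{M}(\tilde\Theta)$ from \eqref{Define mathcal M} in $\tilde{\mathcal{D}}_{\eta}^{\gamma,\rho,A}$ and $\tilde{\mathcal{D}}_{\tilde\eta}^{\gamma,\rho,A}$ respectively, for $A$ large enough to dominate both enhanced-noise norms in the sense of \eqref{large A}. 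I write
\begin{equation*}
(f_a - \tilde f_a, f_a - \tilde f_a) = \bigl[\mathcal{M}(\Theta)(f_a,f_a) - \mathcal{M}(\Theta)(\tilde f_a,\tilde f_a)\bigr] + \bigl[\mathcal{M}(\Theta)(\tilde f_a,\tilde f_a) - \mathcal{M}(\tilde\Theta)(\tilde f_a,\tilde f_a)\bigr],
\end{equation*}
and iterate once to work with $\mathcal{M}^2$, a strict contraction by \eqref{est 203}. The contribution of the first bracket is then bounded by at most $\tfrac{1}{2}\lVert(f_a - \tilde f_a, f_a - \tilde f_a)\rVert_{\tilde{\mathcal{D}}_{\eta}^{\gamma,\rho,A}}$, which can be absorbed into the left-hand side.

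Next, I estimate the inhomogeneous term $\bigl[\mathcal{M}(\Theta) - \mathcal{M}(\tilde\Theta)\bigr](\tilde f_a,\tilde f_a) = \bigl(\tilde\sigma_a(D)[\tilde f_a(\eta - \tilde\eta)],0\bigr)$ in the $\tilde{\mathcal{D}}_{\eta}^{\gamma,\rho,A}$-norm. The paraproducts $\tilde f_a\prec(\eta - \tilde\eta)$ and $\tilde f_a\succ(\eta - \tilde\eta)$ are handled directly via Lemma~\ref{Lemma 3.1} together with the Schauder-type bounds on $\tilde\sigma_a(D)$ recorded in \eqref{Define tilde sigma a}, producing factors of $\lVert\tilde f_a\rVert_{H^\gamma}\lVert\eta - \tilde\eta\rVert_{\mathcal{C}^\alpha}$ times a negative power of $a$. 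The resonant piece $\tilde f_a\circ(\eta - \tilde\eta)$ is the delicate one, because $\tilde f_a$ is paracontrolled by $\tilde\eta$ rather than by $\eta$. I split
\begin{equation*}
\tilde f_a\circ(\eta - \tilde\eta) = \tilde f_a\circ\eta - \tilde f_a\circ\tilde\eta
\end{equation*}
and apply the identity \eqref{est 165} to each summand after first rewriting $\tilde f_a = \bigl(\tilde f_a^\sharp + \tilde f_a\prec\sigma(D)(\tilde\eta - \eta)\bigr) + \tilde f_a\prec\sigma(D)\eta$, which produces an auxiliary element of $\mathcal{D}_\eta^\gamma$ at the cost of a term linear in $\eta - \tilde\eta$. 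The two $\tilde f_a\Theta_2$-type contributions then combine into $\tilde f_a(\Theta_2 - \tilde\Theta_2)$, and the leftover commutator remainders $\mathcal{R}(\tilde f_a,\sigma(D)(\eta - \tilde\eta),\eta)$ and $\mathcal{R}(\tilde f_a,\sigma(D)\tilde\eta,\eta-\tilde\eta)$ are linear in $\eta - \tilde\eta$ with prefactors controlled by $1 + \lVert\Theta\rVert_{\mathcal{E}^\alpha} + \lVert\tilde\Theta\rVert_{\mathcal{E}^\alpha}$, using Lemma~\ref{Lemma 3.1} and the same commutator/resonator estimates that underlie Proposition~\ref{Proposition 6.1}(1).

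Applying the Schauder-type bounds on $\tilde\sigma_a(D)$ as in the proof of Proposition~\ref{Proposition 6.3} and invoking the a priori bound \eqref{est 202b} to control $\lVert\tilde f_a\rVert_{\mathcal{D}_{\tilde\eta}^\gamma}$ by $\lVert g\rVert_{L^2}$ (taking $\delta = 0$), I obtain
\begin{equation*}
\bigl\lVert\bigl[\mathcal{M}(\Theta) - \mathcal{M}(\tilde\Theta)\bigr](\tilde f_a,\tilde f_a)\bigr\rVert_{\tilde{\mathcal{D}}_{\eta}^{\gamma,\rho,A}} \lesssim \lVert g\rVert_{L^2}\, \lVert\Theta - \tilde\Theta\rVert_{\mathcal{E}^\alpha}\bigl(1 + \lVert\Theta\rVert_{\mathcal{E}^\alpha} + \lVert\tilde\Theta\rVert_{\mathcal{E}^\alpha}\bigr),
\end{equation*}
which, combined with the absorption of the contractive first bracket, closes \eqref{est 267}.

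The main obstacle I anticipate is precisely the resonant term $\tilde f_a\circ(\eta - \tilde\eta)$: since $\tilde f_a$ and $\eta$ are not compatible in the paracontrolled sense, \eqref{est 165} cannot be applied to $\tilde f_a\circ\eta$ directly, and the algebraic bookkeeping needed to extract the $(\Theta_2 - \tilde\Theta_2)$-contribution without generating uncontrolled remainders requires care. Once this decomposition is in place, all remaining estimates reduce to routine applications of Bony's estimates together with the negative powers of $a$ coming from \eqref{Define tilde sigma a}, and the argument mirrors verbatim the structure of Proposition~\ref{Proposition 6.3}'s proof.
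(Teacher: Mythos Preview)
Your proposal is correct and follows essentially the same route as the paper's proof. The paper proceeds slightly more concretely: rather than invoking the contraction of $\mathcal{M}^2$ abstractly, it writes out explicit equations for $f_a - \tilde f_a$ and $f_a^{\sharp} - \tilde f_a^{\sharp}$ (with each $\sharp$ taken in its own framework $\eta$, resp.\ $\tilde\eta$), obtains a coupled pair of inequalities with cross-terms carrying negative powers of $a$, and absorbs these for $a$ large. Your contraction framing and the paper's coupled-inequality approach are the same argument dressed differently; in particular both hinge on the same paraproduct, resonator, and Schauder-type estimates, and both must reconcile the mismatch between the $\eta$- and $\tilde\eta$-paracontrolled structures via the identity $\tilde f_a - \tilde f_a \prec \sigma(D)\eta = \tilde f_a^{\sharp} + \tilde f_a \prec \sigma(D)(\tilde\eta - \eta)$.

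One minor bookkeeping point: you say you iterate to $\mathcal{M}^2$ but then estimate only $[\mathcal{M}(\Theta) - \mathcal{M}(\tilde\Theta)](\tilde f_a,\tilde f_a)$; after iteration the inhomogeneous term is actually $[\mathcal{M}(\Theta)^2 - \mathcal{M}(\tilde\Theta)^2](\tilde f_a,\tilde f_a)$, which decomposes into your term plus $\mathcal{M}(\Theta)$ applied to the same (since $\mathcal{M}$ is affine and $\tilde{\mathcal{M}}(\tilde f_a,\tilde f_a) = (\tilde f_a,\tilde f_a)$). This extra application of $\mathcal{M}(\Theta)$ is harmless by the boundedness estimates from Proposition~\ref{Proposition 6.3}, so the omission is cosmetic.
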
 

\begin{proof}[Proof of Proposition \ref{Proposition 6.4}]
We take $a \geq A( \lVert \Theta \rVert_{\mathcal{E}^{\alpha}}) + A ( \lVert \tilde{\Theta} \rVert_{\mathcal{E}^{\alpha}})$ according to \eqref{large A} so that 
\begin{equation}\label{est 220}
f_{a} \triangleq \mathcal{G}_{a} (\Theta) g \hspace{1mm} \text{ and } \hspace{1mm} \tilde{f}_{a} \triangleq \mathcal{G}_{a} (\tilde{\Theta}) g 
\end{equation} 
are well-defined by Proposition \ref{Proposition 6.3}. We can verify from \eqref{est 188}, \eqref{Define H}, and \eqref{Define tilde sigma a} that 
\begin{subequations}
\begin{align}
f_{a} - \tilde{f}_{a}=& \tilde{\sigma}_{a}(D) [ f_{a} \prec \eta - \tilde{f}_{a} \prec \tilde{\eta} + f_{a} \succ \eta - \tilde{f}_{a} \succ \tilde{\eta} + f_{a} \Theta_{2} - \tilde{f}_{a} \tilde{\Theta}_{2} + f_{a}^{\sharp} \circ \eta - \tilde{f}_{a}^{\sharp} \circ \tilde{\eta} ]  \nonumber \\
& \hspace{20mm} + \tilde{\sigma}_{a}(D) [ \mathcal{R} (f_{a}, \sigma(D) \eta, \eta) - \mathcal{R} (\tilde{f}_{a}, \sigma(D) \tilde{\eta}, \tilde{\eta} ) ],  \label{est 205a} \\
f_{a}^{\sharp} - \tilde{f}_{a}^{\sharp} =&  \tilde{\sigma}_{a}(D) [ f_{a} \succ \eta - \tilde{f}_{a} \succ \tilde{\eta}  + f_{a} \Theta_{2} - \tilde{f}_{a} \tilde{\Theta}_{2} + f_{a}^{\sharp} \circ \eta - \tilde{f}_{a}^{\sharp} \circ \eta]  \nonumber \\
&+ \tilde{\sigma}_{a}(D) [ \mathcal{R} (f_{a}, \sigma(D) \eta, \eta) - \mathcal{R} (\tilde{f}_{a}, \sigma(D) \tilde{\eta}, \tilde{\eta} ) ]  \nonumber \\
&+\mathcal{C}_{a} (f_{a}, \eta) + f_{a} \prec (\tilde{\sigma}_{a} - \sigma) (D) \eta  - \mathcal{C}_{a} (\tilde{f}_{a}, \tilde{\eta})- \tilde{f}_{a} \prec (\tilde{\sigma}_{a} - \sigma) (D) \tilde{\eta}, \label{est 205b}  
\end{align}
\end{subequations} 
where $\mathcal{C}_{a}(f_{a}, \eta)$ was defined in \eqref{Define mathcal Ca}. With the same $\rho$ from \eqref{Define rho}, because $\gamma < \alpha + 2$ by hypothesis, we can find the same $\epsilon_{2} \in (0, \frac{1}{3})$ in \eqref{Define epsilon two} and estimate similarly to \eqref{est 196a} with $\vartheta = \frac{\gamma + \epsilon_{2} - \alpha}{2} \in [0,1]$ in \eqref{est 195}, 
\begin{subequations}\label{est 206}
\begin{align}
&a^{- \rho} \lVert \mathcal{C}_{a} (f_{a} - \tilde{f}_{a},\eta) \rVert_{H^{2\gamma}}  \lesssim  a^{-( \rho  + \frac{2+ \alpha - \gamma - \epsilon_{2}}{2} )} \lVert f_{a} - \tilde{f}_{a} \rVert_{H^{\gamma}} \lVert \eta \rVert_{\mathcal{C}^{\alpha}},  \label{est 206a}\\
&a^{-\rho} \lVert \mathcal{C}_{a} (\tilde{f}_{a}, \eta - \tilde{\eta}) \rVert_{H^{2\gamma}} \lesssim a^{- (\rho +\frac{2+ \alpha - \gamma - \epsilon_{2}}{2})} \lVert \tilde{f}_{a} \rVert_{H^{\gamma}} \lVert \eta - \tilde{\eta} \rVert_{\mathcal{C}^{\alpha}}. \label{est 206b} 
\end{align}
\end{subequations} 
Additionally, as $\rho > \gamma - \frac{\alpha+2}{2}$ from \eqref{Define rho}, we can find 
\begin{equation}\label{Define epsilon three}
\epsilon_{3} \in \left(0, \rho - \gamma + \frac{\alpha + 2}{2}\right)
\end{equation} 
and estimate by \eqref{Bony 3} and \eqref{Define tilde sigma a} with $\vartheta = \gamma - \frac{\alpha + 2}{2} + \epsilon_{3} \in [0,1]$, 
\begin{subequations}\label{est 207} 
\begin{align}
&a^{-\rho}\lVert ( f_{a} - \tilde{f}_{a}) \prec (\tilde{\sigma}_{a} - \sigma) (D) \eta \rVert_{H^{2\gamma}} \lesssim  a^{-(\rho - \gamma + \frac{\alpha + 2}{2} - \epsilon_{3})} \lVert f_{a} - \tilde{f}_{a} \rVert_{H^{\gamma}} \lVert \eta \rVert_{\mathcal{C}^{\alpha}}, \label{est 207a}  \\
& a^{-\rho} \lVert \tilde{f}_{a} \prec ( \tilde{\sigma}_{a} - \sigma) (D) (\eta - \tilde{\eta} ) \rVert_{H^{2\gamma}} \lesssim a^{-(\rho - \gamma + \frac{\alpha+2}{2} - \epsilon_{3})} \lVert \tilde{f}_{a} \rVert_{H^{\gamma}} \lVert \eta - \tilde{\eta} \rVert_{\mathcal{C}^{\alpha}}.  \label{est 207b}
\end{align} 
\end{subequations} 
Applying \eqref{est 206} and \eqref{est 207}, and making use of $\rho - \gamma + \frac{\alpha +2}{2} - \epsilon_{3} > 0$ due to \eqref{Define epsilon three} lead to 
\begin{align}
a^{-\rho} \lVert f_{a}^{\sharp} - \tilde{f}_{a}^{\sharp} \rVert_{H^{2\gamma}} \lesssim& (a^{\rho + \frac{\gamma}{2} - 1} + a^{-1+ \gamma}) \lVert g \rVert_{L^{2}} \lVert \Theta - \tilde{\Theta} \rVert_{\mathcal{E}^{\alpha}}  \nonumber \\
&+ a^{-(\rho - \gamma + \frac{\alpha + 2}{2} - \epsilon_{3})} \lVert f_{a} - \tilde{f}_{a} \rVert_{H^{\gamma}} (1+ \lVert \Theta \rVert_{\mathcal{E}^{\alpha}})^{2}. \label{est 222}
\end{align} 
Next, with the same $\epsilon_{2}$ from \eqref{Define epsilon two}, due to \eqref{Schauder} and \eqref{Bony 3}, we estimate 
\begin{equation}\label{est 223}
\lVert \tilde{\sigma}_{a}(D) [ (f_{a} - \tilde{f}_{a}) \prec \eta ] \rVert_{H^{\gamma}} \lesssim a^{\frac{ \gamma + \epsilon_{2} - \alpha}{2} -1} \lVert (f_{a} - \tilde{f}_{a}) \prec \eta \rVert_{H^{\alpha - \epsilon_{2}}} \lesssim a^{- ( \frac{ \alpha + 2 - \gamma - \epsilon_{2}}{2})} \lVert f_{a} - \tilde{f}_{a} \rVert_{H^{\gamma}} \lVert \eta \rVert_{\mathcal{C}^{\alpha}}, 
\end{equation} 
which leads to 
\begin{align}
\lVert f_{a} - \tilde{f}_{a}& \rVert_{H^{\gamma}}   \lesssim a^{-( \frac{ \alpha + 2 - \gamma - \epsilon_{2}}{2})} \left(a^{\frac{\gamma}{2} -1} + a^{-\rho + \gamma - 1} \right) \lVert g \rVert_{L^{2}} \lVert \Theta - \tilde{\Theta} \rVert_{\mathcal{E}^{\alpha}} (1+ \lVert \Theta \rVert_{\mathcal{E}^{\alpha}} + \lVert \tilde{\Theta} \rVert_{\mathcal{E}^{\alpha}})  \nonumber \\
&+ a^{-( \frac{\gamma + \alpha}{2} + 1)} \lVert f_{a}^{\sharp} - \tilde{f}_{a}^{\sharp} \rVert_{H^{2\gamma}} \lVert \eta \rVert_{\mathcal{C}^{\alpha}} + a^{-( \frac{\gamma + \alpha}{2} + 1)}  \left(a^{\rho + \frac{\gamma}{2} -1} + a^{\gamma -1} \right)\lVert g \rVert_{L^{2}} \lVert \Theta - \tilde{\Theta} \rVert_{\mathcal{E}^{\alpha}}. \label{est 234} 
\end{align}
Summing \eqref{est 234} and \eqref{est 222}, making use of $\rho - \gamma + \frac{\alpha+2}{2} - \epsilon_{3} > 0$ from \eqref{Define epsilon three}, we obtain
\begin{equation}\label{est 235}
 \lVert f_{a} - \tilde{f}_{a} \rVert_{H^{\gamma}} + a^{-\rho} \lVert f_{a}^{\sharp} - \tilde{f}_{a}^{\sharp} \rVert_{H^{2\gamma}} \lesssim (a^{\rho + \frac{\gamma}{2} -1} + a^{\gamma -1}) \lVert g \rVert_{L^{2}} \lVert \Theta- \tilde{\Theta} \rVert_{\mathcal{E}^{\alpha}}( 1+ \lVert \Theta \rVert_{\mathcal{E}^{\alpha}} +\lVert \tilde{\Theta} \rVert_{\mathcal{E}^{\alpha}}), 
\end{equation} 
which allows us to conclude \eqref{est 267}. 
\end{proof} 

\begin{define}\label{Definition 6.4}
\rm{(Cf. \cite[Definition 4.17]{AC15})} Let $\alpha \in (-\frac{4}{3}, -1), \gamma \in (-\frac{\alpha}{2}, \alpha + 2)$, and $\Theta = (\eta, \Theta_{2}) \in \mathcal{K}^{\alpha}$. We define a bilinear operator $B: \hspace{1mm} H^{\gamma} \times \mathcal{K}^{\alpha} \mapsto H^{2\gamma}$ by 
\begin{equation}\label{Define B}
B(f,\Theta) \triangleq \sigma(D) [ - 2 \nabla f \prec \nabla \sigma(D) \eta - (1+ \Delta) f \prec \sigma(D) \eta + f \succ \eta + f \Theta_{2} ]. 
\end{equation} 
Then we define 
\begin{equation}\label{Define D Theta gamma}
\mathcal{D}_{\Theta}^{\gamma} \triangleq \{ f \in H^{\gamma}: \hspace{1mm} f^{\flat} \triangleq f - f \prec \sigma(D) \eta -B(f, \Theta) \in H^{2} \} 
\end{equation} 
with an inner product 
\begin{equation}\label{Norm D Theta gamma}
\langle f, g \rangle_{\mathcal{D}_{\Theta}^{\gamma}} \triangleq \langle f, g \rangle_{H^{\gamma}} + \langle f^{\flat}, g^{\flat} \rangle_{H^{2}}. 
\end{equation} 
\end{define} 

\begin{remark}
We observe that $\mathcal{D}_{\Theta}^{\gamma} \subset \mathcal{D}_{\eta}^{\gamma}$ and that any $f \in \mathcal{D}_{\Theta}^{\gamma}$ has the regularity of $H^{(\alpha + 2)-}$ which motivates us to define for any $\alpha \in \left( - \frac{4}{3}, -1 \right)$, $\Theta = ( \eta, \Theta_{2} ) \in \mathcal{K}^{\alpha}$, and 
\begin{equation}\label{Define new kappa}
\kappa \in \left( 0, (4+ 3 \alpha) \wedge \frac{2}{3} \right) 
\end{equation} 
sufficiently small fixed, 
\begin{equation}\label{Define D Theta}
\mathcal{D}_{\Theta} \triangleq \{ f \in H^{\alpha + 2 - \kappa}: \hspace{1mm} f^{\flat} \triangleq f - f \prec \sigma(D) \eta - B(f, \Theta) \in H^{2} \}. 
\end{equation} 
\end{remark} 

\begin{proposition}\label{Proposition 6.5}
\rm{(Cf. \cite[Lemma 4.19]{AC15})} Let $\alpha \in (-\frac{4}{3}, -1)$ and $\gamma \in (\frac{2}{3}, \alpha + 2)$. Then, for any $a \geq 2$, we define 
\begin{equation}\label{Define Ba}
B_{a} (f, \Theta) \triangleq \sigma_{a}(D) [ -2 \nabla f \prec \nabla \sigma(D) \eta - (1+ \Delta) f \prec \sigma(D) \eta + f \succ \eta + f \Theta_{2} ], 
\end{equation} 
where $\sigma_{a}$ was defined in \eqref{Define sigma a}. Then 
\begin{equation}\label{est 240} 
\lVert B_{a} (f, \Theta) \rVert_{H^{2\gamma}} \lesssim a^{- ( \frac{2- \gamma + \alpha}{2})} \lVert f \rVert_{H^{\gamma}} \lVert \Theta \rVert_{\mathcal{E}^{\alpha}}, \hspace{5mm} \lVert (B- B_{a}) (f, \Theta) \rVert_{H^{2 \gamma + 2}} \lesssim a^{\frac{\gamma - \alpha}{2}} \lVert f \rVert_{H^{\gamma}} \lVert \Theta \rVert_{\mathcal{E}^{\alpha}}. 
\end{equation} 
\end{proposition}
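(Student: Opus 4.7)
The plan is to bound the bracketed expression inside $B_a(f,\Theta)$ in the Sobolev space $H^{\gamma+\alpha}$ uniformly in $a$, and then apply the Fourier multiplier estimates for $\sigma_a(D)$ and $(\sigma_a-\sigma)(D)$ with a sharp choice of the parameter $\vartheta$, exactly as was done for $\tilde\sigma_a$ in the proof of Proposition \ref{Proposition 6.3}. The key observation is that $B$ and $B_a$ differ only in the outer multiplier, while the internal factors $\sigma(D)\eta, \nabla\sigma(D)\eta$ are identical.

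First I would estimate each of the four summands inside the bracket in $H^{\gamma+\alpha}$. Since $\eta\in\mathcal{C}^\alpha$, we have $\sigma(D)\eta\in\mathcal{C}^{\alpha+2}$ and $\nabla\sigma(D)\eta\in\mathcal{C}^{\alpha+1}$, while the hypothesis $\gamma<\alpha+2$ forces $\gamma-1<0$ and $\gamma-2<\alpha<0$. Then \eqref{Bony 3} applied to $\nabla f\in H^{\gamma-1}$ and $(1+\Delta)f\in H^{\gamma-2}$ gives
\begin{equation*}
\lVert \nabla f\prec \nabla\sigma(D)\eta\rVert_{H^{\gamma+\alpha}}+\lVert (1+\Delta)f\prec\sigma(D)\eta\rVert_{H^{\gamma+\alpha}}\lesssim \lVert f\rVert_{H^\gamma}\lVert\eta\rVert_{\mathcal{C}^\alpha},
\end{equation*}
and \eqref{Bony 4} gives $\lVert f\succ\eta\rVert_{H^{\gamma+\alpha}}\lesssim \lVert f\rVert_{H^\gamma}\lVert\eta\rVert_{\mathcal{C}^\alpha}$. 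For $f\Theta_2$, decomposing via Bony and using $\Theta_2\in\mathcal{C}^{2\alpha+2}$ together with \eqref{Bony 3}, \eqref{Bony 4}, and \eqref{Bony 5} for the resonant piece yields $\lVert f\Theta_2\rVert_{H^{\gamma+2\alpha+2}}\lesssim \lVert f\rVert_{H^\gamma}\lVert\Theta_2\rVert_{\mathcal{C}^{2\alpha+2}}$; since $\alpha+2>0$ the embedding $H^{\gamma+2\alpha+2}\hookrightarrow H^{\gamma+\alpha}$ holds. Summing, the bracket is bounded in $H^{\gamma+\alpha}$ by $C\lVert f\rVert_{H^\gamma}\lVert\Theta\rVert_{\mathcal{E}^\alpha}$.

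Next I would apply the Fourier multiplier bounds for $\sigma_a(D)$, which are the $\sigma_a$-analogue of \eqref{est 172}: $\sigma_a(D)$ gains $2\vartheta$ derivatives at cost $a^{\vartheta-1}$ for $\vartheta\in[0,1]$. Choosing $\vartheta=(\gamma-\alpha)/2$, which lies in $[0,1]$ because $\alpha<0<\gamma\leq\alpha+2$, the bracket estimate combined with this Schauder-type bound yields
\begin{equation*}
\lVert B_a(f,\Theta)\rVert_{H^{2\gamma}}\lesssim a^{\frac{\gamma-\alpha}{2}-1}\lVert f\rVert_{H^\gamma}\lVert\Theta\rVert_{\mathcal{E}^\alpha}=a^{-\frac{2-\gamma+\alpha}{2}}\lVert f\rVert_{H^\gamma}\lVert\Theta\rVert_{\mathcal{E}^\alpha}.
\end{equation*}
For the second estimate the analogous bound for $(\sigma_a-\sigma)(D)$ gains $2+2\vartheta$ derivatives at cost $a^\vartheta$; the same choice $\vartheta=(\gamma-\alpha)/2$ then gives $\lVert (B-B_a)(f,\Theta)\rVert_{H^{2\gamma+2}}\lesssim a^{(\gamma-\alpha)/2}\lVert f\rVert_{H^\gamma}\lVert\Theta\rVert_{\mathcal{E}^\alpha}$.

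The proof is essentially routine, so there is no substantial obstacle; the only point that deserves care is the admissibility of the resonant term $f\circ\Theta_2$, which requires $\gamma+2\alpha+2>0$. This follows from the standing assumptions $\gamma>2/3$ and $\alpha>-4/3$, and is precisely the place where the lower bound on $\alpha$ is used.
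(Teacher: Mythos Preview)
Your approach is correct and coincides with the paper's proof, which simply records that \eqref{est 172} with $\vartheta=\tfrac{\gamma-\alpha}{2}$ does the job; you have merely written out the bracket estimate in $H^{\gamma+\alpha}$ that the paper leaves implicit. One small fix: for the term $f\prec\Theta_2$ you cannot invoke \eqref{Bony 3} (it requires the first factor's Sobolev index to be negative, whereas $\gamma>0$) nor land in $H^{\gamma+2\alpha+2}$; instead use \eqref{Bony 1} with loss $\alpha+2-\gamma>0$ to obtain $\lVert f\prec\Theta_2\rVert_{H^{\gamma+\alpha}}\lesssim\lVert f\rVert_{L^2}\lVert\Theta_2\rVert_{\mathcal{C}^{2\alpha+2}}$ directly, which is all you need.
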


\begin{proof}[Proof of Proposition \ref{Proposition 6.5}]
Applying the two inequalities in \eqref{est 172} with $\vartheta = \frac{\gamma - \alpha}{2}$ gives us the desired result \eqref{est 240}. 
\end{proof}  

\begin{proposition}\label{Proposition 6.6}
\rm{(Cf. \cite[Proposition 4.20]{AC15})} Let $\alpha \in (-\frac{4}{3}, -1)$, $\Theta = ( \eta, \Theta_{2} ) \in \mathcal{K}^{\alpha}$, and $\kappa$ from \eqref{Define new kappa} sufficiently small fixed. If $f \in \mathcal{D}_{\Theta}$ where $\mathcal{D}_{\Theta}$ is defined in \eqref{Define D Theta}, then $\mathcal{H} f \in L^{2}$. 
\end{proposition}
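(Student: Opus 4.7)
The plan is to show that $\mathcal{H} f = \Delta f - \eta f$ lies in $L^{2}$ by writing $f$ via its paracontrolled ansatz and then tracking explicit cancellations. From the defining relation of $\mathcal{D}_{\Theta}$, namely \eqref{Define D Theta}, we have the decomposition
\begin{equation*}
f = f \prec \sigma(D) \eta + B(f,\Theta) + f^{\flat}, \qquad f^{\flat} \in H^{2}.
\end{equation*}
First I would apply the Leibniz rule to the paraproduct and invoke the identity $\Delta \sigma(D) = \sigma(D) + \mathrm{Id}$, which is immediate from $\sigma(D) = -(1-\Delta)^{-1}$, to obtain
\begin{equation*}
\Delta (f \prec \sigma(D)\eta) = (\Delta f) \prec \sigma(D)\eta + 2 \nabla f \prec \nabla \sigma(D)\eta + f \prec \sigma(D)\eta + f \prec \eta.
\end{equation*}
Next, using the same identity applied inside $B(f,\Theta)$, I would expand
\begin{equation*}
\Delta B(f,\Theta) = B(f,\Theta) - 2 \nabla f \prec \nabla \sigma(D)\eta - (1+\Delta) f \prec \sigma(D)\eta + f \succ \eta + f \Theta_{2}.
\end{equation*}

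Now I would substitute these expansions into $\Delta f - \eta f$, with $\eta f$ written via Bony's decomposition as $f \prec \eta + f \succ \eta + f \circ \eta$. The whole point of the particular definition of $B$ in \eqref{Define B} is to arrange exactly the four cancellations that occur here: the $(\Delta f) \prec \sigma(D) \eta$ terms, the $2 \nabla f \prec \nabla \sigma(D)\eta$ terms, the $f \prec \sigma(D) \eta$ terms, and the two pairs $f \prec \eta$ and $f \succ \eta$ all drop out. What remains is the clean identity
\begin{equation*}
\Delta f - \eta f = \Delta f^{\flat} + B(f,\Theta) + f \Theta_{2} - f \circ \eta.
\end{equation*}
Invoking Proposition \ref{Proposition 6.1}(1), specifically the representation \eqref{est 165}, I would rewrite $f \circ \eta - f \Theta_{2} = \mathcal{R}(f, \sigma(D) \eta, \eta) + f^{\sharp} \circ \eta$ where $f^{\sharp} = f - f \prec \sigma(D)\eta = B(f,\Theta) + f^{\flat}$, so that
\begin{equation*}
\mathcal{H}f = \Delta f^{\flat} + B(f,\Theta) - \mathcal{R}(f, \sigma(D)\eta, \eta) - f^{\sharp} \circ \eta.
\end{equation*}

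The last step is to verify that each of these four terms lies in $L^{2}$. The term $\Delta f^{\flat}$ is in $L^{2}$ by definition of $\mathcal{D}_{\Theta}$. For $B(f,\Theta)$, applying Bony's estimates \eqref{Bony 3}--\eqref{Bony 5} and the smoothing by $\sigma(D)$, each of the four summands inside yields regularity at least $H^{2\alpha + 4 - \kappa}$ (the dominant one being those built on $\sigma(D) \eta \in \mathcal{C}^{\alpha+2}$), which is contained in $L^{2}$ as soon as $\kappa \le 2\alpha + 4$, a strictly weaker condition than \eqref{Define new kappa}. For the Bony commutator $\mathcal{R}(f, \sigma(D)\eta, \eta)$, the standard estimate \eqref{Estimate on R} with $f \in H^{\alpha+2-\kappa}$, $\sigma(D)\eta \in \mathcal{C}^{\alpha+2}$, and $\eta \in \mathcal{C}^{\alpha}$ produces regularity $H^{3\alpha + 4 - \kappa}$. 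Finally, for $f^{\sharp} \circ \eta$ I would use $f^{\sharp} = B(f,\Theta) + f^{\flat} \in H^{2\alpha+4-\kappa}$ together with \eqref{Bony 5}, which is valid because $(2\alpha+4-\kappa) + \alpha > 0$ under \eqref{Define new kappa}, giving regularity $H^{3\alpha + 4 - \kappa}$. Both commutator-type exponents satisfy $3\alpha + 4 - \kappa \geq 0$ precisely because $\kappa < 4 + 3\alpha$ from the standing assumption \eqref{Define new kappa}, so all four summands are in $L^{2}$.

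The main obstacle is purely bookkeeping: getting the signs and the $\Delta \sigma(D) = \sigma(D) + \mathrm{Id}$ algebra right so that the four cancellations materialize, and then matching the surviving commutator regularities against the sharp range of $\kappa$ in \eqref{Define new kappa}. Once these align, the proof is a clean application of Bony's calculus together with Proposition \ref{Proposition 6.1}.
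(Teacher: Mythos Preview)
Your proposal is correct and follows essentially the same route as the paper: you derive the identity $\mathcal{H}f = \Delta f^{\flat} + B(f,\Theta) - \mathcal{R}(f,\sigma(D)\eta,\eta) - f^{\sharp}\circ\eta$ with $f^{\sharp}=B(f,\Theta)+f^{\flat}$, which is exactly the paper's \eqref{est 251}, and then check each term lies in $L^{2}$ using \eqref{Define D Theta}, \eqref{Define B}, \eqref{Estimate on R}, and \eqref{Bony 5}. The paper merely states this identity and cites the relevant estimates, whereas you spell out the Leibniz/$\Delta\sigma(D)=\sigma(D)+\mathrm{Id}$ algebra that produces the cancellations and track the regularity exponents explicitly; in particular your observation that the sharp constraint $\kappa<4+3\alpha$ is needed precisely for $\mathcal{R}(f,\sigma(D)\eta,\eta)$ and $f^{\sharp}\circ\eta$ matches the paper's remark verbatim.
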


\begin{proof}[Proof of Proposition \ref{Proposition 6.6}]
The claim follows by writing 
\begin{equation}\label{est 251}
\mathcal{H} f = \Delta f^{\flat} +  B(f, \Theta) - \mathcal{R} (f, \sigma(D) \eta, \eta) - \eta \circ \left(B(f, \Theta) + f^{\flat} \right)
\end{equation} 
and making use of \eqref{Define D Theta}, \eqref{Define B}, \eqref{Estimate on R}, and \eqref{Bony 5} to verify that each term is in $L^{2}$. The hypothesis that $\kappa < 4 + 3 \alpha$ due to \eqref{Define new kappa} is used upon verifying that $\mathcal{R} (f, \sigma(D) \eta, \eta) \in L^{2}$. 
\end{proof}

\begin{proposition}\label{Proposition 6.7}
\rm{(Cf. \cite[Proposition 4.21]{AC15})} Let $\alpha \in (-\frac{4}{3}, -1), \gamma \in (-\frac{\alpha}{2}, \alpha + 2)$, and $\Theta = (\eta, \Theta_{2}), \tilde{\Theta} = (\tilde{\eta}, \tilde{\Theta}_{2}) \in \mathcal{K}^{\alpha}$. Then, for all $f \in \mathcal{D}_{\Theta}$, there exists $g \in \mathcal{D}_{\tilde{\Theta}}$ such that 
\begin{equation}\label{est 250} 
\lVert f-g \rVert_{H^{\gamma}} + \lVert f^{\flat} - g^{\flat} \rVert_{H^{2}} \lesssim ( \lVert f \rVert_{H^{\gamma}} + \lVert g \rVert_{H^{\gamma}} ) (1+ \lVert \tilde{\Theta} \rVert_{\mathcal{E}^{\alpha}}) \lVert \Theta - \tilde{\Theta} \rVert_{\mathcal{E}^{\alpha}}, 
\end{equation} 
where 
\begin{equation}\label{Define g flat}
g^{\flat} \triangleq g - g \prec \sigma(D) \tilde{\eta} - B(g, \tilde{\Theta}). 
\end{equation} 
In particular, if $(\eta_{n}, c_{n})_{n\in\mathbb{N}} \subset C^{\infty} \times \mathbb{R}$ is a family such that  $(\eta_{n}, -\eta_{n} \circ \sigma(D) \eta_{n} - c_{n}) \to \Theta$ in $\mathcal{E}^{\alpha}$ as $n \nearrow 0$, then there exists a family $\{f_{n} \}_{n \in\mathbb{N}} \subset H^{2}$ such that  
\begin{equation}\label{est 254} 
\lim_{n \nearrow \infty} \lVert f_{n} - f \rVert_{H^{\gamma}} + \lVert f_{n}^{\flat} - f^{\flat} \rVert_{H^{2}} = 0, 
\end{equation} 
where $f_{n}^{\flat} \triangleq f_{n} - f_{n}\prec \sigma(D) \eta_{n} - B(f_{n}, \Theta^{n})$ and $\Theta^{n} \triangleq (\eta_{n}, -\eta_{n} \circ \sigma(D) \eta_{n} - c_{n})$. 
\end{proposition}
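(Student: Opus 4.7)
The plan is to construct $g$ by matching flat parts, seeking $g \in \mathcal{D}_{\tilde\Theta}$ with $g^\flat = f^\flat$; by Definition \ref{Definition 6.4}, this amounts to solving the affine linear fixed-point equation
\begin{equation*}
g - g \prec \sigma(D)\tilde\eta - B(g, \tilde\Theta) = f^\flat
\end{equation*}
for $g \in H^\gamma$, where $f^\flat \in H^2 \hookrightarrow H^\gamma$ is given. The map $g \mapsto g \prec \sigma(D)\tilde\eta + B(g, \tilde\Theta)$ is not a contraction on $H^\gamma$ in general, so to establish existence with quantitative bounds I adapt the multi-scale machinery of Proposition \ref{Proposition 6.3}: pass to the shifted operators $\sigma_a(D)$ from \eqref{Define sigma a} and $B_a$ from Proposition \ref{Proposition 6.5}, and solve the $a$-parametrized family in a Banach space modeled on $\tilde{\mathcal{D}}_{\tilde\eta}^{\gamma, \rho, A}$ of \eqref{Define tilde D}. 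The key point is that, by Proposition \ref{Proposition 6.5}, both the shifted paraproduct and $B_a$ gain a factor of $a^{-(2-\gamma+\alpha)/2}$, so contraction of the associated $\mathcal{M}^2$ holds for $A \gtrsim [1 + \lVert \tilde\Theta \rVert_{\mathcal{E}^\alpha}^4]^{1/\lambda}$ as in \eqref{large A}-\eqref{est 203}, yielding the desired $g$ with $g^\flat = f^\flat \in H^2$, hence $g \in \mathcal{D}_{\tilde\Theta}$.

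The error estimate is then derived by subtracting the two paracontrolled expansions $f = f^\flat + f \prec \sigma(D)\eta + B(f, \Theta)$ and $g = f^\flat + g \prec \sigma(D)\tilde\eta + B(g, \tilde\Theta)$, using the bilinearity of $B$ in its second argument, to obtain
\begin{equation*}
f - g = (f - g) \prec \sigma(D)\tilde\eta + B(f - g, \tilde\Theta) + f \prec \sigma(D)(\eta - \tilde\eta) + B(f, \Theta - \tilde\Theta).
\end{equation*}
As in the proof of Proposition \ref{Proposition 6.4}, I pass to the shifted framework, absorb the $(f-g)$-terms into the left-hand side via the $a$-smallness of Proposition \ref{Proposition 6.5}, and control the two source terms by $\lVert f \rVert_{H^\gamma} \lVert \Theta - \tilde\Theta \rVert_{\mathcal{E}^\alpha}(1 + \lVert \tilde\Theta \rVert_{\mathcal{E}^\alpha})$ using paraproduct estimates from Lemma \ref{Lemma 3.1}. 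Setting $a$ equal to its threshold value then yields \eqref{est 250}; the $H^2$-norm of $f^\flat - g^\flat$ vanishes by construction.

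For the approximation claim \eqref{est 254}, I apply the first part with $\tilde\Theta = \Theta^n \triangleq (\eta_n, -\eta_n \circ \sigma(D)\eta_n - c_n)$, obtaining $f_n \in \mathcal{D}_{\Theta^n}$ with $f_n^\flat = f^\flat$. Since $\eta_n \in C^\infty$ makes both $\sigma(D)\eta_n$ and the operator $B(\,\cdot\,, \Theta^n)$ infinitely smoothing, the identity $f_n = f_n^\flat + f_n \prec \sigma(D)\eta_n + B(f_n, \Theta^n)$ with $f_n^\flat \in H^2$ bootstraps $f_n$ from $H^\gamma$ to $H^2$. The convergence $\lVert f_n - f \rVert_{H^\gamma} + \lVert f_n^\flat - f^\flat \rVert_{H^2} \to 0$ then follows from \eqref{est 250} applied with $\tilde\Theta = \Theta^n$ and the hypothesis $\Theta^n \to \Theta$ in $\mathcal{E}^\alpha$.

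The main obstacle is transplanting the multi-scale fixed-point scheme of Proposition \ref{Proposition 6.3} from its original resolvent setting $(-\mathcal{H}^{\tilde\Theta} + a) f = g$ to the present equation: the parametrized Banach space and the splitting of $M(f,f')^\sharp$ into the six pieces $\RomanIV_1, \dots, \RomanIV_6$ must be re-engineered so that the solution $g$ simultaneously enjoys the $H^\gamma$ regularity and the $H^2$ flat part demanded by Definition \ref{Definition 6.4}, rather than merely the $H^{2\gamma}$-flat structure of $\mathcal{D}_{\tilde\eta}^\gamma$. Once the correct space is identified, however, the paraproduct, Schauder, and commutator estimates in Propositions \ref{Proposition 6.3}-\ref{Proposition 6.5} transfer with only bookkeeping changes in the regularity exponents.
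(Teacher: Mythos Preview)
Your fixed-point instinct is right, but two issues derail the plan as written. First, you cannot achieve $g^\flat = f^\flat$ by shifting to $\sigma_a, B_a$: once you shift, the fixed point $g$ satisfies only a \emph{shifted} flat-part identity, and $g^\flat - f^\flat$ picks up residual terms $g\prec(\sigma_a-\sigma)(D)\tilde\eta + (B_a-B)(g,\tilde\Theta)$ together with their $f$-counterparts. These are nonzero, so the claim that ``the $H^2$-norm of $f^\flat - g^\flat$ vanishes by construction'' is false; you must actually estimate these corrections in $H^2$ via Proposition~\ref{Proposition 6.5} and \eqref{est 172}. Second, the multi-scale space $\tilde{\mathcal{D}}_{\tilde\eta}^{\gamma,\rho,A}$ of Proposition~\ref{Proposition 6.3} is built for an equation that \emph{intrinsically} carries the parameter $a$ (the resolvent $(-\mathcal{H}+a)f_a = g$); your target equation $g - g\prec\sigma(D)\tilde\eta - B(g,\tilde\Theta) = f^\flat$ has no such parameter, so there is no natural $a$-family to organize, and it is unclear what ``solve the $a$-parametrized family'' is supposed to mean here.

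The paper's route avoids both issues and is considerably simpler: fix \emph{one} large $a$ and run an ordinary Banach contraction on $H^\gamma$ for
\[
\Gamma(g) \;=\; g \prec \sigma_a(D)\tilde\eta + B_a(g,\tilde\Theta) + f - \bigl[f \prec \sigma_a(D)\eta + B_a(f,\Theta)\bigr],
\]
whose Lipschitz constant is $\lesssim a^{(\gamma+\epsilon_2-\alpha)/2 - 1}(\lVert\Theta\rVert_{\mathcal{E}^\alpha}+\lVert\tilde\Theta\rVert_{\mathcal{E}^\alpha})$ by \eqref{est 177} and \eqref{est 240}. The fixed point then matches \emph{shifted} flat parts exactly, so $g^\flat - f^\flat$ consists only of the $(\sigma-\sigma_a)$ and $(B-B_a)$ correction terms, which do land in $H^2$, giving the second half of \eqref{est 250} nontrivially rather than for free. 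Your subtraction argument for $\lVert f-g\rVert_{H^\gamma}$ and your bootstrap showing $f_n\in H^2$ are correct once plugged into this simpler framework.
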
 

\begin{proof}[Proof of Proposition \ref{Proposition 6.7}]
Let $f \in \mathcal{D}_{\Theta}$ and define $\Gamma: \hspace{1mm} H^{\gamma} \mapsto H^{\gamma}$ by 
\begin{equation}\label{est 175}
\Gamma(g) \triangleq g \prec \sigma_{a}(D) \tilde{\eta} + B_{a} (g, \tilde{\Theta}) + f - [f \prec \sigma_{a}(D) \eta+ B_{a} (f, \Theta) ], 
\end{equation} 
where $B_{a}(g, \tilde{\Theta})$ and $B_{a} (f, \Theta)$ are defined according to \eqref{Define Ba}. Then, for all $g_{1}, g_{2} \in H^{\gamma}$, because $\gamma < \alpha + 2$ by hypothesis, we can find $\epsilon_{2} > 0$ that satisfies \eqref{Define epsilon two} and rely on \eqref{Bony 1} and \eqref{Define tilde sigma a} to deduce 
\begin{equation}\label{est 177}
\lVert (g_{1} - g_{2}) \prec \sigma_{a}(D) \tilde{\eta} \rVert_{H^{\gamma}} \lesssim a^{\frac{ \gamma + \epsilon_{2} - \alpha}{2} -1} \lVert \tilde{\Theta} \rVert_{\mathcal{E}^{\alpha}} \lVert g_{1} - g_{2} \rVert_{H^{\gamma}},
\end{equation} 
which leads to 
\begin{equation}\label{est 179}
 \lVert \Gamma (g_{1}) - \Gamma(g_{2}) \rVert_{H^{\gamma}} \lesssim  a^{\frac{\gamma + \epsilon_{2} - \alpha}{2} -1} \lVert g_{1} - g_{2}  \rVert_{H^{\gamma}} [ \lVert \Theta \rVert_{\mathcal{E}^{\alpha}} + \lVert \tilde{\Theta} \rVert_{\mathcal{E}^{\alpha}} ].
\end{equation} 
Considering \eqref{Define epsilon two}, we deduce that $\Gamma$ is a contraction for $a \gg 1$ so that there exists a unique $g$ such that 
\begin{subequations}\label{est 180}
\begin{align}
g \overset{\eqref{est 175}}{=}& g \prec \sigma_{a}(D) \tilde{\eta} + B_{a} (g, \tilde{\Theta}) + f - [f \prec \sigma_{a}(D) \eta+ B_{a} (f, \Theta) ] \label{est 180a}\\
\overset{\eqref{Define D Theta gamma}}{=}&g \prec \sigma_{a}(D) \tilde{\eta} + B_{a} (g, \tilde{\Theta}) + f^{\flat} + f \prec ( \sigma - \sigma_{a})(D) \eta + B(f, \Theta) - B_{a}(f, \Theta). \label{est 180b}
\end{align}
\end{subequations} 
Next, for any $\kappa > 0$, we can estimate 
\begin{equation}
\lVert g \prec \sigma_{a}(D) \tilde{\eta} \rVert_{H^{2+ \alpha - \kappa}} \overset{\eqref{Bony 1}}{\lesssim} \lVert g \rVert_{L^{2}} \lVert \sigma_{a}(D) \tilde{\eta} \rVert_{\mathcal{C}^{2+ \alpha}} \lesssim \lVert g \rVert_{H^{\gamma}} \lVert \tilde{\eta} \rVert_{\mathcal{C}^{\alpha}} \lesssim 1, 
\end{equation} 
and deduce $g \in H^{2 + \alpha - \kappa}$. Additionally, we can show that $g^{\flat} \in H^{2}$ so that $g \in \mathcal{D}_{\tilde{\Theta}}$ by \eqref{Define D Theta}. Next, because $\gamma < \alpha + 2$ by hypothesis, we can find $\epsilon_{2} > 0$ that satisfies \eqref{Define epsilon two} and estimate 
\begin{equation}\label{est 247}
 \lVert (g-f) \prec \sigma_{a}(D) \tilde{\eta} \rVert_{H^{\gamma}} \overset{\eqref{Bony 1}}{\lesssim}\lVert g-f \rVert_{L^{2}} \lVert \sigma_{a}(D) \tilde{\eta} \rVert_{\mathcal{C}^{\gamma + \epsilon_{2}}} \overset{\eqref{est 172}}{\lesssim}a^{\frac{ \gamma + \epsilon_{2} - \alpha}{2} -1} \lVert g -f \rVert_{H^{\gamma}}\lVert \tilde{\Theta} \rVert_{\mathcal{E}^{\alpha}}
\end{equation} 
which leads to 
\begin{align*}
\lVert f-g \rVert_{H^{\gamma}} \lesssim& [a^{\frac{\gamma + \epsilon_{2} - \alpha}{2} -1} \lVert \tilde{\Theta} \rVert_{\mathcal{E}^{\alpha}} + a^{-\frac{\alpha}{2} -1} \lVert \Theta \rVert_{\mathcal{E}^{\alpha}} ] \lVert f-g \rVert_{H^{\gamma}}  \\
&+ [a^{\frac{ \gamma + \epsilon_{2} - \alpha}{2} -1} \lVert f \rVert_{H^{\gamma}} + a^{-\frac{\alpha}{2} -1} \lVert g \rVert_{H^{\gamma}} ] \lVert \Theta - \tilde{\Theta} \rVert_{\mathcal{E}^{\alpha}}  
\end{align*}
so that making use of \eqref{Define epsilon two} leads to, for all sufficiently large $a \gg 1$, 
\begin{equation}\label{est 249}
 \lVert f-g \rVert_{H^{\gamma}} \lesssim [a^{\frac{ \gamma + \epsilon_{2} - \alpha}{2} -1} \lVert f \rVert_{H^{\gamma}} + a^{-\frac{\alpha}{2} -1} \lVert g \rVert_{H^{\gamma}} ] \lVert \Theta - \tilde{\Theta} \rVert_{\mathcal{E}^{\alpha}}. 
\end{equation}  
We can estimate $f^{\flat} - g^{\flat}$ similarly, make use of \eqref{est 249}, and conclude \eqref{est 250}.  
\end{proof} 

\begin{proposition}\label{Proposition 6.8}
\rm{(Cf. \cite[Proposition 4.22]{AC15})} Let $\alpha \in (-\frac{4}{3}, -1), \gamma \in (\frac{2}{3},\alpha +2)$, and $\Theta = (\eta, \Theta_{2}), \tilde{\Theta} = (\tilde{\eta}, \tilde{\Theta}_{2}) \in \mathcal{K}^{\alpha}$. Define $\mathcal{H}^{\Theta} \triangleq  \Delta - \eta$ and $\mathcal{H}^{\tilde{\Theta}} \triangleq \Delta - \tilde{\eta}$ with respective domains denoted by $\mathcal{D}_{\Theta}$ and $\mathcal{D}_{\tilde{\Theta}}$ defined according to \eqref{Define D Theta}. Then 
\begin{align}
\lVert \mathcal{H}^{\Theta} f - \mathcal{H}^{\tilde{\Theta}} g \rVert_{L^{2}} \lesssim& ( \lVert f-g \rVert_{H^{\gamma}} + \lVert f^{\flat} - g^{\flat} \rVert_{H^{2}} + \lVert \Theta - \tilde{\Theta} \rVert_{\mathcal{E}^{\alpha}})  \nonumber \\
& \times \left( 1+ \lVert \Theta \rVert_{\mathcal{E}^{\alpha}} + \lVert \tilde{\Theta} \rVert_{\mathcal{E}^{\alpha}} \right) \left( 1+ \lVert \Theta \rVert_{\mathcal{E}^{\alpha}} + \lVert f \rVert_{H^{\gamma}} + \lVert g \rVert_{H^{\gamma}} + \lVert f^{\flat} \rVert_{H^{2}} \right).  \label{est 252}
\end{align}
Moreover, the operator $\mathcal{H}: \hspace{1mm} \mathcal{D}_{\Theta} \mapsto L^{2}$ is symmetric in $L^{2}$ so that 
\begin{equation}
\langle \mathcal{H}f, g \rangle_{L^{2}} = \langle f, \mathcal{H} g \rangle_{L^{2}} \hspace{3mm} \forall \hspace{1mm} f, g \in \mathcal{D}_{\Theta}. 
\end{equation} 
\end{proposition}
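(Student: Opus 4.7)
The plan is to first prove the quantitative bound \eqref{est 252} and then deduce symmetry by smooth approximation. The starting point is the representation
\begin{equation*}
\mathcal{H} f = \Delta f^{\flat} + B(f, \Theta) - \mathcal{R}(f, \sigma(D)\eta, \eta) - \eta \circ \bigl(B(f,\Theta) + f^{\flat}\bigr)
\end{equation*}
already derived in \eqref{est 251} within the proof of Proposition \ref{Proposition 6.6}, and the analogous identity for $\mathcal{H}^{\tilde{\Theta}} g$. Subtracting term by term, the difference $\mathcal{H}^{\Theta} f - \mathcal{H}^{\tilde{\Theta}} g$ splits into the Laplacian of $f^{\flat}-g^{\flat}$, the difference $B(f,\Theta) - B(g,\tilde{\Theta})$, two $\mathcal{R}$-commutator differences, and the resonant differences $\eta \circ (B(f,\Theta)+f^{\flat}) - \tilde{\eta}\circ (B(g,\tilde{\Theta}) + g^{\flat})$. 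The first contributes $\lVert f^{\flat}-g^{\flat}\rVert_{H^2}$ directly. For the $B$-difference, I would telescope as $B(f-g,\Theta) + B(g,\Theta-\tilde{\Theta})$ and apply \eqref{Define B} together with \eqref{Bony 1} and \eqref{Bony 3}-\eqref{Bony 5} in $L^2$.

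For the two commutator differences, the natural telescoping is
\begin{equation*}
\mathcal{R}(f-g, \sigma(D)\eta, \eta) + \mathcal{R}(g, \sigma(D)(\eta-\tilde{\eta}), \eta) + \mathcal{R}(g, \sigma(D)\tilde{\eta}, \eta-\tilde{\eta}),
\end{equation*}
to which the trilinear commutator estimate \eqref{Estimate on R} applies; the restriction $\kappa < 4+3\alpha$ imposed in \eqref{Define new kappa} is precisely what guarantees membership in $L^2$. The resonant differences are handled by splitting into $(\eta-\tilde{\eta}) \circ (B(f,\Theta)+f^{\flat})$ plus $\tilde{\eta} \circ [B(f-g,\Theta) + B(g,\Theta-\tilde{\Theta}) + (f^{\flat}-g^{\flat})]$, each of which yields to \eqref{Bony 5} once the $H^{2\gamma}$ bound on $B$ from Proposition \ref{Proposition 6.5} and the $H^2$ bound on the $\flat$-component are used. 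Collecting the resulting terms, each factor pairs naturally to reconstruct the right-hand side of \eqref{est 252}.

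For the symmetry claim, the approach is to reduce to the smooth case. Given arbitrary $f, g \in \mathcal{D}_{\Theta}$, invoke Proposition \ref{Proposition 6.7} to produce approximants $\{f_n\}, \{g_n\} \subset H^2$ associated to a smooth family $\Theta^n = (\eta_n, -\eta_n \circ \sigma(D)\eta_n - c_n) \to \Theta$ in $\mathcal{E}^{\alpha}$, such that \eqref{est 254} holds for both $f$ and $g$. Since $\eta_n \in C^{\infty}$, the operator $\mathcal{H}^{\Theta^n} = \Delta - \eta_n$ acts classically and integration by parts gives
\begin{equation*}
\langle \mathcal{H}^{\Theta^n} f_n, g_n\rangle_{L^2} = \langle f_n, \mathcal{H}^{\Theta^n} g_n\rangle_{L^2}.
\end{equation*}
Applying \eqref{est 252} (to the pairs $(f_n, \Theta^n)$ versus $(f,\Theta)$, and similarly for $g$) together with $L^2$-convergence of $f_n \to f$ and $g_n \to g$, the inner products on both sides pass to the limit, yielding $\langle \mathcal{H}f, g\rangle_{L^2} = \langle f, \mathcal{H}g\rangle_{L^2}$.

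The main technical obstacle will be the careful bookkeeping of $\mathcal{R}$-commutator terms: the margin $4+3\alpha$ is tight, so one must verify that each telescoped piece lands in $L^2$ rather than just some $H^{-s}$, and one must keep track of how $\lVert f\rVert_{H^\gamma}$, $\lVert f^{\flat}\rVert_{H^2}$, and $\lVert\Theta\rVert_{\mathcal{E}^\alpha}$ are distributed across factors to match the polynomial structure asserted on the right-hand side of \eqref{est 252}. Everything else is a straightforward application of Bony's calculus combined with Propositions \ref{Proposition 6.5} and \ref{Proposition 6.7}.
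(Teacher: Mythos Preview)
Your proposal is correct and follows essentially the same route as the paper: the bound \eqref{est 252} is obtained exactly by subtracting the two instances of the representation \eqref{est 251} and telescoping each difference via \eqref{Define B}, \eqref{Estimate on R}, and the Bony estimates \eqref{Bony in Sobolev}, while symmetry is deduced by invoking Proposition \ref{Proposition 6.7} to approximate $f,g\in\mathcal{D}_{\Theta}$ by $f_n,g_n\in H^{2}$ attached to smooth $\Theta^{n}$, using classical symmetry of $\Delta-\eta_n$, and passing to the limit through \eqref{est 252}. The only minor remark is that the margin you cite for the $\mathcal{R}$-terms is governed here by the hypothesis $\gamma>\tfrac{2}{3}>-2\alpha-2$ (since you are estimating in terms of $\lVert f-g\rVert_{H^\gamma}$) rather than by the $\kappa<4+3\alpha$ condition, which was needed in Proposition \ref{Proposition 6.6} where the input regularity was $H^{\alpha+2-\kappa}$.
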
 

\begin{proof}[Proof of Proposition \ref{Proposition 6.8}]
We can deduce \eqref{est 252} starting from \eqref{est 251} and using \eqref{Define B}, \eqref{est 172} and \eqref{Bony in Sobolev}. Next, we let 
\begin{equation*}
\{ \Theta^{n} \}_{n \in \mathbb{N}} = \{ (\eta_{n}, -\eta_{n} \circ \sigma(D) \eta_{n} - c_{n}) \}_{n\in\mathbb{N}} \subset C^{\infty} 
\end{equation*}
satisfy $\Theta^{n} \to \Theta$ in $\mathcal{E}^{\alpha}$ as $n \nearrow \infty$. Then, by Proposition \ref{Proposition 6.7}, there exists $\{f_{n}\}_{n \in\mathbb{N}} \subset H^{2}$ such that \eqref{est 254} is satisfied. The smoothness of $\eta_{n}$ allows us to define $\mathcal{H}_{n} \triangleq  \Delta - \eta_{n}$ on $H^{2}$. Moreover, applying the assumption of $\Theta^{n} \to \Theta$ in $\mathcal{E}^{\alpha}$ as $n \nearrow \infty$ and \eqref{est 254} to \eqref{est 252} shows that $\mathcal{H}_{n} f_{n} \to \mathcal{H} f$ in $L^{2}$. Additionally, if $\{g_{n}\}_{n \in \mathbb{N}} \subset H^{2}$ satisfies $\mathcal{H}_{n}g_{n} \to \mathcal{H} g$ in $L^{2}$ as $n \nearrow \infty$ and $\lim_{n \nearrow \infty} \lVert g_{n} - g \rVert_{H^{\gamma}} + \lVert g_{n}^{\flat} - g^{\flat} \rVert_{H^{2}} = 0$ similarly to \eqref{est 254}, then $\langle \mathcal{H}_{n} f_{n}, g_{n} \rangle_{L^{2}}  = \langle f_{n}, \mathcal{H}_{n} g_{n} \rangle_{L^{2}}$; in turn, this implies 
\begin{align*}
\lvert \langle \mathcal{H}f, g \rangle_{L^{2}} - \langle f, \mathcal{H} g \rangle_{L^{2}}& \rvert \leq \lVert \mathcal{H} f - \mathcal{H} f_{n} \rVert_{L^{2}} \lVert g \rVert_{L^{2}} + \lVert \mathcal{H} f_{n} \rVert_{L^{2}} \lVert g - g_{n} \rVert_{L^{2}} \\
&+ \lVert f_{n} - f \rVert_{L^{2}} \lVert \mathcal{H}_{n} g_{n} \rVert_{L^{2}} + \lVert f \rVert_{L^{2}} \lVert \mathcal{H}_{n}g_{n} - \mathcal{H} g \rVert_{L^{2}}  \to 0 \text{ as } n \nearrow \infty. 
\end{align*}
\end{proof} 

\begin{proposition}\label{Proposition 6.9}
\rm{(Cf. \cite[Proposition 4.23]{AC15})} Let $\alpha \in (-\frac{4}{3}, -1), \gamma \in (\frac{2}{3}, \alpha + 2),$ and $A = A( \lVert \Theta \rVert_{\mathcal{E}^{\alpha}})$ from Proposition \ref{Proposition 6.3}. Then, for all $a \geq A$, $-\mathcal{H} + a: \hspace{1mm} \mathcal{D}_{\Theta} \mapsto L^{2}$ is invertible with inverse $\mathcal{G}_{a}: \hspace{1mm} L^{2} \mapsto \mathcal{D}_{\Theta}$. Additionally, $\mathcal{G}_{a}: \hspace{1mm} L^{2} \mapsto L^{2}$ is bounded, self-adjoint, and compact. 
\end{proposition}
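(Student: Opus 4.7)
The plan is to upgrade Proposition \ref{Proposition 6.3}, which inverts $-\mathcal{H} + a$ into the larger space $\mathcal{D}_{\eta}^{\gamma}$, to an inverse into $\mathcal{D}_{\Theta}$, and then read off self-adjointness from Proposition \ref{Proposition 6.8} and compactness from Rellich. Given $g \in L^{2}$ and $a \geq A$, Proposition \ref{Proposition 6.3} with $\delta = 0$ and some $\gamma \in (\frac{2}{3}, \alpha + 2)$ produces a unique $f_{a} \in \mathcal{D}_{\eta}^{\gamma}$ satisfying $(-\mathcal{H} + a) f_{a} = g$ with $\lVert f_{a} \rVert_{H^{\gamma}} + a^{-\rho} \lVert f_{a}^{\sharp} \rVert_{H^{2\gamma}} \lesssim \lVert g \rVert_{L^{2}}$. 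To upgrade $f_{a}$ to $\mathcal{D}_{\Theta}$, I would verify that $f_{a}^{\flat} \triangleq f_{a} - f_{a} \prec \sigma(D) \eta - B(f_{a}, \Theta)$, defined through \eqref{Define D Theta} and \eqref{Define B}, lies in $H^{2}$. Substituting the decomposition of $f_{a}$ into $(a - \Delta) f_{a} = g - \eta f_{a}$, expanding $\eta f_{a} = f_{a} \prec \eta + f_{a} \succ \eta + f_{a} \circ \eta$ and invoking the resonance rewriting \eqref{est 165}, the bilinear $B$ in \eqref{Define B} is designed precisely so that every contribution of regularity below $L^{2}$ cancels, producing
\begin{equation*}
(a - \Delta) f_{a}^{\flat} \in L^{2}
\end{equation*}
with norm controlled by $\lVert g \rVert_{L^{2}}$ and a polynomial in $\lVert \Theta \rVert_{\mathcal{E}^{\alpha}}$. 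Elliptic regularity then delivers $f_{a}^{\flat} \in H^{2}$ and hence $f_{a} \in \mathcal{D}_{\Theta}$. Uniqueness in $\mathcal{D}_{\Theta} \subset \mathcal{D}_{\eta}^{\gamma}$ is inherited from Proposition \ref{Proposition 6.3}, and the $L^{2} \to L^{2}$ bound on $\mathcal{G}_{a}$ follows from \eqref{est 202a} with $\delta = 0$.

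Self-adjointness and compactness then follow from general principles together with Proposition \ref{Proposition 6.8}. For self-adjointness, $\mathcal{G}_{a}$ is bounded and everywhere-defined on $L^{2}$, so by Hellinger--Toeplitz it suffices to verify symmetry; setting $f_{i} \triangleq \mathcal{G}_{a} g_{i} \in \mathcal{D}_{\Theta}$, Proposition \ref{Proposition 6.8} gives
\begin{equation*}
\langle \mathcal{G}_{a} g_{1}, g_{2} \rangle_{L^{2}} = \langle f_{1}, (-\mathcal{H}+a) f_{2} \rangle_{L^{2}} = \langle (-\mathcal{H}+a) f_{1}, f_{2} \rangle_{L^{2}} = \langle g_{1}, \mathcal{G}_{a} g_{2} \rangle_{L^{2}}.
\end{equation*}
Compactness is a consequence of Rellich: $\mathcal{G}_{a}$ factors through $L^{2} \to \mathcal{D}_{\Theta} \hookrightarrow H^{\alpha + 2 - \kappa} \hookrightarrow L^{2}$, and the constraint \eqref{Define new kappa} ensures $\alpha + 2 - \kappa > 0$, so the final embedding is compact.

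The main obstacle is the verification $f_{a}^{\flat} \in H^{2}$. Although the identity \eqref{est 251} encodes exactly the algebra needed, it was derived under the a priori hypothesis $f \in \mathcal{D}_{\Theta}$, so running it in reverse means every resonance and commutator term---in particular $f_{a} \circ \eta$, $f_{a}^{\sharp} \circ \eta$, and $\mathcal{R}(f_{a}, \sigma(D) \eta, \eta)$---must be shown to close in $L^{2}$ using only the weaker regularity $f_{a} \in \mathcal{D}_{\eta}^{\gamma}$ together with the enhancement $\Theta_{2}$ from \eqref{Define K alpha}. The margin is tight: the upper bound $\alpha > -\frac{4}{3}$, equivalently $2\alpha + 2 > -\frac{2}{3}$, together with $\kappa < 4 + 3\alpha$ from \eqref{Define new kappa}, is precisely what allows each such product to close in $L^{2}$ after applying $\Delta^{-1}$.
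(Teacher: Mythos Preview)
Your proposal is correct and follows essentially the same route as the paper: invoke Proposition \ref{Proposition 6.3} to obtain $f_{a} \in \mathcal{D}_{\eta}^{\gamma}$, then exploit the equation together with the definition of $B$ to show $f_{a}^{\flat} \in H^{2}$ (the paper does this by applying $(1-\Delta)$ to $f_{a}^{\sharp}$ and subtracting $(1-\Delta)B(f_{a},\Theta)$, arriving at the explicit formula \eqref{est 258}, which is equivalent to your $(a-\Delta)f_{a}^{\flat} \in L^{2}$ plus elliptic regularity, up to the harmless lower-order term $(a-1)f_{a}^{\flat}$), and then deduce self-adjointness from the symmetry in Proposition \ref{Proposition 6.8} and compactness from a Rellich embedding. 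The paper factors the compact embedding through $H^{\gamma}$ rather than $H^{\alpha+2-\kappa}$, but either works.
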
 

\begin{proof}[Proof of Proposition \ref{Proposition 6.9}]
Let $g \in L^{2}$. By Proposition \ref{Proposition 6.3}, there exists a unique $f_{a} \in \mathcal{D}_{\eta}^{\gamma}$ such that $f_{a} = \mathcal{G}_{a} g$; i.e., $(-\mathcal{H} + a) f_{a} = g$, and consequently due to \eqref{Define H} and \eqref{est 165}, 
\begin{align}
 (1 - \Delta) f_{a}^{\sharp} =& f_{a}^{\sharp} + g - af_{a} + 2 \nabla f_{a} \prec \nabla \sigma(D) \eta + (1+ \Delta) f_{a} \prec \sigma(D) \eta - f_{a} \succ \eta   \nonumber \\
& \hspace{20mm} - \left( f_{a} \Theta_{2} + \mathcal{R} (f_{a}, \sigma(D) \eta, \eta ) + f_{a}^{\sharp} \circ \eta \right).  \label{est 257}
\end{align}
Resultantly, 
\begin{align}
f_{a}^{\flat} \triangleq f_{a}^{\sharp} - B(f_{a}, \Theta) \overset{\eqref{est 257} \eqref{Define B}}{=}  \sigma(D) [-f_{a}^{\sharp} - g + a f_{a} + \mathcal{R} (f_{a}, \sigma(D) \eta, \eta) + f_{a}^{\sharp} \circ \eta]   \label{est 258}
\end{align}
where we can show that $f_{a}^{\flat} \in H^{2}$ 
\begin{equation}\label{est 259}
\lVert f_{a}^{\flat} \rVert_{H^{2}} \overset{\eqref{Estimate on R} \eqref{Bony 5}}{\lesssim}  a \lVert f_{a} \rVert_{\mathcal{D}_{\eta}^{\gamma}} (1+ \lVert \eta \rVert_{\mathcal{C}^{\alpha}}^{2}) + \lVert g \rVert_{L^{2}}  \lesssim 1, 
\end{equation} 
and consequently $f_{a} \in H^{\alpha + 2 - \kappa}$ so that $f_{a} \in \mathcal{D}_{\Theta}$ by \eqref{Define D Theta}. Moreover, 
\begin{equation}\label{est 260} 
\lVert \mathcal{G}_{a} g \rVert_{\mathcal{D}_{\Theta}^{\gamma}} 
\overset{\eqref{Norm D Theta gamma}}{\leq}  \lVert \mathcal{G}_{a} g \rVert_{H^{\gamma}} + a^{-\rho} \lVert ( \mathcal{G}_{a} g)^{\sharp} \rVert_{H^{2\gamma}} + \lVert f_{a}^{\flat} \rVert_{H^{2}}  \overset{\eqref{est 259}\eqref{est 202}}{\lesssim} a^{\gamma} (1+\lVert \eta \rVert_{\mathcal{C}^{\alpha}}^{2} ) \lVert g\rVert_{L^{2}}.
\end{equation} 
Next, the fact that $\mathcal{G}_{a}: \hspace{1mm} L^{2} \mapsto L^{2}$ is  self-adjoint follows from the symmetry of $\mathcal{H}$. Finally, writing $\mathcal{G}_{a}: \hspace{1mm} L^{2} \mapsto L^{2}$ as a composition of $\mathcal{G}_{a}:\hspace{1mm} L^{2} \mapsto H^{\gamma}$ and an embedding operator $i:  \hspace{1mm} H^{\gamma} \mapsto L^{2}$ shows that $\mathcal{G}_{a}: \hspace{1mm} L^{2} \mapsto L^{2}$ is compact. 
\end{proof}

\appendix
\section{Further preliminaries}\label{Appendix A}
\begin{lemma}\label{Lemma A.1} 
\rm{(\hspace{1sp}\cite[Proposition 3.3]{AC15})} Let $\alpha, n \in \mathbb{R}$ and $\sigma: \hspace{1mm} \mathbb{R}^{d} \setminus \{0\} \mapsto \mathbb{R}$ be an infinitely differentiable function such that $\lvert D^{k} \sigma(x) \rvert \lesssim (1+ \lvert x \rvert)^{-n - k }$ for all $x \in \mathbb{R}^{d}$. For $f \in H^{\alpha}(\mathbb{T}^{d})$, we define $\sigma(D) f$ by \eqref{Define sigma D}. Then $\sigma(D) f \in H^{\alpha + n} (\mathbb{T}^{d})$ and 
\begin{equation}\label{Schauder}
\lVert \sigma(D) f \rVert_{H^{\alpha + n}} \lesssim_{\alpha, n} \lVert f \rVert_{H^{\alpha}}. 
\end{equation} 
\end{lemma}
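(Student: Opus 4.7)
The approach is to prove this Fourier-multiplier boundedness on the $H^{s}$ scale directly from Plancherel on $\mathbb{T}^{d}$. The hypothesis on $\sigma$ that genuinely matters here is only the zeroth-order pointwise decay $\lvert\sigma(x)\rvert\lesssim(1+\lvert x\rvert)^{-n}$; the higher-derivative conditions $\lvert D^{k}\sigma(x)\rvert\lesssim(1+\lvert x\rvert)^{-n-k}$ in the hypothesis would be needed only for an $L^{p}$-version via Mikhlin--H\"ormander, not for the $L^{2}$-type estimate \eqref{Schauder}.

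First I would recall the equivalent norm $\lVert f\rVert_{H^{\alpha}}^{2}\approx \sum_{k\in\mathbb{Z}^{d}}(1+\lvert k\rvert^{2})^{\alpha}\lvert\hat{f}(k)\rvert^{2}$ valid on $\mathbb{T}^{d}$. Since \eqref{Define sigma D} gives $\widehat{\sigma(D)f}(k)=\sigma(k)\hat{f}(k)$ on $\mathbb{Z}^{d}\setminus\{0\}$, the bound $\lvert\sigma(k)\rvert\lesssim(1+\lvert k\rvert)^{-n}$ yields in one line
\[
\lVert \sigma(D)f\rVert_{H^{\alpha+n}}^{2} \approx \sum_{k\neq 0}(1+\lvert k\rvert^{2})^{\alpha+n}\lvert\sigma(k)\rvert^{2}\lvert\hat{f}(k)\rvert^{2} \lesssim \sum_{k\neq 0}(1+\lvert k\rvert^{2})^{\alpha}\lvert\hat{f}(k)\rvert^{2} \lesssim \lVert f\rVert_{H^{\alpha}}^{2},
\]
with implicit constant depending only on $\alpha$, $n$, and the implicit constant in $\lvert\sigma(x)\rvert\lesssim(1+\lvert x\rvert)^{-n}$. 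Alternatively, and more in the spirit of the Littlewood--Paley machinery of Section~\ref{Subsection 3.2}, one uses $\lVert f\rVert_{H^{\alpha}}^{2}\approx \sum_{j\geq -1}2^{2j\alpha}\lVert \Delta_{j}f\rVert_{L^{2}}^{2}$: on the annulus $\lvert k\rvert\approx 2^{j}$ supporting $\rho_{j}$ the hypothesis gives $\lvert\sigma(k)\rvert\lesssim 2^{-jn}$, so by Plancherel $\lVert \Delta_{j}\sigma(D)f\rVert_{L^{2}}\lesssim 2^{-jn}\lVert \Delta_{j}f\rVert_{L^{2}}$, and weighting by $2^{2j(\alpha+n)}$ and summing yields the claim. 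This dyadic route also absorbs the low-frequency block $\Delta_{-1}$ through the compact support of $\chi$.

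The only delicate point is the $k=0$ mode, where $\sigma$ is not \emph{a priori} defined. In all applications of the lemma inside Section~\ref{Section 6} this is harmless: either the input is mean-zero (so $\hat{f}(0)=0$ and the mode contributes nothing), or $\sigma$ extends continuously to $0$, as for $\sigma_{a}(0)=-1/(1+a)$ from \eqref{Define sigma a} and $\tilde{\sigma}_{a}(0)=-1/a$ from \eqref{Define tilde sigma a}; in either case the zero-mode contribution is bounded by $\lvert\hat{f}(0)\rvert^{2}\leq \lVert f\rVert_{H^{\alpha}}^{2}$ and fits trivially into the estimate.

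I do not anticipate any genuine analytic obstacle beyond this bookkeeping. The only care required is to track how the constant in \eqref{Schauder} depends on the implicit constant in $\lvert\sigma(x)\rvert\lesssim(1+\lvert x\rvert)^{-n}$, since the lemma is iterated throughout Section~\ref{Section 6} against the parameter-dependent symbols $\sigma_{a}$, $\sigma_{a}-\sigma$, and $\tilde{\sigma}_{a}$; this is precisely what produces the $a^{\vartheta-1}$ and $a^{\vartheta}$ factors recorded in \eqref{est 172} and \eqref{Define tilde sigma a}, and verifying those scalings reduces to an elementary pointwise computation on the symbol before Plancherel is invoked.
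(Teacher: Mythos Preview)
Your argument is correct: the Plancherel-based one-line bound (or its Littlewood--Paley variant) is exactly how one proves such a multiplier estimate on $H^{\alpha}(\mathbb{T}^{d})$, and your handling of the zero mode and of the dependence of the constant on the symbol bound is accurate. Note, however, that the paper does not supply its own proof of Lemma~\ref{Lemma A.1}; it is quoted from \cite[Proposition 3.3]{AC15} as a preliminary, so there is no in-paper argument to compare against.
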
 

\begin{lemma}\label{Lemma A.2} 
\rm{(\hspace{1sp}\cite[Proposition A.2]{AC15} and \cite[Lemma A.8]{GUZ20})} Let $\alpha \in (0,1), \beta \in \mathbb{R}$, and $f \in H^{\alpha} (\mathbb{T}^{d}), g \in \mathcal{C}^{\beta} (\mathbb{T}^{d})$, and $\sigma: \hspace{1mm} \mathbb{R}^{d} \setminus \{0\} \mapsto \mathbb{R}$ be infinitely differentiable function such that $\lvert D^{k} \sigma(x) \rvert \lesssim (1+ \lvert x \rvert)^{-n-k}$ for all $x \in \mathbb{R}^{d}$ and $k \in \mathbb{N}_{0}^{d}$. Define 
\begin{equation}\label{Define mathcal C}
\mathcal{C} ( f,g) \triangleq \sigma(D) (f\prec g) - f \prec \sigma(D) g.  
\end{equation} 
Then 
\begin{align}\label{est 195}
\lVert \mathcal{C} (f,g) \rVert_{H^{\alpha + \beta + n - \delta}} \lesssim \lVert f \rVert_{H^{\alpha}} \lVert g \rVert_{\mathcal{C}^{\beta}} \hspace{3mm} \forall \hspace{1mm} \delta > 0. 
\end{align}
\end{lemma}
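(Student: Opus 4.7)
The plan is a standard dyadic Littlewood--Paley decomposition of the paraproduct combined with a first-order Taylor expansion, which converts the commutator into a convolution against $\nabla \sigma$---a symbol of order $-n-1$ by the derivative bound on $\sigma$---and thereby yields the regularity gain of $n$ (on top of the $\alpha$ from $f$ and $\beta$ from $g$).

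Concretely, I would decompose
\begin{equation*}
\mathcal{C}(f,g) = \sum_{i \geq -1} T_i, \qquad T_i \triangleq \sigma(D)(S_{i-1}f \cdot \Delta_i g) - S_{i-1}f \cdot \sigma(D)\Delta_i g.
\end{equation*}
For $i \geq 1$, both $\Delta_i g$ and the product $S_{i-1}f \cdot \Delta_i g$ are frequency-supported in an annulus of radius $\sim 2^i$, hence so is $T_i$; this almost-orthogonality reduces the claim to the block bound $\|T_i\|_{L^2} \lesssim 2^{-i(\alpha+\beta+n)} \|f\|_{H^\alpha} \|g\|_{\mathcal{C}^\beta}$, which, through the embedding $B^{\alpha+\beta+n}_{2,\infty} \hookrightarrow H^{\alpha+\beta+n-\delta}$, delivers \eqref{est 195} with the arbitrary $\delta>0$ loss.

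To prove this block bound, I would write $T_i$ as an integral against the kernel $K_i$ of $\sigma(D)$ localized to the annulus at scale $2^i$, using the identity
\begin{equation*}
T_i(x) = \int K_i(y) \bigl[S_{i-1}f(x-y) - S_{i-1}f(x)\bigr] \Delta_i g(x-y)\, dy.
\end{equation*}
Applying the Taylor representation $S_{i-1}f(x-y) - S_{i-1}f(x) = -\int_0^1 y \cdot \nabla S_{i-1}f(x-\tau y)\, d\tau$ produces the factor $y K_i(y)$, which on the Fourier side corresponds to $\nabla \sigma(\xi)$ and hence to a symbol satisfying $|\nabla\sigma(\xi)| \lesssim (1+|\xi|)^{-n-1}$ by hypothesis; equivalently $\|y K_i\|_{L^1} \lesssim 2^{-i(n+1)}$. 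Combined with Bernstein's inequality $\|\nabla S_{i-1}f\|_{L^2} \lesssim 2^{i(1-\alpha)}\|f\|_{H^\alpha}$ (valid because $\alpha < 1$, using $\|\Delta_j f\|_{L^2} \lesssim 2^{-j\alpha}\|f\|_{H^\alpha}$ and summation of $2^{j(1-\alpha)}$ over $j \leq i-2$) and the Hölder bound $\|\Delta_i g\|_{L^\infty} \lesssim 2^{-i\beta}\|g\|_{\mathcal{C}^\beta}$, Young's convolution inequality gives
\begin{equation*}
\|T_i\|_{L^2} \lesssim 2^{-i(n+1)} \cdot 2^{i(1-\alpha)} \cdot 2^{-i\beta} \|f\|_{H^\alpha}\|g\|_{\mathcal{C}^\beta},
\end{equation*}
which is the required scaling.

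The main technical obstacle is justifying the kernel representation in the presence of $\sigma$'s singularity at the origin: a priori $\check{\sigma}$ is only a tempered distribution, not an $L^1$ function. I would resolve this by introducing, for each $i \geq 1$, a smooth annular cutoff $\psi_i$ supported away from $0$ and covering the frequencies of $\Delta_i g$ as well as those of $S_{i-1}f \cdot \Delta_i g$, and replacing $\sigma(D)$ by $(\psi_i\sigma)(D)$ inside $T_i$ without changing the value; the modified symbol $\psi_i \sigma$ is Schwartz, and the derivative hypothesis $|D^k\sigma(x)| \lesssim (1+|x|)^{-n-k}$ yields $\|\psi_i \check{\sigma}\|_{L^1} \lesssim 2^{-in}$ and $\|y\,\psi_i \check{\sigma}\|_{L^1} \lesssim 2^{-i(n+1)}$ uniformly in $i$. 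The low-frequency blocks $i \in \{-1,0\}$ are treated separately by a direct $L^2$-estimate exploiting compact Fourier support, using $\alpha > 0$ to control $\|S_{-2}f\|_{L^\infty}$ via embedding $H^\alpha \hookrightarrow B^0_{\infty,\infty}$ when needed and the decay bound on $\sigma$ far from the origin.
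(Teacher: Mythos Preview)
The paper does not supply its own proof of this lemma; it is recorded in Appendix~A as a cited preliminary from \cite[Proposition~A.2]{AC15} and \cite[Lemma~A.8]{GUZ20}, and your argument---dyadic decomposition of the paraproduct, first-order Taylor expansion of the low-frequency factor, and the kernel bound $\lVert y K_i\rVert_{L^1}\lesssim 2^{-i(n+1)}$ coming from $\lvert\nabla\sigma\rvert\lesssim(1+\lvert\cdot\rvert)^{-n-1}$---is exactly the standard proof in those references. One minor remark: with the paper's convention $S_{i-1}f=\sum_{-1\le j\le i-2}\Delta_j f$ the blocks $i\in\{-1,0\}$ vanish identically, so your separate low-frequency treatment is unnecessary (and the embedding $H^\alpha\hookrightarrow B^0_{\infty,\infty}$ you invoke there would in any case fail for $\alpha\le d/2$).
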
 
 
\begin{lemma}\label{Lemma A.3} 
\rm{(\hspace{1sp}\cite[Proposition 4.3]{AC15} and \cite[Proposition A.2]{GUZ20})} Let $\alpha \in (0,1), \beta, \gamma \in \mathbb{R}$ such that $\beta + \gamma < 0$ and $\alpha + \beta + \gamma > 0$. Define 
\begin{equation}\label{Define R}
\mathcal{R}(f,g,h) \triangleq (f \prec g) \circ h - f (g \circ h)
\end{equation} 
for smooth functions. Then this trilinear operator can be extended to the product space $H^{\alpha} \times \mathcal{C}^{\beta} \times \mathcal{C}^{\gamma}$ and 
\begin{equation}\label{Estimate on R} 
\lVert \mathcal{R} (f,g.h) \rVert_{H^{\alpha + \beta + \gamma - \delta}} \lesssim \lVert f \rVert_{H^{\alpha}} \lVert g \rVert_{\mathcal{C}^{\beta}} \lVert h \rVert_{\mathcal{C}^{\gamma}} \hspace{3mm} \forall \hspace{1mm} f \in H^{\alpha}, g \in \mathcal{C}^{\beta}, h \in \mathcal{C}^{\gamma}, \text{ and } \delta > 0. 
\end{equation} 
\end{lemma}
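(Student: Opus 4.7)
The plan is to establish \eqref{Estimate on R} by the classical paracommutator argument originating in Gubinelli--Imkeller--Perkowski. The heuristic is that although $g \circ h$ is not classically well-defined when $\beta + \gamma < 0$, the combination $(f \prec g) \circ h - f(g \circ h)$ enjoys a cancellation that ``transfers'' the $\alpha > 0$ regularity of $f$ onto the resonant pairing of $g$ and $h$, making the difference $\mathcal{R}(f,g,h)$ genuinely of regularity $\alpha + \beta + \gamma$.

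First I would expand both $(f \prec g) \circ h$ and $f(g \circ h)$ into Littlewood--Paley blocks using the identities \eqref{est 0}. Writing $\Delta_i(f \prec g) = \sum_{|m-i|\le N_1}\Delta_i(S_{m-1}f\,\Delta_m g)$ and pulling $\Delta_i$ past $S_{m-1} f$ by a standard commutator of the type appearing in Lemma \ref{Lemma A.2}, I would obtain, for smooth $f, g, h$, the key identity
\begin{equation*}
\mathcal{R}(f,g,h) = -\sum_{i \ge -1}\sum_{|j-i|\le 1}(f - S_{i-1}f)\,\Delta_i g\,\Delta_j h \; + \; \widetilde{\mathcal{R}}(f,g,h),
\end{equation*}
where $\widetilde{\mathcal{R}}$ collects commutator remainders $[\Delta_i, S_{m-1}f]\Delta_m g\,\Delta_j h$ and their analogues. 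These remainders satisfy the target bound by essentially the same argument as \eqref{est 195}, so the problem reduces to estimating the principal sum above.

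Next I would write $f - S_{i-1}f = \sum_{k \ge i-2}\Delta_k f$ and analyze the trilinear atom $T_{k,i,j} \triangleq (\Delta_k f)(\Delta_i g)(\Delta_j h)$ for $|j-i|\le 1$, whose Fourier support lies in a ball of radius $\lesssim 2^{\max(k,i)}$. Bernstein and Parseval, together with $\alpha \in (0,1)$, give
\begin{equation*}
\|T_{k,i,j}\|_{L^2} \lesssim 2^{-k\alpha}\,2^{-i(\beta+\gamma)}\,\|f\|_{H^\alpha}\|g\|_{\mathcal{C}^\beta}\|h\|_{\mathcal{C}^\gamma}.
\end{equation*}
To assemble the $H^{\alpha+\beta+\gamma-\delta}$ norm I would organize contributions by the output dyadic index $m$, splitting into the regime $k \sim i$ (output $m \lesssim i$) and the regime $k \gg i$ (output $m \sim k$). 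In the first regime the hypothesis $\beta + \gamma < 0$ is essential: it forces the geometric series $\sum_{i \gtrsim m} 2^{-i(\alpha+\beta+\gamma)}$ to be convergent only after a small loss $\delta$ in the exponent, producing the block bound $2^{-m(\alpha+\beta+\gamma-\delta)}$. In the second regime one sums over $k \sim m$ and separately over $i < k$, where $\beta+\gamma < 0$ again guarantees absolute convergence in $i$ while $\alpha > 0$ supplies the decay in $k$. Summing the squared block bounds in $\ell^{2}_{m}$ with the weight $2^{2m(\alpha+\beta+\gamma-\delta)}$ closes the estimate at the level of smooth inputs; the extension to $H^\alpha \times \mathcal{C}^\beta \times \mathcal{C}^\gamma$ then follows by trilinearity and density.

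The hard part is Step 1: verifying the commutator identity and absorbing the remainder $\widetilde{\mathcal{R}}$. The Littlewood--Paley manipulations are elementary, but the spectral-support bookkeeping through nested paraproducts (with shifts $N_1, N_2$ from \eqref{est 0}) is delicate, and one must recognize that the commutator $[\Delta_i, S_{m-1}f]$ effectively gains a derivative on $f$ against the kernel of $\Delta_i$, producing exactly the extra regularity needed to bound $\widetilde{\mathcal{R}}$ in $H^{\alpha+\beta+\gamma-\delta}$. Once that identity is in hand, the subsequent Bernstein and geometric-sum bookkeeping is routine, and the hypotheses $\alpha + \beta + \gamma > 0$ and $\beta + \gamma < 0$ enter precisely to ensure convergence of the double dyadic sum with the unavoidable endpoint loss $\delta$.
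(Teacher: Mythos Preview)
The paper does not prove Lemma~\ref{Lemma A.3}; it records the statement and cites \cite[Proposition~4.3]{AC15} and \cite[Proposition~A.2]{GUZ20}. Your sketch follows the standard Gubinelli--Imkeller--Perkowski paracommutator argument, which is indeed how those references proceed, so in that sense there is nothing to compare.

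Two points in your write-up are nonetheless off. First, the step ``pulling $\Delta_i$ past $S_{m-1}f$ by a commutator of the type in Lemma~\ref{Lemma A.2}'' is a detour: Lemma~\ref{Lemma A.2} concerns Fourier multipliers $\sigma(D)$ acting on paraproducts and is irrelevant here. The identity \eqref{est 0} already gives $\Delta_i(f\prec g)=\sum_{|m-i|\le N_1}\Delta_i[(S_{m-1}f)\Delta_m g]$, and since $|m-i|\le N_1$ the replacement of $S_{m-1}f$ by $S_{i-1}f$ costs only finitely many blocks of $f$; no operator commutator is needed and there is no separate remainder $\widetilde{\mathcal{R}}$ to absorb. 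Second, your account of where the hypothesis $\beta+\gamma<0$ enters is garbled: in your first regime it is $\alpha+\beta+\gamma>0$ (not $\beta+\gamma<0$) that makes $\sum_{i\gtrsim m}2^{-i(\alpha+\beta+\gamma)}$ converge, with no $\delta$-loss required at that step; and in your second regime the finite sum $\sum_{i<k}2^{-i(\beta+\gamma)}$ is dominated by its top term $2^{-k(\beta+\gamma)}$ precisely \emph{because} $\beta+\gamma<0$, which is the opposite of ``absolute convergence.'' These are bookkeeping slips rather than genuine gaps---the dyadic sums do close under the stated hypotheses---but they would need to be corrected in a full proof.
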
 

\section{Further details}\label{Appendix B} 

\subsection{Proof of Lemma \ref{Product estimate lemma}}\label{Section B.1}
We sketch the proof of Lemma \ref{Product estimate lemma}. First, we estimate 
\begin{align*} 
\lVert \Delta_{m} (f \prec g)\rVert_{L^{2}} &\overset{\eqref{est 0}}{\lesssim}\sum_{l \leq m-2} \lVert \Delta_{l} f\rVert_{L^{\infty}} \lVert \Delta_{m} g\rVert_{L^{2}}  \\
&\lesssim \lVert f\rVert_{\dot{B}_{2,\infty}^{\sigma_{1}}}\left(\sum_{l\leq m-2}2^{(l-m)(\frac{d}{2} - \sigma_{1})}\right) 2^{m(\frac{d}{2} - \sigma_{1})}\lVert \Delta_{m} g\rVert_{L^{2}} \lesssim \lVert f\rVert_{\dot{B}_{2,\infty}^{\sigma_{1}}}2^{m(\frac{d}{2} - \sigma_{1})}\lVert\Delta_{m} g\rVert_{L^{2}}, 
\end{align*}
where we used Bernstein's inequality and the hypothesis that $\sigma_{1} < \frac{d}{2}$. Multiplying by $2^{m(\sigma_{1} + \sigma_{2} - \frac{d}{2})}$ and taking $l^{2}$-norm in $m$ give us 
\begin{align}\label{est 18}
\left\lVert 2^{m(\sigma_{1} + \sigma_{2} - \frac{d}{2})}\lVert \Delta_{m} (f \prec g)\rVert_{L^{2}} \right\rVert_{l^{2}} \lesssim& \lVert f\rVert_{\dot{B}_{2,\infty}^{\sigma_{1}}} \left\lVert 2^{m\sigma_{2}}\lVert \Delta_{j} g\rVert_{L^{2}} \right\rVert_{l^{2}} \lesssim \lVert f\rVert_{\dot{H}^{\sigma_{1}}} \lVert g\rVert_{\dot{H}^{\sigma_{2}}}.
\end{align}
Similarly, relying on the hypothesis that $\sigma_{2} < \frac{d}{2}$ and Bernstein's inequality leads us to 
\begin{align}\label{est 19}
\left\lVert 2^{m(\sigma_{1} + \sigma_{2} - \frac{d}{2})}\lVert \Delta_{m} (g \prec f)\rVert_{L^{2}} \right\rVert_{l^{2}} \lesssim \lVert g\rVert_{\dot{B}_{2,\infty}^{\sigma_{2}}} \Bigg\lVert 2^{m\sigma_{1}}\lVert \Delta_{m} f\rVert_{L^{2}} \Bigg\rVert_{l^{2}} \lesssim \lVert g\rVert_{\dot{H}^{\sigma_{2}}}\lVert f\rVert_{\dot{H}^{\sigma_{1}}}.
\end{align}
Finally, Bernstein's and H$\ddot{\mathrm{o}}$lder's inequalities lead us to 
\begin{align*}
&\lVert \Delta_{m} (f \circ g)\rVert_{L^{2}} \overset{\eqref{est 0}}{\lesssim}  2^{m(\frac{d}{2})} \sum_{j: \lvert j \rvert \leq 1} \sum_{i \geq m- N_{2}} \lVert \Delta_{i} f\Delta_{i+j} g\rVert_{L^{1}}\\
\lesssim&  \sum_{j: \lvert j \rvert \leq 1} \sum_{i \geq m- N_{2}} 2^{(m-i)(\sigma_{1} + \sigma_{2})} 2^{m(\frac{d}{2}) + (i-m)(\sigma_{1} + \sigma_{2})} \lVert \Delta_{i} f\rVert_{L^{2}} \lVert \Delta_{i+j} g\rVert_{L^{2}}.
\end{align*}
Multiplying by $2^{m(\sigma_{1} + \sigma_{2} - \frac{d}{2})}$, taking $l^{2}$-norm in $m$, and using Young's inequality for convolution give us  
\begin{align}
\left\lVert 2^{m(\sigma_{1} + \sigma_{2} - \frac{d}{2})}\lVert \Delta_{m}(f \circ g)\rVert_{L^{2}} \right\rVert_{l^{2}}  \lesssim \lVert f\rVert_{\dot{H}^{\sigma_{1}}} \lVert g\rVert_{\dot{H}^{\sigma_{2}}}. \label{est 20}
\end{align}
Summing up \eqref{est 18}, \eqref{est 19}, and \eqref{est 20} implies \eqref{Product estimate}. 

\subsection{Proof of Proposition \ref{Proposition 4.11}}\label{Section B.2}
Similarly to \eqref{est 61} and \eqref{est 33}, we see that \eqref{est 32} gives us 
\begin{equation}\label{est 105}
\partial_{t} \lVert w^{\mathcal{L}} (t) \rVert_{\dot{H}^{\epsilon}}^{2}  = \sum_{k=1}^{4} \RomanII_{k}
\end{equation} 
where 
\begin{subequations}
\begin{align}
\RomanII_{1} \triangleq& 2 \langle (-\partial_{x}^{2})^{\epsilon} w^{\mathcal{L}}, \nu \partial_{x}^{2} w^{\mathcal{L}} -  \partial_{x} (w^{\mathcal{L}} \mathcal{L}_{\lambda_{t}}X) \rangle_{L^{2}}(t), \label{Define II1}\\
\RomanII_{2} \triangleq& -2 \langle (-\partial_{x}^{2})^{\epsilon} w^{\mathcal{L}}, \partial_{x} (w^{\mathcal{L}} \mathcal{H}_{\lambda_{t}}X - w^{\mathcal{L}} \prec \mathcal{H}_{\lambda_{t}}X ) \rangle_{L^{2}}(t), \label{Define II2} \\
\RomanII_{3} \triangleq& -2 \langle (-\partial_{x}^{2})^{\epsilon} w^{\mathcal{L}}, \partial_{x} (w^{\mathcal{H}} X - w^{\mathcal{H}} \prec \mathcal{H}_{\lambda_{t}}X) \rangle_{L^{2}} (t),  \label{Define II3}\\
\RomanII_{4} \triangleq& - \langle (-\partial_{x}^{2})^{\epsilon} w^{\mathcal{L}}, \partial_{x} (w^{2} + 2wY - C^{\prec}(w, Q^{\mathcal{H}}) + Y^{2} \rangle_{L^{2}}(t).  \label{Define II4}
\end{align}
\end{subequations} 
For $\RomanII_{1}$, we first compute 
\begin{align}
\lVert w^{\mathcal{L}} \prec \mathcal{L}_{\lambda_{t}}X(t) \rVert_{\dot{H}^{2\epsilon}}  \overset{\eqref{Bony 3}}{\lesssim}  \lVert & w^{\mathcal{L}}(t) \rVert_{\dot{H}^{2\epsilon}}  \lambda_{t}^{\frac{1}{3}} \lVert X(t) \rVert_{\mathcal{C}^{-\kappa}}  \nonumber \\
\overset{\eqref{Define Lt and Nt} \eqref{Define M} \eqref{Higher frequency estimate}}{\lesssim}& \lVert w^{\mathcal{L}} (t) \rVert_{\dot{H}^{2\epsilon}} (1+ M + N_{t}^{\kappa}) N_{t}^{\kappa}  \lesssim \lVert w^{\mathcal{L}}(t) \rVert_{\dot{H}^{2\epsilon}} C(M, N_{t}^{\kappa}),  \label{est 81}
\end{align}
and similarly
\begin{align}
\lVert w^{\mathcal{L}} \succ \mathcal{L}_{\lambda_{t}} X(t) \rVert_{\dot{H}^{2\epsilon}} +& \lVert w^{\mathcal{L}} \circ \mathcal{L}_{\lambda_{t}}X(t) \rVert_{\dot{H}^{2\epsilon}} \overset{\eqref{Bony 2} \eqref{Bony 5}}{\lesssim} \lVert w^{\mathcal{L}} (t) \rVert_{H^{2\epsilon}} \lVert \mathcal{L}_{\lambda_{t}} X(t) \rVert_{\mathcal{C}^{\frac{1}{3} - \kappa}} \nonumber\\
& \hspace{10mm}  \lesssim \lVert w^{\mathcal{L}}(t) \rVert_{\dot{H}^{2\epsilon}} \lambda_{t}^{\frac{1}{3}} \lVert X(t) \rVert_{\mathcal{C}^{-\kappa}}  \lesssim \lVert w^{\mathcal{L}}(t) \rVert_{\dot{H}^{2\epsilon}} C(M, N_{t}^{\kappa}). \label{est 82}
\end{align}
Therefore, we estimate $\RomanII_{1}$ from \eqref{Define II1} by 
\begin{align}
\RomanII_{1} &\overset{\eqref{est 81} \eqref{est 82}}{\leq}- 2 \nu \lVert w^{\mathcal{L}}(t) \rVert_{\dot{H}^{1+ \epsilon}}^{2} +   \lVert w^{\mathcal{L}} (t) \rVert_{\dot{H}^{1}} \lVert w^{\mathcal{L}}(t) \rVert_{\dot{H}^{2\epsilon}} C(M, N_{t}^{\kappa}) \nonumber \\
\overset{\eqref{Define M}}{\leq}&- 2 \nu \lVert w^{\mathcal{L}}(t) \rVert_{\dot{H}^{1+ \epsilon}}^{2} + C(M, N_{t}^{\kappa}) \lVert w^{\mathcal{L}}(t) \rVert_{H^{1+ \epsilon}}^{\frac{1+ 2 \epsilon}{1+ \epsilon}} \leq - \frac{31 \nu}{16} \lVert w^{\mathcal{L}} (t) \rVert_{\dot{H}^{1+ \epsilon}}^{2} +C(M, N_{t}^{\kappa}). \label{est 106}
\end{align}
Next, we first rewrite $\RomanII_{2}$ from \eqref{Define II2} using Bony's paraproducts and estimate as follows:
\begin{align}
\RomanII_{2} =& -2 \langle ( -\partial_{x}^{2})^{\epsilon} w^{\mathcal{L}}, \partial_{x} ( w^{\mathcal{L}} \succ \mathcal{H}_{\lambda_{t}}X + w^{\mathcal{L}} \circ \mathcal{H}_{\lambda_{t}}X) \rangle_{L^{2}} (t)  \nonumber \\
\overset{\eqref{Bony 4} \eqref{Bony 5}}{\lesssim}& \lVert w^{\mathcal{L}}(t) \rVert_{\dot{H}^{1+ \epsilon}}  \lVert w^{\mathcal{L}}(t) \rVert_{\dot{H}^{\epsilon + \kappa}}  \lVert \mathcal{H}_{\lambda_{t}}X(t) \rVert_{\mathcal{C}^{-\kappa}} \leq \frac{\nu}{16} \lVert w^{\mathcal{L}}(t) \rVert_{\dot{H}^{1+ \epsilon}}^{2} + C(M, N_{t}^{\kappa}). \label{est 107} 
\end{align}
Concerning $\RomanII_{3}$ in \eqref{Define II3}, we first rewrite 
\begin{equation}\label{est 263}
\RomanII_{3}  = -2 \langle (-\partial_{x}^{2})^{\epsilon} w^{\mathcal{L}}, \partial_{x} ( w^{\mathcal{H}} \mathcal{L}_{\lambda_{t}}X + w^{\mathcal{H}}  \succ \mathcal{H}_{\lambda_{t}}X+ w^{\mathcal{H}} \circ \mathcal{H}_{\lambda_{t}}X ) \rangle_{L^{2}} (t), 
\end{equation} 
make use of \eqref{est 36}-\eqref{est 35} and estimate 
\begin{align}
\RomanII_{3} \lesssim& \lVert w^{\mathcal{L}} (t) \rVert_{\dot{H}^{\frac{2}{3} + 2 \kappa + 2 \epsilon}} [ \lVert w^{\mathcal{H}} \mathcal{L}_{\lambda_{t}} X(t) \rVert_{\dot{H}^{\frac{1}{3} - 2 \kappa}} + \lVert w^{\mathcal{H}} \succ \mathcal{H}_{\lambda_{t}}X(t) \rVert_{\dot{H}^{\frac{1}{3} - 2 \kappa}}+ \lVert w^{\mathcal{H}} \circ \mathcal{H}_{\lambda_{t}}X(t) \rVert_{\dot{H}^{\frac{1}{3} - 2 \kappa}} ]  \nonumber \\
\overset{\eqref{est 36} \eqref{est 35}}{\lesssim}& \lVert w^{\mathcal{L}}(t) \rVert_{\dot{H}^{\frac{2}{3} + 2 \kappa + 2 \epsilon}} [ \lambda_{t}^{\frac{1}{3}} (N_{t}^{\kappa})^{2} + (N_{t}^{\kappa})^{2} ] \overset{ \eqref{Higher frequency estimate} \eqref{Define M}}{\leq} \frac{\nu}{16} \lVert w^{\mathcal{L}}(t) \rVert_{\dot{H}^{1+ \epsilon}}^{2} + C(M, N_{t}^{\kappa}). \label{est 108}
\end{align}
Within $\RomanII_{4}$, we first work on 
\begin{equation}\label{est 83}
- \langle (-\partial_{x}^{2})^{\epsilon} w^{\mathcal{L}}, \partial_{x} w^{2} \rangle_{L^{2}}(t) = - \langle (-\partial_{x}^{2})^{\epsilon} w^{\mathcal{L}}, \partial_{x} \left( (w^{\mathcal{L}})^{2} + 2 w^{\mathcal{L}} w^{\mathcal{H}} + (w^{\mathcal{H}})^{2}  \right) \rangle_{L^{2}} (t). 
\end{equation}  
For the product of the lower order terms within \eqref{est 83}, we do a commutator type estimate despite absence of divergence-free property as follows: 
\begin{align}
& - \langle (-\partial_{x}^{2})^{\epsilon} w^{\mathcal{L}}, \partial_{x} (w^{\mathcal{L}})^{2} \rangle_{L^{2}} (t)  \nonumber \\
=& -2 \int_{\mathbb{T}} \left( [ (-\partial_{x}^{2})^{\frac{\epsilon}{2}},w^{\mathcal{L}} \partial_{x} ] w^{\mathcal{L}} \right) (-\partial_{x}^{2})^{\frac{\epsilon}{2}} w^{\mathcal{L}}(t) dx+ \int_{\mathbb{T}} \partial_{x} w^{\mathcal{L}} \lvert (-\partial_{x}^{2})^{\frac{\epsilon}{2}} w^{\mathcal{L}} (t)\rvert^{2}dx \nonumber  \\
\lesssim&  \lVert (-\partial_{x}^{2})^{\frac{\epsilon}{2}} w^{\mathcal{L}}(t) \rVert_{L^{4}} \lVert \partial_{x} w^{\mathcal{L}}(t) \rVert_{L^{2}} \lVert (-\partial_{x}^{2})^{\frac{\epsilon}{2}} w^{\mathcal{L}}(t) \rVert_{L^{4}} \leq \frac{\nu}{48} \lVert w^{\mathcal{L}}(t) \rVert_{\dot{H}^{1+ \epsilon}}^{2} + C(M, N_{t}^{\kappa}),  \label{est 84} 
\end{align}
where $[a,b] = ab - ba$. Next, for the product of the higher order terms within \eqref{est 83}, we rely on \eqref{est 42} and estimate 
\begin{align}
 - \langle (-\partial_{x}^{2})^{\epsilon} w^{\mathcal{L}}, \partial_{x} (w^{\mathcal{H}})^{2} \rangle_{L^{2}} (t) \overset{\eqref{est 42}}{\lesssim}& \lVert w^{\mathcal{L}}(t) \rVert_{\dot{H}^{1+ \epsilon}}^{\frac{1+ 6 \epsilon}{2(1+ \epsilon)}} \lVert w^{\mathcal{L}}(t) \rVert_{L^{2}}^{\frac{1- 4 \epsilon}{2(1+ \epsilon)}} (N_{t}^{\kappa})^{2}  \nonumber\\
\overset{\eqref{Define M}}{\leq}& \frac{\nu}{48} \lVert w^{\mathcal{L}}(t) \rVert_{\dot{H}^{1+ \epsilon}}^{2} + C(M, N_{t}^{\kappa}).\label{est 85}
\end{align} 
Next, the product of the higher and lower order terms within \eqref{est 83} is treated as follows:
\begin{align}
-2 \langle (-\partial_{x}^{2})^{\epsilon} w^{\mathcal{L}}, \partial_{x} (w^{\mathcal{L}} w^{\mathcal{H}}) &\rangle_{L^{2}} (t) \overset{\eqref{Product estimate}}{\lesssim} \lVert w^{\mathcal{L}}(t) \rVert_{\dot{H}^{\frac{1}{2} + 2 \epsilon + 2 \kappa}} \lVert w^{\mathcal{L}} (t) \rVert_{\dot{H}^{\frac{1}{2} - \kappa}} \lVert w^{\mathcal{H}}(t) \rVert_{\dot{H}^{\frac{1}{2} - \kappa}} \nonumber \\
\overset{\eqref{Define M} \eqref{Higher frequency estimate}}{\lesssim}&  \lVert w^{\mathcal{L}}(t) \rVert_{\dot{H}^{1+ \epsilon}}^{\frac{1+ 2 \epsilon + \kappa}{1+ \epsilon}} C(M, N_{t}^{\kappa}) \leq \frac{\nu}{48} \lVert w^{\mathcal{L}}(t) \rVert_{\dot{H}^{1+ \epsilon}}^{2}  + C(M, N_{t}^{\kappa}). \label{est 86}
\end{align}
Applying \eqref{est 84}, \eqref{est 85}, and \eqref{est 86} to \eqref{est 83} gives us 
\begin{equation}\label{est 100}
- \langle (-\partial_{x}^{2})^{\epsilon} w^{\mathcal{L}}, \partial_{x} w^{2} \rangle_{L^{2}}(t) \leq \frac{\nu}{16} \lVert w^{\mathcal{L}}(t) \rVert_{\dot{H}^{1+ \epsilon}}^{2} + C(M, N_{t}^{\kappa}).
\end{equation} 
Next, we estimate the rest of the non-commutator terms in $\RomanII_{4}$ of \eqref{Define II4}:
\begin{align}
& - \langle (-\partial_{x}^{2})^{\epsilon} w^{\mathcal{L}}, \partial_{x} (2w Y + Y^{2}) \rangle_{L^{2}} (t) \nonumber \\
\overset{\eqref{Bony 3} \eqref{Bony 4}}{\lesssim}& \lVert w^{\mathcal{L}} (t) \rVert_{\dot{H}^{1+\epsilon}}^{\frac{1- \frac{3\kappa}{2} + 2 \epsilon}{1+ \epsilon}} \lVert w^{\mathcal{L}}(t) \rVert_{L^{2}}^{\frac{ \frac{3\kappa}{2} - \epsilon}{1+ \epsilon}} [ \lVert w(t) \rVert_{H^{2\kappa}} + \lVert Y \rVert_{C_{t} \mathcal{C}^{2\kappa}} ] \lVert Y \rVert_{C_{t} \mathcal{C}^{2\kappa}}  \nonumber \\
\overset{\eqref{Define M} \eqref{Higher frequency estimate}}{\lesssim}&C(M, N_{t}^{\kappa}) [ \lVert w^{\mathcal{L}} (t) \rVert_{\dot{H}^{1+ \epsilon}}^{\frac{1 + \frac{\kappa}{2} + 2 \epsilon}{1+ \epsilon}} + \lVert w^{\mathcal{L}} (t) \rVert_{\dot{H}^{1+ \epsilon}}^{\frac{ 1 - \frac{3\kappa}{2} + 2 \epsilon}{1+ \epsilon}} ] \leq \frac{\nu}{16} \lVert w^{\mathcal{L}}(t) \rVert_{\dot{H}^{1+ \epsilon}}^{2} + C(M, N_{t}^{\kappa}). \label{est 101}
\end{align}
Finally, we work on the commutator term within $\RomanII_{4}$ of \eqref{Define II4}: 
\begin{align}
& \langle (-\partial_{x}^{2})^{\epsilon} w^{\mathcal{L}}, \partial_{x} C^{\prec} (w, Q^{\mathcal{H}}) \rangle_{L^{2}} (t)  \label{est 90} \\
\overset{\eqref{est 46b} }{=}& - \left\langle (-\partial_{x}^{2})^{\epsilon} w^{\mathcal{L}}, \partial_{x}\left( \frac{1}{2}  \partial_{x} (w^{2} + 2wY + 2wX+ Y^{2})  \prec Q^{\mathcal{H}} + 2 \nu \partial_{x} w \prec \partial_{x} Q^{\mathcal{H}} \right) \right\rangle_{L^{2}} (t). \nonumber 
\end{align}
Concerning the first term in \eqref{est 90}, computations in \eqref{est 51} shows 
\begin{align}
\lVert (\partial_{x} w^{2}) \prec  Q^{\mathcal{H}} (t) \rVert_{\dot{H}^{1- 2 \kappa - \gamma}} \lesssim N_{t}^{\kappa} (1+ \lVert w(t) \rVert_{L^{2}})^{-3 \gamma} \left(\lVert w^{\mathcal{L}} (t) \rVert_{\dot{H}^{\frac{1}{4} - \frac{\kappa}{4}}} + N_{t}^{\kappa} \right)^{2} \hspace{3mm} \forall \hspace{1mm} \gamma \geq 0,  \label{est 87}
\end{align}
so that for any $\gamma \in [0, \frac{3}{2} - \frac{3\kappa}{2})$ and $\eta \in \left( \max\{ \frac{1}{4}, 2 \kappa + \gamma + 2 \epsilon\}, 1+ \epsilon \right)$, we can compute 
\begin{align}
& - \frac{1}{2} \langle (-\partial_{x}^{2})^{\epsilon} w^{\mathcal{L}}, \partial_{x} [ ( \partial_{x} w^{2}) \prec Q^{\mathcal{H}} ] \rangle_{L^{2}} (t) \lesssim \lVert w^{\mathcal{L}}(t) \rVert_{\dot{H}^{2 \kappa + \gamma + 2 \epsilon}} \lVert (\partial_{x} w^{2}) \prec Q^{\mathcal{H}}(t) \rVert_{\dot{H}^{1- 2 \kappa - \gamma}}  \nonumber \\
\lesssim& C(M, N_{t}^{\kappa}) \Bigg(\lVert w^{\mathcal{L}}(t) \rVert_{\dot{H}^{1+ \epsilon}}^{\frac{3 \kappa + 2 \gamma + 4 \epsilon + 1}{2(1+\epsilon)}} +  \lVert w^{\mathcal{L}}(t) \rVert_{\dot{H}^{1+ \epsilon}}^{ \frac{2\kappa+ \gamma + 2 \epsilon}{1+ \epsilon}}   \Bigg). \label{est 89}
\end{align} 
For simplicity, we choose 
\begin{equation}\label{est 88}
\gamma = \frac{1}{2} \text{ and } \eta = \frac{3}{4}
\end{equation} 
to conclude 
\begin{equation}\label{est 95} 
 - \frac{1}{2} \langle (-\partial_{x}^{2})^{\epsilon} w^{\mathcal{L}}, \partial_{x} [ ( \partial_{x} w^{2}) \prec Q^{\mathcal{H}} ] \rangle_{L^{2}} (t)   \leq \frac{\nu}{80} \lVert w^{\mathcal{L}}(t) \rVert_{\dot{H}^{1+ \epsilon}}^{2} + C(M, N_{t}^{\kappa}).
\end{equation} 
Concerning the second term in \eqref{est 90}, using the fact that
\begin{align} 
\lVert [ \partial_{x} (wY) ] \prec Q^{\mathcal{H}}(t) \rVert_{\dot{H}^{1-\eta}} \lesssim (N_{t}^{\kappa})^{2} [ \lVert w^{\mathcal{L}}(t) \rVert_{H^{\kappa}} + N_{t}^{\kappa}]  \hspace{1mm} \forall \hspace{1mm} \eta \in (\kappa, 1) \label{est 91} 
\end{align}
due to \eqref{est 48} and \eqref{est 52}, we can deduce 
\begin{equation}\label{est 96} 
- \langle (-\partial_{x}^{2})^{\epsilon} w^{\mathcal{L}}, \partial_{x} ( [ \partial_{x} (wY) ] \prec Q^{\mathcal{H}}) \rangle_{L^{2}} (t) \leq \frac{\nu}{80} \lVert w^{\mathcal{L}} (t) \rVert_{\dot{H}^{1+ \epsilon}}^{2} + C(M, N_{t}^{\kappa}). 
\end{equation} 
Concerning the third term in \eqref{est 90}, making use of \eqref{est 50} and \eqref{est 53} that implies 
\begin{align}
 \lVert [ \partial_{x} (wX) ] \prec Q^{\mathcal{H}}(t) \rVert_{\dot{H}^{1-\eta}} \lesssim  (N_{t}^{\kappa})^{2} [ \lVert w^{\mathcal{L}}(t) \rVert_{\dot{H}^{\frac{3\kappa}{2}}} + N_{t}^{\kappa} ] \hspace{3mm} \forall \hspace{1mm} \eta \in (\kappa, 1),  \label{est 92} 
\end{align}
we can deduce 
\begin{equation}\label{est 97} 
- \langle (-\partial_{x}^{2})^{\epsilon} w^{\mathcal{L}}, \partial_{x} \left( [ \partial_{x} (wX) ] \prec Q^{\mathcal{H}} \right) \rangle_{L^{2}}(t)\leq \frac{\nu}{80} \lVert w^{\mathcal{L}}(t) \rVert_{\dot{H}^{1+ \epsilon}}^{2} + C(M, N_{t}^{\kappa}). 
\end{equation}   
Concerning the fourth term in \eqref{est 90}, we can rely on estimates from \eqref{est 54a} that implies 
\begin{align}\label{est 93} 
\lVert ( \partial_{x} Y^{2}) \prec Q^{\mathcal{H}}(t) \rVert_{\dot{H}^{1-\eta}}   \overset{\eqref{Bony 3} \eqref{Estimate on Q} \eqref{Define Lt and Nt}}{\lesssim} (N_{t}^{\kappa})^{3} \hspace{3mm} \forall \hspace{1mm} \eta \in (0,1), 
\end{align}
to deduce 
\begin{equation}\label{est 98}
- \frac{1}{2} \langle (-\partial_{x}^{2})^{\epsilon} w^{\mathcal{L}}, \partial_{x} \left( [ \partial_{x} Y^{2} ] \prec Q^{\mathcal{H}} \right) \rangle_{L^{2}} (t)   \leq \frac{\nu}{80} \lVert w^{\mathcal{L}} (t) \rVert_{\dot{H}^{1+ \epsilon}}^{2} + C(M, N_{t}^{\kappa}). 
\end{equation} 
Finally, concerning the fifth term in \eqref{est 90}, making use of \eqref{est 54b} that implies 
\begin{equation}\label{est 94} 
\lVert \partial_{x} w \prec \partial_{x} Q^{\mathcal{H}} (t) \rVert_{\dot{H}^{1-\eta}} \lesssim N_{t}^{\kappa} [ \lVert w^{\mathcal{L}}(t) \rVert_{\dot{H}^{\eta}} + N_{t}^{\kappa}] \hspace{3mm} \forall \hspace{1mm} \eta \geq \frac{1}{2} + \frac{3\kappa}{4}, 
\end{equation} 
leads us to 
\begin{equation}\label{est 99}
2 \left\langle ( - \partial_{x}^{2})^{\epsilon} w^{\mathcal{L}}, \partial_{x} \left( \nu \partial_{x} w \prec \partial_{x} Q^{\mathcal{H}} \right) \right\rangle_{L^{2}} (t)  \leq \frac{\nu}{80} \lVert w^{\mathcal{L}}(t) \rVert_{\dot{H}^{1+ \epsilon}}^{2} + C(M, N_{t}^{\kappa}). 
\end{equation} 
Applying \eqref{est 95}, \eqref{est 96}, \eqref{est 97}, \eqref{est 98}, and \eqref{est 99} to \eqref{est 90} gives us 
\begin{equation}\label{est 102}
 \langle (-\partial_{x}^{2})^{\epsilon} w^{\mathcal{L}}, \partial_{x} C^{\prec} (w, Q^{\mathcal{H}}) \rangle_{L^{2}} (t)\leq \frac{\nu}{16} \lVert w^{\mathcal{L}}(t) \rVert_{\dot{H}^{1+ \epsilon}}^{2} +C(M, N_{t}^{\kappa}).
\end{equation} 
In conclusion, we finally deduce by applying \eqref{est 100}, \eqref{est 101}, and \eqref{est 102} to \eqref{Define II4}, 
\begin{equation}\label{est 109}
\RomanII_{4} \leq \frac{3\nu}{16} \lVert w^{\mathcal{L}}(t) \rVert_{\dot{H}^{1+ \epsilon}}^{2} +C(M, N_{t}^{\kappa}). 
\end{equation} 
By applying \eqref{est 106}, \eqref{est 107}, \eqref{est 108}, and \eqref{est 109} to \eqref{est 105}, we obtain 
\begin{equation}
\partial_{t} \lVert w^{\mathcal{L}}(t) \rVert_{\dot{H}^{\epsilon}}^{2}  \leq - \frac{13 \nu}{8} \lVert w^{\mathcal{L}} (t) \rVert_{\dot{H}^{1+ \epsilon}}^{2} +C(M, N_{t}^{\kappa})
\end{equation} 
so that applying Gronwall's inequality completes the proof of Proposition \ref{Proposition 4.11}. 

\subsection{Proof of Proposition \ref{Proposition 4.12}}\label{Section B.3} 
The hypothesis of $\theta^{\text{in}} \in L^{2} (\mathbb{T})$ implies $\theta^{\text{in}} \in \mathcal{C}^{-1+ \kappa}(\mathbb{T})$ for $\kappa \in (0, \frac{1}{2})$ due to Bernstein's inequality, and therefore allows us via Proposition \ref{Proposition 4.2} to obtain $T^{\max} ( \{ L_{t}^{\kappa} \}_{t \geq 0}, \theta^{\text{in}}) \in (0, \infty]$ and a unique mild solution $w \in \mathcal{M}_{T^{\max}}^{\frac{\gamma}{2}} \mathcal{C}^{\frac{3\kappa}{4}}$ over $[0, T^{\max})$ with $\gamma = 1 - \frac{\kappa}{4}$ such that $\sup_{t \in [0, T^{\max} ]} t^{\frac{1}{2} - \frac{\kappa}{8}} \lVert w^{\mathcal{L}}(t) \rVert_{\mathcal{C}^{\frac{3\kappa}{4}}} < \infty$. It follows that 
\begin{equation}\label{est 104}
\lVert w^{\mathcal{L}}(t) \rVert_{H^{\zeta}} < \infty \hspace{3mm} \forall \hspace{1mm} t \in [0, T^{\max}) \hspace{1mm} \forall \hspace{1mm} \zeta < 1 - \kappa. 
\end{equation} 
Suppose that there exists some $i_{\max} \in \mathbb{N}_{0}$ such that $T_{i} = T^{\max}$ for all $i \geq i_{\max}$. Then, due to the Besov embedding of $H^{\kappa} (\mathbb{T}) \hookrightarrow \mathcal{C}^{-1+ 2\kappa}(\mathbb{T})$ and Proposition \ref{Proposition 4.11}, we can reach a contradiction to $T^{\max}$ and conclude that $T_{i} < T^{\max}$ for all $i \in \mathbb{N}$. 
 
\subsection{Proof of convergence $\mathbb{P}$-a.s. for Proposition \ref{Proposition 4.13}}\label{Section B.4}
We can compute for $\{ \lambda^{n} \}_{n \in \mathbb{N}}$, similarly to \eqref{est 157}, 
\begin{align}
& \mathbb{E} \left[ \lvert \Delta_{m} [ ( \partial_{x} \mathcal{L}_{\lambda^{n}} X \circ P^{\lambda^{n}} ) (t) - r_{\lambda^{n}} (t) - ( \partial_{x} \mathcal{L}_{\lambda^{n+1}} X \circ P^{\lambda^{n+1}} ) (t) + r_{\lambda^{n+1}} (t) ] \rvert^{2} \right]  \label{est 161}\\
=& \sum_{k, k' \in \mathbb{Z} \setminus \{0\}} e^{i 4 \pi (k+k')} \rho_{m}^{2} (k+k') \lvert \psi_{0} (k, k') \rvert^{2} (1+ \nu \lvert k' \rvert^{2})^{-1} \lvert k \rvert^{3} \lvert k'\rvert^{3} \nonumber\\
& \hspace{15mm} \times \int_{0}^{t} e^{-2 \nu \lvert k \rvert^{2} (t-s)} ds \int_{0}^{t} e^{- 2 \nu \lvert k' \rvert^{2} (t-s')} ds' \nonumber\\
& \hspace{15mm} \times \left[ \left[ \mathfrak{l} \left( \frac{\lvert k \rvert}{\lambda^{n}} \right) - \mathfrak{l} \left( \frac{\lvert k \rvert}{\lambda^{n+1}} \right) \right] \mathfrak{l} \left( \frac{\lvert k' \rvert}{\lambda^{n}} \right) + \mathfrak{l} \left( \frac{\lvert k \rvert}{\lambda^{n+1}} \right) \left[ \mathfrak{l} \left( \frac{\lvert k' \rvert}{\lambda^{n}} \right) - \mathfrak{l} \left( \frac{\lvert k' \rvert}{\lambda^{n+1}} \right) \right] \right] \nonumber\\
& \hspace{15mm} \times \Bigg[ (1+ \nu \lvert k \rvert^{2})^{-1} \left[ \mathfrak{l} \left( \frac{ \lvert k' \rvert}{\lambda^{n}} \right) - \mathfrak{l} \left( \frac{ \lvert k' \rvert}{\lambda^{n+1}} \right) \right] \mathfrak{l} \left( \frac{\lvert k \rvert}{\lambda^{n}} \right) + \mathfrak{l} \left( \frac{\lvert k' \rvert}{\lambda^{n+1}} \right) \left[ \mathfrak{l} \left( \frac{\lvert k \rvert}{\lambda^{n}} \right) - \mathfrak{l} \left( \frac{\lvert k \rvert}{\lambda^{n+1}} \right)  \right]  \nonumber \\
& \hspace{20mm} + (1+ \nu \lvert k' \rvert^{2})^{-1} \left[ \mathfrak{l} \left( \frac{ \lvert k \rvert}{\lambda^{n}} \right) - \mathfrak{l} \left( \frac{ \lvert k \rvert}{\lambda^{n+1}} \right) \right] \mathfrak{l} \left( \frac{\lvert k' \rvert}{\lambda^{n}} \right) + \mathfrak{l} \left( \frac{\lvert k \rvert}{\lambda^{n+1}} \right) \left[ \mathfrak{l} \left( \frac{\lvert k' \rvert}{\lambda^{n}} \right) - \mathfrak{l} \left( \frac{\lvert k' \rvert}{\lambda^{n+1}} \right)  \right] \Bigg].  \nonumber 
\end{align}
We estimate similarly to \eqref{est 158}  
\begin{align}
& \mathbb{E} \left[ \lvert \Delta_{m} [ ( \partial_{x} \mathcal{L}_{\lambda^{n}} X \circ P^{\lambda^{n}} ) (t) - r_{\lambda^{n}} (t) - ( \partial_{x} \mathcal{L}_{\lambda^{n+1}} X \circ P^{\lambda^{n+1}} ) (t) + r_{\lambda^{n+1}} (t) ] \rvert^{2} \right]   \nonumber \\
\overset{\eqref{Define psi 0}}{\lesssim}&  \sum_{k, k' \in \mathbb{Z} \setminus \{0\}: \lvert k \rvert \approx 2^{m}, \lvert k' \rvert \gtrsim 2^{m}} \lvert k' \rvert^{\frac{1}{2}} \left( \sum_{c: m \lesssim c} \frac{1}{2^{\frac{c}{4}}} \right)^{2} \left( \frac{1}{1+ \lvert k' \rvert^{2}}\right)^{2} \lvert k-k' \rvert \lvert k' \rvert \nonumber \\
& \hspace{30mm} \times  \left[ 1_{[ \lambda^{n}, \lambda^{n+1} ]} (\lvert k-k' \rvert) + 1_{[\lambda^{n}, \lambda^{n+1} ]} (\lvert k' \rvert) \right] \lesssim (\lambda^{n})^{-\frac{\kappa}{4}} 2^{\frac{m\kappa}{4}}.  \label{est 162}
\end{align}
We conclude that for all $p \in [2,\infty)$, due to Gaussian hypercontractivity theorem again, 
\begin{align*}
& \mathbb{E} \left[ \lVert ( \partial_{x} \mathcal{L}_{\lambda^{n}} X \circ P^{\lambda^{n}} ) (t) - r_{\lambda^{n}} (t) - [ ( \partial_{x} \mathcal{L}_{\lambda^{n+1}} X \circ P^{\lambda^{n+1}} ) (t) - r_{\lambda^{n+1}} (t) ] \rVert_{B_{p,p}^{-\kappa}}^{p}  \right]   \\
& \hspace{30mm} \overset{\eqref{est 162}}{\lesssim} \sum_{m=-1}^{\infty} 2^{- \kappa pm} \int_{\mathbb{T}} \lvert ( \lambda^{n})^{-\frac{\kappa}{4}} 2^{\frac{m \kappa}{4}} \rvert^{\frac{p}{2}} dx  \lesssim (\lambda^{n})^{-\frac{\kappa p}{8}}.
\end{align*}
  
\section*{Acknowledgments}
The author expresses deep gratitude to Prof. Jiahong Wu and Prof. Carl Mueller for valuable discussions.

\end{document}